\definecolor{green1}{rgb}{0.,0.4,0.}
\definecolor{blue1}{rgb}{0.,0.,0.8}
\definecolor{red1}{rgb}{0.8,0.,0.}
\pgfplotsset{compat=1.18}
\newcommand{\cF}{\mbox{${\mathcal F}$}}
\newcommand{\cW}{\mbox{${\mathcal W}$}}
\newcommand{\mt}{\mbox{${\widetilde M}$}}
\newcommand{\RR}{\mbox{${\mathbb R}$}}
\newcommand{\eps}{\mbox{${\epsilon}$}}
\newtheorem{thm}{Theorem}[section]
\newtheorem*{thm*}{Theorem}
\newtheorem{cor}[thm]{Corollary}
\newtheorem{lem}[thm]{Lemma}
\newtheorem{prop}[thm]{Proposition}
\newtheorem{defn}[thm]{Definition}
\newtheorem{rem}[thm]{Remark}
\newtheorem{thmintro}{Theorem}
\newtheorem*{cor*}{Corollary}
\newtheorem{thmintrobeta}{Theorem}
\newtheorem{propintro}[thmintrobeta]{Proposition}
\newtheorem{defnintro}[thmintrobeta]{Definition}
\newtheorem{constr}[thmintrobeta]{Construction}
\newtheorem{corintro}[thmintrobeta]{Corollary}
\newtheorem{exintro}[thmintrobeta]{Example}
\newtheorem{remintro}[thmintrobeta]{Remark}
\newtheorem{questionintro}{Question}
\newcommand{\R}{\mathbb{R}}
\title{Topological invariance of Liouville structures for taut foliations and Anosov flows}
\author{ 
Jonathan Bowden\thanks{Department of Mathematics, Leibniz Universit\"at Hannover, Germany. Email address: \href{mailto:jonathan.bowden@math.uni-hannover.de}{\texttt{jonathan.bowden@math.uni-hannover.de}}. Website: \url{https://sites.google.com/view/jpbowden/}.}
\and
Thomas Massoni\thanks{Department of Mathematics, Stanford University, Stanford, USA. Email address: \href{mailto:tmassoni@stanford.edu}{\texttt{tmassoni@stanford.edu}}. Website: \url{https://sites.google.com/view/thomasmassoni/}.}
\and
\normalsize
with an appendix by Thomas Barthelm\'{e}, S\'{e}rgio R. Fenley, and Rafael Potrie}
\date{\today}
\begin{document}

\maketitle

\begin{abstract}
Building on the work of Eliashberg and Thurston, we associate to a taut foliation on a closed oriented $3$-manifold $M$ a Liouville structure on the thickening $[-1,1] \times M$, under suitable hypotheses. Our main result shows that this Liouville structure is a \emph{topological invariant} of the foliation: two such foliations which are \emph{topologically conjugate} induce Liouville structures that are exact symplectomorphic (after completion). Specializing to the case of weak foliations of Anosov flows, we obtain that under natural orientability conditions, the Liouville structures originally introduced by Mitsumatsu are invariant under \emph{orbit equivalence}. Our methods also imply that two orbit equivalent Anosov flows are \emph{deformation equivalent} through projectively Anosov flows. 
The proofs combine two main technical ingredients: (1) a careful smoothing scheme for topological conjugacies between $C^1$-foliations, and (2) a refinement of a deep result of Vogel on the uniqueness of contact structures approximating a foliation.

In an appendix, this smoothing scheme is used to construct new examples of \emph{collapsed Anosov flows}, providing a key step to complete the classification of transitive partially hyperbolic diffeomorphisms in dimension three.
\end{abstract}

\tableofcontents

\section{Introduction}

    \subsection{Context}

Anosov flows were introduced by Anosov~\cite{A69} as a generalization of geodesic flows on hyperbolic manifolds. They exhibit remarkable properties, like \emph{structural stability}---$C^1$-small perturbations yield flows which are still Anosov and conjugate via a homeomorphism, up to reparametrization. In this case, one says that the flows are \emph{topologically equivalent} or \emph{orbit equivalent}. Thus, the qualitative study of Anosov flows, or hyperbolic systems in general, seeks to classify them up to $C^0$-equivalence, and one wishes to associate invariants that behave well under such equivalences. Given the lack of smoothness of orbit equivalences, this is in general a very subtle problem.

In dimension $3$, the theory of Anosov flows reveals intricate connections between the dynamical properties of the flow and its closed orbits, and the topology of the underlying manifold. There is a well-developed structural framework initiated by Fenley and Barbot, which draws its richness from the plethora of examples arising via various surgery and gluing constructions. This analysis essentially studies the flow (up to topological equivalence) by considering the weak-stable and weak-unstable foliations as the fundamental objects. This way, one can attach invariants to the flow by considering invariants of its weak foliations. 

\paragraph{Invariants from contact topology.} One such invariant arises by considering the \emph{bicontact structure} given by a pair of transverse contact distributions that are tangent to the flow, but nowhere tangent to the stable or unstable directions, as introduced by Mitsumatsu~\cite{Mit95} and Eliashberg--Thurston~\cite{ET}. In fact, one can further consider a \emph{Liouville structure} on the thickening $[-1,1] \times M$ of the underlying $3$-manifold $M$, whose deformation class is also an invariant of the flow up to \emph{smooth} deformation equivalence~\cite{Mas25a}. Informally speaking, this Liouville structure combines the data of the aforementioned bicontact structures together with some information about how they interact---it notably ``detects'' the closed orbits of the flow as particular exact Lagrangians which are studied in~\cite{CLMM}. We note that invariants from contact and symplectic geometry have already proved useful in the study of special classes of Anosov flows; see~\cite{BM24}.

Since the invariants of interest arise from approximations of cooriented codimension-$1$ foliations, or in fact transverse pairs of such foliations, we will develop a more general approach and show a certain form of functoriality for contact approximations under homeomorphisms. Vogel~\cite{V16} showed that the contact structure approximating a $C^2$-foliation is well-defined up to isotopy, provided that some natural and necessary conditions hold; our main result will show that these smooth invariants behave well under topological transformations, even for suitable $C^1$-foliations.

\paragraph{New constructions of partially hyperbolic systems.} One key technical step (see~\Cref{thmintrobeta:approx} below) is to approximate a $C^0$-conjugation between $C^1$-foliations by diffeomorphisms with control on the `distortion' of the tangent planes of the foliations. A byproduct of this approximation scheme is a construction of new examples of \emph{partially hyperbolic systems} in dimension $3$, which in turn concludes an ongoing program to establish a topological classification of transitive partially hyperbolic diffeomorphisms in dimension $3$. The construction of these examples, featuring the first instances of anomalous partially hyperbolic diffeomorphisms isotopic to the identity (the so-called \emph{double translations}) appears in an appendix to this paper, which is written by Barthelm\'{e}, Fenley, and Potrie. 

\begin{center}
\textit{\textbf{Standing assumptions.} In this paper, $M$ denotes a smooth, closed, oriented, connected $3$-manifold. All the structures under consideration (foliations, plane fields, contact structures) will be assumed to be (co)orientable, and even (co)oriented when necessary. Furthermore, all foliations will be of class $C^1$, unless specified otherwise.}
\end{center}

    \subsection{Liouville structures arising from foliations}

To study weak foliations of Anosov flows and invariants thereof, the natural class of foliations to consider is that of taut $C^1$-foliations. Hereafter, we will consider foliations of class $C^1$, in the sense that they are defined by a $C^1$ foliated atlas; see~\cite{CC00} for the precise definition. In particular, the leaves are $C^1$-immersed and the tangent distribution is $C^0$ and \emph{uniquely} integrable. We will even consider a special class of taut foliations that are called \emph{hypertaut} in~\cite{Mas24}:

\begin{defnintro}\label{def:hypertaut}
    A cooriented $C^1$-foliation is \textbf{hypertaut} if there exists an exact $2$-form positive which evaluates positively along its leaves.
\end{defnintro}

The condition above might seem somewhat contrived; for instance, it immediately implies that such a foliation has no closed leaves by Stokes' Theorem, and is hence automatically \emph{taut} by Goodman~\cite{G75}. However, by Sullivan's results on foliation cycles~\cite{S76} (see also Candel--Conlon~\cite{CC00}), it is equivalent to the nonexistence of (nontrivial) holonomy invariant transverse measures. In particular, this notion is \emph{topological}, meaning that it is invariant under $C^0$-conjugation.

Furthermore, by a result of Bonatti--Firmo~\cite{BF94}, this condition holds for a generic taut $C^{\infty}$-foliation on atoroidal, non-fibered $3$-manifolds. In view of Gabai's work~\cite{G83}, this implies that any hyperbolic non-fibered $3$-manifold with positive first Betti number has a hypertaut foliation.

If $\mathcal{F}$ is a hypertaut foliation, then every pair of contact structures with opposite signs $(\xi_-, \xi_+)$ approximating $\mathcal{F}$ is Liouville fillable: there exists a Liouville structure on $[-1,1] \times M$ which induces $\xi_\pm$ on $\{\pm1\} \times M$; see~\cite[Proposition 4.4]{Mas24}. We now describe this in somewhat more detail.

\begin{constr} \label{construction}
    Let $\mathcal{F}$ be a hypertaut $C^1$-foliation and let $\beta$ be a smooth $1$-form such that $d\beta_{\vert T \mathcal{F}} > 0$. In particular, $\mathcal{F}$ has no closed leaves and is not the standard foliation by spheres on $S^1 \times S^2$. By Eliashberg--Thurston~\cite{ET} (or rather its generalization to foliations with lower regularity by the first author~\cite{B16} and independently in~\cite{KR17}), there exists an approximating contact pair $(\xi_-, \xi_+)$ such that $d\beta_{\vert \xi_\pm} > 0$. If $\alpha$ is a continuous $1$-form such that $\ker \alpha = T \mathcal{F}$ as cooriented plane fields, then $\alpha \wedge d\beta > 0$. We consider a smoothing $\widetilde{\alpha}$ of $\alpha$ satisfying $\widetilde{\alpha} \wedge d\beta > 0$ and an $\epsilon > 0$ to be chosen small enough, and we define a $1$-form
    $$\lambda \coloneqq \beta + \epsilon t \widetilde{\alpha} $$
    on $V \coloneqq [-1,1]_t \times M$. Then it is easy to check that $\omega \coloneqq d\lambda$ is symplectic, and for $\epsilon$ small enough, $\omega$ is positive on $\xi_\pm$ along $\{ \pm 1\} \times M$. In other words, $(V,\omega)$ is a \emph{weak symplectic filling} of $(-M,\xi_-) \sqcup (M, \xi_+)$, which is moreover exact. A result of Eliashberg~\cite{E04} (see also Lemma~\ref{lem:filling} below) implies that $\lambda$ can be modified near $\partial V$ into a \textbf{Liouville filling} of $(-M,\xi_-) \sqcup (M, \xi_+)$ in a unique way up to homotopy (see Proposition~\ref{prop:deffill}). We will refer to the resulting Liouville structure as a \textbf{Liouville thickening} of $\mathcal{F}$.
\end{constr}

While this construction seemingly depends on the contact approximations $\xi_\pm$, it can be shown to be independent of the choices of $\widetilde{\alpha}$, $\beta$, and $\epsilon$ (provided that $\epsilon$ is small enough), up to Liouville homotopy. In order to remove the dependence on the choice of contact approximations, we will use that hypertaut $C^1$-foliations have ``enough attracting holonomy'':

\begin{propintro} \label{propintro:enoughholo}
    Let $\mathcal{F}$ be a cooriented $C^1$-foliation on $M$. Then $\mathcal{F}$ is hypertaut if and only if it has no closed leaves, and every minimal set (closed set saturated by leaves) has a curve with attracting holonomy.
\end{propintro}

This proposition is a consequence of a deep result of Deroin--Kleptsyn--Navas~\cite{DKN07}; see Section~\ref{sec:hypertaut} below for a proof. It is an extension to $C^1$-foliations of a classical result of Sacksteder, which holds for $C^2$-foliations with holonomy; the additional hypertautness hypothesis compensates for the lower regularity.

In general, the contact approximations to foliations depend on various choices, and uniqueness can fail for general $C^1$-foliations, especially in the presence of closed leaves. On the other hand, for hypertaut foliations of class at least $C^2$, they are unique by~\cite{V16}, and the Liouville structure described in Construction~\ref{construction} does not depend on the choices made in the construction, up to homotopy. We will extend this result to hypertaut $C^1$-foliations in Proposition~\ref{prop:welldef} below.

\begin{exintro}
    The main examples of hypertaut foliations we will consider are the following.
    \begin{itemize}
        \item The weak foliations of a (smooth) Anosov flow on $M$, when coorientable, are hypertaut.
        \item Any taut $C^1$-foliation on a rational homology sphere is hypertaut.
        \item Any taut $C^2$-foliation without closed leaves on a non-fibered $3$-manifold is hypertaut.
    \end{itemize}
    See Lemma~\ref{lem:hypertaut} and Proposition~\ref{prop:Anosovhypertaut} below for a proof.
\end{exintro}

For a hypertaut $C^1$-foliation $\mathcal{F}$, we denote by $\lambda_\mathcal{F}$ a/the Liouville thickening of $\mathcal{F}$ on $V = [-1,1] \times M$. Our main result is:

\begin{thmintro}[$C^0$-naturality] \label{thmintro:liouv}
    Let $\mathcal{F}_0$ and $\mathcal{F}_1$ be homeomorphic hypertaut $C^1$-foliations, via a homeomorphism preserving the coorientations of the foliations. Then $\lambda_{\mathcal{F}_0}$ and $\lambda_{\mathcal{F}_1}$ are deformation equivalent. More precisely, if $h : (M, \mathcal{F}_0) \rightarrow (M, \mathcal{F}_1)$ is such a homeomorphism, then $h$ is isotopic to a smooth diffeomorphism $\widetilde{h} : M \rightarrow M$ such that $\big(\mathrm{id} \times \widetilde{h}\big)_* \lambda_{\mathcal{F}_0}$ and $\lambda_{\mathcal{F}_1}$ are homotopic Liouville structures.
\end{thmintro}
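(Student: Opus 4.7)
The plan is to bootstrap from $C^0$-control on the conjugacy $h$ to $C^0$-control on the tangent plane fields, and then compare the two Liouville thickenings using a simple straight-line interpolation at the level of the defining $1$-forms.

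First, I would apply the smoothing theorem (Theorem~\ref{thmintrobeta:approx}) to $h$. This provides a smooth diffeomorphism $\widetilde{h}$, isotopic to $h$, such that the pushforward $\mathcal{F}'_0 \coloneqq \widetilde{h}_* \mathcal{F}_0$ is a hypertaut admissible $C^1$-foliation whose tangent distribution is $C^0$-close to $T\mathcal{F}_1$, with the closeness parameter at our disposal. Because $\widetilde{h}$ is a smooth diffeomorphism, Construction~\ref{construction} is natural under pushforward: pushing forward a choice of data $(\beta, \widetilde{\alpha}, \xi_\pm, \epsilon)$ used to define $\lambda_{\mathcal{F}_0}$ yields valid data defining a Liouville thickening of $\mathcal{F}'_0$, and the resulting form is exactly $(\mathrm{id} \times \widetilde{h})_* \lambda_{\mathcal{F}_0}$. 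The theorem therefore reduces to the following purely local claim: \emph{two hypertaut admissible $C^1$-foliations whose tangent plane fields are sufficiently $C^0$-close have homotopic Liouville thickenings.}

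For this reduced claim, I would first produce \emph{shared} ambient data for $\mathcal{F}'_0$ and $\mathcal{F}_1$. Because the plane fields are $C^0$-close, the $C^0$-extension of Eliashberg--Thurston (\cite{B16, KR17}) yields a common approximating contact pair $(\xi_-, \xi_+)$, and since positivity of $d\beta$ on a finite collection of plane fields is an open condition, one can fix a single smooth $1$-form $\beta$ such that $d\beta$ is simultaneously positive on $T\mathcal{F}'_0$, $T\mathcal{F}_1$, $\xi_+$ and $\xi_-$. The $C^1$-refinement of Vogel's uniqueness (Proposition~\ref{prop:welldef}) applied to each foliation ensures that this shared choice does not affect the resulting Liouville thickening up to homotopy. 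With $\beta$ and $\xi_\pm$ thus fixed, the two Liouville forms read $\lambda_i = \beta + \epsilon t \widetilde{\alpha}_i$, where $\widetilde{\alpha}_i$ is a smooth $1$-form close to a defining $1$-form for $\mathcal{F}'_0$ respectively $\mathcal{F}_1$, with $\widetilde{\alpha}_i \wedge d\beta > 0$. Choosing these smoothings so close that the straight-line family $\widetilde{\alpha}_s \coloneqq (1-s)\widetilde{\alpha}_0 + s\widetilde{\alpha}_1$ still satisfies $\widetilde{\alpha}_s \wedge d\beta > 0$ pointwise (again an open condition), and taking a single small enough $\epsilon$, the forms $\lambda_s = \beta + \epsilon t \widetilde{\alpha}_s$ constitute a continuous family of exact weak symplectic fillings of $(-M,\xi_-) \sqcup (M, \xi_+)$. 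Applying Eliashberg's filling result (Lemma~\ref{lem:filling}) parametrically, together with the relative uniqueness in Proposition~\ref{prop:deffill}, converts this into the desired homotopy of Liouville thickenings.

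The principal obstacle, by some margin, is the smoothing step: given only that $h$ is a $C^0$-homeomorphism conjugating two $C^1$-foliations, one must build a smooth $\widetilde{h}$ isotopic to $h$ with genuine $C^0$-control on $D\widetilde{h}(T\mathcal{F}_0)$. Naive convolution-based smoothings of $h$ erase all derivative information, so one is forced to smooth leaf-by-leaf in adapted foliation charts and to control how the transverse holonomy maps are approximated, something that requires the admissibility hypothesis (Sacksteder curves in each minimal set) to rigidify the holonomy enough. A secondary but nontrivial point is pushing Vogel's uniqueness from the $C^2$ to the $C^1$ regime, which is again where admissibility enters crucially; once both are in hand, the interpolation argument above is essentially formal.
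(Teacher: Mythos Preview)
Your argument is correct and matches the paper's proof (Theorem~\ref{thm:uniqhyp}): smooth the conjugacy via Theorem~\ref{thmintrobeta:approx}, reduce to comparing Liouville thickenings of $C^0$-close admissible hypertaut foliations, and conclude by choosing shared data $(\beta, \xi_\pm)$ and interpolating the $\widetilde{\alpha}$'s through pre-Liouville structures before applying Proposition~\ref{prop:deffill}; the paper packages these last steps inside Proposition~\ref{prop:welldef} and the $C^0$-homotopy case of Theorem~\ref{thm:uniqhyp}. One correction to your closing commentary: the smoothing step (Theorem~\ref{thmintrobeta:approx}) does \emph{not} use admissibility---it holds for arbitrary cooriented $C^1$-foliations and is proved by induction over a fine triangulation with no reference to holonomy---so admissibility enters only through Vogel's uniqueness, exactly as in your second point.
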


In particular, the \emph{completions} of the Liouville domains associated with these foliations are \emph{exact symplectomorphic}; see~\cite{CE12}. As a consequence, all Floer-type invariants of a hypertaut foliation defined through its Liouville thickening are invariant under topological equivalence. We remark that a special case of this result was already obtained for weak foliations of Reeb Anosov flows in~\cite{BM24}.

\bigskip

The proof of our main theorem has three key ingredients which are completely independent of each other.
\begin{itemize}
    \item The first ingredient is a careful smoothing/approximation result of the homeomorphism that proceeds via induction over a fine triangulation, jiggled into general position.
    \item The second ingredient is a refinement of the main result of~\cite{V16} on the uniqueness of contact approximations of hypertaut foliations, with $C^1$ regularity, and with some additional transverse control on the resulting contact homotopies.
    \item The last ingredient is a generalization of a classical argument of Eliashberg to deform the symplectic structure near the boundary of $V$ into a Liouville structure, together with a parametric and relative version thereof.
\end{itemize}  

Our strategy also provides a variation on the contact approximations of hypertaut foliations. Recall that a \textbf{positive contact pair} is a pair of cooriented contact structures $(\xi_-, \xi_+)$ admitting a common positively transverse vector field; see~\cite{CF11}. The Eliashberg--Thurston theorem readily provides positive contact pairs approximating foliations, and the second author showed in~\cite{Mas24} that one can construct $C^0$-foliations (but not necessarily $C^1$!) from (tight) positive contact pairs.

\begin{thmintro} \label{thmintro:poscont}
    Let $\mathcal{F}_0$ and $\mathcal{F}_1$ be two homeomorphic hypertaut $C^1$-foliations, and $(\xi^0_-, \xi^0_+)$ and $(\xi^1_-, \xi^1_+)$ be positive contact pairs sufficiently $C^0$-close to $\mathcal{F}_0$ and $\mathcal{F}_1$, respectively. Then $\big(\xi^0_-, \xi^0_+\big)$ and $\big(\xi^1_-, \xi^1_+\big)$ are deformation equivalent through positive contact pairs. More precisely, if $h : (M, \mathcal{F}_0) \rightarrow (M, \mathcal{F}_1)$ is such a homeomorphism, then $h$ is isotopic to a smooth diffeomorphism $\widetilde{h} : M \rightarrow M$ such that $\big(\widetilde{h}_*(\xi^0_-), \widetilde{h}_* (\xi^0_+)\big)$ and $\big(\xi^1_-, \xi^1_+\big)$ are homotopic through positive contact pairs.
\end{thmintro}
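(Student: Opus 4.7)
The plan is to combine the smoothing scheme from \Cref{thmintrobeta:approx} (the first ingredient) with the refined version of Vogel's uniqueness theorem (the second ingredient); the Liouville-deformation ingredient is not needed here, since no Liouville structure appears in the statement. First I would apply \Cref{thmintrobeta:approx} to the homeomorphism $h$ to obtain a smooth diffeomorphism $\widetilde{h}$ isotopic to $h$ such that $\widetilde{h}_* T\mathcal{F}_0$ is arbitrarily $C^0$-close to $T\mathcal{F}_1$ as cooriented plane fields. Pushing the positive contact pair $\bigl(\xi^0_-, \xi^0_+\bigr)$ forward by $\widetilde{h}$ yields a positive contact pair $C^0$-close to $\widetilde{h}_* T\mathcal{F}_0$, hence to $T\mathcal{F}_1$. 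Thus both $\bigl(\widetilde{h}_* \xi^0_\pm\bigr)$ and $\bigl(\xi^1_\pm\bigr)$ are positive contact pairs arbitrarily $C^0$-close to $\mathcal{F}_1$.

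Next, I would invoke the refined Vogel uniqueness for $C^1$ admissible foliations, applying it once to the positive contact structures $\widetilde{h}_* \xi^0_+$ and $\xi^1_+$, and once to the negative ones $\widetilde{h}_* \xi^0_-$ and $\xi^1_-$. This produces homotopies $\xi_\pm^t$ from $\widetilde{h}_* \xi^0_\pm$ to $\xi^1_\pm$, and the crucial transverse control refinement built into this second ingredient ensures that each $\xi_\pm^t$ remains $C^0$-close to $T\mathcal{F}_1$ throughout the homotopy, not merely at the endpoints.

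These two homotopies would then automatically assemble into a path of positive contact pairs. Since $\mathcal{F}_1$ is cooriented, one can fix a smooth vector field $X$ positively transverse to $T\mathcal{F}_1$; by the $C^0$-control from the previous step, $X$ remains positively transverse to both $\xi_-^t$ and $\xi_+^t$ for every $t$, so $\bigl(\xi_-^t, \xi_+^t\bigr)$ is a positive contact pair for all $t$. The main obstacle is the second step: extending Vogel's $C^2$-uniqueness statement to $C^1$-foliations while simultaneously keeping the interpolating contact structures $C^0$-close to $T\mathcal{F}_1$. This transverse control, which is absent from Vogel's original argument, requires revisiting his proof---which proceeds via linearized holonomy along Sacksteder curves together with a propagation argument across minimal sets---to ensure that the approximating contact structures are never driven away from the foliation during the interpolation. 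Once this refinement is in hand, the rest of the argument is essentially formal.
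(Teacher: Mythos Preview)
Your overall strategy is correct and matches the paper's approach: the paper explicitly says the proof of this theorem follows \emph{mutatis mutandis} from that of Theorem~\ref{thm:uniqhyp}, using only the first two ingredients (Theorem~\ref{thmintrobeta:approx} and Theorem~\ref{thmintrobeta:uniq}) and dropping the Liouville deformation.

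There is, however, one inaccuracy in how you describe the transverse control. You write that the refined Vogel uniqueness ``ensures that each $\xi_\pm^t$ remains $C^0$-close to $T\mathcal{F}_1$ throughout the homotopy.'' This is \emph{not} what Theorem~\ref{thmintrobeta:uniq} provides, and in fact the paper poses exactly that stronger statement as an open question (Question~6). What the refinement actually guarantees is weaker but sufficient: having fixed a smooth transverse line field $\mathcal{I}$ in advance, the contact homotopy stays inside $\mathcal{P}_\mathcal{I}$, i.e., remains transverse to $\mathcal{I}$ throughout. So the correct order of operations is: first choose the line field $\mathcal{I}$ (your $X$) transverse to $\mathcal{F}_1$, then take the associated Vogel neighborhood $\mathcal{V}_\mathcal{I}$, then choose $\widetilde{h}$ close enough that $\widetilde{h}_* T\mathcal{F}_0$ and the pushed-forward contact structures land in $\mathcal{V}_\mathcal{I}$ (this also dictates what ``sufficiently $C^0$-close'' means in the hypothesis). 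The homotopies produced by Theorem~\ref{thmintrobeta:uniq} then stay transverse to $\mathcal{I}$ by construction, not by virtue of being $C^0$-close to $T\mathcal{F}_1$. Your final paragraph reaches the right conclusion, but the justification should invoke transversality to $\mathcal{I}$ directly rather than deducing it from a $C^0$-closeness that is not available.
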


We now discuss the main steps of our strategy in more detail.

\subsubsection{Smoothing foliated homeomorphisms}

We fix some auxiliary Riemannian metric on $M$, which induces a natural metric on the spaces of (continuous) plane fields and line fields on $M$.

The first ingredient is a careful smoothing result for the topological conjugation $h$. Namely, we approximate $h$ by a smooth diffeomorphism while keeping some control on the plane fields tangent to the foliations:

\begin{thmintrobeta} \label{thmintrobeta:approx}
    Let $\mathcal{F}_0$ and $\mathcal{F}_1$ be two coorientable $C^1$-foliations on $M$, and $h : M \rightarrow M$ be a homeomorphism sending the leaves of $\mathcal{F}_0$ to leaves of $\mathcal{F}_1$. For every $\epsilon > 0$, there exists a smooth diffeomorphism $\widetilde{h} : M \rightarrow M$ such that
    $$d_{C^0}(h, \widetilde{h}) < \epsilon, \qquad d_{C^0}\big(T \mathcal{F}_1, T\widetilde{\mathcal{F}}_1 \big) < \epsilon,$$
    where $\widetilde{\mathcal{F}}_1 \coloneqq \widetilde{h}_*(\mathcal{F}_0)$. Moreover, $h$ and $\widetilde{h}$ are isotopic through homeomorphisms which are $\epsilon$-close to $h$.\footnote{This would follow from the fact that the homeomorphism group of $M$ is locally path-connected, but it easily holds by construction.}
\end{thmintrobeta}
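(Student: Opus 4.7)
The plan is to construct $\widetilde{h}$ via an inductive smoothing over the skeleta of a sufficiently fine triangulation $\mathcal{T}$ of $M$, jiggled into general position with respect to both $\mathcal{F}_0$ and $\mathcal{F}_1$. The approach relies on the following local model: since $\mathcal{F}_0$ and $\mathcal{F}_1$ are $C^1$, around each point there exist $C^1$-foliation charts $(u,v,t)$ in which the leaves are horizontal slices $\{t = \mathrm{const}\}$. If the mesh of $\mathcal{T}$ is small enough, every simplex $\sigma$ lies in such a chart for $\mathcal{F}_0$, and $h(\sigma)$ lies in such a chart for $\mathcal{F}_1$. In combined source/target $C^1$-foliation coordinates, the leaf-preserving property forces $h$ to take the product form
\[
h(u,v,t) = \bigl(\phi(u,v,t),\,\psi(u,v,t),\,g(t)\bigr),
\]
where $g$ is a continuous transverse homeomorphism. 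The crucial observation is that \emph{any} map of this form sends the horizontal plane field to itself, so a smoothing of $h$ that preserves the product structure preserves $T\mathcal{F}_1$ exactly in foliation coordinates; the only errors in $d_{C^0}(T\mathcal{F}_1, \widetilde{h}_*T\mathcal{F}_0)$ come from the $C^1$-coordinate change between foliation charts and the ambient smooth atlas, controlled by the chart diameter.

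The inductive step proceeds as follows. First, set $\widetilde{h}$ to coincide with $h$ near the $0$-skeleton (up to a local $C^0$-small adjustment making it smooth near vertices). Given a smooth map already defined on a neighborhood of $\mathcal{T}^{(k-1)}$ that is $\epsilon_k$-close to $h$ and satisfies the tangent-plane estimate with constant $\epsilon_k$, I would extend to each open $k$-simplex $\sigma$ by working in the associated $C^1$-foliation charts: smooth $g$ via a one-dimensional convolution along the transverse direction, smooth $\phi$ and $\psi$ via a two-dimensional convolution in the leaf direction, and patch with the existing boundary data in a thin collar of $\partial\sigma$ using a smooth cutoff. The general-position assumption ensures that leaves intersect each simplex and face transversely in a controlled way, so the leaf traces of $h(\sigma)$ inside the target chart of $\mathcal{F}_1$ form nice graphs over the horizontal direction, allowing the collar interpolation to be uniformly estimated.

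The hard part will be the collar interpolation step: the boundary data on $\partial\sigma$ already carries a small transverse error accumulated from previous stages, and matching it to the product-form map on the interior risks amplifying this error. This forces a careful choice of a sequence $\epsilon_0 < \epsilon_1 < \epsilon_2 < \epsilon_3 \le \epsilon$ together with a mesh $\delta$ of $\mathcal{T}$ small enough that the moduli of continuity of $T\mathcal{F}_0$ and $T\mathcal{F}_1$ over any simplex are bounded by, say, $\epsilon/4$. A secondary point is ensuring $\widetilde{h}$ is a global diffeomorphism: this will follow from $C^0$-closeness to the homeomorphism $h$ together with a standard injectivity-on-small-scales argument, provided the smoothing scale is chosen sufficiently small. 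Finally, the isotopy between $h$ and $\widetilde{h}$ can be taken as a straight-line interpolation in local charts, which automatically remains $\epsilon$-close to $h$; the fact that every intermediate map is a homeomorphism again follows from $C^0$-closeness.
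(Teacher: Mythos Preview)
Your broad strategy matches the paper's: induction over the skeleta of a fine generic triangulation, working in $C^1$-foliation charts where $h$ has the product form $(u,v,t)\mapsto(\phi,\psi,g(t))$. But several of your technical steps are genuine gaps.

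First, your diagnosis of where the tangent-plane error comes from is incorrect. You say the only error arises from the $C^1$ change between foliation charts and the smooth atlas, controlled by chart diameter. In fact, through the $0$- and $1$-skeleta one can keep the smoothing \emph{exactly} leaf-preserving (product form), so there is no error at all there. The error enters at the $2$-skeleton because the transverse smoothing $\widetilde g$ constructed over one boundary edge of a $2$-simplex need not agree with the one over the opposite edge: both approximate the same continuous $g$, but they differ by some amount $|\widetilde g^{\,0}-\widetilde g^{\,1}|$. Interpolating between them with a cutoff $\tau$ tilts the image of the horizontal plane by roughly $|\tau'|\cdot|\widetilde g^{\,0}-\widetilde g^{\,1}|$. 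The crucial decoupling is that $|\tau'|$ is \emph{fixed} by the geometry of the cover, while $|\widetilde g^{\,0}-\widetilde g^{\,1}|$ can be made arbitrarily small by tightening the $1$-skeleton smoothing. This is why the tolerances are nested as $\epsilon_0\ll\epsilon_1\ll\epsilon_2\ll\epsilon$, not because of moduli of continuity of $T\mathcal{F}_i$. Your collar-interpolation paragraph gestures at this but does not name the mechanism, and attributing the control to chart diameter would not work.

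Second, two of your closing arguments are false as stated. That $\widetilde h$ is a global diffeomorphism does \emph{not} follow from $C^0$-closeness to a homeomorphism plus an injectivity-on-small-scales argument; a smooth map $C^0$-close to a homeomorphism can easily fail to be injective. In the paper, injectivity is built in structurally: each local piece is a product of a family of $C^1$ embeddings of disks with a strictly monotone transverse map, hence an embedding, and the patching is arranged so adjacent pieces literally agree on overlaps. Similarly, a straight-line interpolation between $h$ and $\widetilde h$ in local charts need not consist of homeomorphisms; $C^0$-closeness to $h$ does not imply injectivity of intermediate maps. The isotopy in the paper comes for free from the construction (each step is a controlled modification supported on shrinking sets), not from linear interpolation.
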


We remark that as a consequence of the proof, one could also obtain a `local' uniqueness statement: any two such smoothings differ by some smooth isotopy which induces a path of foliations with tangent plane fields close to $T\mathcal{F}_1$.

Our method can be adapted to \emph{pairs} of transverse foliations. This will be relevant for approximating orbit equivalences between Anosov flows via suitable smooth diffeomorphisms.

\begin{defnintro}
    A bifoliation $(\mathcal{F}, \mathcal{G})$ on $M$ is a pair of transverse $C^1$-foliations. It is orientable if both $\mathcal{F}$ and $\mathcal{G}$ are orientable.
\end{defnintro}

A smooth Anosov flow on $M$ induces a $C^1$-bifoliation $(\mathcal{F}^{ws}, \mathcal{F}^{wu})$ obtained from the weak-stable and weak-unstable foliations of the flow. This bifoliation is not necessarily orientable, but we will assume orientability throughout; this can always be achieved after passing to a suitable finite cover.

Let $(\mathcal{F}_0, \mathcal{G}_0)$ and $(\mathcal{F}_1, \mathcal{G}_1)$ be two bifoliations on $M$. We now consider \emph{bifoliated homeomorphisms}, where a homeomorphism $h : M \rightarrow M$ is bifoliated if it sends the leaves of $\mathcal{F}_0$ to leaves of $\mathcal{F}_1$, and the leaves of $\mathcal{G}_0$ to leaves of $\mathcal{G}_1$.

\begin{thmintrobeta} \label{thmintrobeta:bifolapprox}
    Let $(\mathcal{F}_0, \mathcal{G}_0)$ and $(\mathcal{F}_1, \mathcal{G}_1)$ be orientable $C^1$-bifoliations on $M$, and let $h : M \rightarrow M$ be a bifoliated homeomorphism. For every $\epsilon > 0$, there exists a smooth diffeomorphism $\widetilde{h} : M \rightarrow M$ satisfying
        $$d_{C^0}(h, \widetilde{h}) < \epsilon, \qquad 
        d_{C^0}\big(T \mathcal{F}_1, T\widetilde{\mathcal{F}}_1 \big) < \epsilon, \qquad 
        d_{C^0}\big(T \mathcal{G}_1, T\widetilde{\mathcal{G}}_1 \big) < \epsilon,$$
    where $\widetilde{\mathcal{F}}_1 = \widetilde{h}_*(\mathcal{F}_0)$ and $\widetilde{\mathcal{G}}_1 = \widetilde{h}_*(\mathcal{G}_0)$. Moreover, $h$ and $\widetilde{h}$ are isotopic through homeomorphisms which are $\epsilon$-close to $h$.
\end{thmintrobeta}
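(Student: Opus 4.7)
The plan is to adapt the inductive smoothing scheme used for Theorem \ref{thmintrobeta:approx} to the bifoliated setting. The crucial new ingredient is a \emph{bifoliation chart}: since $\mathcal{F}_1$ and $\mathcal{G}_1$ are $C^1$ and transverse, in a neighborhood of any point of $M$ there is a $C^1$ chart in which $T\mathcal{F}_1$ and $T\mathcal{G}_1$ become arbitrarily close to two fixed transverse plane fields. In such a chart, controlling $T\widetilde{h}_*(\mathcal{F}_0)$ and $T\widetilde{h}_*(\mathcal{G}_0)$ simultaneously against their targets becomes two nearly-independent linear-algebraic conditions on $d\widetilde{h}$, so the single-foliation scheme can essentially be applied twice at once.

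First, I would fix a triangulation $\mathcal{T}$ of $M$ and jiggle it into general position relative to both $\mathcal{F}_1$ and $\mathcal{G}_1$, and hence also relative to the intersection line field $T\mathcal{F}_1 \cap T\mathcal{G}_1$. By further refining $\mathcal{T}$, I would arrange that every closed star of a vertex lies inside a bifoliation chart on which both plane fields are nearly constant. Then I would proceed by induction over the skeleta of $\mathcal{T}$, producing at stage $k$ a homeomorphism $h_k$, isotopic to $h$ through homeomorphisms $\epsilon$-close to $h$, that is already smooth on a neighborhood of the $k$-skeleton and already satisfies the two $C^0$-bounds on the pushed-forward tangent plane fields.

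The inductive step from $h_k$ to $h_{k+1}$ is carried out one $(k+1)$-cell $\sigma$ at a time. Over $\sigma$ I would work in a bifoliation chart containing a slight thickening of $\sigma$, convolve $h_k$ with a narrow mollifier, and interpolate back with the old $h_k$ via a cutoff function supported in that thickening. In the chart, both $T\mathcal{F}_1$ and $T\mathcal{G}_1$ are nearly constant, and the defining conditions $h_*(T\mathcal{F}_0) = T\mathcal{F}_1$ and $h_*(T\mathcal{G}_0) = T\mathcal{G}_1$, combined with the $C^1$-regularity of $\mathcal{F}_0$ and $\mathcal{G}_0$, guarantee that first-order estimates on the mollified map translate directly into plane-field closeness bounds for both foliations. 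The same scale hierarchy as in the proof of Theorem \ref{thmintrobeta:approx}—mesh much smaller than chart scale, mollifier scale much smaller than mesh—controls the cumulative error across the inductive steps.

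The principal obstacle is exactly the simultaneity of the control: the essential geometric difficulty of preserving closeness of $T\widetilde{h}_*(\mathcal{F}_0)$ to $T\mathcal{F}_1$ is already present in the single-foliation case, and it is not a priori obvious that a convolution-based smoothing, which may implicitly favor one foliation over the other in the gluing, does not spoil the second estimate. This is resolved by the bifoliation chart reduction: in such a chart the two conditions decouple up to controlled error, so the single-foliation smoothing of Theorem \ref{thmintrobeta:approx} applied inside each chart yields both bounds at once. No new dynamical input beyond transversality of $\mathcal{F}_1$ and $\mathcal{G}_1$ should be required, which is precisely why the orientability assumption on the bifoliation enters only through the triangulation and the globalization, not through the local smoothing.
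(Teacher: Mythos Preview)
Your proposal rests on a smoothing mechanism---mollification followed by cutoff interpolation---that does not deliver the required control. The core difficulty is that $h$ is merely a homeomorphism: it carries no first-order data, so convolving $h$ with a mollifier produces a smooth map with no a priori bound on the \emph{direction} of its differential. Your assertion that ``first-order estimates on the mollified map translate directly into plane-field closeness bounds'' is exactly the missing step, not a consequence of anything you have set up. Moreover, a convex interpolation $\phi\,\widetilde{h} + (1-\phi)\,h_k$ between two injective maps need be neither injective nor an immersion, so this cannot be expected to yield a diffeomorphism at all. Incidentally, the paper's proof of Theorem~\ref{thmintrobeta:approx} is not mollification-based either, so the ``same scale hierarchy'' you invoke does not exist in the form you describe.

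What the paper actually does is exploit the local normal form: in coordinates adapted to the bifoliations on both source and target, a bifoliated homeomorphism reads $h(x,y,z)=(a(x,y,z),b(y),c(z))$ with $a(\,\cdot\,,y,z)$, $b$, and $c$ strictly monotone. One smooths each of these three scalar components separately while preserving monotonicity; the resulting local map is then automatically a $C^1$ embedding sending both foliations \emph{exactly} to their targets. The genuine difficulty, and the content of the proof, is the patching: adjacent simplices produce incompatible smoothings of $b$ and $c$, and these are reconciled not by a cutoff on $h$ but by a graphical interpolation that deforms the target foliations $\mathcal{F}_1$, $\mathcal{G}_1$ into nearby $C^1$ foliations $\widetilde{\mathcal{F}}_1$, $\widetilde{\mathcal{G}}_1$. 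The tilt introduced is bounded by the $C^0$-discrepancy between the competing transverse smoothings, which can be made arbitrarily small at the previous inductive stage while the geometry of the cover stays fixed. Your cutoff step bypasses this mechanism entirely, and without it there is no reason the patched map should be either a diffeomorphism or plane-field-close to the target.
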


A key property of the approximation above is that the line fields $T\mathcal{F}_1 \cap T\mathcal{G}_1$ and $T\mathcal{\widetilde{F}}_1 \cap T\mathcal{\widetilde{G}}_1 = \widetilde{h}_*\big(T\mathcal{F}_0 \cap T\mathcal{G}_0\big)$ are also $\epsilon$-close. For dynamical applications, one has the following consequence which will be used in the construction of new partially hyperbolic diffeomorphisms.

\begin{corintro}[Anosov bifoliations] \label{corintro:smoothing}
    Let $\Phi_0$ and $\Phi_1$ be two smooth Anosov flows on $M$ with orientable weak invariant bundles $E_i^{{wu}/{ws}}$, $i \in \{0,1\}$. If $h : M \rightarrow M$ is an orbit equivalence between $\Phi_0$ and $\Phi_1$, then for every $\epsilon > 0$, there exists a smooth diffeomorphism $\widetilde{h} : M \rightarrow M$ such that $d_{C^0}(h, \widetilde{h}) < \epsilon$ and
    \begin{itemize}
        \item The plane fields $\widetilde{h}_*\big(E_0^{ws}\big)$ and $E_1^{ws}$ are $\epsilon$-close,
        \item The plane fields $\widetilde{h}_*\big(E_0^{wu}\big)$ and $E_1^{wu}$ are $\epsilon$-close.
    \end{itemize}
    As a consequence, the line fields tangent to $\Phi_1$ and $\widetilde{h}_*(\Phi_0)$ are $\epsilon$-close.
\end{corintro}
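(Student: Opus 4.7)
The plan is to deduce this corollary directly from Theorem~\ref{thmintrobeta:bifolapprox} applied to the weak bifoliations of the two Anosov flows. Setting $\mathcal{F}_i \coloneqq \mathcal{F}^{ws}_i$ and $\mathcal{G}_i \coloneqq \mathcal{F}^{wu}_i$ for $i \in \{0,1\}$, the whole argument reduces to verifying the hypotheses of that theorem and then unpacking its conclusion.

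The first step is to check the hypotheses. For a smooth Anosov flow on a closed $3$-manifold, the weak-stable and weak-unstable foliations are of class $C^1$ by the classical stable manifold theorem (they are normally hyperbolic of codimension one). They are transverse since $T\mathcal{F}^{ws}_i \cap T\mathcal{F}^{wu}_i = \R \cdot X_i$ is the $1$-dimensional flow line, and orientability is exactly the standing hypothesis on the weak invariant bundles. Moreover, any orbit equivalence $h$ is automatically bifoliated: it sends $\Phi_0$-orbits to $\Phi_1$-orbits, and since each weak-stable (resp.\ weak-unstable) leaf is the union of flow orbits characterized dynamically by a topological asymptotic condition that is preserved under orbit equivalence, $h$ sends weak-stable leaves of $\Phi_0$ to those of $\Phi_1$ (and similarly for weak-unstable leaves).

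Applying Theorem~\ref{thmintrobeta:bifolapprox} to $(\mathcal{F}_0, \mathcal{G}_0)$, $(\mathcal{F}_1, \mathcal{G}_1)$, and $h$ with the given $\epsilon$ then produces a smooth diffeomorphism $\widetilde{h}$ satisfying $d_{C^0}(h, \widetilde{h}) < \epsilon$, such that $\widetilde{h}_*(E_0^{ws}) = T\widetilde{\mathcal{F}}_1$ is $\epsilon$-close to $T\mathcal{F}^{ws}_1 = E_1^{ws}$, and likewise for the weak-unstable plane fields; these are precisely the first two bullet points. The final assertion on line fields is exactly the ``key property'' highlighted in the paragraph preceding the corollary: the intersection line fields $T\mathcal{F}_1 \cap T\mathcal{G}_1$ and $\widetilde{h}_*(T\mathcal{F}_0 \cap T\mathcal{G}_0)$ are also $\epsilon$-close. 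Since the former is the line field tangent to $\Phi_1$ and the latter is the line field tangent to $\widetilde{h}_*(\Phi_0)$, this gives the conclusion. If one prefers to derive this independently, a short direct argument suffices: the uniform transversality of $E_1^{ws}$ and $E_1^{wu}$ (which is automatic from Anosovness) makes the map $(P,Q) \mapsto P \cap Q$ Lipschitz on pairs of planes with transversality angle bounded below, so $\epsilon$-closeness of the plane fields yields $O(\epsilon)$-closeness of the line fields, which can be absorbed by rescaling $\epsilon$.

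The entire substantive work is therefore contained in Theorem~\ref{thmintrobeta:bifolapprox}, and no genuine obstacle arises in the present corollary beyond packaging. The only mild subtlety is making the ``orbit equivalence is bifoliated'' claim fully rigorous, which is standard in the Anosov flow literature: weak leaves are preserved under any orbit equivalence because they are saturated by orbits and characterized topologically via the coarse asymptotic behavior of orbits under the flow.
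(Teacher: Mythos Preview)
Your proposal is correct and matches the paper's approach exactly: the corollary is stated immediately after Theorem~\ref{thmintrobeta:bifolapprox} as a direct consequence, with no separate proof given, and the line-field statement is precisely the ``key property'' highlighted in the paragraph preceding the corollary. The only minor addition worth noting is that the paper later (Proposition~\ref{prop:Anosovadmi}) cites~\cite{HPS77} specifically for the $C^1$ regularity of the weak foliations, which you attribute to the stable manifold theorem in general terms.
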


In particular, if $\Phi$ is a single Anosov flow with orientable weak foliations, and $\beta$ is a self orbit equivalence of $\Phi$, then a smoothing $\widetilde{\beta}$ of $\beta$ obtained in this way, for $\epsilon$ small enough, satisfies that \emph{$\Phi$ is $\widetilde{\beta}$-transverse to itself} in the terminology of~\cite{BFP}. In Appendix~\ref{appendixB} written by Thomas Barthelm\'{e}, S\'{e}rgio Fenley, and Rafael Potrie, this result will be used to solve an important problem in the classification of \emph{partially hyperbolic diffeomorphisms} on $3$-manifolds.

\subsubsection{Uniqueness of contact approximations}

In general the contact structure approximating a foliation is not unique, as one sees by approximating a product foliation of the $3$-torus by a contact structure with (arbitrary) Giroux torsion. However, excluding this and a few other exceptional cases, Vogel was able to obtain the following uniqueness statement.

\begin{thm*}[Vogel~\cite{V16}]
    Let $\mathcal{F}$ be a coorientable $C^2$-foliation on a closed oriented $3$-manifold satisfying the following conditions: 
    \begin{enumerate}
    \item $\mathcal{F}$ has no closed leaf of genus $g \le 1$,
    \item $\mathcal{F}$ is not a foliation by planes, 
    \item $\mathcal{F}$ is not a foliation by cylinders. 
    \end{enumerate}
    Then there is a $C^0$-neighborhood $\mathcal{V}$ of $\mathcal{F}$ in the space of plane fields and a contact structure $\xi$ in $\mathcal{V}$ such that every positive contact structure in $\mathcal{V}$ is isotopic to $\xi$.
\end{thm*}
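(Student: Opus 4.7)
The plan is to show that any two positive contact structures $\xi_0, \xi_1 \in \mathcal{V}$ in a sufficiently small $C^0$-neighborhood of $\mathcal{F}$ are isotopic, by constructing a homotopy through plane fields and carefully controlling the parameters where the contact condition degenerates. First, I would choose a continuous path $\{\xi_t\}_{t \in [0,1]}$ of plane fields in $\mathcal{V}$ connecting $\xi_0$ to $\xi_1$, most naturally one that passes through $T\mathcal{F}$ itself via tilting by shrinking multiples of a vector field transverse to $\mathcal{F}$. For $\mathcal{V}$ chosen small enough, the entire path remains in $\mathcal{V}$.

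After perturbing generically relative to endpoints, the \emph{degenerate set} $T \subset (0,1)$ of parameters for which $\xi_t$ fails to be contact has controlled structure: generically $T$ is a finite union of intervals and isolated points, and at each $t \in T$ the locus where the contact condition fails is a smooth surface $S_t \subset M$. Between consecutive components of $T$ all plane fields are contact, so Gray stability yields an ambient isotopy. The remaining task is to show that for each bad parameter $t_0 \in T$, the contact structures $\xi_{t_0 - \epsilon}$ and $\xi_{t_0 + \epsilon}$ are isotopic within $\mathcal{V}$.

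To cross each $t_0$, I would apply convex surface theory to the confoliation $\xi_{t_0}$. The idea is to find a suitable family of convex surfaces---transverse tori, or compact pieces of leaves with Legendrian boundary---whose dividing sets are controlled by the foliation structure, to normalize $\xi_{t_0 \pm \epsilon}$ with respect to these surfaces, and then to invoke the classification of tight contact structures on thickened surfaces and solid tori to identify them up to isotopy. Away from minimal sets of $\mathcal{F}$ one can build such convex surfaces from the product structure of foliated charts, while near minimal sets one exploits the existence of a transverse compact surface (guaranteed after taking a suitable cover, using the noncompactness of leaves and the genus hypothesis) to reduce to a model computation in a neighborhood of a higher-genus surface.

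The main obstacle is precisely this crossing step, and the three hypotheses are essential there. A closed sphere leaf would allow an overtwisted disk to be inserted at $t_0$, changing the isotopy class; a closed torus leaf would allow Giroux torsion to be added; a foliation by planes admits approximating contact structures distinguished by e.g.\ their Hopf invariant; and a foliation by cylinders on $T^3$ admits approximations with arbitrary Giroux torsion. All of these yield genuinely non-isotopic contact approximations in arbitrarily small $C^0$-neighborhoods, so the exclusions are necessary. Their role in the proof is to ensure that every closed surface tangent to $\xi_{t_0}$ has enough genus to rule out overtwisted disks and Giroux torsion layers, and that the transverse geometry of $\mathcal{F}$ supplies enough convex surfaces for the classification machinery to apply uniformly in $t$ across each bad parameter.
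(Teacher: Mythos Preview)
This theorem is quoted from~\cite{V16} and the paper does not reprove it directly; however, the paper does reconstruct Vogel's argument in detail when proving the refinement (Theorem~\ref{thm:uniq}), so one can compare your proposal against that.

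Your outline has a genuine gap at the very first step. You propose a path of plane fields passing through $T\mathcal{F}$ and then claim that after generic perturbation the degenerate set $T\subset(0,1)$ is a finite union of intervals with the non-contact locus at each bad parameter being a smooth surface. But at $t$ where $\xi_t=T\mathcal{F}$, the $1$-form satisfies $\alpha\wedge d\alpha\equiv 0$ on all of $M$: the contact condition fails globally, not along a hypersurface. A small perturbation of the path does not repair this, because being contact is a $C^1$-open condition on $1$-forms, and generic plane fields $C^0$-close to an integrable one are simply not contact anywhere. There is no transversality mechanism that produces the picture you describe; the ``degenerate set'' will typically be all of $[0,1]$ for any path that stays in a small $C^0$-neighborhood of $T\mathcal{F}$, unless the path is built with a great deal of care. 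Consequently the convex-surface crossing argument never gets off the ground.

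Vogel's actual strategy is of a quite different nature and uses the hypotheses through the \emph{holonomy} of $\mathcal{F}$ rather than through convex surfaces transverse to it. The exclusions (1)--(3) force $\mathcal{F}$ to have nontrivial holonomy, and then Sacksteder's theorem (plus Ghys's argument in the minimal case) produces in every minimal set an embedded leafwise curve with \emph{linear} attracting holonomy. Standard neighborhoods of these Sacksteder curves serve as ``sinks'': any contact structure $C^0$-close to $T\mathcal{F}$ can be put into a normal form there. One then fixes an adapted polyhedral decomposition of $M$ together with a full system of ribbons connecting the faces of the polyhedra into these standard neighborhoods. The homotopy from an arbitrary $\xi\in\mathcal{V}$ to the basepoint $\widetilde\xi$ is built skeleton-by-skeleton: first near supporting vertices, then over the $2$-skeleton via an auxiliary (non-contact) family of plane fields, then one flows for a long time along vector fields supported on the ribbons to make the holonomy on every polyhedron negative, which forces the family to become contact on each polyhedron; finally one fills the polyhedra and corrects inside the Sacksteder neighborhoods. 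The ribbons and the attracting holonomy are what allow one to \emph{manufacture} contactness along the homotopy, rather than hoping it survives a generic perturbation. Your sketch contains none of this machinery, and nothing in it plays the role that Sacksteder curves play in the real proof.
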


Unfortunately, the theorem \emph{does not} guarantee that the path of contact structures remains within $\mathcal{V}$, see Figure~\ref{fig:Vogelnhbd}.

\begin{figure}[!ht]
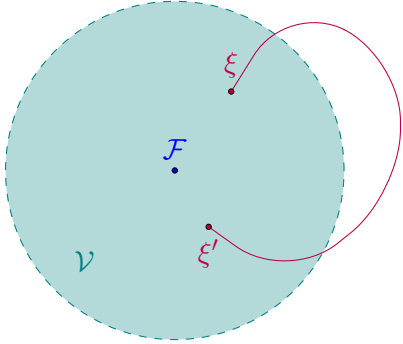

    \centering
    \includestandalone{tikz/path}
    \caption{Summary of Vogel's theorem.}
    \label{fig:Vogelnhbd}
\end{figure}

Note also that all the exceptional cases above imply that the foliation has a (nontrivial) transverse invariant measure, and hence are excluded if the foliation is hypertaut. 

We now assume that $\mathcal{F}$ is hypertaut and $C^1$, and we fix a smooth $1$-dimensional foliation $\mathcal{I}$ transverse to $\mathcal{F}$. We shall need a refinement of Vogel's result, which is stated in his paper, although several steps are not worked out in detail there. To this end, let $\mathcal{P}_\mathcal{I} \subset \mathcal{P}$ denote the space of oriented plane fields on $M$ transverse to $\mathcal{I}$. 

\begin{thmintrobeta}[Theorem~\ref{thm:uniq}] \label{thmintrobeta:uniq}
    There exists a $C^0$-neighborhood $\mathcal{V} =  \mathcal{V}_\mathcal{I} \subset \mathcal{P}_\mathcal{I}$ of $T \mathcal{F}$ such that any two positive (resp.~negative) contact structures in $\mathcal{V}$ are contact homotopic through a path of contact structures \underline{within $\mathcal{P}_\mathcal{I}$}.
\end{thmintrobeta}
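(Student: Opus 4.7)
The plan is to revisit Vogel's argument in two complementary directions: relaxing the regularity hypothesis from $C^2$ to $C^1$, and arranging the resulting contact homotopy to remain transverse to the auxiliary one-dimensional foliation $\mathcal{I}$. The starting observation is that transversality to $\mathcal{I}$ is a $C^0$-open condition, so any plane field sufficiently $C^0$-close to $T\mathcal{F}$ automatically lies in $\mathcal{P}_\mathcal{I}$. Consequently, if I can produce a contact homotopy $\xi_t$ from $\xi$ to $\xi_+$ that stays in a small $C^0$-neighborhood of $T\mathcal{F}$, the transverse control follows for free.

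For the $C^1$-extension of Vogel's theorem, I would retrace his proof step by step and verify that each ingredient uses only $C^1$-regularity of $\mathcal{F}$. Sacksteder's theorem, which produces Sacksteder curves in each minimal set, is valid for $C^1$-foliations. The local analysis near a Sacksteder curve rests on holonomy germs and model calculations that only involve first derivatives, and the convex-surface arguments used in the complement of the minimal sets depend only on the topology of the foliation and the approximating contact structure. In each case, Vogel's construction can be carried out with $T\mathcal{F}$ merely continuous by working with $1$-forms and avoiding any differentiation of the tangent plane field itself.

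For the transverse control, the plan is to \emph{quantify} Vogel's uniqueness: if $\xi$ is sufficiently $C^0$-close to $T\mathcal{F}$, then the contact homotopy from $\xi$ to $\xi_+$ can be chosen to be $C^0$-small, hence to remain in $\mathcal{P}_\mathcal{I}$. I would decompose Vogel's construction into pieces supported (i) in a neighborhood of each minimal set and (ii) on the complement of such neighborhoods. In (i), the rigidity of the holonomy near a Sacksteder curve forces any contact structure $C^0$-close to $T\mathcal{F}$ to be already $C^0$-close to the chosen normal form, so that the interpolating contact isotopy is itself $C^0$-small. In (ii), the foliation is a product and the contact approximations are contractible in the appropriate sense; here I would exploit the natural affine structure on $\mathcal{P}_\mathcal{I}$, obtained by parametrizing a plane field transverse to $\mathcal{I}$ by the unique $1$-form $\alpha$ with $\alpha(X) = 1$ for a fixed positive generator $X$ of $T\mathcal{I}$, together with a compactly supported isotopy that can be made $C^0$-small. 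Finally, to confirm the footnote, any two linear deformations $\alpha + s_1\beta_1$ and $\alpha + s_2\beta_2$ of $T\mathcal{F}$ are joined in $\mathcal{P}_\mathcal{I}$ by straight-line interpolation, which remains a positive contact form transverse to $\mathcal{I}$ for $s_1, s_2$ sufficiently small.

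The main obstacle is precisely this quantitative control. Vogel establishes his uniqueness statement qualitatively, and extracting from his construction the fact that the \emph{size} of the contact homotopy can be made to shrink with the $C^0$-distance of $\xi$ to $T\mathcal{F}$---especially inside the neighborhoods of minimal sets, where the holonomy-rigidity step is the most delicate and where a priori the approximating contact structure can twist along the Sacksteder leaves---is exactly the step that Vogel states but leaves without a complete proof. Filling this in carefully, and verifying that the estimates are uniform enough to be glued with the complementary model isotopies, constitutes the core technical content of the theorem.
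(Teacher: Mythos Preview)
Your proposal contains a genuine strategic gap. You plan to keep the entire contact homotopy $C^0$-close to $T\mathcal{F}$, so that transversality to $\mathcal{I}$ follows automatically. But this quantitative strengthening is precisely what the paper does \emph{not} know how to achieve: it is posed there as an open question (Question~6), and the authors explicitly say they can only establish the weaker statement where the ambient neighborhood is all of $\mathcal{P}_\mathcal{I}$. The obstruction is concrete. Vogel's argument corrects the holonomy on polyhedra by flowing along semi-infinite ribbons that spiral into Sacksteder curves; one pushes the contact structure forward under this flow for a \emph{large} time $T$ (Lemma~\ref{lem:ribbonconv}), and during this process the plane field converges to the horizontal distribution near the Sacksteder annulus rather than staying near $T\mathcal{F}$. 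Similarly, the ``pulling-down the window'' step near Sacksteder curves (Lemma~\ref{lem:pulldown}) involves large deformations. There is no evident way to shrink these excursions as $\xi$ approaches $T\mathcal{F}$.

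The paper's route is therefore different in kind: rather than bounding the $C^0$-distance to $T\mathcal{F}$, it maintains transversality to $\mathcal{I}$ \emph{directly} at every stage of Vogel's construction. This requires new work at three points: choosing $\mathcal{I}$-adapted standard neighborhoods of Sacksteder curves (Proposition~\ref{prop:stdnbdgamma}); proving a filling result for polyhedra that keeps the extension transverse to $\mathcal{I}$ (Proposition~\ref{prop:extfol}, via the contractibility of $\mathcal{I}$-transverse contact structures on a cylinder, Proposition~\ref{prop:contractcyl}); and a uniqueness statement for horizontal contact structures on the thickened annulus near Sacksteder curves (Proposition~\ref{prop:contractible}). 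A minor point: Sacksteder's theorem is \emph{not} valid for $C^1$-foliations; the paper sidesteps this by building the existence of Sacksteder curves into the definition of admissibility, so your first reduction step is unnecessary but your stated justification for it is incorrect.
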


Note that the neighborhood $\mathcal{V}$ depends on the choice of the transverse foliation $\mathcal{I}$ a priori, but we omit this dependence in the interest of notation economy.

\subsubsection{Deformation of \emph{exact} weak symplectic fillings}

A \textbf{pre-Liouville structure} on a compact $4$-manifold $V$ with contact boundary is a pair $(\lambda, \xi)$, where $\lambda \in \Omega^1(V)$ and $\xi$ is a contact structure on $\partial V$, such that $d \lambda$ is symplectic and dominates $\xi$ along $\partial V$, namely, $d\lambda_{\vert \xi} > 0$. Such fillings are sometimes called \emph{weakly exact} in the literature. In many situations, one is naturally led to consider pre-Liouville structures, which are somewhat more flexible than actual Liouville structures since the condition at the boundary is relaxed. However, one can always deform a pre-Liouville structure near $\partial V$ to become Liouville, without modifying the underlying contact structure. This operation yields a global primitive of a (different yet homotopic) symplectic structure which restricts to a contact form along the boundary. This procedure also extends to deformations, and we will need the following:

\begin{propintro}[Proposition~\ref{prop:deffill}] \label{propintro:deffill}
    Let $V$ be a $4$-dimensional compact manifold with boundary, and $(\lambda_t, \xi_t)_{t \in [0,1]}$ be a path of pre-Liouville structures on $V$. Assume that for $i \in \{0,1\}$, $(\lambda_i, \xi_i)$ is a Liouville structure: $\lambda_i$ is a Liouville form and $\ker {\lambda_i}_{\vert \partial V} = \xi_i$. Then $\lambda_0$ and $\lambda_1$ are Liouville homotopic.
\end{propintro}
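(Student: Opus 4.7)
The plan is to establish a parametric version of Eliashberg's classical technique for converting a pre-Liouville structure into a Liouville one near the boundary. Fix a collar $U \cong (-\delta, 0]_s \times \partial V$ of $\partial V$ with projection $\pi : U \to \partial V$. Since $\lambda_i$ is Liouville for $i \in \{0,1\}$, the $1$-form $\alpha_i \coloneqq \lambda_i|_{\partial V}$ is a contact form for $\xi_i$. First I would choose a smooth family of contact forms $(\alpha_t)_{t\in[0,1]}$ for $\xi_t$ interpolating $\alpha_0$ and $\alpha_1$; this is possible because, for each fixed $\xi$, the set of contact forms defining $\xi$ is a convex cone, and a smooth family of contact forms for the family $\xi_t$ can be chosen by elementary partition-of-unity arguments.

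The central construction is then
$$\widetilde{\lambda}_t \coloneqq \lambda_t + K\chi(s)\pi^*\alpha_t \quad \text{on } U,$$
extended by $\lambda_t$ outside $U$, where $\chi : (-\delta,0] \to [0,1]$ is a smooth nondecreasing cutoff with $\chi(0)=1$, $\chi \equiv 0$ near $s=-\delta$, and $K > 0$ is a large constant. A direct computation in the collar gives
$$(d\widetilde{\lambda}_t)^2 = (d\lambda_t)^2 + 2K \, d\lambda_t\wedge\bigl(\chi'\,ds\wedge\pi^*\alpha_t + \chi\,\pi^*d\alpha_t\bigr) + 2K^2 \chi\chi' \, ds\wedge\pi^*(\alpha_t\wedge d\alpha_t).$$
The last term is positive wherever $\chi' > 0$ (since $\alpha_t$ is contact) and scales as $K^2$, while the cross terms scale only as $K$. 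By compactness of $V \times [0,1]$, a uniform choice of a sufficiently steep $\chi$ and a sufficiently large $K$ makes $d\widetilde{\lambda}_t$ symplectic for all $t$. At the boundary, $\widetilde{\lambda}_t|_{\partial V} = \lambda_t|_{\partial V} + K\alpha_t$ is a contact form for $\xi_t$ for $K$ large, since both summands have positive $d$-restriction to $\xi_t$. Hence $(\widetilde{\lambda}_t)_{t\in[0,1]}$ is a path of genuine Liouville structures, all filling $(\partial V, \xi_t)$.

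It remains to connect $\lambda_i$ and $\widetilde{\lambda}_i$ by a Liouville isotopy, for $i \in \{0,1\}$; this is a non-parametric instance of the same problem. After first rescaling $\widetilde{\lambda}_i$ by $(1+K)^{-1}$ so that its boundary form coincides with $\alpha_i = \lambda_i|_{\partial V}$, the two Liouville structures agree away from the collar and differ there only by a modification which can be absorbed by a reparametrization of the Liouville collar of $\lambda_i$. Concatenating the three Liouville isotopies $\lambda_0 \rightsquigarrow \widetilde{\lambda}_0 \rightsquigarrow \widetilde{\lambda}_1 \rightsquigarrow \lambda_1$ yields the desired Liouville isotopy, from which exact symplectomorphism follows by the standard Moser argument applied to Liouville deformations.

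The main obstacle will be the endpoint reconciliation step: while the parametric modification is straightforward once $\chi$ and $K$ are chosen suitably, connecting the original $\lambda_i$ to its modification $\widetilde{\lambda}_i$ through Liouville structures is more delicate because the space of Liouville fillings of a fixed contact structure is not convex, and a naive linear interpolation can fail (the relevant ``Cauchy--Schwarz'' inequality for $2$-forms in a $4$-dimensional symplectic vector space does not hold in the required signature). One instead has to invoke the Liouville collar neighborhood theorem, or equivalently pass to the symplectization completion of $(V, \lambda_i)$, in order to perform the isotopy in a controlled way.
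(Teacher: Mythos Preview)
Your central parametric step has a real gap. You claim that for steep $\chi$ and large $K$ the form $d\widetilde\lambda_t$ is symplectic because the $K^2$ term dominates, but that term equals $2K^2\chi\chi'\,ds\wedge\pi^*(\alpha_t\wedge d\alpha_t)$ and vanishes wherever $\chi'=0$. In any smooth cutoff there is a region where $\chi>0$ while $\chi'$ is small (certainly near the point where $\chi$ first becomes positive), and there the expression reduces essentially to $(d\lambda_t)^2 + 2K\chi\, d\lambda_t\wedge\pi^*d\alpha_t$. The $4$-form $d\lambda_t\wedge\pi^*d\alpha_t = ds\wedge(\iota_{\partial_s}d\lambda_t)\wedge\pi^*d\alpha_t$ has no a priori sign in an arbitrary collar, so for large $K$ this can become negative. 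Your compactness argument therefore does not close: the threshold for $K$ blows up as you approach the transition region. (A minor related inaccuracy: $\widetilde\lambda_t|_{\partial V}=\lambda_t|_{\partial V}+K\alpha_t$ is a contact form, but not for $\xi_t$ unless $\lambda_t|_{\xi_t}=0$; its kernel is only close to $\xi_t$.)

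The paper sidesteps exactly this difficulty by first invoking a \emph{parametric coisotropic neighbourhood theorem} (Gotay): for each $t$ one finds collar coordinates in which $\lambda_t$ becomes $t\alpha_t+\gamma_t+df_t$ with $\alpha_t\wedge d\gamma_t=0$, and then performs an explicit three-stage modification (kill $df_t$; multiply $t\alpha_t$ by a large constant; cut off $\gamma_t$). In these normalized coordinates each stage is checked to be symplectic by a one-line computation using $\alpha_t\wedge d\gamma_t=0$, with no delicate balancing of terms. The endpoint reconciliation, which you flag as the main obstacle, is then automatic: when $\lambda_i$ is already Liouville one has $\gamma_i=0$, so every intermediate form in the three-stage modification restricts to a contact form for $\xi_i$ on the boundary, and the whole modification is itself a Liouville homotopy. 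So the hard part is not where you located it; it is choosing the collar adapted to the symplectic data, and that is precisely the content of the coisotropic normal form.
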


In Section~\ref{sec:Def_weak}, we will state a more general result that shows that the natural `forgetful map' from the space of Liouville structures on $V$ to the space of pre-Liouville structures is a (weak) homotopy equivalence, which might be of independent interest.

    \subsection{Applications to Anosov flows}

We now focus on the case of weak foliations of Anosov flows.

    \subsubsection{Anosov Liouville structures}\label{subsec:Anosov_Liouville}

Following~\cite{Mit95, Hoz24}, one can associate to any Anosov flow on $M$ with oriented weak invariant bundles a Liouville pair on $V = [-1,1] \times M$, in the sense of~\cite{MNW13}. The properties of these Liouville pairs were also studied in~\cite{Mas25a, Mas24}. In particular, this endows $V$ with the structure of a Liouville domain with convex (disconnected) boundary which we call an \emph{Anosov Liouville domain}. Moreover, the contact structures on the boundary components can be identified with a supporting bicontact structure.

Let $\Phi_0$ and $\Phi_1$ be two oriented Anosov flows on $M$, and assume that $\Phi_0$ and $\Phi_1$ are orbit equivalent, via an orbit equivalence $h : M \rightarrow M$ preserving the orientations on the weak foliations of the flows. For simplicity, we will say that the flows are \emph{oriented orbit equivalent}. For $i \in \{0,1\}$, we denote by $\lambda_i$ an Anosov Liouville structure on $V$ supported by $\Phi_i$ and defining a bicontact structure $(\xi^i_-, \xi^i_+)$. As a consequence of Theorem~\ref{thmintro:liouv}, we have:

\begin{thmintro} \label{thmintro:anosov}
    The Liouville domains $(V, \lambda_0)$ and $(V, \lambda_1)$ are deformation equivalent, hence their completions are exact symplectomorphic. The symplectomorphism is isotopic to $\mathrm{id} \times h$. 
    
    In particular, $\xi^0_+$ (resp.~$\xi^0_-$) and $\xi^1_+$ (resp.~$\xi^1_-$) are contactomorphic via diffeomorphisms isotopic to $h$ (possibly through two \emph{different} diffeomorphisms).
\end{thmintro}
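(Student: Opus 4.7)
The plan is to derive this theorem as a (relatively) direct consequence of Theorem~\ref{thmintro:liouv}, after setting up the correct identifications.

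First I would check that $h$ fits the hypotheses of Theorem~\ref{thmintro:liouv} applied to the weak-stable foliations. An orbit equivalence between Anosov flows sends (weak-stable) leaves of $\Phi_0$ to (weak-stable) leaves of $\Phi_1$ and likewise for the weak-unstable foliations; so $h$ is in particular a leaf-preserving homeomorphism between the $C^1$ foliations $\mathcal{F}_0^{ws}$ and $\mathcal{F}_1^{ws}$. Under the standing orientability assumption, both of these are cooriented, admissible and hypertaut (Propositions~\ref{prop:C2admi} and \ref{prop:Anosovadmi}). It remains to identify the Anosov Liouville structure $\lambda_i$ with a Liouville thickening of $\mathcal{F}_i^{ws}$ in the sense of Construction~\ref{construction}: by design, $\lambda_i$ is obtained by the same linear-deformation recipe $\beta + \epsilon t\widetilde{\alpha}$ starting from a $1$-form defining $T\mathcal{F}_i^{ws}$ and a bicontact pair $(\xi^i_-, \xi^i_+)$ approximating it, followed by the boundary modification of Lemma~\ref{lem:filling}. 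The refined uniqueness statement of Theorem~\ref{thmintrobeta:uniq} ensures that, up to Liouville homotopy, $\lambda_i$ does not depend on the specific bicontact approximation used, so it agrees with the Liouville thickening $\lambda_{\mathcal{F}_i^{ws}}$ of Theorem~\ref{thmintro:liouv}.

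With this identification in hand, Theorem~\ref{thmintro:liouv} produces a smooth diffeomorphism $\widetilde{h}$ isotopic (through homeomorphisms) to $h$ such that $(\mathrm{id} \times \widetilde{h})_*\lambda_0$ and $\lambda_1$ are homotopic Liouville structures on $V = [-1,1] \times M$. A Liouville homotopy yields an exact symplectomorphism by a Moser-type argument (namely integrating a time-dependent vector field annihilating the interpolating Liouville form), and composing this symplectomorphism with $\mathrm{id} \times \widetilde{h}$ gives the required exact symplectomorphism $V \to V$, which is isotopic to $\mathrm{id}\times h$ through the combined isotopy from $\widetilde{h}$ to $h$ and the Liouville flow.

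For the \emph{in particular} clause, restricting the Liouville homotopy to each boundary component produces a path of contact structures on $M$ starting at $\widetilde{h}_*(\xi^0_\pm)$ and ending at $\xi^1_\pm$; Gray stability then gives ambient contact isotopies of $M$ realizing each as a contactomorphism. Composing $\widetilde{h}$ with these Gray isotopies yields the desired contactomorphisms, each isotopic to $h$; since the Gray isotopies on $\{-1\}\times M$ and $\{+1\}\times M$ are independent, the two contactomorphisms may genuinely differ, accounting for the parenthetical remark in the statement.

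The only nontrivial part is the identification in the first paragraph: the Anosov Liouville structure is \emph{a priori} defined via an explicit Mitsumatsu-type formula attached to the bicontact pair of the flow, while $\lambda_{\mathcal{F}_i^{ws}}$ in Theorem~\ref{thmintro:liouv} is defined via Construction~\ref{construction} for some (\emph{a priori} unrelated) bicontact approximation. Reconciling these requires precisely the refined uniqueness of contact approximations within $\mathcal{P}_\mathcal{I}$ (Theorem~\ref{thmintrobeta:uniq}), together with Proposition~\ref{propintro:deffill} to upgrade the resulting homotopy of pre-Liouville structures into a genuine Liouville homotopy. Beyond this bookkeeping, the theorem reduces to Theorem~\ref{thmintro:liouv}, which is where all the real work lies.
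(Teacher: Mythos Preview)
Your overall strategy is the paper's strategy: reduce to Theorem~\ref{thmintro:liouv} after identifying the Anosov Liouville structure with a Liouville thickening of a weak foliation. Two corrections are worth flagging.

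First, the paper carries out the identification with the weak-\emph{unstable} foliation, not the weak-stable one: in terms of a defining pair $(\alpha_s,\alpha_u)$ one has $\lambda = 2(\alpha_u - \tau\alpha_s)$, so the $\widetilde{\alpha}$ in Construction~\ref{construction} is (a smoothing of) $-\alpha_s$, whose kernel is $E^{wu}$. This is recorded as Lemma~\ref{lem:ALthick}.

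Second, your sentence ``by design, $\lambda_i$ is obtained by the same linear-deformation recipe $\beta+\epsilon t\widetilde{\alpha}$ \dots and a bicontact pair $(\xi^i_-,\xi^i_+)$ approximating it'' glosses over a genuine issue: the supporting bicontact structure of $\Phi_i$ need not lie in the Vogel neighborhood $\mathcal{V}$ of $T\mathcal{F}_i^{wu}$, so Theorem~\ref{thmintrobeta:uniq} alone does not connect it to the approximations used in Construction~\ref{construction}. The paper closes this gap (Lemma~\ref{lem:ALthick}, via Lemma~\ref{lem:zungpair}) by running a one-parameter family of Liouville pairs $(\delta\alpha_u+\alpha_s,\ \delta\alpha_u-\alpha_s)$, $\delta\in(0,1]$, whose contact kernels converge to $E^{wu}$ as $\delta\to 0$ while the associated $1$-forms remain pre-Liouville; Proposition~\ref{propintro:deffill} then turns this into a Liouville homotopy to a thickening in the sense of Construction~\ref{construction}. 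With these two points fixed, your argument matches the paper's.
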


As noted before, our proof strategy actually shows that $(\xi^0_-, \xi^0_+)$ and $(\xi^1_-, \xi^1_+)$ are deformation equivalent as \emph{positive contact pairs}. 

    \subsubsection{Uniqueness of supporting bicontact structures}

A \textbf{bicontact structure} is a pair of contact structures $(\xi_-, \xi_+)$ with opposite signs which are transverse. In particular, bicontact structures are positive contact pairs (for \emph{any} coorientations).

As a direct consequence of our approximation results as well as the more general version of Vogel's uniqueness theorem, we obtain a variant of Theorem~\ref{thmintro:poscont} for Anosov flows: 

\begin{thmintro} \label{thmintro:anosovbicontact}
    Let $\Phi_0$ and $\Phi_1$ be two oriented Anosov flows on $M$ supported by bicontact structures $(\xi^0_-, \xi^0_+)$ and $(\xi^1_-, \xi^1_+)$, respectively. If $\Phi_0$ and $\Phi_1$ are oriented orbit equivalent, then $(\xi^0_-, \xi^0_+)$ and $(\xi^1_-, \xi^1_+)$ are deformation equivalent through bicontact structures. 
    
    More precisely, if $h : M \rightarrow M$ is an (oriented) orbit equivalence between $\Phi_0$ and $\Phi_1$, then $h$ is isotopic to a smooth diffeomorphism $\widetilde{h} : M \rightarrow M$ such that $\big(\widetilde{h}_*(\xi^0_-), \widetilde{h}_*(\xi^0_+)\big)$ and $(\xi^1_-, \xi^1_+)$ are homotopic through bicontact structures.
\end{thmintro}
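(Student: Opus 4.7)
My plan is to deduce the theorem from Corollary~\ref{corintro:smoothing} (the bifoliated smoothing of the orbit equivalence) together with Theorem~\ref{thmintrobeta:uniq} (the refined Vogel uniqueness), following the general strategy of Theorem~\ref{thmintro:poscont} but carried out in parallel for each leg of the bicontact pair.

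First, I apply Corollary~\ref{corintro:smoothing} to $h$: for any $\epsilon > 0$ this produces a smooth diffeomorphism $\widetilde h$, isotopic to $h$ through $\epsilon$-close homeomorphisms, with $\widetilde h_*(E_0^{ws})$ and $\widetilde h_*(E_0^{wu})$ respectively $\epsilon$-close to $E_1^{ws}$ and $E_1^{wu}$. Since $(\xi^0_-, \xi^0_+)$ supports $\Phi_0$, the contact structure $\xi^0_-$ is a $C^0$-small perturbation of one of the two weak foliations of $\Phi_0$ (say the weak stable one) and $\xi^0_+$ is a $C^0$-small perturbation of the other (the weak unstable); after pushforward by $\widetilde h$, the pair $(\widetilde h_*(\xi^0_-), \widetilde h_*(\xi^0_+))$ is accordingly $C^0$-close to the weak foliations of $\Phi_1$, in the same pattern as $(\xi^1_-, \xi^1_+)$. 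By Proposition~\ref{prop:C2admi}, both weak foliations of $\Phi_1$ are $C^1$ admissible, so Theorem~\ref{thmintrobeta:uniq} applies to each.

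Next, I pick a smooth $1$-dimensional foliation $\mathcal{I}$ on $M$ transverse to $T\mathcal{F}^{ws}_1$, $T\mathcal{F}^{wu}_1$, and to each of $\xi^1_\pm$ and $\widetilde h_*(\xi^0_\pm)$ — a generic pointwise choice satisfies all these open conditions. Applying Theorem~\ref{thmintrobeta:uniq} with respect to $\mathcal{F}^{ws}_1$ and this $\mathcal{I}$ yields, for $\epsilon$ small enough, a negative contact homotopy $(\xi^t_-)_{t \in [0,1]}$ from $\widetilde h_*(\xi^0_-)$ to $\xi^1_-$ within $\mathcal{P}_\mathcal{I}$; applying it with respect to $\mathcal{F}^{wu}_1$ yields a positive contact homotopy $(\xi^t_+)_{t \in [0,1]}$ from $\widetilde h_*(\xi^0_+)$ to $\xi^1_+$ within $\mathcal{P}_\mathcal{I}$.

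The hard part, in my view, is the last step: assembling these two \emph{independent} contact homotopies into a genuine homotopy of bicontact structures, i.e., enforcing $\xi^t_-(p) \neq \xi^t_+(p)$ for every $(t, p) \in [0,1] \times M$. Opposite signs do not by themselves force pointwise distinctness of plane fields, since the contact condition involves the first jet of a defining form. However, $T\mathcal{F}^{ws}_1$ and $T\mathcal{F}^{wu}_1$ are uniformly transverse on compact $M$, so their $C^0$-distance in the plane-field bundle is bounded below by some $\delta > 0$; if each Vogel-type homotopy can be arranged to stay within a $\delta/3$-neighborhood of its target foliation — a refinement of Theorem~\ref{thmintrobeta:uniq} that seems plausible from its construction and fits the transverse control already emphasized in its statement — then the two paths remain pointwise disjoint, and the combined path is the desired bicontact homotopy from $(\widetilde h_*(\xi^0_-), \widetilde h_*(\xi^0_+))$ to $(\xi^1_-, \xi^1_+)$. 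Together with the isotopy from $h$ to $\widetilde h$ supplied by Corollary~\ref{corintro:smoothing}, this yields the theorem.
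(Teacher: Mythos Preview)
Your proposal has a genuine gap in the ``hard part'' you yourself flag. You assume that the Vogel-type homotopy of Theorem~\ref{thmintrobeta:uniq} can be arranged to stay within a $\delta/3$-neighborhood of the target foliation. This is precisely what the paper does \emph{not} prove and in fact poses as an open question (Question~6 in the Introduction). Theorem~\ref{thmintrobeta:uniq} only guarantees that the homotopy stays in $\mathcal{P}_\mathcal{I}$, i.e., transverse to a fixed smooth line field; it gives no control on the $C^0$-distance to $T\mathcal{F}$ along the path (cf.\ Figure~\ref{fig:Vogelnhbd}). Since you pick a single $\mathcal{I}$ transverse to both weak foliations, $\mathcal{P}_\mathcal{I}$ contains plane fields close to either one, so the two paths $(\xi^t_-)$ and $(\xi^t_+)$ could well collide.

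The paper circumvents this by a different mechanism. It moves one leg at a time, and for the moving leg it chooses $\mathcal{I}$ to be a smooth line field $\ell_-$ \emph{tangent to the fixed leg} $\widetilde{\xi}^1_-$. In dimension three, any plane field transverse to $\ell_-$ is automatically distinct from (hence transverse to) $\widetilde{\xi}^1_-$, so the Vogel homotopy in $\mathcal{P}_{\ell_-}$ is a path of bicontact structures. The subtlety is that $\widetilde{h}_*(\xi^0_+)$ need not lie in the Vogel neighborhood for $\ell_-$; the paper handles this with Lemma~\ref{lem:flow}, which shows that flowing along $\Phi_1$ pushes $\widetilde{h}_*(\xi^0_+)$ toward $E^{wu}_1$ while keeping it transverse to any line field close to $E^{ss}_1$ (in particular $\ell_-$). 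After $\xi^1_+$ is fixed, the roles are swapped using a line field $\ell_+$ tangent to $\xi^1_+$.

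Two smaller points. First, a supporting bicontact structure is not in general $C^0$-close to the weak foliations; the paper uses that the space of supporting bicontact structures is contractible (\cite{Hoz24, Mas25a}) to reduce to a specific one of that form. Second, the admissibility of the weak foliations is Proposition~\ref{prop:Anosovadmi}, not Proposition~\ref{prop:C2admi} (the weak foliations are only $C^1$).
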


By the contact characterization of \emph{projectively Anosov flows}~\cite{Mit95, ET}, we readily obtain:

\begin{corintro} \label{corintro:projanosov}
    If two oriented Anosov flows $\Phi_0$ and $\Phi_1$ are orbit equivalent, then they are deformation equivalent through projectively Anosov flows. More precisely, there exists a diffeomorphism $\widetilde{h} : M \rightarrow M$ topologically isotopic to the orbit equivalence $h$ such that $\widetilde{h}_* \Phi_0$ is homotopic to $\Phi_1$ through projectively Anosov flows.
\end{corintro}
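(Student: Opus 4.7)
My plan is to derive this corollary directly from Theorem~\ref{thmintro:anosovbicontact} together with the contact characterization of projectively Anosov flows due to Mitsumatsu and Eliashberg--Thurston: a nonsingular flow $\Phi$ on $M$ is projectively Anosov if and only if its oriented tangent line field equals the transverse intersection $\xi_- \cap \xi_+$ of some bicontact structure $(\xi_-, \xi_+)$. In other words, projective Anosovity of a flow is equivalent to the existence of a supporting bicontact structure, and it depends only on the underlying oriented line field (not on the parametrization).

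First, I would fix supporting bicontact structures $(\xi^i_-, \xi^i_+)$ for $\Phi_i$, $i \in \{0,1\}$, produced by the standard Mitsumatsu construction from the oriented weak foliations of $\Phi_i$. Applying Theorem~\ref{thmintro:anosovbicontact} to the orbit equivalence $h$, I obtain a smooth diffeomorphism $\widetilde{h}$ topologically isotopic to $h$, together with a smooth path $(\xi^t_-, \xi^t_+)_{t \in [0,1]}$ of bicontact structures joining $(\widetilde{h}_* \xi^0_-, \widetilde{h}_* \xi^0_+)$ at $t=0$ to $(\xi^1_-, \xi^1_+)$ at $t=1$. Since $\xi^t_-$ and $\xi^t_+$ are transverse for every $t$, the intersection $\ell_t \coloneqq \xi^t_- \cap \xi^t_+$ is a smooth one-parameter family of oriented one-dimensional distributions on $M$. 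Fixing an auxiliary Riemannian metric, let $X_t$ denote the positively oriented unit vector field tangent to $\ell_t$; by construction, $X_t$ is tangent to both members of the bicontact structure $(\xi^t_-, \xi^t_+)$, so by the contact characterization recalled above, the associated flow $\Phi_t$ is projectively Anosov for every $t \in [0,1]$.

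It remains to match the endpoints. By the Mitsumatsu construction, $X_0$ and $X_1$ agree with $\widetilde{h}_* X_{\Phi_0}$ and $X_{\Phi_1}$ up to multiplication by a smooth positive function on $M$. Since the set of positive rescalings of a given vector field is convex, and rescaling preserves projective Anosovity (it depends only on the oriented line field), I can prepend and append short linear homotopies adjusting these reparametrizations to produce a path of projectively Anosov flows that literally starts at $\widetilde{h}_* \Phi_0$ and ends at $\Phi_1$.

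I do not anticipate any substantial obstacle beyond Theorem~\ref{thmintro:anosovbicontact} itself: the corollary is essentially a translation from the language of bicontact structures to that of projectively Anosov flows via the Mitsumatsu--Eliashberg--Thurston dictionary, with the only mild subtlety being the endpoint reparametrization adjustment mentioned above.
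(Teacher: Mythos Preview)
Your proposal is correct and follows exactly the approach the paper takes: the paper simply states that Corollary~\ref{corintro:projanosov} follows ``readily'' from Theorem~\ref{thmintro:anosovbicontact} via the contact characterization of projectively Anosov flows due to Mitsumatsu and Eliashberg--Thurston, without writing out further details. Your derivation makes this explicit, and the endpoint-reparametrization adjustment you mention is the only bookkeeping step needed.
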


\subsubsection{\texorpdfstring{$\R$}{R}-covered and contact Anosov flows}

Fenley~\cite{F94} and independently Barbot~\cite{B95} discovered a fundamental dichotomy among Anosov flows on $3$-manifolds, between those that are $\mathbb{R}$-covered and those that are not. Here an Anosov flow on $M$ is $\mathbb{R}$-covered if the leaf space of the lift of the weak-(un)stable foliation to the universal cover $\widetilde{M}$ is homeomorphic to $\R$. There is then a rich structure theory for such flows, essentially going back to Fenley's early work. Suspension flows of hyperbolic torus automorphisms are $\R$-covered, and in that case the global picture on $\widetilde{M}$ is that of a product; such flows are called \textbf{product $\R$-covered Anosov flows}.

The other classical example of an Anosov flow is given by the geodesic flow of a negatively curved metric on the unit tangent bundle of a closed surface, which is also $\mathbb{R}$-covered. In this case, however, there is no global product structure for the weak foliations and one obtains a ``skewed strip''; see~\cite{F94}. In particular, one refers to such flows as \textbf{skewed $\R$-covered Anosov}. Furthermore, since the manifold is oriented, one can distinguish between those flows that are positively and negatively skewed. 

In fact, the geodesic flow is in addition the Reeb flow of a suitable contact form for the canonical contact structure on the unit (co)tangent bundle of the surface. Barbot~\cite{B01} showed that any Reeb flow of a positive contact structure, which is in addition Anosov, is automatically positively skewed $\R$-covered. Such flows will be called \textbf{contact Anosov}. Very recently, Marty~\cite{M24} was able to show the converse, giving a complete characterization of $\R$-coveredness in terms of contact geometry. Hence, in what follows, one can use skewed $\R$-covered and contact Anosov interchangeably.

The following proposition is well-known to the experts but we were not able to find a proof in the literature.

\begin{propintro} \label{prop:contactanosov}
    If $\Phi$ is a contact Anosov flow for a positive (resp.~negative) contact structure $\xi$, and if $\Phi$ is tangent to a positive (resp.~negative) contact structure $\xi'$, then $\xi$ and $\xi'$ are contact homotopic.
\end{propintro}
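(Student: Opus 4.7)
The strategy is to connect both $\xi$ and $\xi'$ to a common reference positive contact structure, namely the positive contact approximation $\xi^+$ of the weak stable foliation $\mathcal{F}^{ws}$ of $\Phi$ constructed by linear deformation as in \Cref{construction}.

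To show $\xi' \simeq \xi^+$, I use the flow. The pullback family $\Phi_t^* \xi'$ is a contact isotopy of positive contact structures tangent to $\Phi$, and by the induced Anosov dynamics on $TM/\mathbb{R}X$, the lines $\Phi_t^*\xi'/\mathbb{R}X$ converge to the stable direction as $t \to +\infty$, so $\Phi_t^*\xi' \to T\mathcal{F}^{ws}$ in $C^0$. Fix a smooth $1$-foliation $\mathcal{I}$ transverse to $\mathcal{F}^{ws}$ (e.g., a smoothing of the strong unstable direction); every positive contact plane field tangent to $\Phi$ sufficiently close to $T\mathcal{F}^{ws}$ lies in $\mathcal{P}_\mathcal{I}$, since the only flow-tangent plane containing $E^{su}$ is $T\mathcal{F}^{wu}$, which is integrable. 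For $t$ large, $\Phi_t^*\xi'$ sits in the neighborhood $\mathcal{V}_\mathcal{I}$ of \Cref{thmintrobeta:uniq}, whose refined uniqueness yields $\Phi_t^*\xi' \simeq \xi^+$.

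For $\xi \simeq \xi^+$, I use the contact Anosov hypothesis crucially. Write $\xi = \ker \alpha$ with $\alpha$ the Reeb contact form of $\Phi$, so that $\iota_X \alpha = 1$ and $\iota_X d\alpha = 0$. Let $\alpha_{ws}$ be a smooth $1$-form with $\ker \alpha_{ws} = T\mathcal{F}^{ws}$ (hence $\iota_X \alpha_{ws} = 0$ automatically), and consider the family $\alpha_s \coloneqq \alpha + s\,\alpha_{ws}$ for $s \geq 0$. Its kernel $\xi_s \coloneqq \ker \alpha_s$ is transverse to $X$ since $\alpha_s(X) \equiv 1$. Two vanishings collapse the contact condition: $\alpha_{ws} \wedge d\alpha_{ws} = 0$ by integrability of $\mathcal{F}^{ws}$, and $\alpha_{ws} \wedge d\alpha = 0$ because contracting this $3$-form by the nowhere-vanishing $X$ annihilates it, forcing it to vanish on a $3$-manifold. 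Hence
\[
\alpha_s \wedge d\alpha_s \;=\; \alpha \wedge d\alpha + s\, (\alpha \wedge d\alpha_{ws}),
\]
and upon rescaling $\alpha_{ws}$ by a suitable positive function so that $\alpha \wedge d\alpha_{ws} \geq 0$ pointwise (see below), $\alpha_s$ is a positive contact form for every $s \geq 0$. Since $\xi_s \to T\mathcal{F}^{ws}$ in $C^0$ as $s \to +\infty$, for $s$ large $\xi_s$ lies in $\mathcal{V}_\mathcal{I}$ and \Cref{thmintrobeta:uniq} gives $\xi_s \simeq \xi^+$; concatenating with the path $\{\xi_s\}_{s \in [0, S]}$ yields $\xi = \xi_0 \simeq \xi^+$.

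Combining the two steps, $\xi \simeq \xi^+ \simeq \xi'$; the negative case is identical \emph{mutatis mutandis}. The main technical obstacle is the rescaling of $\alpha_{ws}$ enforcing $\alpha \wedge d\alpha_{ws} \geq 0$ globally---equivalently, $d\alpha_{ws}|_\xi$ non-negative as an oriented area form on $\xi = \ker \alpha$. Pointwise this is an easy linear condition (since $\xi \neq T\mathcal{F}^{ws}$); globally it reduces to a first-order differential inequality for the rescaling function along the strong stable foliation, whose solvability on the compact $3$-manifold follows from the hypertautness of $\mathcal{F}^{ws}$ together with an averaging argument along stable orbits.
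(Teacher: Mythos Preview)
There are two genuine gaps.

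\textbf{For $\xi'$.} Your claim that $\Phi_t^*\xi' \to T\mathcal{F}^{ws}$ in $C^0$ is unjustified. The projectivized Anosov dynamics on $TM/\mathbb{R}X$ attracts lines to the stable direction under pullback only when they are everywhere transverse to the unstable direction. Nothing in the hypothesis that $\xi'$ is a positive contact structure tangent to $\Phi$ prevents $\xi'_p = E^{wu}_p$ at some points $p$; the contact condition forces $\bar\xi'$ to move monotonically along flow lines in the projectivized normal bundle, but it does not prevent $\bar\xi'$ from crossing the invariant section $\bar E^{uu}$. At such points the pullback family is pinned to $E^{wu}$ and uniform convergence to $T\mathcal{F}^{ws}$ fails. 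The paper handles exactly this by first invoking a theorem of Hozoori to homotope $\xi'$ to the positive member of a supporting bicontact structure, which \emph{is} everywhere transverse to both strong bundles; only then does the flow argument go through (the paper pushes forward towards $E^{wu}$ rather than pulling back towards $E^{ws}$, but that choice is immaterial).

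\textbf{For $\xi$.} The rescaling of $\alpha_{ws}$ you need is not innocent. Writing $\alpha_{ws}=\alpha_u$ in a defining pair and working in the coframe $(\alpha,\alpha_s,\alpha_u)$, one has $d\alpha_u=r_u\,\alpha\wedge\alpha_u+c\,\alpha_s\wedge\alpha_u$ for a function $c$ with no a~priori sign, and your condition $\alpha\wedge d(f\alpha_u)\ge 0$ reduces to $e_s(\log f)\ge -c$ along the strong stable line field. Since $\log f$ is bounded on the compact $M$ while strong stable leaves are copies of $\mathbb{R}$, solvability forces $\int_0^T c(\gamma(t))\,dt$ to be uniformly bounded below over all strong stable leaves $\gamma$ and all $T>0$. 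This is implied neither by hypertautness of $\mathcal{F}^{ws}$ (which concerns exact $2$-forms positive on $E^{ws}$, not on $\xi=E^{ss}\oplus E^{uu}$) nor by any averaging you sketch. The paper avoids this altogether: it applies a $C^1$-small perturbation to $\xi=E^{ss}\oplus E^{uu}$ to make it transverse to $E^{ss}$ (this preserves the contact condition), then flows forward to approach $E^{wu}$ and applies Vogel uniqueness for $\mathcal{F}^{wu}$. Your linear interpolation is in the spirit of the direct approach the authors themselves note (in a footnote) they were unable to make work.
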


\begin{proof}
We have $\xi = E^{ss} \oplus E^{uu}$, where $E^{ss}$ and $E^{uu}$ denote the strong-stable and unstable bundles of $\Phi$, respectively. In particular, those are $C^1$. By Hozoori~\cite[Theorem 1.8]{Hoz24}, $\xi'$ is homotopic to a contact structure which belongs to a bicontact structure supporting $\Phi$. In particular, it is transverse to $E^s$. We can apply a $C^1$-small perturbation to $\xi$ to make it transverse to $E^{ss}$ as well. We can then flow along $\Phi$ to homotope the contact structures to ones that are $C^0$-close to $E^{wu}$, and the uniqueness of contact structures approximating $\mathcal{F}^{wu}$ finishes the proof.
\end{proof}

Note that here we use Vogel's theorem for a foliation that is not quite $C^2$. However, Vogel's original proof would apply here since the foliations under consideration have no closed leaves and every minimal set has a curve with attracting holonomy; no additional transverse control is required.\footnote{One may believe that there should be a proof of the previous proposition that does not rely on Vogel's uniqueness result. For instance, one might try to find a suitable contact form $\alpha'$ for $\xi'$ whose Reeb vector field is transverse to $\xi$ with the correct orientation; that would ensure that the linear interpolation between $\alpha'$ and $\alpha$ (the contact form whose Reeb vector field generates $\Phi$) is a path of contact forms. Unfortunately, we were unable to make this strategy work.}

As a byproduct of Theorem~\ref{thmintro:anosov} and Marty's result, we obtain:

\begin{thmintro} \label{thm:Rcovered}
    Let $\Phi$ be a positive (resp.~negative) skewed $\R$-covered Anosov flow with supporting bicontact structure $(\xi_-, \xi_+)$. Then $\xi_+$ (resp.~$\xi_-$) admits a contact form whose Reeb vector field is Anosov and isotopically equivalent to $\Phi$.
\end{thmintro}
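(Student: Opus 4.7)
The plan is to reduce to the case where $\Phi$ is itself the Reeb flow of a contact form for $\xi_+$, using Marty's characterization~\cite{M24} of skewed $\R$-coveredness together with our functoriality results to transport that contact form back to the one supplied by the theorem. We treat the positive case; the negative case is identical up to sign conventions. By Marty's theorem, $\Phi$ is orbit equivalent, via some orbit equivalence $h : M \to M$, to a contact Anosov flow $\Phi'$ on $M$; write $\Phi'$ as the Reeb flow of a contact form $\alpha'$ for a positive contact structure $\xi' := \ker \alpha'$. By Mitsumatsu~\cite{Mit95}, $\Phi'$ admits a supporting bicontact structure $(\eta_-, \eta_+)$. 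Since $\Phi'$ is tangent to the positive contact structure $\eta_+$, Proposition~\ref{prop:contactanosov} yields a contact homotopy from $\eta_+$ to $\xi'$.

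Now we apply Theorem~\ref{thmintro:anosovbicontact} to the orbit equivalence $h$ between $\Phi$ and $\Phi'$: it produces a smooth diffeomorphism $\widetilde{h}$ isotopic to $h$ such that $\big(\widetilde{h}_*(\xi_-), \widetilde{h}_*(\xi_+)\big)$ is homotopic to $(\eta_-, \eta_+)$ through bicontact structures. Combined with the previous paragraph, $\widetilde{h}_*(\xi_+)$ is contact homotopic to $\xi'$, so Gray stability yields a diffeomorphism $\phi$ of $M$ isotopic to the identity with $\phi_* \widetilde{h}_*(\xi_+) = \xi'$. Setting $\Psi := \phi \circ \widetilde{h}$ and $\alpha := \Psi^* \alpha'$, we obtain a contact form on $M$ with $\ker \alpha = \xi_+$; its Reeb vector field $R_\alpha = \Psi^{-1}_* R_{\alpha'}$ generates a flow $\Phi_\alpha$ smoothly conjugate to $\Phi'$ via $\Psi$, and hence Anosov.

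Finally, $\Psi$ is an orbit equivalence from $\Phi_\alpha$ to $\Phi'$ and $h^{-1}$ is an orbit equivalence from $\Phi'$ to $\Phi$, so $h^{-1} \circ \Psi$ is an orbit equivalence $\Phi_\alpha \to \Phi$ smoothly isotopic to $h^{-1} \circ h = \mathrm{id}$, showing that $\Phi_\alpha$ is isotopically equivalent to $\Phi$. The heavy lifting here is done by Marty's characterization, by the functoriality Theorem~\ref{thmintro:anosovbicontact} (which itself rests on the smoothing Theorem~\ref{thmintrobeta:bifolapprox} and the transverse-controlled uniqueness Theorem~\ref{thmintrobeta:uniq}), and by Proposition~\ref{prop:contactanosov}; the only genuine subtlety in the assembly is to keep careful track of isotopy classes at each stage so that the final orbit equivalence is isotopic to the identity rather than merely topologically equivalent.
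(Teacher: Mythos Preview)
Your proof is correct and follows essentially the same route as the paper: Marty's characterization provides a contact Anosov model $\Phi'$, Proposition~\ref{prop:contactanosov} identifies its Reeb contact structure with the positive half of a supporting bicontact structure, and the functoriality result transports this back to $\xi_+$ while tracking isotopy classes. The only cosmetic difference is that you invoke Theorem~\ref{thmintro:anosovbicontact} where the paper cites Theorem~\ref{thmintro:anosov}; since you only use the conclusion that $\xi_+$ and $\eta_+$ are contactomorphic via a map isotopic to $h$, either theorem suffices and the arguments are interchangeable.
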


Here, two flows are isotopically equivalent if they are orbit equivalent via an orbit equivalence isotopic to the identity. This resolves part of a conjecture of Barthelm\'{e}; see~\cite[Conjecture 4.18]{B25}. Combined with Proposition~\ref{prop:contactanosov}, we readily get:

\begin{corintro}
    Let $\xi$ be a positive (resp.~negative) contact structure on $M$. Then $\xi$ admits an Anosov Reeb vector field if and only if there exists a positive (resp.~negative) skewed $\R$-covered Anosov flow tangent to $\xi$.
\end{corintro}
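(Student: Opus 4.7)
The plan is to assemble the preceding structural results into both directions of the equivalence. I treat the positive case; the negative case is entirely analogous by reversing signs throughout.

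For the backward direction, I start with a positively skewed $\R$-covered Anosov flow $\Phi$ tangent to $\xi$. Since $\Phi$ is projectively Anosov and tangent to the positive contact structure $\xi$, Hozoori~\cite[Theorem 1.8]{Hoz24} provides a homotopy through positive contact structures from $\xi$ to a contact structure $\widetilde{\xi}_+$ forming the positive half of a supporting bicontact structure $(\widetilde{\xi}_-, \widetilde{\xi}_+)$ of $\Phi$. Theorem~\ref{thm:Rcovered} then supplies a contact form for $\widetilde{\xi}_+$ whose Reeb vector field is Anosov, and Gray stability produces a diffeomorphism $\phi$ of $M$ isotopic to the identity with $\phi_*\widetilde{\xi}_+ = \xi$; pulling the contact form back through $\phi$ yields an Anosov Reeb vector field on $\xi$.

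For the forward direction, I suppose $\xi$ admits an Anosov Reeb vector field generating a flow $\Phi$. Then $\Phi$ is a contact Anosov flow in the sense of Barbot, and by Marty's characterization~\cite{M24} it is positively skewed $\R$-covered. However, $\Phi$ itself is transverse to $\xi$ rather than tangent, so I need to transport it to produce a tangent flow. Fix a supporting bicontact structure $(\xi_-, \xi_+)$ of $\Phi$, which exists by Eliashberg--Thurston applied to the coorientable weak foliations of $\Phi$; then $\Phi$ is tangent to the positive contact structure $\xi_+$. Both hypotheses of Proposition~\ref{prop:contactanosov} are met ($\Phi$ is contact Anosov for the positive $\xi$ and tangent to the positive $\xi_+$), so $\xi$ and $\xi_+$ are contact homotopic. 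Gray stability then produces a diffeomorphism $\phi$ isotopic to the identity with $\phi_*\xi_+ = \xi$, and $\phi_*\Phi$ is the desired positively skewed $\R$-covered Anosov flow tangent to $\xi$.

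No substantive obstacle arises, as the corollary is essentially a bookkeeping combination of the preceding results. The only delicate point is tracking the sign conventions (positive versus negative contact structures, positively versus negatively skewed $\R$-covered flows), which match uniformly by the conventions of Barbot and Marty.
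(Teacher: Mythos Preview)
Your proof is correct and follows essentially the same route the paper intends: both directions are obtained by combining Theorem~\ref{thm:Rcovered}, Proposition~\ref{prop:contactanosov} (or its ingredient, Hozoori's theorem), and Gray stability. One minor point: in the forward direction, the implication ``contact Anosov $\Rightarrow$ positively skewed $\R$-covered'' is due to Barbot rather than Marty (Marty proved the converse), but this does not affect the argument.
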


By the work of Barthelm\'{e}--Mann--Bowden~\cite{BM24} combined with Marty's result, we also obtain:

\begin{corintro}
    Let $\Phi_0$, $\Phi_1$ be two positive (resp.~negative) skewed $\R$-covered Anosov flows which are tangent to the same positive (resp.~negative) contact structure $\xi$. Then $\Phi_0$ and $\Phi_1$ are isotopically equivalent.
\end{corintro}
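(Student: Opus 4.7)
The plan is to reduce, via Theorem~\ref{thm:Rcovered} and Proposition~\ref{prop:contactanosov}, to the case of two Reeb-Anosov flows for the common contact structure $\xi$, and then invoke the uniqueness of such flows up to isotopic equivalence provided by~\cite{BM24}.

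First, for each $i \in \{0,1\}$, apply Theorem~\ref{thm:Rcovered} to $\Phi_i$: since $\Phi_i$ is positive (resp.~negative) skewed $\R$-covered, the side $\eta_i$ of its supporting bicontact structure of the same sign as $\xi$ (namely $\xi_+^i$, resp.~$\xi_-^i$) admits a contact form whose Reeb vector field is Anosov and generates a flow $\Phi_i'$ isotopically equivalent to $\Phi_i$. In particular, $\Phi_i'$ is a contact Anosov flow for $\eta_i$.

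Second, let $\phi_i$ be a diffeomorphism isotopic to the identity realizing the isotopic equivalence $\Phi_i \sim \Phi_i'$. Since $\Phi_i$ is tangent to $\xi$, the pushforward flow $(\phi_i)_*\Phi_i$ is tangent to $(\phi_i)_*\xi$, and because $\Phi_i'$ shares its orbits (up to reparametrization) with $(\phi_i)_*\Phi_i$, the flow $\Phi_i'$ is itself tangent to $(\phi_i)_*\xi$. The latter is contact isotopic to $\xi$ and has matching sign, so Proposition~\ref{prop:contactanosov}, applied to the contact Anosov flow $\Phi_i'$, yields that $\eta_i$ is contact homotopic to $(\phi_i)_*\xi$, and hence to $\xi$.

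By Gray stability, there exists a diffeomorphism $\psi_i$ isotopic to the identity such that $(\psi_i)_*\eta_i = \xi$. The pushforward $\Phi_i'' \coloneqq (\psi_i)_*\Phi_i'$ is then a Reeb-Anosov flow for the contact structure $\xi$. Invoking~\cite{BM24}, which asserts that any two Reeb-Anosov flows for a common contact structure are isotopically equivalent, we obtain $\Phi_0'' \sim \Phi_1''$, whence the chain of isotopic equivalences
$$\Phi_0 \sim \Phi_0' \sim \Phi_0'' \sim \Phi_1'' \sim \Phi_1' \sim \Phi_1$$
completes the proof. The main subtlety is ensuring at each step that the diffeomorphisms used ($\phi_i$ and $\psi_i$) can be taken isotopic to the identity, so that sign conventions and the hypotheses of Proposition~\ref{prop:contactanosov} are preserved; beyond this, the argument is a direct chaining of the stated results.
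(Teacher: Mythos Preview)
Your proof is correct and follows the same overall strategy as the paper's: reduce to two Reeb-Anosov flows for the common contact structure $\xi$ and invoke~\cite{BM24}. The paper's proof is terser, simply asserting ``By Theorem~\ref{thm:Rcovered}, $\Phi_0$ and $\Phi_1$ are both isotopically equivalent to Anosov Reeb flows for $\xi$'' without spelling out why the Reeb flow lives on $\xi$ rather than on the bicontact side $\eta_i$; you correctly fill this gap using Proposition~\ref{prop:contactanosov} and Gray stability, which is indeed the intended mechanism.
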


\begin{proof}
    By Theorem~\ref{thm:Rcovered}, $\Phi_0$ and $\Phi_1$ are both isotopically equivalent to Anosov Reeb flows for $\xi$. By Barthelm\'{e}--Mann--Bowden, all the Anosov Reeb flows for a given contact structure are isotopically equivalent, hence $\Phi_0$ and $\Phi_1$ are isotopically equivalent.
\end{proof}

\begin{remintro}
    The distinction between positive and negative contact structures is essential in the previous results. For instance, there exist closed $3$-manifolds admitting two \emph{positive} skewed $\R$-covered Anosov flows which are tangent to the same \emph{negative} contact structure, but are not orbit equivalent. Such examples are constructed by Barbot--Fenley in~\cite{BF24} for suitable covers of unit (co)tangent bundles of closed surfaces. Further candidate examples are constructed in~\cite{BM22}.
\end{remintro}

    \subsection{Open questions}

We conclude the introduction with some further questions.

\medskip

First, we ask if Theorem~\ref{thmintro:liouv} extends to \emph{semi-conjugacies} (continuous surjective maps sending leaves to leaves) between hypertaut foliations:

\begin{questionintro}
    If two hypertaut $C^1$-foliations are semi-conjugate, how are their Liouville thickenings related? What about their approximating positive contact pairs?
\end{questionintro}

Now considering Anosov flows, one can ask if a stronger version of Theorem~\ref{thmintro:anosov} holds:

\begin{questionintro}
If $\Phi_0$ and $\Phi_1$ are Anosov flows on $M$ generated by smooth vector fields $X_0$ and $X_1$, respectively, and if they are orbit equivalent through an orbit equivalence $h$, does there exist a smooth diffeomorphism $\widetilde{h}$ close to $h$ such that $\widetilde{h}_* X_0$ is homotopic to $X_1$ through $C^1$ Anosov vector fields?
\end{questionintro}

We remark that Theorem~\ref{thm:strongbifolapprox} would provide a $C^0$ path of $C^1$ Anosov vector fields, so one would hope to upgrade the regularity of the deformation.

In a different direction, one can ask if a converse to Theorem~\ref{thmintro:anosov} holds:

\begin{questionintro}
If two Anosov Liouville structures are exact symplectomorphic after completion, are their underlying Anosov flows orbit equivalent? 
\end{questionintro}

Recall that two Liouville structures $\lambda_0$ and $\lambda_1$ on a (finite type) Liouville manifold $\widehat{V}$ are exact symplectomorphic if there exists a diffeomorphism $\varphi : \widehat{V} \rightarrow \widehat{V}$ and a smooth map $f : \widehat{V} \rightarrow \R$ which is constant near infinity such that 
$$\lambda_1 = \varphi^* \lambda_0 + df.$$

The answer to the above question is positive for (skewed) $\R$-covered Anosov flows, by the work of Barthelm\'{e}--Mann--Bowden~\cite{BM24} and Marty~\cite{M24}. One possible way to address this question in general would be to consider the \emph{skeleta} of these two Liouville structures, since the Liouville flows restrict to (scalings of) the respective Anosov flows there; see~\cite{Mas24}. However, because of the `$df$' term in the definition of exact symplectomorphism, it is not immediately clear how to relate the two skeleta.

\medskip

One may also ask if a converse to Corollary~\ref{corintro:projanosov} holds; this was already raised by Hozoori:

\begin{questionintro}[Hozoori~\cite{Hoz24}]
    If two (oriented) Anosov flows are homotopic through projectively Anosov flows, are they orbit equivalent?
\end{questionintro}

In~\cite{B25}, Barthelm\'{e} asks: if $\Phi$ is an Anosov flow such that $\xi_+$, the positive contact structure of a supporting bicontact structure, admits an Anosov Reeb vector field, is $\Phi$ $\R$-covered? This is equivalent to:

\begin{questionintro}
    If $\xi_+$ is a positive contact structure on $M$ which supports a positive skewed $\R$-covered Anosov flow, is every Anosov flow supported by $\xi_+$ positive skewed $\R$-covered as well? If so, then all these flows are orbit equivalent.
\end{questionintro}

There is a similar statement for negative contact structures. An affirmative answer to this question would mean that the ``$\R$-coveredness nature'' of an Anosov flow can be determined from a bicontact structure supporting it.

\medskip

Finally, it would be interesting to strengthen Theorem~\ref{thmintrobeta:uniq}, in order to obtain a better control on the homotopies between approximating contact structures to a hypertaut foliation:

\begin{questionintro}
    Let $\mathcal{F}$ be a hypertaut $C^1$-foliation on $M$. Is the following statement true: for every neighborhood $\mathcal{U}$ of $T \mathcal{F}$, there exists a smaller neighborhood $\mathcal{V} \subset \mathcal{U}$ such that any two positive (resp.~negative) contact structures in $\mathcal{V}$ are homotopic through contact structures \underline{within $\mathcal{U}$}?
\end{questionintro}

We only prove this result for a neighborhood $\mathcal{U}$ corresponding to the set of plane fields transverse to a given smooth line field, which is sufficient for our purpose. To prove such a statement for weak foliations of Anosov flows, it would suffice to generalize our strategy to less regular transverse line fields, and apply the result to the strong line fields of the flow, which are continuous but not necessarily $C^1$.

        \subsection*{Acknowledgments}

J.~Bowden is supported by the Heisenberg-Program (BO 4423/4-1) of the German Science Foundation. T.~Massoni is supported by a Stanford Science Fellowship. 

We are particularly grateful to CIRM and the organizers of the conference \emph{Foliations and Diffeomorphism Groups}, as well as the hospitality of International Wissenschaftsforum in Heidelberg for hosting the Workshop \emph{Symplectic geometry and Anosov flows}, where parts of this project were born. We would also like to thank Thomas Barthelm\'{e}, Fabio Gironella, Rafael Potrie, and Jonathan Zung for their interest and valuable input.

    \section{Smoothing (bi)foliated homeomorphisms} \label{sec:smooth}

Recall that we are assuming that all the plane fields and line fields under consideration are orientable, and all the foliations are of class at least $C^1$.

In this section, we prove Theorem~\ref{thmintrobeta:approx} and Theorem~\ref{thmintrobeta:bifolapprox} from the Introduction. We first describe the strategy one dimension lower for the sake of clarity. We then move to the general case of $2$-dimensional foliations on $3$-manifolds, before explaining how to extend the strategy to bifoliations.

        \subsection{Preamble: the \texorpdfstring{$2$}{2}-dimensional case}

In order to convey the key ideas of our proof, we first describe it in the $2$-dimensional case, namely, for $1$-dimensional foliations on surfaces. We will put an emphasis on the main ideas at the expense of rigor. 

We consider the following setup. Let $\Sigma$ be a smooth, connected, oriented surface---not necessarily compact---together with two $C^1$-foliations $\mathcal{F}_0$ and $\mathcal{F}_1$ which are both cooriented. We then consider a homeomorphism $h : \Sigma \rightarrow \Sigma$ which sends the leaves of $\mathcal{F}_0$ to leaves of $\mathcal{F}_1$. For simplicity, we further assume that $h$ preserves the orientation of the surface as well as the coorientations of the foliations.

We want to approximate $h$ by a smooth diffeomorphism $\widetilde{h}$ such that the tangent line field of the foliation $\widetilde{\mathcal{F}}_1 \coloneqq \widetilde{h}_*(\mathcal{F}_0)$ is very close to that of $\mathcal{F}_1$. This problem is easy to solve \emph{locally}: near $p \in \Sigma$ and $ h(p)$, we can find $C^1$ coordinates $(x,y)$ in which $\mathcal{F}_0$ and $\mathcal{F}_1$ are horizontal (tangent to $\partial_x$), and $h$ is of the form
$$h(x,y) = \big(u(x,y), v(y)\big),$$
where $u(\, \cdot \,, y)$ and $v$ are \emph{strictly increasing}. See Figure~\ref{fig:h}.

\begin{figure}[!ht]
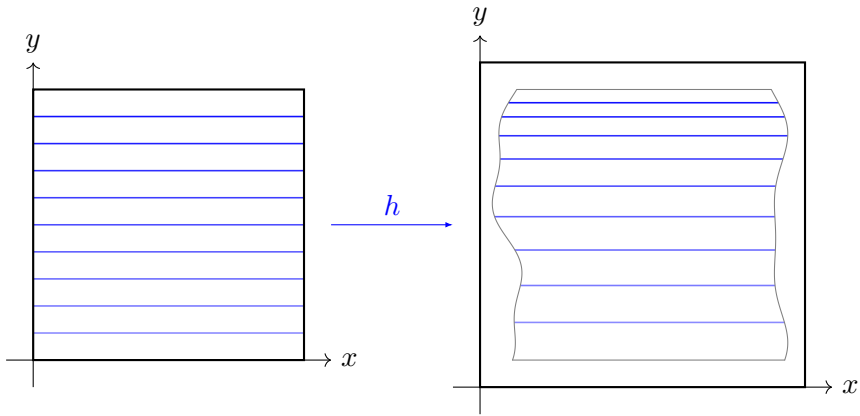

    \centering
    \includestandalone{tikz/h}
    \caption{Local depiction of $h$.}
    \label{fig:h}
\end{figure}

It is easy to approximate $u$ by a smooth map $\widetilde{u} = \widetilde{u}(x,y)$ satisfying $\partial_x \widetilde{u} > 0$, and $v$ by a smooth map $\widetilde{v} = \widetilde{v}(y)$ satisfying $\partial_y \widetilde{v} > 0$. Then, the map
$$\widetilde{h}(x,y) \coloneqq \big(\widetilde{u}(x,y), \widetilde{v}(y) \big)$$
is a smooth diffeomorphism onto its image and sends $\mathcal{F}_0$ to $\mathcal{F}_1$.

The difficulty of the proof is to carefully patch these local smoothings together in order to obtain a global \emph{diffeomorphism} (not merely a smooth map!) while keeping control on the tangent line field of $\widetilde{\mathcal{F}}_1$.

To achieve this, we consider a sufficiently fine triangulation $\mathcal{T}$ of $\Sigma$ in general position with $\mathcal{F}_0$, and so that each simplex is contained in a neighborhood in which $\mathcal{F}_0$ is standard (and so is $\mathcal{F}_1$ on the image of these neighborhoods under $h$). We will further assume that these neighborhoods come with good coordinate systems and overlap in a controlled way. More precisely, we consider for each simplex $\mathfrak{t} \in \mathcal{T}$ the following data:
\begin{itemize}
    \item A neighborhood $U_\mathfrak{t}$ containing $\mathfrak{t}$, 
    \item $C^1$ coordinates $\varphi_\mathfrak{t} : U_\mathfrak{t} \rightarrow (0,1)_{x,y}^2$ in which $\mathcal{F}_0$ becomes horizontal, i.e., spanned by $\partial_x$,
    \item An open neighborhood $\overline{h(U_\mathfrak{t})} \subset V_\mathfrak{t}$ with coordinates $\psi_\mathfrak{t} : V_\mathfrak{t} \rightarrow \R_{x,y}^2$ in which $\mathcal{F}_1$ becomes horizontal.
\end{itemize}
We further assume that the following conditions are satisfied:
\begin{itemize}
    \item For all $\mathfrak{t}, \mathfrak{t}' \in \mathcal{T}$, $U_\mathfrak{t} \cap U_{\mathfrak{t}'} \subset U_{\mathfrak{t} \cap \mathfrak{t}'}$, with the convention $U_\varnothing \coloneqq \varnothing$.
    \item If $\mathfrak{t} \in  \mathcal{T}$ is an edge and $\mathfrak{t}_0 \in \partial \mathfrak{t}$ is a vertex, we require that there is a leaf of $\mathcal{F}_0$ separating $U_\mathfrak{t} \cap \bigcup_{\mathfrak{t}_0 \in \partial \mathfrak{t}'} U_{\mathfrak{t}'}$ and $U_\mathfrak{t} \setminus U_{\mathfrak{t}_0}$. Here, the union runs over all the edges in $\mathcal{T}$ containing $\mathfrak{t}_0$.
    \item If $\mathfrak{t} \in \mathcal{T}$ is a $2$-simplex, then
    \begin{itemize}
        \item The set 
        $$\varphi_\mathfrak{t}\left(U_\mathfrak{t} \cap \bigcup_{\mathfrak{t}' \in \partial\mathfrak{t}} U_{\mathfrak{t}'}\right) \subset (0,1)^2$$
        contains some $\ell^\infty$-neighborhood $N^1_\mathfrak{t}$ of $\partial [0,1]^2$. Here, the union runs over all the edges in $\partial \mathfrak{t}$.
        \item The set 
        $$\varphi_\mathfrak{t}\left(U_\mathfrak{t} \cap \bigcup_{\mathfrak{t}' \cap \mathfrak{t} \neq \varnothing,\\ \mathfrak{t}' \neq \mathfrak{t}} U_{\mathfrak{t}'}\right) \subset (0,1)^2$$
        is contained in a $\ell^\infty$-neighborhood $N^2_\mathfrak{t} \Subset N^1_\mathfrak{t}$ of $\partial [0,1]^2$. Here, the union runs over all the $2$-simplices intersecting $\mathfrak{t}$.
    \end{itemize}
\end{itemize}
See Figure~\ref{fig:cover}. The technical conditions on the overlap of the neighborhoods of the simplices in $\mathcal{T}$ will ensure that the various smoothings of $h$ on those can be easily patched together into a global map.

\begin{figure}[!ht]
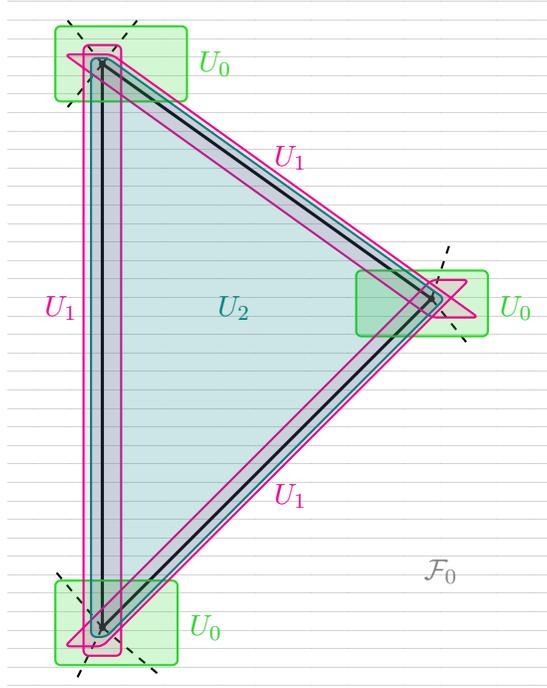

    \centering
    \includestandalone{tikz/cover}
    \caption{Neighborhoods of simplices.}
    \label{fig:cover}
\end{figure}

We will now proceed by induction on the dimension of the simplices to construct the desired smoothing of $h$. For $i \in \{0,1,2\}$, we denote by $\mathcal{T}_i$ the set of $i$-dimensional simplices in $\mathcal{T}$.

It is easy to find a smoothing $\widetilde{h}_0$ of $h$ on the union $U_0$ of the $U_\mathfrak{t}$'s over the vertices $\mathfrak{t} \in \mathcal{T}_0$. Moreover, this smoothing sends $\mathcal{F}_0$ to $\mathcal{F}_1$. The next step is to obtain a smoothing $\widetilde{h}_1$ on the union $U_1$ of the $U_\mathfrak{t}$'s over the edges $\mathfrak{t} \in \mathcal{T}_1$. For such an edge $\mathfrak{t} \in \mathcal{T}_1$, we consider the maps
\begin{align*}
    h_\mathfrak{t} &\coloneqq \psi_\mathfrak{t} \circ h \circ \varphi^{-1}_\mathfrak{t} : (0,1)^2 \rightarrow \R^2,\\
    \widetilde{h}_{\mathfrak{t}, 0} &\coloneqq \psi_\mathfrak{t} \circ \widetilde{h}_0 \circ \varphi^{-1}_\mathfrak{t} : \varphi(U_0 \cap U_\mathfrak{t}) \rightarrow \R^2,
\end{align*}
which are of the form
\begin{align*}
    h_\mathfrak{t}(x,y) &= \big(u_\mathfrak{t}(x,y), v_\mathfrak{t}(y) \big),\\
    \widetilde{h}_{\mathfrak{t},0}(x,y) &= \big(\widetilde{u}_{\mathfrak{t},0}(x,y), \widetilde{v}_{\mathfrak{t},0}(y) \big),
\end{align*}
where $u_\mathfrak{t}$ and $\widetilde{u}_{\mathfrak{t},0}$ (resp.~$v_\mathfrak{t}$ and $\widetilde{v}_{\mathfrak{t},0}$) are $C^0$-close on the set where they are both defined. See Figure~\ref{fig:h0}.

One can find smoothings $\widetilde{u}_\mathfrak{t} = \widetilde{u}_\mathfrak{t}(x,y)$ and $\widetilde{v}_\mathfrak{t} = \widetilde{v}_\mathfrak{t}(y)$ of $u_\mathfrak{t}$ and $v_\mathfrak{t}$, respectively, which satisfy:
\begin{itemize}
    \item $\partial_x \widetilde{u}_\mathfrak{t} > 0$,
    \item $\partial_y \widetilde{v}_\mathfrak{t} > 0$,
    \item $\widetilde{u}_\mathfrak{t}$ and $\widetilde{u}_{\mathfrak{t},0}$ coincide near $(0,1) \times \partial(0,1)$,
    \item $\widetilde{v}_\mathfrak{t}$ and $\widetilde{v}_{\mathfrak{t},0}$ coincide near $\partial (0,1)$.\footnote{One should be more precise about the exact neighborhoods where these maps coincide, but we remain informal for now.}
\end{itemize}

We can then patch together the maps $\psi^{-1}_\mathfrak{t} \circ \widetilde{h}_\mathfrak{t} \circ \varphi_\mathfrak{t}$, for the edges $\mathfrak{t} \in \mathcal{T}_1$, into a smooth embedding $\widetilde{h}_1 : U_1 \rightarrow \Sigma$ which is $C^0$-close to $h$, and which sends $\mathcal{F}_0$ to $\mathcal{F}_1$.

Finally, we want to find an appropriate smoothing of $h$ on the whole of $\Sigma$, using the previously constructed smoothing $\widetilde{h}_1$. At this point, we will also have to modify the target foliation.

As before, we consider for each $2$-simplex $\mathfrak{t} \in \mathcal{T}_2$
\begin{align*}
    h_\mathfrak{t} &\coloneqq \psi_\mathfrak{t} \circ h \circ \varphi^{-1}_\mathfrak{t} : (0,1)^2 \rightarrow \R^2,\\
    \widetilde{h}_{\mathfrak{t}, 1} &\coloneqq \psi_\mathfrak{t} \circ \widetilde{h}_1 \circ \varphi^{-1}_\mathfrak{t} : \varphi(U_1 \cap U_\mathfrak{t}) \rightarrow \R^2,
\end{align*}
which are of the form 
\begin{align*}
    h_\mathfrak{t}(x,y) &= \big(u_\mathfrak{t}(x,y), v_\mathfrak{t}(y) \big),\\
    \widetilde{h}_{\mathfrak{t},1}(x,y) &= \big(\widetilde{u}_{\mathfrak{t},1}(x,y), \widetilde{v}_{\mathfrak{t},1}(x,y) \big),
\end{align*}
where $u_\mathfrak{t}$ and $\widetilde{u}_{\mathfrak{t},1}$ (resp.~$v_\mathfrak{t}$ and $\widetilde{v}_{\mathfrak{t},1}$) are $C^0$-close on the set where they are both defined. See Figure~\ref{fig:h1}.

Note that $\widetilde{v}_{\mathfrak{t},1}(x,y)$ is locally constant in $x$; setting  
\begin{align*}
    \widetilde{v}^i_{\mathfrak{t},1}(y) \coloneqq \widetilde{v}_{\mathfrak{t},1}(x,y)
\end{align*}
for $x$ close to $i \in \{0,1\}$, we obtain two smooth approximations of $v_\mathfrak{t}$ satisfying $\partial_y \widetilde{v}^i_{\mathfrak{t},1} > 0$ and which coincide near $\partial (0,1)$. However, they might be \emph{different} since they come from (transversal) smoothings of $h$ near different edges in the boundary of $\mathfrak{t}$. We will have to interpolate between them in a graphical way, which will modify the image of $\mathcal{F}_0$. The key observation is that the modified line field will differ from $\mathcal{F}_1$ by a quantity that depends only on the geometry of the coverings and choices of coordinates, which are \emph{fixed}, and the quantity $\big\vert \widetilde{v}^1_{\mathfrak{t}, 1} - \widetilde{v}^0_{\mathfrak{t}, 1} \big\vert$, which can be made arbitrarily small at the previous step.

There is an extra difficulty due to the fact that the image of $h_\mathfrak{t}$, i.e., the set $\psi_\mathfrak{t}(h(U_\mathfrak{t})) \subset \R^2$, might have very ``wiggly sides'', making this graphical interpolation complicated. For simplicity, we will assume that $h_\mathfrak{t}$ is very close to the identity in the $C^0$ topology. This can be achieved by composing $h_\mathfrak{t}$ with the inverse of a smoothing of (a slight extension of) itself. Further details will be given below when we treat the $3$-dimensional case.

Then, we consider a cutoff function $\tau : [0,1] \rightarrow [0,1]$ which is nonincreasing and supported on a sufficiently large neighborhood of $0$, and we set
$$V_\mathfrak{t}(x,y) \coloneqq \tau(x) \widetilde{v}^0_{\mathfrak{t},1}(y) + (1-\tau(x))\widetilde{v}^1_{\mathfrak{t},1}(y).$$
We can also find a smoothing $\widetilde{u}_\mathfrak{t}$ of $u_\mathfrak{t}$ which satisfies $\partial_x \widetilde{u}_\mathfrak{t} > 0$ and coincides with $\widetilde{u}_{\mathfrak{t}, 1}$ near $\partial (0,1)^2$, and we define:
$$\widetilde{h}_\mathfrak{t} \coloneqq \big(\widetilde{u}_\mathfrak{t}(x,y), V_\mathfrak{t}(\widetilde{u}_\mathfrak{t}(x,y), y) \big) = \big(\widetilde{u}_\mathfrak{t}(x,y), \widetilde{v}_\mathfrak{t}(x, y) \big),$$
which coincides with $\widetilde{h}_{\mathfrak{t},1}$ near $\partial (0,1)^2$. See Figure~\ref{fig:h2}. By definition, this map is a $C^1$ embedding which is $C^0$-close to $h_\mathfrak{t}$. Moreover, we have:
$$\big\vert \big(\widetilde{h}_\mathfrak{t}\big)_*\partial_x - \partial_x \vert \leq \vert \tau'\vert \big\vert \widetilde{v}^1_{\mathfrak{t}, 1} - \widetilde{v}^0_{\mathfrak{t}, 1} \big\vert.$$

\begin{figure}[!ht]
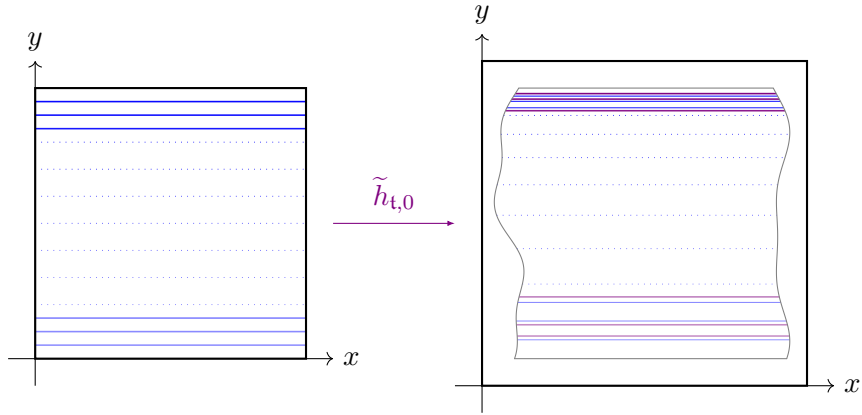
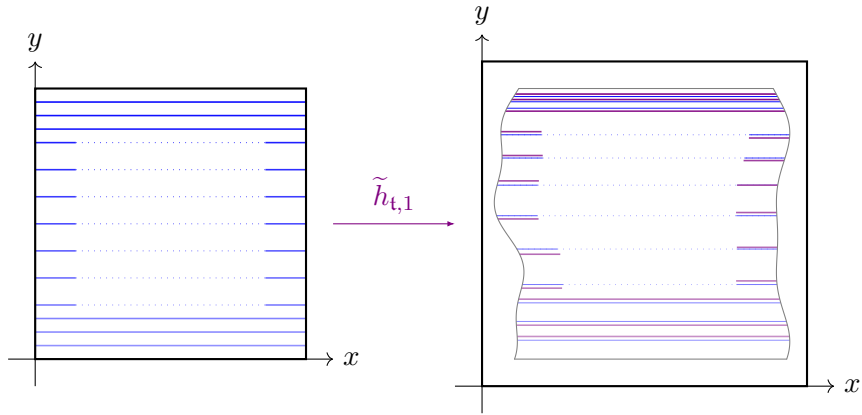
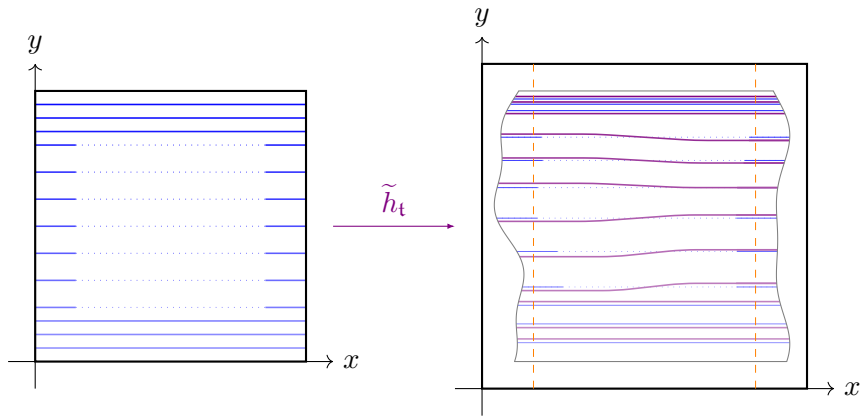

\centering
    \begin{subfigure}[b]{0.8\textwidth}
        \centering
        \includestandalone{tikz/h0}
        \caption{The smoothing $\widetilde{h}_{\mathfrak{t},0}$ for $\mathfrak{t} \in \mathcal{T}_1$.}
        \label{fig:h0}
    \end{subfigure}
    
    \vspace{1cm}

    \begin{subfigure}{0.8\textwidth}
        \centering
        \includestandalone{tikz/h1}
        \caption{The smoothing $\widetilde{h}_{\mathfrak{t},1}$ for $\mathfrak{t} \in \mathcal{T}_2$.}
        \label{fig:h1}
    \end{subfigure}

    \vspace{1cm}

    \begin{subfigure}{0.8\textwidth}
        \centering
        \includestandalone{tikz/h2}
        \caption{The smoothing $\widetilde{h}_{\mathfrak{t}}$ for $\mathfrak{t} \in \mathcal{T}_2$.}
        \label{fig:h2}
    \end{subfigure}
    \caption{Steps of the smoothing procedure.}
\end{figure}

Here, the size of $\tau'$ is essentially fixed by the setup, while the difference $\big\vert \widetilde{v}^1_{\mathfrak{t}, 1} - \widetilde{v}^0_{\mathfrak{t}, 1} \big\vert$ depends on the choice of smoothing $\widetilde{h}_1$ on $U_1$, and can be made arbitrarily small. Therefore, we can guarantee that the smooth map $\widetilde{h} = \widetilde{h}_2$ obtained by patching together the maps $\psi^{-1}_\mathfrak{t} \circ \widetilde{h}_\mathfrak{t} \circ \varphi_\mathfrak{t}$, for the $2$-simplices $\mathfrak{t} \in \mathcal{T}_2$, sends $\mathcal{F}_0$ to a foliation whose line field is arbitrarily $C^0$-close to that of $\mathcal{F}_1$.

\bigskip

Let us briefly explain how to adapt the strategy to the bifoliated case. We consider two pairs of transverse cooriented $C^1$-foliations $(\mathcal{F}_0, \mathcal{G}_0)$ and $(\mathcal{F}_1, \mathcal{G}_1)$ on $\Sigma$, as well as a homeomorphism $h : \Sigma \rightarrow \Sigma$ sending the leaves of $\mathcal{F}_0$ (resp.~$\mathcal{G}_0$) to leaves of $\mathcal{F}_1$ (resp.~$\mathcal{G}_1$). For every $p \in \Sigma$, there exist $C^1$ coordinates near $p \in \Sigma$ and near $h(p)$ in which $h$ is of the form
$$h(x,y) = (u(x), v(y)),$$
where $u$ and $v$ are both continuous and strictly increasing functions. Using the previous strategy, it is easy to produce a smoothing $\widetilde{h}_1$ of $h$ in a neighborhood of the $1$-skeleton of a sufficiently fine and generic triangulation $\mathcal{T}$ of $\Sigma$, such that $\widetilde{h}_1$ still sends $(\mathcal{F}_0, \mathcal{G}_0)$ to $(\mathcal{F}_1, \mathcal{G}_1)$. For the extension over the $2$-simplices, we can proceed similarly by extending $\widetilde{h}_1$ by graphical interpolations in both the vertical and horizontal directions. Concretely, we first define a new bifoliation $\big(\widetilde{\mathcal{F}}_1, \widetilde{\mathcal{G}}_1\big)$ which coincides with $(\mathcal{F}_1, \mathcal{G}_1)$ near $h(\mathcal{T}_1)$ by a suitable interpolation, and extend $\widetilde{h}_1$ so that it maps $(\mathcal{F}_0, \mathcal{G}_0)$ to $\big(\widetilde{\mathcal{F}}_1, \widetilde{\mathcal{G}}_1\big)$. As before, we will be able to ensure that the line field of $\widetilde{\mathcal{F}}_1$ (resp.~$\widetilde{\mathcal{G}}_1$) is very close to that of $\mathcal{F}_1$ (resp.~$\mathcal{G}_1$).

\bigskip

We will now consider the $3$-dimensional case and make some of the previous definitions and technical steps more precise.

        \subsection{Adapted coordinates and clean covers}

Let $\mathcal{F}$ be a cooriented $C^1$-foliation on $M$.

\begin{defn} \label{def:adapted}
A $C^1$ coordinate system $(x,y,z)$ near $p \in M$ is \textbf{adapted to $\mathcal{F}$} if in these coordinates, 
\begin{align} \label{eq:adapted}
T \mathcal{F} = \mathrm{span}\big\{ \partial_x, \partial_y\},
\end{align}
and $\partial_z$ is positively transverse to $\mathcal{F}$.
\end{defn}

There exists a uniform constant $\delta_0 > 0$ such that every open ball of radius less than $\delta_0$ in $M$ admits coordinates adapted to $\mathcal{F}$.

\medskip

Let $0 < \delta < \delta_0$ and $\mathcal{T}$ be a triangulation of $M$. We say that $\mathcal{T}$ is \textbf{$\delta$-fine} if each of its simplices is included in a ball of radius $\delta/2$. We now assume that $\mathcal{T}$ is $\delta$-fine and in general position with $\mathcal{F}$, which can always be achieved by considering a sufficiently fine and suitable subdivision of $\mathcal{T}$ and applying Thurston's Jiggling Lemma~\cite{T74} (see also~\cite[Section 4A2]{V16}).

For $0 \leq i \leq 3$, we write
$$\mathcal{T}_i \coloneqq \big\{ \mathfrak{t} \in \mathcal{T} \ \big\vert \ \dim(\mathfrak{t}) = i \big\}.$$

\begin{defn} \label{def:cover}
A \textbf{$\delta$-clean cover} $\big(\mathcal{U}, \boldsymbol{\varphi}\big)$ of $M$ adapted to $\mathcal{F}$ and modeled on $\mathcal{T}$ is a collection $\mathcal{U} = (U_\mathfrak{t})_{\mathfrak{t} \in \mathcal{T}}$ of open subsets of $M$ indexed by the simplices of $\mathcal{T}$, together with a collection of $C^1$ diffeomorphisms $\boldsymbol{\varphi} = (\varphi_\mathfrak{t})_{\mathfrak{t} \in \mathcal{T}}$, $\varphi_\mathfrak{t} : \overline{U}_\mathfrak{t} \rightarrow [0,1]^3$, such that for every $\mathfrak{t} \in \mathcal{T}$, the following properties hold.
\begin{enumerate}
    \item $\mathfrak{t} \subset U_\mathfrak{t}$, $\mathrm{diam}(U_\mathfrak{t}) < \delta$, and $\varphi_\mathfrak{t}$ defines coordinates on $\overline{U}_\mathfrak{t}$ adapted to $\mathcal{F}$,
    \item If $\mathfrak{t}' \in \mathcal{T}$, then $U_\mathfrak{t} \cap U_{\mathfrak{t}'} \subset U_{\mathfrak{t}\cap \mathfrak{t}'}$, where $U_\varnothing = \varnothing$ by convention,
    \item If $\dim(\mathfrak{t}) \geq 1$, there exists a subset $B_\mathfrak{t} \subset \partial[0,1]^3$ made of the union of $\dim(\mathfrak{t})$ pairs of opposite faces of $[0,1]^3$, and a width $0 < w_\mathfrak{t} < 0.1$, such that
    \begin{enumerate}
        \item The set 
        $$N_\mathfrak{t} \coloneqq \varphi_\mathfrak{t} \left( \overline{U}_\mathfrak{t} \cap \bigcup_{\mathfrak{t}' \in \partial \mathfrak{t}} U_{\mathfrak{t}'} \right) \subset [0,1]^3$$
        contains the $\ell^\infty$-neighborhood of radius $2w_\mathfrak{t}$ of $B_\mathfrak{t}$,
        \item For every $\mathfrak{t}' \in \mathcal{T}$ with $\dim(\mathfrak{t}') = \dim(\mathfrak{t})$ and $\mathfrak{t}' \neq \mathfrak{t}$, the set
        $$\varphi_\mathfrak{t} \left( \overline{U}_\mathfrak{t} \cap \overline{U}_{\mathfrak{t}'}\right) \subset N_\mathfrak{t}$$
        is contained in the $\ell^\infty$-neighborhood of radius $w_\mathfrak{t}$ of $B_\mathfrak{t}$.
        \end{enumerate}
\end{enumerate}
\end{defn}

For $0 \leq i \leq 3$, we write
    $$U_i \coloneqq \bigcup_{\mathfrak{t} \in \mathcal{T}_i} U_\mathfrak{t},$$
so that $U_i$ is a neighborhood of the $i$-skeleton of $\mathcal{T}$.

Clean covers of $M$ can easily be constructed by first considering a sufficiently fine and generic triangulation $\mathcal{T}$ of $M$, and then proceeding by induction on the skeleton of $\mathcal{T}$:

\begin{lem} \label{lem:cleancov}
For every $0 < \delta < \delta_0$, there exists a $\delta$-clean cover of $M$ adapted to $\mathcal{F}$ and modeled on some sufficiently fine triangulation of $M$.
\end{lem}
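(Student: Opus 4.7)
The strategy is to first secure a sufficiently fine, generic triangulation of $M$, then construct the neighborhoods $U_\mathfrak{t}$ and the adapted charts $\varphi_\mathfrak{t}$ by upward induction on $\dim(\mathfrak{t})$, exploiting the flexibility of adapted coordinates within each horizontal leaf.

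\textbf{Triangulation.} Starting from any smooth triangulation of $M$ and iterating barycentric subdivision, we obtain arbitrarily fine triangulations; by Thurston's Jiggling Lemma, we may further perturb the vertices so that the resulting triangulation $\mathcal{T}$ is in general position with respect to $\mathcal{F}$. For a parameter $\delta' < \delta/2$ chosen small enough, we ensure that $\mathcal{T}$ is $\delta'$-fine and that each closed simplex $\overline{\mathfrak{t}}$ lies inside an open ball of radius less than $\delta_0$ admitting an adapted chart in the sense of \Cref{def:adapted}. Moreover, for every $k$-simplex with $k \leq 2$, the tangent $k$-plane to $\mathfrak{t}$ at each point is transverse to $T\mathcal{F}$.

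\textbf{Placement of simplices in adapted charts.} For each $\mathfrak{t} \in \mathcal{T}$, choose a $C^1$ diffeomorphism $\widetilde{\varphi}_\mathfrak{t}$ from an open neighborhood $\widetilde{U}_\mathfrak{t} \supset \overline{\mathfrak{t}}$ of diameter less than $\delta$ onto an open set in $\R^3$ containing $[0,1]^3$, in which $\mathcal{F}$ becomes horizontal. Because adapted coordinates allow arbitrary $z$-dependent reparametrization in the $(x,y)$-plane, one may further arrange $\widetilde{\varphi}_\mathfrak{t}(\overline{\mathfrak{t}})$ to sit in a ``standard position'' inside $[0,1]^3$ compatible with the desired $B_\mathfrak{t}$: a vertex maps to the center $(1/2,1/2,1/2)$; an edge becomes a curve joining the centers of a pair of opposite faces (which serves as $B_e$); a triangle becomes a (tilted) surface whose three vertices lie on three of four chosen lateral faces; a tetrahedron has its four vertices on four of the six faces in general position.

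\textbf{Inductive construction.} The final $U_\mathfrak{t}$, together with rescaled charts $\varphi_\mathfrak{t} : \overline{U}_\mathfrak{t} \to [0,1]^3$, are defined by upward induction on $\dim(\mathfrak{t})$. For a vertex $v$, take $U_v$ as the preimage of a small concentric cube around $\widetilde{\varphi}_v(v)$. For $\mathfrak{t}$ of dimension $k \geq 1$, assuming all $U_{\mathfrak{t}'}$ with $\mathfrak{t}' \in \partial \mathfrak{t}$ have been built, choose $U_\mathfrak{t}$ as the preimage (via $\widetilde{\varphi}_\mathfrak{t}$) of a box-like neighborhood of $\widetilde{\varphi}_\mathfrak{t}(\overline{\mathfrak{t}})$ inside $[0,1]^3$ satisfying:
\begin{itemize}
    \item $\mathfrak{t} \subset U_\mathfrak{t} \subset \widetilde{U}_\mathfrak{t}$;
    \item $U_\mathfrak{t}$ is thin transverse to $\mathfrak{t}$, so that (by the finiteness of $\mathcal{T}$ and a small enough $\delta'$) it is disjoint from every $U_{\mathfrak{t}''}$ with $\overline{\mathfrak{t}''} \cap \overline{\mathfrak{t}} = \varnothing$;
    \item Every point of $U_\mathfrak{t}$ whose image under $\varphi_\mathfrak{t}$ lies in the $\ell^\infty$-$2w_\mathfrak{t}$-neighborhood of $B_\mathfrak{t}$ belongs to $U_{\mathfrak{t}'}$ for some $\mathfrak{t}' \in \partial \mathfrak{t}$, arranged by having each boundary neighborhood extend appropriately along its standard placement.
\end{itemize}
Condition (2) of \Cref{def:cover} then follows automatically: overlaps with lower-dimensional $U_{\mathfrak{t}'}$ are either contained in $U_{\mathfrak{t}'} = U_{\mathfrak{t} \cap \mathfrak{t}'}$ (when $\mathfrak{t}' \in \partial \mathfrak{t}$) or empty by thinness. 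Condition (3b) for same-dimensional neighbors follows because two adjacent $k$-simplices share a face of dimension less than $k$, so transversal thinness confines their overlap to a narrow neighborhood of that shared face, which by design lies in the $w_\mathfrak{t}$-thickening of $B_\mathfrak{t}$.

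\textbf{Main obstacle.} The principal difficulty is ensuring consistency of the standard placements across all charts: the relative positions of the faces $\mathfrak{t}' \in \partial \mathfrak{t}$ inside the chart $\widetilde{\varphi}_\mathfrak{t}$ must collectively cover the $2w_\mathfrak{t}$-thickening of $B_\mathfrak{t}$, and simultaneously the chart $\widetilde{\varphi}_{\mathfrak{t}'}$ must agree (up to a controlled $C^1$ transition on the overlap) with the standard position of $\mathfrak{t}'$ in $\widetilde{\varphi}_\mathfrak{t}$. This is resolved by choosing $\delta'$ (and hence $\mathrm{diam}(\widetilde{U}_\mathfrak{t})$) sufficiently small: each chart then has enough freedom to rescale and reparametrize $\mathfrak{t}$ and its faces into the prescribed model, while a uniform choice of widths $w_\mathfrak{t} \in (0, 0.1)$ is possible. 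The entire argument is a finite combinatorial exercise, since the local complexity of $\mathcal{T}$ is uniformly bounded in three dimensions.
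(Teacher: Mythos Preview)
Your proposal is correct and follows essentially the same approach as the paper: the paper does not give a detailed proof of this lemma, merely stating that clean covers ``can easily be constructed by first considering a sufficiently fine and generic triangulation $\mathcal{T}$ of $M$, and then proceeding by induction on the skeleton of $\mathcal{T}$,'' which is exactly the strategy you elaborate. Your write-up supplies more detail than the paper itself (in particular the ``standard placement'' of simplices within adapted cubes and the handling of the overlap conditions), all of which is in the spirit of what the authors leave implicit.
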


        \subsection{Foliated homeomorphisms}

Let $h : (M, \mathcal{F}_0) \rightarrow (M, \mathcal{F}_1)$ be a foliated homeomorphism, where $\mathcal{F}_0$ and $\mathcal{F}_1$ are cooriented $C^1$-foliations, and $h$ preserves the coorientations.

In coordinates adapted to $\mathcal{F}_0$ and $\mathcal{F}_1$, $h$ is locally of the form
\begin{align} \label{eq:normalformh}
    h(x,y,z) = \big(u_1(x,y,z), u_2(x,y,z), v(z)\big),
\end{align}
where $u_1$, $u_2$, and $v$ are continuous functions, and $v$ is \emph{strictly increasing}. Moreover, $$u_z : (x,y) \mapsto (u_1(x,y,z), u_2(x,y,z)) \in \R^2$$ defines a $1$-parameter family of $C^0$ embeddings.\footnote{Here, $v$ stands for `vertical' and $u$ stands for `urizontal'.}

We choose $\delta > 0$ small enough so that every open ball of radius $\delta$ admits coordinates adapted to $\mathcal{F}_0$, and the image of such a ball under $h$ is included in a ball which admits coordinates adapted to $\mathcal{F}_1$. Then, we consider a $\delta$-clean cover $(\mathcal{U}, \boldsymbol{\varphi})$ adapted to $\mathcal{F}_0$ and modeled on some triangulation $\mathcal{T}$ of $M$ which is $\delta$-fine and in general position with $\mathcal{F}_0$. For each $\mathfrak{t} \in \mathcal{T}$, we choose an open set $V_\mathfrak{t} \subset M$ containing $\overline{h(U_\mathfrak{t})}$ together with coordinates $\psi_\mathfrak{t} : V_\mathfrak{t} \hookrightarrow \R^3$ adapted to $\mathcal{F}_1$.

For $\epsilon > 0$, we will write $[\textrm{quantity}]\lesssim \epsilon$ to mean $[\textrm{quantity}] \leq C \epsilon$ for some unspecified constant $C > 0$ which does not depend on $\epsilon$.

For each $\mathfrak{t} \in \mathcal{T}$, we define
    $$h_\mathfrak{t} \coloneqq \psi_\mathfrak{t} \circ h \circ \varphi^{-1}_\mathfrak{t} : [0,1]^3 \rightarrow \R^3$$
    which is of the form
    $$h_\mathfrak{t}(x,y,z) = \big(u_\mathfrak{t}(x,y,z), v_\mathfrak{t}(z)\big) \in \R^2 \times \R$$
    for some family of $C^0$ embeddings  $u_\mathfrak{t}( \, \cdot\, , \, \cdot \, , z) : [0,1]^2 \hookrightarrow \R^2$ and a continuous, strictly increasing function $v_\mathfrak{t} : [0,1] \rightarrow \R$.

We decompose the proof of Theorem~\ref{thmintrobeta:approx} into several steps. The first two steps---smoothing $h$ near the $0$- then $1$-skeleton of $\mathcal{T}$---are relatively straightforward. The last two steps consist of smoothing near the $2$-skeleton and then extending over the $3$-cells, and will be more technical. Indeed, some care will be required in order to control the derivatives of the smoothing along the leaves of $\mathcal{F}_0$.

    \subsubsection{Smoothing near the \texorpdfstring{$0$}{0}-skeleton}

For the first step, one can simply consider a $C^1$ diffeomorphism that preserves leaves, by smoothing $h$ leafwise, and independently in the transverse direction. In order to set up notation for later steps we make this more precise.

Let $\epsilon > 0$. For $\mathfrak{t} \in \mathcal{T}_0$, we consider
\begin{itemize}
    \item A $C^1$ function $\widetilde{v}_\mathfrak{t} : [0,1] \rightarrow \R$ satisfying
    \begin{align*}
        \partial_z \widetilde{v}_\mathfrak{t} > 0, \qquad \vert \widetilde{v}_\mathfrak{t} - v_\mathfrak{t} \vert_{C^0} < \epsilon,
    \end{align*}
    as provided by the first item in Lemma~\ref{lem:smoothincrease1},
    \item A $C^1$ map $\widetilde{u}_\mathfrak{t} :  [0,1]^3 \rightarrow \R^2$, such that
    \begin{align*}
        \vert \widetilde{u}_\mathfrak{t} - u_\mathfrak{t} \vert_{C^0} < \epsilon,
    \end{align*}
    and for every $z \in [0,1]$, $\widetilde{u}_\mathfrak{t}( \, \cdot \, , z) : [0,1]^2 \rightarrow \R^2$ is a $C^1$ embedding, as provided by the first item in Lemma~\ref{lem:smoothemb1}.
\end{itemize}

We can then define $\widetilde{h}_\mathfrak{t}$ as 
$$\widetilde{h}_\mathfrak{t} \coloneqq \big(\widetilde{u}_\mathfrak{t}(x,y,z), \widetilde{v}_\mathfrak{t}(z)\big),$$
so that $\widetilde{h}_\mathfrak{t} : [0,1]^3 \rightarrow \R^3$ is a $C^1$ embedding (as a proper injective immersion) and 
$$\vert \widetilde{h}_\mathfrak{t} - h_\mathfrak{t}\vert_{C^0} \lesssim \epsilon.$$
We combine the $\widetilde{h}_\mathfrak{t}$'s, $\mathfrak{t} \in \mathcal{T}_0$, together into a map $\widetilde{h}_0 : U_0 \rightarrow M$ defined as 
\begin{align*}
    \widetilde{h}_0(p) \coloneqq \psi_\mathfrak{t}^{-1} \circ \widetilde{h}_\mathfrak{t} \circ \varphi_\mathfrak{t} (p)
\end{align*}
for $\mathfrak{t} \in \mathcal{T}_0$ and $p \in U_\mathfrak{t}$. This expression makes sense for $\epsilon > 0$ small enough, so that the image of $\widetilde{h}_\mathfrak{t}$ is contained in $\psi_\mathfrak{t}(V_\mathfrak{t})$. Also, the $U_\mathfrak{t}$'s, $\mathfrak{t} \in \mathcal{T}_0$, are pairwise disjoint by definition. By construction, $\widetilde{h}_0$ is a $C^1$ embedding sending $\mathcal{F}_0$ (restricted to $U_0$) to $\mathcal{F}_1$ and satisfies 
\begin{align*}
    d_{C^0}\big(h_{\vert U_0}, \widetilde{h}_0\big) \lesssim \epsilon.
\end{align*}
In summary, we have proved:

\begin{lem} \label{lem:skel0}
For every $\epsilon_0 > 0$, there exists a $C^1$ embedding $\widetilde{h}_0 : U_0 \rightarrow M$ satisfying
\begin{align*}
    d_{C^0}\big(h_{\vert U_0}, \widetilde{h}_0\big) < \epsilon_0, \qquad
    (\widetilde{h}_0)_* \mathcal{F}_0 = \mathcal{F}_1.
\end{align*}
\end{lem}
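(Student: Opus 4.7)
The plan is to unpack the construction sketched just before the lemma statement and verify that it produces a map with all the required properties. Since the $U_\mathfrak{t}$ for vertices $\mathfrak{t} \in \mathcal{T}_0$ are pairwise disjoint---indeed, by Definition~\ref{def:cover}, $U_\mathfrak{t} \cap U_{\mathfrak{t}'} \subset U_{\mathfrak{t} \cap \mathfrak{t}'} = U_\varnothing = \varnothing$ for distinct vertices---there is no genuine patching to do at this first step; the real work is simply to check that each local model $\widetilde{h}_\mathfrak{t}$ is a $C^1$ embedding and that the $C^0$ bound survives the coordinate changes.

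First, I would fix a small auxiliary $\epsilon > 0$ to be shrunk at the end. For each $\mathfrak{t} \in \mathcal{T}_0$, I would apply Lemma~\ref{lem:smoothincrease1} to the continuous strictly increasing function $v_\mathfrak{t} : [0,1] \rightarrow \R$ to obtain a $C^1$ approximation $\widetilde{v}_\mathfrak{t}$ with $\partial_z \widetilde{v}_\mathfrak{t} > 0$ and $|\widetilde{v}_\mathfrak{t} - v_\mathfrak{t}|_{C^0} < \epsilon$, and apply Lemma~\ref{lem:smoothemb1} to the continuous family of $C^0$ embeddings $u_\mathfrak{t}(\, \cdot \, , \, \cdot \, , z) : [0,1]^2 \hookrightarrow \R^2$ to obtain a $C^1$ map $\widetilde{u}_\mathfrak{t}$ on $[0,1]^3$ each of whose $z$-slices is a $C^1$ embedding and with $|\widetilde{u}_\mathfrak{t} - u_\mathfrak{t}|_{C^0} < \epsilon$. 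Assembling these gives $\widetilde{h}_\mathfrak{t}(x,y,z) \coloneqq \bigl(\widetilde{u}_\mathfrak{t}(x,y,z), \widetilde{v}_\mathfrak{t}(z)\bigr)$.

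To check that $\widetilde{h}_\mathfrak{t}$ is a $C^1$ embedding, injectivity comes in two steps: if $\widetilde{h}_\mathfrak{t}(x,y,z) = \widetilde{h}_\mathfrak{t}(x',y',z')$, then strict monotonicity of $\widetilde{v}_\mathfrak{t}$ forces $z = z'$, after which the slice embedding property forces $(x,y) = (x',y')$. Immersivity follows from the block-triangular Jacobian, whose diagonal blocks are the differential of the slice embedding (injective by hypothesis) and $\partial_z \widetilde{v}_\mathfrak{t} > 0$. Since $[0,1]^3$ is compact, a $C^1$ injective immersion is automatically a $C^1$ embedding.

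Finally, I would define $\widetilde{h}_0(p) \coloneqq \psi_\mathfrak{t}^{-1} \circ \widetilde{h}_\mathfrak{t} \circ \varphi_\mathfrak{t}(p)$ for $p \in U_\mathfrak{t}$, which is well defined by the disjointness remark. For $\epsilon$ small enough, the image $\widetilde{h}_\mathfrak{t}([0,1]^3)$ stays inside $\psi_\mathfrak{t}(V_\mathfrak{t})$, so this is a bona fide $C^1$ embedding on $U_0$. Since the adapted coordinates of Definition~\ref{def:adapted} straighten both foliations to the horizontal slices $\{z = \mathrm{const}\}$, and $\widetilde{h}_\mathfrak{t}$ sends $\{z = c\}$ into $\{z = \widetilde{v}_\mathfrak{t}(c)\}$, one obtains $(\widetilde{h}_0)_* \mathcal{F}_0 = \mathcal{F}_1$ on $U_0$. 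The $C^0$ bound $d_{C^0}(h|_{U_0}, \widetilde{h}_0) \lesssim \epsilon$ follows from the local $C^0$ bounds on each $\widetilde{h}_\mathfrak{t}$ combined with the uniform Lipschitz continuity of the finitely many charts $\varphi_\mathfrak{t}, \psi_\mathfrak{t}^{-1}$ on their compact closures; choosing $\epsilon$ sufficiently small makes this fall below $\epsilon_0$. There is no genuine obstacle at this step, since the vertex neighborhoods are disjoint; the real difficulty of the smoothing scheme is postponed to the inductive steps over edges, faces, and $3$-cells, where compatibility on overlaps must be engineered into the construction.
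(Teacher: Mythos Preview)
Your proposal is correct and follows essentially the same approach as the paper: you invoke the same two smoothing lemmas (Lemma~\ref{lem:smoothincrease1} and Lemma~\ref{lem:smoothemb1}), assemble $\widetilde{h}_\mathfrak{t}$ in the same way, and patch via the disjointness of the vertex neighborhoods. If anything, your justification that $\widetilde{h}_\mathfrak{t}$ is a $C^1$ embedding (via the block-triangular Jacobian and slice-wise injectivity) is more explicit than the paper's one-line remark.
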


    \subsubsection{Smoothing near the \texorpdfstring{$1$}{1}-skeleton}

The second step is essentially the same as the first one, but relative to the boundary of the edges of the triangulation.

Let $\epsilon > 0$ and $0 < \epsilon_0 \ll \epsilon$, to be chosen sufficiently small below. We choose an embedding $\widetilde{h}_0 : U_0 \rightarrow M$ as in Lemma~\ref{lem:skel0} for $\epsilon_0$.

Let $\mathfrak{t} \in \mathcal{T}_1$. We consider the map
$$\widetilde{h}_{\mathfrak{t}, 0} : \varphi_\mathfrak{t}(\overline{U}_\mathfrak{t} \cap U_0) \subset N_\mathfrak{t} \rightarrow \R^3$$
defined by
$$\widetilde{h}_{\mathfrak{t}, 0} \coloneqq \psi_\mathfrak{t} \circ \widetilde{h}_0 \circ \varphi^{-1}_\mathfrak{t}.$$
Its restriction to $[0,1]^2 \times \big([0, 2w_\mathfrak{t}] \sqcup [1-2w_\mathfrak{t}, 1] \big)$
is of the form
$$\widetilde{h}_{\mathfrak{t}, 0}(x,y,z) = \big(\widetilde{u}_{\mathfrak{t}, 0}(x,y,z), \widetilde{v}_{\mathfrak{t}, 0}(z)\big),$$
where $\partial_z \widetilde{v}_{\mathfrak{t}, 0} > 0$, and each $\widetilde{u}_{\mathfrak{t}, 0}( \, \cdot \, ,z) : [0,1]^2\rightarrow \R^2$ is a $C^1$ embedding. Moreover, after shrinking $\epsilon_0$, we may assume that for every $z \in [0, 2w_\mathfrak{t}] \sqcup [1-2w_\mathfrak{t}, 1]$,
\begin{align*}
    \vert \widetilde{u}_{\mathfrak{t}, 0}( \, \cdot \,,z)  - u_{\mathfrak{t}}( \, \cdot \,,z)\vert_{C^0} < \epsilon, \qquad \vert \widetilde{v}_{\mathfrak{t}, 0}(z)  - v_{\mathfrak{t}}(z)\vert< \epsilon, \qquad \widetilde{v}_{\mathfrak{t}, 0}(w_\mathfrak{t}) < \widetilde{v}_{\mathfrak{t}, 0}(1-w_\mathfrak{t}).
\end{align*}

We now consider
\begin{itemize}
    \item A $C^1$ function $\widetilde{v}_\mathfrak{t} : [0,1] \rightarrow \R$ satisfying
    \begin{align*}
        \partial_z \widetilde{v}_\mathfrak{t} > 0, \qquad \vert \widetilde{v}_\mathfrak{t} - v_\mathfrak{t} \vert_{C^0} < 2\epsilon,
    \end{align*}
    and for every $z \in [0, w_\mathfrak{t} ] \sqcup [1-w_\mathfrak{t}, 1]$,
    \begin{align*}
        \widetilde{v}_\mathfrak{t}(z) = \widetilde{v}_{\mathfrak{t}, 0}(z),
    \end{align*}
    as provided by the second item in Lemma~\ref{lem:smoothincrease1},
    \item A $C^1$ map $\widetilde{u}_\mathfrak{t} : [0,1]^3 \rightarrow \R^2$, such that
    \begin{align*}
        \vert \widetilde{u}_\mathfrak{t} - u_\mathfrak{t} \vert_{C^0} < 2\epsilon,
    \end{align*}
    for every $z \in [0, w_\mathfrak{t} ] \sqcup [1-w_\mathfrak{t}, 1]$,
    \begin{align*}
        \widetilde{u}_\mathfrak{t}(\, \cdot \, ,z) = \widetilde{u}_{\mathfrak{t}, 0}(\, \cdot \,, z),
    \end{align*}
    and for every $z \in [0,1]$, $\widetilde{u}_\mathfrak{t}( \, \cdot \, , z) : [0,1]^2 \rightarrow \R^2$ is a $C^1$ embedding, as provided by the second item of Lemma~\ref{lem:smoothemb1}.
\end{itemize}

Then we define $\widetilde{h}_\mathfrak{t}$ as 
$$\widetilde{h}_\mathfrak{t}(x,y,z) \coloneqq \big(\widetilde{u}_\mathfrak{t}(x,y,z), \widetilde{v}_\mathfrak{t}(z)\big),$$
so that $\widetilde{h}_\mathfrak{t} : [0,1]^3 \rightarrow \R^3$ is a $C^1$ embedding and 
$$\vert \widetilde{h}_\mathfrak{t} - h_\mathfrak{t}\vert_{C^0} \lesssim \epsilon.$$

We combine the $\widetilde{h}_\mathfrak{t}$'s, $\mathfrak{t} \in \mathcal{T}_1$, together into a map $\widetilde{h}_1 : U_1 \rightarrow M$ defined as
\begin{align*}
 \widetilde{h}_1(p) \coloneqq \psi_\mathfrak{t}^{-1} \circ \widetilde{h}_\mathfrak{t} \circ \varphi_\mathfrak{t} (p)
\end{align*}
for $\mathfrak{t} \in \mathcal{T}_1$ and $p \in U_\mathfrak{t}$. This expression makes sense for $\epsilon > 0$ small enough, so that the image of $\widetilde{h}_\mathfrak{t}$ is contained in $\psi_\mathfrak{t}(V_\mathfrak{t})$. Importantly, if $\mathfrak{t}, \mathfrak{t}' \in \mathcal{T}_1$ are such that $\mathfrak{t} \cap \mathfrak{t}' \neq \varnothing$ and if $p \in U_\mathfrak{t} \cap U_{\mathfrak{t}'}$, then
$$\psi_\mathfrak{t}^{-1} \circ \widetilde{h}_\mathfrak{t} \circ \varphi_\mathfrak{t} (p) = \psi_{\mathfrak{t}'}^{-1} \circ \widetilde{h}_{\mathfrak{t}'} \circ \varphi_{\mathfrak{t}'} (p) = \widetilde{h}_0(p),$$
which guarantees that $\widetilde{h}_1$ is well-defined.

By construction, $\widetilde{h}_1$ is a $C^1$ embedding sending $\mathcal{F}_0$ (restricted to $U_1$) to $\mathcal{F}_1$ and satisfies 
\begin{align*}
    d_{C^0}\big(h_{\vert U_1}, \widetilde{h}_1\big) \lesssim \epsilon.
\end{align*}
In summary, we have proved:

\begin{lem} \label{lem:skel1}
For every $\epsilon_1 > 0$, there exists a $C^1$ embedding $\widetilde{h}_1 : U_1 \rightarrow M$ satisfying
\begin{align*}
    d_{C^0}\big(h_{\vert U_1}, \widetilde{h}_1\big) < \epsilon_1, \qquad
    (\widetilde{h}_1)_* \mathcal{F}_0 = \mathcal{F}_1.
\end{align*}
\end{lem}

    \subsubsection{Smoothing near the \texorpdfstring{$2$}{2}-skeleton}

This step is more involved than the previous ones, as we need to start modifying the target foliation in a very careful way. This deformation will be graphical with respect to appropriately chosen coordinates. It will be crucial to make the dependence of the various objects and quantities as explicit as possible. 

Let $\epsilon > 0$ be such that $\epsilon \ll \min \{ w_\mathfrak{t}\  \vert \  \mathfrak{t} \in \mathcal{T}_2\}$, and consider $0 < \epsilon_1 \ll \epsilon$, to be chosen small enough below. We choose an embedding $\widetilde{h}_1 : U_1 \rightarrow M$ as in Lemma~\ref{lem:skel1} for $\epsilon_1$.

Let $\mathfrak{t} \in \mathcal{T}_2$. We consider the map
$$\widetilde{h}_{\mathfrak{t}, 1} : \varphi_\mathfrak{t}(\overline{U}_\mathfrak{t} \cap U_1) \subset N_\mathfrak{t} \rightarrow \R^3$$
defined by
$$\widetilde{h}_{\mathfrak{t}, 1} \coloneqq \psi_\mathfrak{t} \circ \widetilde{h}_1 \circ \varphi^{-1}_\mathfrak{t}.$$
We may assume that the set $B_\mathfrak{t}$ from Definition~\ref{def:cover} is the union of the faces $\{x=0\}$, $\{x=1\}$, $\{z=0\}$, and $\{z=1\}$ of $\partial[0,1]^3$.

The restriction of $\widetilde{h}_{\mathfrak{t}, 1}$ to $[0,1]^2 \times \big([0, 2w_\mathfrak{t}] \sqcup [1-2w_\mathfrak{t}, 1] \big)$
is of the form
$$\widetilde{h}_{\mathfrak{t}, 1}(x,y,z) = \big(\widetilde{u}_{\mathfrak{t}, 1}(x,y,z), \widetilde{v}_{\mathfrak{t}, 1}(x,z)\big),$$
where $\partial_z \widetilde{v}_{\mathfrak{t}, 1} > 0$, and each $\widetilde{u}_{\mathfrak{t}, 1}( \, \cdot \, ,z) : [0,1]^2\rightarrow \R^2$ is a $C^1$ embedding. 

The restriction of $\widetilde{h}_{\mathfrak{t}, 1}$ to $[0, 2w_\mathfrak{t}] \times [0,1]^2$
is of the form
$$\widetilde{h}_{\mathfrak{t}, 1}(x,y,z) = \big(\widetilde{u}^0_{\mathfrak{t}, 1}(x,y,z), \widetilde{v}^0_{\mathfrak{t}, 1}(z)\big),$$
where $\partial_z \widetilde{v}^0_{\mathfrak{t}, 1} > 0$.
Similarly, the restriction of $\widetilde{h}_{\mathfrak{t}, 1}$ to $[1-2w_\mathfrak{t}, 1] \times [0,1]^2$
is of the form
$$\widetilde{h}_{\mathfrak{t}, 1}(x,y,z) = \big(\widetilde{u}^1_{\mathfrak{t}, 1}(x,y,z), \widetilde{v}^1_{\mathfrak{t}, 1}(z)\big),$$
where $\partial_z \widetilde{v}^1_{\mathfrak{t}, 1} > 0$.

By construction, $\widetilde{v}^0_{\mathfrak{t}, 1}$ and $\widetilde{v}^1_{\mathfrak{t}, 1}$ coincide on $[0, 2w_\mathfrak{t}) \cup (1-2w_\mathfrak{t}, 1]$. However, these two maps might \emph{not} be equal on the whole of $[0,1]$, since they correspond to `transversal' smoothings of $h$ along different edges bounding the $2$-simplex $\mathfrak{t}$. We will need to carefully interpolate between those below. We further note that
\begin{align} \label{ineq:vtilde1}
    \big\vert \widetilde{v}^1_{\mathfrak{t}, 1}  - \widetilde{v}^0_{\mathfrak{t}, 1} \big\vert_{C^0} \leq \big\vert \widetilde{v}^1_{\mathfrak{t}, 1}  - v_\mathfrak{t} \big\vert_{C^0} + \big\vert \widetilde{v}^0_{\mathfrak{t}, 1}  - v_\mathfrak{t} \big\vert_{C^0} \lesssim \epsilon_1.
\end{align}

We also consider $0 < \overline{\epsilon} \ll \epsilon$ and an auxiliary smoothing $\overline{h}_\mathfrak{t}$ of $h_\mathfrak{t}$, which is a $C^1$ embedding $[0,1]^3 \rightarrow \R^3$ of the form
    \begin{align*}
        \overline{h}_\mathfrak{t}(x,y,z) = (\overline{u}_\mathfrak{t}(x,y,z), \overline{v}_\mathfrak{t}(z))
    \end{align*}
    for $(x,y,z) \in [0,1]^3$,
and such that 
$$\big\vert \overline{h}_\mathfrak{t} - h_\mathfrak{t}\big\vert_{C^0} < \overline{\epsilon} < \epsilon.$$
This embedding can be constructed as in the smoothing of $h$ near the $0$-skeleton. Note that it depends on $\overline{\epsilon}$, but not on $\epsilon_1$. We may further arrange that $\overline{h}_\mathfrak{t}$ is defined on a neighborhood $W_\mathfrak{t}$ of $[0,1]^3 \subset \R^3$, so that the image of $h_\mathfrak{t}$ is contained in the image of $\overline{h}_\mathfrak{t}$. Then, after shrinking $\epsilon_1$, we may also assume that the image of $\widetilde{h}_{\mathfrak{t},1}$ is contained in the image of $\overline{h}_\mathfrak{t}$, and by also shrinking $\overline{\epsilon}$, we may achieve
$$\big\vert \overline{h}_\mathfrak{t}^{-1} \circ \widetilde{h}_{\mathfrak{t},1} - \mathrm{id} \big\vert_{C^0} < \epsilon $$
on $N_\mathfrak{t}$. The role of this auxiliary smoothing $\overline{h}_\mathfrak{t}$ is to `straighten' the image of $[0,1]^3$ under $\widetilde{h}_{\mathfrak{t},1}$; the images of the lateral sides of that cube might be extremely `wiggly', which would complicate the extension of $\widetilde{h}_{\mathfrak{t},1}$, as we would like to perform a graphical deformation (in appropriate coordinates).

\begin{center}
\emph{We now fix the value of $\overline{\epsilon}$, and we will shrink $\epsilon_1$ further. Recall that they both depend on $\epsilon$, which was introduced first.}
\end{center}

We define:
$$\mathsf{h}_{\mathfrak{t},1} \coloneqq \overline{h}_\mathfrak{t}^{-1} \circ \widetilde{h}_{\mathfrak{t},1} : N_\mathfrak{t} \rightarrow W_\mathfrak{t},$$
which is of the form
$$\mathsf{h}_{\mathfrak{t}, 1}(x,y,z) = \big(\mathsf{u}_{\mathfrak{t}, 1}(x,y,z), \mathsf{v}_{\mathfrak{t}, 1}(z)\big),$$
where $\mathsf{v}_{\mathfrak{t}, 1} = \overline{v}^{-1}_\mathfrak{t} \circ \widetilde{v}_{\mathfrak{t},1}$. 

For $i \in \{0,1\}$, we also define
$$\mathsf{v}^i_{\mathfrak{t}, 1} \coloneqq \overline{v}^{-1}_\mathfrak{t} \circ \widetilde{v}^i_{\mathfrak{t},1},$$
so that for every $z \in [0,1]$
$$\big\vert \mathsf{v}^i_{\mathfrak{t},1}(z) - z\big\vert < \epsilon,$$
while by~\eqref{ineq:vtilde1} we have
\begin{align}
    \big\vert \mathsf{v}^0_{\mathfrak{t},1} - \mathsf{v}^1_{\mathfrak{t},1}\big\vert_{C^0} \lesssim \big\vert \partial_z \overline{v}^{-1}_\mathfrak{t} \big\vert_{C^0} \, \epsilon_1 \lesssim \epsilon_1.
\end{align}
Then, Lemma~\ref{lem:smoothemb2} provides a $C^1$ map $\mathsf{u}_\mathfrak{t} : [0,1]^3 \rightarrow \R^2$, such that for every $(x,y,z) \in \big([0,w_\mathfrak{t}] \sqcup [1-w_\mathfrak{t},1]\big) \times [0,1]^2 \cup [0,1]^2 \times \big([0,w_\mathfrak{t}] \sqcup [1-w_\mathfrak{t},1]\big)$, 
    \begin{align*}
        \mathsf{u}_\mathfrak{t}(x,y,z) = \mathsf{u}_{\mathfrak{t}, 1}(x,y,z),
    \end{align*}
    and for every $z \in [0,1]$,
    \begin{align*}
        \vert \mathsf{u}_\mathfrak{t}( \, \cdot\, , z) - \mathrm{id} \vert_{C^0} < 2\epsilon,
    \end{align*}
    and $\mathsf{u}_\mathfrak{t}( \, \cdot \, , z) : [0,1]^2 \rightarrow \R^2$ is a $C^1$ embedding.

Let $\tau_\mathfrak{t} : [0,1] \rightarrow [0,1]$ be a smooth cutoff function satisfying
\begin{itemize}
    \item $\tau_\mathfrak{t} =1$ on $[0, 2w_\mathfrak{t}]$ and $\tau_\mathfrak{t} = 0$ on $[1-2w_\mathfrak{t},1]$,
    \item $\tau_\mathfrak{t}$ is nonincreasing and $\tau_\mathfrak{t}' \geq -5$ (recall that $w_\mathfrak{t} \leq 0.1$).
\end{itemize}
We define
$$\mathsf{V}_\mathfrak{t}(x,y,z) = \mathsf{V}_\mathfrak{t}(x,z) \coloneqq \tau_\mathfrak{t}(x) \mathsf{v}^0_{\mathfrak{t},1}(z) + (1-\tau_\mathfrak{t}(x))\mathsf{v}^1_{\mathfrak{t},1}(z).$$
Note that $\tau_\mathfrak{t}$ only depends on the choice of clean cover, and
$$\partial_z \mathsf{V}_\mathfrak{t} > 0, \qquad \vert \partial_x \mathsf{V}_\mathfrak{t}\vert_{C^0} \leq 5 \big\vert \mathsf{v}^0_{\mathfrak{t},1} - \mathsf{v}^1_{\mathfrak{t},1}\big\vert_{C^0} \lesssim \epsilon_1.$$
Moreover, for every $(x,z) \in [0,1]^2$,
$$\vert\mathsf{V}_\mathfrak{t}(x,z) - z  \vert < \epsilon,$$
and for every $(x,z) \in \big([0,w_\mathfrak{t}] \sqcup [1-w_\mathfrak{t},1]\big) \times [0,1] \bigcup [0,1] \times \big([0,w_\mathfrak{t}] \sqcup [1-w_\mathfrak{t},1]\big)$,
$$\mathsf{V}_\mathfrak{t}(x,z) = \mathsf{v}_{\mathfrak{t},1}(x,z).$$
Therefore, the graphs of $z \mapsto \mathsf{V}_\mathfrak{t}( \, \cdot \, ,z)$, $z \in [0,1]$, define a $C^1$-foliation $\mathsf{F}_\mathfrak{t}$ on $W_\mathfrak{t}$ whose tangent plane field $T\mathsf{F}_\mathfrak{t}$ coincides with $H=\mathrm{span}\{\partial_x, \partial_y\}$ on $\big([0,w_\mathfrak{t}] \sqcup [1-w_\mathfrak{t},1]\big) \times [0,1]^2 \cup [0,1]^2 \times \big([0,w_\mathfrak{t}] \sqcup [1-w_\mathfrak{t},1]\big)$, and satisfies
\begin{align} \label{ineq:TF}
    d_{C^0}(T\mathsf{F}_\mathfrak{t}, H) \lesssim \epsilon_1.
\end{align}

We now define $\mathsf{h}_\mathfrak{t}$ as 
$$\mathsf{h}_\mathfrak{t}(x,y,z) \coloneqq \big(\mathsf{u}_\mathfrak{t}(x,y,z), \mathsf{V}_\mathfrak{t} \big(\mathsf{u}_\mathfrak{t}(x,y,z),z\big)\big),$$
so that $\mathsf{h}_\mathfrak{t} : [0,1]^3 \rightarrow W_\mathfrak{t}$ is a $C^1$ embedding sending the horizontal foliation on $[0,1]^3$ to $\mathsf{F}_\mathfrak{t}$, and 
$$\vert \mathsf{h}_\mathfrak{t} - \mathrm{id}\vert_{C^0} \leq 2\epsilon.$$

Finally, we set
$$\widetilde{h}_\mathfrak{t} \coloneqq \overline{h}_\mathfrak{t} \circ \mathsf{h}_\mathfrak{t} : [0,1]^3 \rightarrow \R^3.$$
By construction, the following hold:
\begin{itemize}
    \item For every $(x,y,z) \in \big([0,w_\mathfrak{t}] \sqcup [1-w_\mathfrak{t},1]\big) \times [0,1]^2 \cup [0,1]^2 \times \big([0,w_\mathfrak{t}] \sqcup [1-w_\mathfrak{t},1]\big)$, 
    $$\widetilde{h}_\mathfrak{t}(x,y,z) = \widetilde{h}_{\mathfrak{t},1}(x,y,z),$$
    \item There is a function $\omega_\mathfrak{t} : [0,\infty) \rightarrow [0,\infty)$ with $\lim_{s \rightarrow 0} \omega_\mathfrak{t}(s) =0$ (obtained from the modulus of continuity of $h_\mathfrak{t}$) such that
    \begin{align*}
        \big\vert \widetilde{h}_\mathfrak{t} - h_\mathfrak{t} \big\vert_{C^0}  &= \big\vert \overline{h}_\mathfrak{t} \circ \mathsf{h}_\mathfrak{t} - h_\mathfrak{t} \big\vert_{C^0} \\
        & \leq  \big\vert \overline{h}_\mathfrak{t} \circ \mathsf{h}_\mathfrak{t} - h_\mathfrak{t} \circ \mathsf{h}_\mathfrak{t} \big\vert_{C^0} + \big\vert h_\mathfrak{t} \circ \mathsf{h}_\mathfrak{t} - h_\mathfrak{t} \big\vert_{C^0} \\
        &\leq \epsilon + \omega_\mathfrak{t}(\epsilon)
    \end{align*}
    \item The image of the horizontal foliation on $[0,1]^3$ by $\widetilde{h}_{\mathfrak{t}}$, denoted by $\widetilde{\mathcal{F}}_\mathfrak{t}$, is the image of $\mathsf{F}_\mathfrak{t}$ by $\overline{h}_\mathfrak{t}$. Writing $H = \mathrm{span}\{ \partial_x, \partial_y\}$ as before, we have
    \begin{align*}
        d_{C^0}\big( T \mathcal{F}_\mathfrak{t}, H\big) &= d_{C^0}\big( d\overline{h}_\mathfrak{t}(T \mathsf{F}_\mathfrak{t}), d\overline{h}_\mathfrak{t}(H)\big) \\
       & \leq \vert d\overline{h}_\mathfrak{t}\vert_{C^0} \, d_{C^0}\big( T \mathsf{F}_\mathfrak{t}, H\big) \\
       &\lesssim \epsilon_1,
    \end{align*}
    so we can shrink $\epsilon_1$ to ensure
    $$d_{C^0}\big( T \mathcal{F}_\mathfrak{t}, H\big) < \epsilon.$$
\end{itemize}

We now combine the $\widetilde{h}_\mathfrak{t}$'s, $\mathfrak{t} \in \mathcal{T}_2$, together into a map $\widetilde{h}_2 : U_2 \rightarrow M$ defined as
\begin{align*}
 \widetilde{h}_2(p) \coloneqq \psi_\mathfrak{t}^{-1} \circ \widetilde{h}_\mathfrak{t} \circ \varphi_\mathfrak{t} (p)
\end{align*}
for $\mathfrak{t} \in \mathcal{T}_2$ and $p \in U_\mathfrak{t}$. As before, this expression makes sense for $\epsilon > 0$ small enough, so that the image of $\widetilde{h}_\mathfrak{t}$ is contained in $\psi_\mathfrak{t}(V_\mathfrak{t})$. Moreover, if $\mathfrak{t}, \mathfrak{t}' \in \mathcal{T}_2$ are such that $\mathfrak{t} \cap \mathfrak{t}' \neq \varnothing$ and if $p \in U_\mathfrak{t} \cap U_{\mathfrak{t}'}$, then
$$\psi_\mathfrak{t}^{-1} \circ \widetilde{h}_\mathfrak{t} \circ \varphi_\mathfrak{t} (p) = \psi_{\mathfrak{t}'}^{-1} \circ \widetilde{h}_{\mathfrak{t}'} \circ \varphi_{\mathfrak{t}'} (p) = \widetilde{h}_1(p),$$
which guarantees that $\widetilde{h}_2$ is well-defined.

By construction, $\widetilde{h}_2$ is a $C^1$ embedding sending $\mathcal{F}_0$ (restricted to $U_2$) to a foliation $\widetilde{\mathcal{F}}_1$ satisfying
$$d_{C^0}\big(T \mathcal{F}_1, T\widetilde{\mathcal{F}}_1 \big) \lesssim \epsilon,$$
and which coincides with $\mathcal{F}_1$ on a neighborhood of $h(\mathcal{T}_1)$ (here, the inequality is independent of $\epsilon_1$ and $\overline{\epsilon}$).
Moreover,
\begin{align*}
    d_{C^0}\big(h_{\vert U_2}, \widetilde{h}_2\big) \leq \omega_2(\epsilon)
\end{align*}
for some function $\omega_2 : [0,\infty) \rightarrow [0,\infty)$ with $\lim_{t \rightarrow 0} \omega_2(t) = 0$, which only depends on $h$ and the clean cover. 

In summary, we have proved:

\begin{lem} \label{lem:skel2}
For every $\epsilon_2 > 0$, there exists a $C^1$ embedding $\widetilde{h}_2 : U_2 \rightarrow M$ satisfying
\begin{align*}
    d_{C^0}\big(h_{\vert U_2}, \widetilde{h}_2\big) < \epsilon_2,
\end{align*}
and $(\widetilde{h}_2)_* \mathcal{F}_0 \eqqcolon \widetilde{\mathcal{F}}_1$ satisfies
\begin{align*}
    d_{C^0}\big({T \mathcal{F}_1}, {T\widetilde{\mathcal{F}}_1} \big) < \epsilon_2, \qquad \widetilde{\mathcal{F}}_1 = \mathcal{F}_1 \textrm{ near } h(\mathcal{T}_1).
\end{align*}
\end{lem}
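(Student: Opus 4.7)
The plan is to extend the 1-skeleton smoothing $\widetilde{h}_1$ from Lemma~\ref{lem:skel1} simplex-by-simplex over the 2-cells of $\mathcal{T}$, working in the adapted charts $\varphi_\mathfrak{t},\psi_\mathfrak{t}$ where both foliations become horizontal and $h_\mathfrak{t}$ takes the product form $(u_\mathfrak{t}(x,y,z),v_\mathfrak{t}(z))$. On each 2-simplex $\mathfrak{t}\in\mathcal{T}_2$ the pulled-back map $\widetilde{h}_{\mathfrak{t},1}:=\psi_\mathfrak{t}\circ\widetilde{h}_1\circ\varphi_\mathfrak{t}^{-1}$ is already defined on an $\ell^\infty$-neighborhood of the ``boundary face pair'' $B_\mathfrak{t}=\{x=0,1\}\cup\{z=0,1\}$; I must extend it across the interior of $[0,1]^3$ into a $C^1$ embedding that still pushes $\mathcal{F}_0$ to a foliation within $\epsilon_2$ of $\mathcal{F}_1$ and \emph{equal} to $\mathcal{F}_1$ near the 1-skeleton.

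The key difficulty is that on the two opposite faces $\{x=0\}$ and $\{x=1\}$, the vertical component of $\widetilde{h}_{\mathfrak{t},1}$ takes two a priori different values $\widetilde{v}^0_{\mathfrak{t},1},\widetilde{v}^1_{\mathfrak{t},1}$, coming from independent 1-skeleton smoothings along different edges of $\partial\mathfrak{t}$; their $C^0$-distance is only $O(\epsilon_1)$ by the triangle inequality against $v_\mathfrak{t}$. To bridge them across $\mathfrak{t}$ while preserving monotonicity in $z$ (so graphs still foliate) and keeping the tangent plane close to horizontal, I plan a graphical convex combination $\mathsf{V}_\mathfrak{t}(x,z):=\tau_\mathfrak{t}(x)\mathsf{v}^0_{\mathfrak{t},1}(z)+(1-\tau_\mathfrak{t}(x))\mathsf{v}^1_{\mathfrak{t},1}(z)$ with a fixed smooth cutoff $\tau_\mathfrak{t}$ whose slope depends only on the clean cover. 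Then $\partial_z\mathsf{V}_\mathfrak{t}>0$ automatically, while $|\partial_x\mathsf{V}_\mathfrak{t}|\lesssim|\tau_\mathfrak{t}'|\cdot\epsilon_1$ can be made arbitrarily small by choosing $\epsilon_1\ll\epsilon_2$ \emph{after} the cutoff is fixed. A secondary subtlety is that this graphical interpolation only makes sense in reasonably straight coordinates, whereas $\widetilde{h}_{\mathfrak{t},1}([0,1]^3)$ may have very wiggly lateral sides in the target chart. I bypass this by first producing an auxiliary product-form straightening $\overline{h}_\mathfrak{t}=(\overline{u}_\mathfrak{t},\overline{v}_\mathfrak{t})$ of $h_\mathfrak{t}$ with $|\overline{h}_\mathfrak{t}-h_\mathfrak{t}|<\overline{\epsilon}$ for some $\overline{\epsilon}$ fixed \emph{independently} of $\epsilon_1$, and performing the interpolation on the near-identity map $\mathsf{h}_{\mathfrak{t},1}:=\overline{h}_\mathfrak{t}^{-1}\circ\widetilde{h}_{\mathfrak{t},1}$; Lemma~\ref{lem:smoothemb2} is then used to extend the horizontal component $\mathsf{u}_{\mathfrak{t},1}$ to a family of slicewise $C^1$ embeddings $\mathsf{u}_\mathfrak{t}$ matching the boundary data. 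Setting $\widetilde{h}_\mathfrak{t}:=\overline{h}_\mathfrak{t}\circ(\mathsf{u}_\mathfrak{t},\mathsf{V}_\mathfrak{t}(\mathsf{u}_\mathfrak{t},\cdot))$, the plane-field estimate for the pushed-forward foliation is bounded by $|d\overline{h}_\mathfrak{t}|_{C^0}\cdot O(\epsilon_1)$, and the first factor is fixed.

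The final step is to glue the local pieces into $\widetilde{h}_2(p):=\psi_\mathfrak{t}^{-1}\circ\widetilde{h}_\mathfrak{t}\circ\varphi_\mathfrak{t}(p)$ for $p\in U_\mathfrak{t}$, $\mathfrak{t}\in\mathcal{T}_2$. By construction of $\mathsf{V}_\mathfrak{t}$ and $\mathsf{u}_\mathfrak{t}$ near $B_\mathfrak{t}$, we have $\widetilde{h}_\mathfrak{t}=\widetilde{h}_{\mathfrak{t},1}$ in a neighborhood of that set; the clean cover property places every overlap $U_\mathfrak{t}\cap U_{\mathfrak{t}'}$ (distinct 2-simplices) inside $U_1$, where both definitions reduce to $\widetilde{h}_1$, so patching is consistent. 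The same boundary agreement immediately yields $\widetilde{\mathcal{F}}_1=\mathcal{F}_1$ near $h(\mathcal{T}_1)$. The main thing to be disciplined about is the order of parameter choices: given $\epsilon_2$, fix the clean cover and the cutoff $\tau_\mathfrak{t}$; choose $\overline{\epsilon}$ small enough to control $|d\overline{h}_\mathfrak{t}|_{C^0}$ and to fit the image of $\widetilde{h}_{\mathfrak{t},1}$ into that of $\overline{h}_\mathfrak{t}$; then shrink $\epsilon_1$ so that, using uniform continuity of $h_\mathfrak{t}$, both the $C^0$ distance to $h$ and the resulting plane-field distance fall below $\epsilon_2$.
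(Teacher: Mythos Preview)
Your proposal is correct and follows essentially the same approach as the paper: the auxiliary straightening $\overline{h}_\mathfrak{t}$, the convex graphical interpolation $\mathsf{V}_\mathfrak{t}$ between the two boundary transversal smoothings, the use of Lemma~\ref{lem:smoothemb2} for the horizontal component, and the parameter hierarchy (clean cover and cutoff fixed first, then $\overline{\epsilon}$, then $\epsilon_1$) are exactly what the paper does. One small inaccuracy: shrinking $\overline{\epsilon}$ does not control $|d\overline{h}_\mathfrak{t}|_{C^0}$---that quantity is simply \emph{fixed} once $\overline{h}_\mathfrak{t}$ is chosen (and may be large), and it is the subsequent shrinking of $\epsilon_1$ that absorbs it in the plane-field estimate.
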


    \subsubsection{Smoothing on the \texorpdfstring{$3$}{3}-cells}

The final step is similar to the previous one, but relative to the vertical boundaries of the $3$-cells.

Let $\epsilon > 0$ be such that $\epsilon \ll \min \{ w_\mathfrak{t}\  \vert \  \mathfrak{t} \in \mathcal{T}_3\}$, and $0 < \epsilon_2 \ll \epsilon$, to be chosen small enough below. We choose an embedding $\widetilde{h}_2 : U_2 \rightarrow M$ as in Lemma~\ref{lem:skel2} for $\epsilon_2$.

Let $\mathfrak{t} \in \mathcal{T}_3$ and consider the map
$$\widetilde{h}_{\mathfrak{t}, 2} : \varphi_\mathfrak{t}(\overline{U}_\mathfrak{t} \cap U_2) \subset N_\mathfrak{t} \rightarrow \R^3$$
defined by
$$\widetilde{h}_{\mathfrak{t},2} \coloneqq \psi_\mathfrak{t} \circ \widetilde{h}_2 \circ \varphi^{-1}_\mathfrak{t}.$$
Note that $B_\mathfrak{t} = \partial [0,1]^3$.

As in the previous step, we also consider $0 < \overline{\epsilon} \ll \epsilon$ and an auxiliary smoothing $\overline{h}_\mathfrak{t}$ of $h_\mathfrak{t}$, which is a $C^1$ embedding $[0,1]^3 \rightarrow \R^3$ of the form
    \begin{align*}
        \overline{h}_\mathfrak{t}(x,y,z) = (\overline{u}_\mathfrak{t}(x,y,z), \overline{v}_\mathfrak{t}(z))
    \end{align*}
    for $(x,y,z) \in [0,1]^3$,
and such that 
$$\big\vert \overline{h}_\mathfrak{t} - h_\mathfrak{t}\big\vert_{C^0} < \overline{\epsilon} < \epsilon.$$
This embedding can be constructed as in the smoothing of $h$ near the $0$-skeleton, and it depends on $\overline{\epsilon}$, but not on $\epsilon_2$. As before, we may further arrange that $\overline{h}_\mathfrak{t}$ is defined on a neighborhood $W_\mathfrak{t}$ of $[0,1]^3 \subset \R^3$, so that the images of $h_\mathfrak{t}$ and $\widetilde{h}_{\mathfrak{t},2}$ are contained in the image of $\overline{h}_\mathfrak{t}$, and so that
$$\big\vert \overline{h}_\mathfrak{t}^{-1} \circ \widetilde{h}_{\mathfrak{t},2} - \mathrm{id} \big\vert_{C^0} < \epsilon,$$
where the inequality is to be understood on $N_\mathfrak{t}$.

\begin{center}
\emph{We now consider the value of $\overline{\epsilon}$ fixed, and we will shrink $\epsilon_2$ further. Recall that they both depend on $\epsilon$, which was introduced first.}
\end{center}

Let $N^\rho_\mathfrak{t}$ denote the $\ell^\infty$-neighborhood of $\partial[0,1]^3$ of radius $\rho > 0$. We define:
$$\mathsf{h}_{\mathfrak{t},2} \coloneqq \overline{h}_\mathfrak{t}^{-1} \circ \widetilde{h}_{\mathfrak{t},2} : N_\mathfrak{t} \rightarrow W_\mathfrak{t},$$
which is of the form
$$\mathsf{h}_{\mathfrak{t}, 2}(x,y,z) = \big(\mathsf{u}_{\mathfrak{t}, 2}(x,y,z), \mathsf{v}_{\mathfrak{t},2}(x,y,z)\big).$$
Let $\mathsf{F}_{\mathfrak{t},2}$ denote the image of the horizontal foliation on $[0,1]^3$ (restricted to $N^{2w_\mathfrak{t}}_\mathfrak{t}$) by $\mathsf{h}_{\mathfrak{t}, 2}$. By assumption,
$$d_{C^0}\big(T \mathsf{F}_{\mathfrak{t},2}, H\big) \lesssim \epsilon_2,$$
where $H= \mathrm{span}\{\partial_x, \partial_y\}$.
Then, there exists a $C^1$ map $\mathsf{V}_{\mathfrak{t},2} : N^{2w_\mathfrak{t}}_\mathfrak{t} \rightarrow \R$ such that the graphs of $\mathsf{V}_{\mathfrak{t},2}(\, \cdot \, , z)$, $z \in [0,1]$, describe (subsets of) the leaves of $\mathsf{F}_{\mathfrak{t},2}$, and 
\begin{align*}
    \mathsf{v}_{\mathfrak{t},2}(x,y,z) = \mathsf{V}_{\mathfrak{t},2} \big(\mathsf{u}_{\mathfrak{t}, 2}(x,y,z), z \big).
\end{align*}
By the assumptions on $\widetilde{\mathcal{F}}_1$,
$$\partial_z \mathsf{V}_{\mathfrak{t},2}> 0, \qquad \vert \partial_x \mathsf{V}_{\mathfrak{t},2} \vert_{C^0}, \vert \partial_y \mathsf{V}_{\mathfrak{t},2} \vert_{C^0} \lesssim \epsilon_2,$$
and $\mathsf{F}_{\mathfrak{t},2}$ is tangent to $H$ near $\{z=0\} \cup \{z=1\}$. Moreover, $\mathsf{u}_{\mathfrak{t}, 2}( \, \cdot \, , z)$, $z \in [0,1]$, is a family of $C^1$ embeddings.

Lemma~\ref{lem:smoothemb2} provides a $C^1$ map $\mathsf{u}_\mathfrak{t} : [0,1]^3 \rightarrow \R^2$, such that for every $(x,y,z) \in N^{w_\mathfrak{t}}_\mathfrak{t}$, 
    \begin{align*}
        \mathsf{u}_\mathfrak{t}(x,y,z) = \mathsf{u}_{\mathfrak{t}, 2}(x,y,z),
    \end{align*}
    and for every $z \in [0,1]$, 
    \begin{align*}
        \vert \mathsf{u}_\mathfrak{t}( \, \cdot \, , z) - \mathrm{id} \vert_{C^0} < 2\epsilon,
    \end{align*}
    and $\mathsf{u}_\mathfrak{t}( \, \cdot \, , z) : [0,1]^2 \rightarrow \R^2$ is a $C^1$ embedding.

Let $\tau_\mathfrak{t} : [0,1]^2 \rightarrow [0,1]$ be a smooth cutoff function such that $\tau_\mathfrak{t} = 1$ on $[0,1]^2 \setminus (w_\mathfrak{t}, 1-w_\mathfrak{t})^2$ and $\tau_\mathfrak{t} = 0$ on $[2w_\mathfrak{t}, 1-2w_\mathfrak{t}]^2$. This choice of $\tau_\mathfrak{t}$ only depends on $w_\mathfrak{t}$ and not on $\epsilon_2$. For $z \in [0,1]$, we write
$$\mathsf{V}^0_{\mathfrak{t},2}(z) \coloneqq \mathsf{V}_{\mathfrak{t},2}(0,0,z)$$
and we define
$$\mathsf{V}_{\mathfrak{t}}(x,y,z) \coloneqq \tau_\mathfrak{t}(x,y) \mathsf{V}_{\mathfrak{t},2}(x,y,z) + (1-\tau_\mathfrak{t}(x,y)) \mathsf{V}^0_{\mathfrak{t},2}(z),$$
so that
\begin{align*}
    \partial_z \mathsf{V}_{\mathfrak{t}} > 0, \qquad \vert\partial_x \mathsf{V}_{\mathfrak{t}} \vert_{C^0}, \vert \partial_y \mathsf{V}_{\mathfrak{t}} \vert_{C^0} \lesssim \epsilon_2.
\end{align*}
Then, the graphs of $\mathsf{V}_{\mathfrak{t}}(\, \cdot \, , z)$, $z \in [0,1]$, describe a foliation $\mathsf{F}_\mathfrak{t}$ on $[0,1]^3$ which coincides with $\mathsf{F}_{\mathfrak{t},2}$ on $N^{w_\mathfrak{t}}_\mathfrak{t}$, and which satisfies
$$d_{C^0}\big(T \mathsf{F}_{\mathfrak{t}}, H\big) \lesssim \epsilon_2.$$
Setting
$$\mathsf{v}_\mathfrak{t}(x,y,z) \coloneqq \mathsf{V}_\mathfrak{t}\big(\mathsf{u}_\mathfrak{t}(x,y,z), z\big)$$
for $(x,y,z) \in [0,1]^3$, we have
$$\vert \mathsf{v}_\mathfrak{t}(x,y,z)-z\vert \lesssim \epsilon.$$

We now proceed exactly as in the previous step, and we define $\mathsf{h}_\mathfrak{t}$ as 
$$\mathsf{h}_\mathfrak{t}(x,y,z) \coloneqq \big(\mathsf{u}_\mathfrak{t}(x,y,z), \mathsf{v}_\mathfrak{t} (x,y,z)\big),$$
so that $\mathsf{h}_\mathfrak{t} : [0,1]^3 \rightarrow W_\mathfrak{t}$ is a $C^1$ embedding sending the horizontal foliation on $[0,1]^3$ to $\mathsf{F}_\mathfrak{t}$, and 
$$\vert \mathsf{h}_\mathfrak{t} - \mathrm{id}\vert_{C^0} \leq 2\epsilon.$$

Finally, we define
$$\widetilde{h}_\mathfrak{t} \coloneqq \overline{h}_\mathfrak{t} \circ \mathsf{h}_\mathfrak{t} : [0,1]^3 \rightarrow \R^3.$$
By construction, it satisfies:
\begin{itemize}
    \item For every $(x,y,z) \in N^{w_\mathfrak{t}}_\mathfrak{t}$, 
    $$\widetilde{h}_\mathfrak{t}(x,y,z) = \widetilde{h}_{\mathfrak{t},2}(x,y,z),$$
    \item There is a function $\omega_\mathfrak{t} : [0,\infty) \rightarrow [0,\infty)$ with $\lim_{s \rightarrow 0} \omega_\mathfrak{t}(s) =0$ such that
    \begin{align*}
        \big\vert \widetilde{h}_\mathfrak{t} - h_\mathfrak{t} \big\vert_{C^0}  \leq \omega_\mathfrak{t}(\epsilon),
    \end{align*}
    \item The image of the horizontal foliation on $[0,1]^3$ by $\widetilde{h}_{\mathfrak{t}}$, denoted by $\widetilde{\mathcal{F}}_\mathfrak{t}$, satisfies
    \begin{align*}
        d_{C^0}\big( T \mathcal{F}_\mathfrak{t}, H\big) \lesssim \epsilon_2,
    \end{align*}
    so we can shrink $\epsilon_2$ to ensure
    $$d_{C^0}\big( T \mathcal{F}_\mathfrak{t}, H\big) < \epsilon.$$
\end{itemize}

We now combine the $\widetilde{h}_\mathfrak{t}$'s, $\mathfrak{t} \in \mathcal{T}_3$, together into a map $\widetilde{h} : U_3=M \rightarrow M$ defined as
\begin{align*}
 \widetilde{h}(p) \coloneqq \psi_\mathfrak{t}^{-1} \circ \widetilde{h}_\mathfrak{t} \circ \varphi_\mathfrak{t} (p)
\end{align*}
for $\mathfrak{t} \in \mathcal{T}_3$ and $p \in U_\mathfrak{t}$. As before, this expression makes sense for $\epsilon > 0$ small enough so that the image of $\widetilde{h}_\mathfrak{t}$ is contained in $\psi_\mathfrak{t}(V_\mathfrak{t})$. Moreover, if $\mathfrak{t}, \mathfrak{t}' \in \mathcal{T}_3$ are such that $\mathfrak{t} \cap \mathfrak{t}' \neq \varnothing$ and if $p \in U_\mathfrak{t} \cap U_{\mathfrak{t}'}$, then
$$\psi_\mathfrak{t}^{-1} \circ \widetilde{h}_\mathfrak{t} \circ \varphi_\mathfrak{t} (p) = \psi_{\mathfrak{t}'}^{-1} \circ \widetilde{h}_{\mathfrak{t}'} \circ \varphi_{\mathfrak{t}'} (p) = \widetilde{h}_2(p),$$
which guarantees that $\widetilde{h}$ is well-defined.

By construction, $\widetilde{h}$ is a $C^1$ diffeomorphism sending $\mathcal{F}_0$ to a foliation $\widetilde{\mathcal{F}}_1$ satisfying
\begin{align} \label{eq:distfol1}
    d_{C^0}\big(T \mathcal{F}_1, T\widetilde{\mathcal{F}}_1 \big) \lesssim \epsilon,
\end{align}
independently of $\epsilon_2$ and $\overline{\epsilon}$, and
\begin{align*}
    d_{C^0}\big(h, \widetilde{h}\big) \leq \omega(\epsilon)
\end{align*}
for some function $\omega : [0,\infty) \rightarrow [0,\infty)$ with $\lim_{t \rightarrow 0} \omega(t) = 0$, which only depends on $h$ and the clean cover. Finally, $\widetilde{h}$ can be approximated in the $C^1$ topology by a \emph{smooth} diffeomorphism such that~\eqref{eq:distfol1} still holds. This concludes the proof of Theorem~\ref{thmintrobeta:approx}. \qed

        \subsection{Bifoliated homeomorphisms}

We now explain how to adapt the previous strategy to the case of bifoliated homeomorphisms.

\medskip
Let $(\mathcal{F}, \mathcal{G})$ be a $C^1$-bifoliation on $M$.

\begin{defn}
A $C^1$ coordinate system $(x,y,z)$ near $p \in M$ is \textbf{adapted to $(\mathcal{F}, \mathcal{G})$} if in these coordinates, 
\begin{align} \label{eq:bifoladapted}
T \mathcal{F} = \mathrm{span}\big\{ \partial_x, \partial_y\}, \qquad T \mathcal{G} = \mathrm{span}\big\{ \partial_x, \partial_z\}.
\end{align}
\end{defn}

In coordinates adapted to bifoliations, a bifoliated homeomorphism is of the form
$$h(x,y,z) = \big(a(x,y,z), b(y), c(z)\big),$$
where the functions $x \mapsto a(x, \cdot\, , \cdot\, )$, $b$ and $c$ are strictly monotone.

We now fix $\delta_0$ so that every open ball of radius less than $\delta_0$ in $M$ admits coordinates adapted to $(\mathcal{F}, \mathcal{G})$. Let $\mathcal{T}$ be a $\delta$-fine triangulation of $M$ in general position with $\mathcal{F}$ and $\mathcal{G}$, with $0 < \delta < \delta_0$. As before, this can be achieved by Thurston's jiggling (see~\cite{V16} for an argument that generalizes well to the case of multiple line/plane fields).

We would like to adapt the definition of clean covers (Definition~\ref{def:cover}) to the bifoliated case. However, it is not possible to find small neighborhoods of the cells of $\mathcal{T}$ which are diffeomorphic to a standard bifoliated cube. Instead, we will consider two clean covers, one for each foliation, which are compatible in a suitable sense. 

Let $\big(\mathcal{U}^\mathcal{F},\boldsymbol{\varphi}^\mathcal{F}\big)$ and $\big(\mathcal{U}^\mathcal{G},\boldsymbol{\varphi}^\mathcal{G}\big)$ be clean covers of $M$ adapted to $\mathcal{F}$ and $\mathcal{G}$, respectively, and modeled on $\mathcal{T}$. For $\mathfrak{t} \in \mathcal{T}$, we set
$$U_\mathfrak{t} \coloneqq U^\mathcal{F}_\mathfrak{t} \cap U^\mathcal{G}_\mathfrak{t}.$$
We say that $\big(\mathcal{U}^\mathcal{F},\boldsymbol{\varphi}^\mathcal{F}\big)$ and $\big(\mathcal{U}^\mathcal{G},\boldsymbol{\varphi}^\mathcal{G}\big)$ are \textbf{compatible} if for every $\mathfrak{t} \in \mathcal{T}$, the set 
        $$\check{N}^\mathcal{F}_\mathfrak{t} \coloneqq \varphi^\mathcal{F}_\mathfrak{t} \left( \overline{U}^\mathcal{F}_\mathfrak{t} \cap \bigcup_{\mathfrak{t}' \in \partial \mathfrak{t}} U_{\mathfrak{t}'} \right) \subset [0,1]^3$$
contains the $\ell^\infty$-neighborhood of radius $2w^\mathcal{F}_\mathfrak{t}$ of $B^\mathcal{F}_\mathfrak{t}$, and the set 
        $$\check{N}^\mathcal{G}_\mathfrak{t} \coloneqq \varphi^\mathcal{G}_\mathfrak{t} \left( \overline{U}^\mathcal{G}_\mathfrak{t} \cap \bigcup_{\mathfrak{t}' \in \partial \mathfrak{t}} U_{\mathfrak{t}'} \right) \subset [0,1]^3$$
contains the $\ell^\infty$-neighborhood of radius $2w^\mathcal{G}_\mathfrak{t}$ of $B^\mathcal{G}_\mathfrak{t}$.

As before, for $0 \leq i \leq 3$, we write
    $$U_i \coloneqq \bigcup_{\mathfrak{t} \in \mathcal{T}_i} U_\mathfrak{t},$$
so that $U_i$ is a neighborhood of the $i$-skeleton of $\mathcal{T}$. Compatible clean covers can be constructed by induction on the skeleton of $\mathcal{T}$:

\begin{lem} \label{lem:cleancov2}
For every $0 < \delta < \delta_0$, there exists a compatible pair of $\delta$-clean covers of $M$ adapted to $\mathcal{F}$ and $\mathcal{G}$, respectively, and modeled on some sufficiently fine common triangulation of $M$.
\end{lem}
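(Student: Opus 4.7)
I would first apply Thurston's jiggling lemma~\cite{T74} to the pair $(T\mathcal{F}, T\mathcal{G})$; the lemma works equally well for any finite collection of $C^0$ plane fields, as already exploited in~\cite[Section 4A2]{V16}. This gives a $\delta$-fine triangulation $\mathcal{T}$ of $M$ in general position with respect to both $\mathcal{F}$ and $\mathcal{G}$. After further subdivision, I may arrange that every simplex of $\mathcal{T}$ is contained in a ball of radius $\delta/2$ carrying coordinates $(x,y,z)$ adapted to the bifoliation in the sense of~\eqref{eq:bifoladapted}, i.e.\ satisfying $T\mathcal{F}=\mathrm{span}\{\partial_x,\partial_y\}$ and $T\mathcal{G}=\mathrm{span}\{\partial_x,\partial_z\}$. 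On such a ball, the chart $(x,y,z)$ is $\mathcal{F}$-adapted (in the sense of Definition~\ref{def:adapted}) while the chart $(x,z,y)$ is $\mathcal{G}$-adapted, so the \emph{same} open set simultaneously admits both types of adapted charts.

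\textbf{Reduction and construction.} Rather than build two clean covers independently and then impose compatibility, I would construct a single family $\{U_\mathfrak{t}\}_{\mathfrak{t}\in\mathcal{T}}$ together with two coordinate maps $\varphi^\mathcal{F}_\mathfrak{t},\varphi^\mathcal{G}_\mathfrak{t}:\overline{U}_\mathfrak{t}\to[0,1]^3$ on the same neighborhood, and set $U^\mathcal{F}_\mathfrak{t}=U^\mathcal{G}_\mathfrak{t}:=U_\mathfrak{t}$ for every $\mathfrak{t}$. The tautology $U_\mathfrak{t}=U^\mathcal{F}_\mathfrak{t}\cap U^\mathcal{G}_\mathfrak{t}$ then gives $\check N^\mathcal{F}_\mathfrak{t}=N^\mathcal{F}_\mathfrak{t}$ and $\check N^\mathcal{G}_\mathfrak{t}=N^\mathcal{G}_\mathfrak{t}$, so the compatibility condition reduces to the requirement that each of $(\mathcal{U},\boldsymbol{\varphi}^\mathcal{F})$ and $(\mathcal{U},\boldsymbol{\varphi}^\mathcal{G})$ be a clean cover in the sense of Definition~\ref{def:cover}. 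I would then build $U_\mathfrak{t}$ by induction on $\dim\mathfrak{t}$, exactly as in the proof of Lemma~\ref{lem:cleancov}: at the $i$-th stage, for each $i$-simplex $\mathfrak{t}$, pick $U_\mathfrak{t}$ inside a fixed bifoliated-coordinate ball so as to contain $\mathfrak{t}$ and all previously constructed $U_{\mathfrak{t}'}$ with $\mathfrak{t}'\in\partial\mathfrak{t}$, while remaining sufficiently separated from $U_{\mathfrak{t}''}$ for non-incident same-dimensional $\mathfrak{t}''$.

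\textbf{Main obstacle.} The delicate point is that one fixed $U_\mathfrak{t}$ must satisfy the collar axioms (3a)--(3b) of Definition~\ref{def:cover} for \emph{both} $\varphi^\mathcal{F}_\mathfrak{t}$ and $\varphi^\mathcal{G}_\mathfrak{t}$ simultaneously. The change of charts $\varphi^\mathcal{G}_\mathfrak{t}\circ(\varphi^\mathcal{F}_\mathfrak{t})^{-1}$ is, however, a fixed $C^1$-diffeomorphism of $[0,1]^3$ with uniformly bounded derivatives (arising from a coordinate permutation applied to a single bifoliated chart, followed by a self-diffeomorphism of the cube straightening out $\mathfrak{t}$). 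Therefore an $\ell^\infty$-collar of radius $r$ of $B^\mathcal{F}_\mathfrak{t}$ in the $\mathcal{F}$-chart contains, and is contained in, an $\ell^\infty$-collar of comparable radius $\sim r$ of the corresponding faces in the $\mathcal{G}$-chart. Thus, by first arranging the single-cover collar conditions in one chart with a sufficiently large margin and then choosing $w^\mathcal{F}_\mathfrak{t}$ and $w^\mathcal{G}_\mathfrak{t}$ small enough that both required collars fit inside that margin (and the nesting condition~(3b) with same-dimensional simplices, handled exactly as in the single case by shrinking $U_\mathfrak{t}$ laterally), both clean-cover conditions are verified at once. The remainder of the induction is a verbatim repetition of the single-foliation construction of Lemma~\ref{lem:cleancov}.
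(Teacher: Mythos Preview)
Your reduction to a single common open set $U^\mathcal{F}_\mathfrak{t}=U^\mathcal{G}_\mathfrak{t}=:U_\mathfrak{t}$ does not work for generic simplices, and this is precisely the difficulty the paper flags just before the lemma when it says that one \emph{cannot} take neighborhoods diffeomorphic to standard bifoliated cubes. The obstruction persists even with two separate adapted charts on a common domain.

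Work in a bifoliated chart with $\mathcal{F}=\{z=\mathrm{const}\}$, $\mathcal{G}=\{y=\mathrm{const}\}$, and take an edge $e$ in general position, say the diagonal from $v_0=(0,0,0)$ to $v_1=(1,1,1)$, with tiny vertex neighborhoods $U_{v_0},U_{v_1}$. Suppose $\overline{U}_e$ admits both an $\mathcal{F}$-adapted and a $\mathcal{G}$-adapted diffeomorphism onto $[0,1]^3$. The corner structure of $\overline{U}_e$ is intrinsic, so the transition map permutes its six faces; two of them are the $\mathcal{F}$-plaques $\{z=z_\pm\}$ (the $z'$-faces of $\varphi^\mathcal{F}$) and two are the $\mathcal{G}$-plaques $\{y=y_\pm\}$ (the $z''$-faces of $\varphi^\mathcal{G}$), and by transversality these four are distinct, filling two of the three opposite pairs. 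Since $e\subset U_e$, we have $[z_-,z_+]\supset[0,1]$ and $[y_-,y_+]\supset[0,1]$. Now cube face--adjacency forces \emph{every} face of $\overline{U}_e$ to span either the full $y$-range $[y_-,y_+]$ (each $\mathcal{F}$-plaque face is adjacent to both $\mathcal{G}$-plaque faces), or the full $z$-range $[z_-,z_+]$ (each $\mathcal{G}$-plaque face), or both (the remaining pair, which is adjacent to all four). Hence \emph{no} face of $\overline{U}_e$ can lie in a tiny $U_{v_i}$, and the collar condition~(3a) of Definition~\ref{def:cover} fails for every possible choice of $B^\mathcal{F}_e$. Your ``main obstacle'' paragraph only addresses how collars in one chart compare to collars in the other, but the actual problem is one level earlier: no single pair of opposite faces is small enough in the first place.

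This is exactly why the paper keeps two \emph{different} open sets $U^\mathcal{F}_\mathfrak{t}\neq U^\mathcal{G}_\mathfrak{t}$: each can then be a thin tube sheared along its own foliation so as to follow the edge, but such an $\mathcal{F}$-tube will not admit a $\mathcal{G}$-adapted cube chart (its $\mathcal{G}$-slices degenerate at the ends). The compatibility condition is only a constraint on the intersections $U_{\mathfrak{t}'}=U^\mathcal{F}_{\mathfrak{t}'}\cap U^\mathcal{G}_{\mathfrak{t}'}$ at \emph{lower}-dimensional boundary simplices, which is far weaker and can be arranged in the inductive step. Your tautological shortcut collapses precisely the flexibility that makes the construction possible.
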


We now consider two (co)orientable $C^1$-bifoliations $(\mathcal{F}_0, \mathcal{G}_0)$ and $(\mathcal{F}_1, \mathcal{G}_1)$ as well as a bifoliated homeomorphism $h : M \rightarrow M$ between them. We may choose (co)orientations so that $h$ sends the (co)orientation of $\mathcal{F}_0$ (resp.~$\mathcal{G}_0$) to that of $\mathcal{F}_1$ (resp.~$\mathcal{G}_1$). We do not require that $h$ preserves the orientation on $M$.

We consider a pair $\big(\mathcal{U}^{\mathcal{F}_0},\boldsymbol{\varphi}^{\mathcal{F}_0}\big)$ and $\big(\mathcal{U}^{\mathcal{G}_0},\boldsymbol{\varphi}^{\mathcal{G}_0}\big)$ of compatible clean covers for $(\mathcal{F}_0, \mathcal{G}_0)$ modeled on a $\delta$-fine triangulation $\mathcal{T}$ in general position with $(\mathcal{F}_0, \mathcal{G}_0)$. For each $\mathfrak{t} \in \mathcal{T}$, we choose an open set 
$$\overline{U^{\mathcal{F}_0}_\mathfrak{t} \cup U^{\mathcal{G}_0}_\mathfrak{t}} \subset \widehat{U}_\mathfrak{t} $$
together with a diffeomorphism $\widehat{\varphi}_\mathfrak{t} : \widehat{U}_\mathfrak{t} \hookrightarrow (0,1)^3$ defining coordinates adapted to $(\mathcal{F}_0, \mathcal{G}_0)$ (this is achievable after possibly shrinking $\delta$). We also choose an open set $h\big(\widehat{U}_\mathfrak{t}\big) \subset V_\mathfrak{t}$ together with coordinates $\psi_\mathfrak{t} : V_\mathfrak{t} \hookrightarrow \R^3$ adapted to $(\mathcal{F}_1, \mathcal{G}_1)$. We set
$$h_\mathfrak{t} \coloneqq \psi_\mathfrak{t}\circ h \circ \widehat{\varphi}^{-1}_\mathfrak{t} : (0,1)^3 \rightarrow \R^3,$$
which is of the form
$$h_\mathfrak{t}(x,y,z) = \big(a_\mathfrak{t}(x,y,z), b_\mathfrak{t}(y), c_\mathfrak{t}(z)\big).$$

We can proceed as in the proofs of Lemma~\ref{lem:skel0} and Lemma~\ref{lem:skel1}, and use items 1 and 2 of Lemma~\ref{lem:smoothincrease2}, to smooth the maps $a_\mathfrak{t}$, $b_\mathfrak{t}$, $c_\mathfrak{t}$ near the $1$-skeleton of $\mathcal{T}$ and obtain:

\begin{lem} \label{lem:biskel1}
For every $\epsilon_1 > 0$, there exists a $C^1$ embedding $\widetilde{h}_1 : U_1 \rightarrow M$ satisfying
\begin{align*}
    d_{C^0}\big(h_{\vert U_1}, \widetilde{h}_1\big) < \epsilon_1, \qquad
    (\widetilde{h}_1)_* \mathcal{F}_0 = \mathcal{F}_1, \qquad
    (\widetilde{h}_1)_* \mathcal{G}_0 = \mathcal{G}_1.
\end{align*}
\end{lem}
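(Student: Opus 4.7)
The proof proceeds by induction over the skeleton of $\mathcal{T}$, following the exact template of Lemma~\ref{lem:skel0} and Lemma~\ref{lem:skel1}. The key simplification compared with the single-foliation case is that in a chart $\widehat{\varphi}_\mathfrak{t}$ adapted to the bifoliation, the map $h$ takes the fully decoupled form
$$h_\mathfrak{t}(x,y,z) = \big(a_\mathfrak{t}(x,y,z), b_\mathfrak{t}(y), c_\mathfrak{t}(z)\big),$$
where $a_\mathfrak{t}(\cdot, y, z) : [0,1] \to \R$ is a continuous family of strictly monotone 1D embeddings and $b_\mathfrak{t}, c_\mathfrak{t} : [0,1] \to \R$ are strictly monotone. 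Any smoothing that respects this product structure and preserves monotonicity will automatically define a $C^1$ bifoliated embedding.

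The plan is first to smooth near the $0$-skeleton. For each $\mathfrak{t} \in \mathcal{T}_0$, I would independently smooth $b_\mathfrak{t}$ and $c_\mathfrak{t}$ using item 1 of Lemma~\ref{lem:smoothincrease2}, and smooth the $2$-parameter family $a_\mathfrak{t}$ using the 1-dimensional analog of item 1 of Lemma~\ref{lem:smoothemb1}. Assembling $\widetilde{h}_\mathfrak{t} \coloneqq \big(\widetilde{a}_\mathfrak{t}, \widetilde{b}_\mathfrak{t}, \widetilde{c}_\mathfrak{t}\big)$ in the chart $\widehat{\varphi}_\mathfrak{t}$, and gluing via the compatible chart data (possible since the vertex neighborhoods are pairwise disjoint), yields a $C^1$ embedding $\widetilde{h}_0 : U_0 \to M$ which sends $\mathcal{F}_0 \mapsto \mathcal{F}_1$ and $\mathcal{G}_0 \mapsto \mathcal{G}_1$ exactly on its domain.

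Next, I would extend over the $1$-cells relative to $\widetilde{h}_0$. For each $\mathfrak{t} \in \mathcal{T}_1$, the crucial observation is that $\widetilde{h}_0$, read in the bifoliated chart $\widehat{\varphi}_\mathfrak{t}$ on $\widehat{\varphi}_\mathfrak{t}(\overline{U}_\mathfrak{t} \cap U_0)$, already inherits the decoupled form $\big(\widetilde{a}_{\mathfrak{t},0}, \widetilde{b}_{\mathfrak{t},0}(y), \widetilde{c}_{\mathfrak{t},0}(z)\big)$, since it preserves both foliations. Applying item 2 of Lemma~\ref{lem:smoothincrease2} to each of $b_\mathfrak{t}$ and $c_\mathfrak{t}$ relative to $\widetilde{b}_{\mathfrak{t},0}$ and $\widetilde{c}_{\mathfrak{t},0}$ on the appropriate neighborhoods of the relevant faces, together with the relative version of the family smoothing applied to $a_\mathfrak{t}$, produces $\widetilde{h}_\mathfrak{t}$ agreeing with $\widetilde{h}_0$ near $\partial\mathfrak{t}$ and $C^0$-close to $h_\mathfrak{t}$. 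Pushing forward by $\psi_\mathfrak{t}^{-1}$ and patching across $\mathfrak{t} \in \mathcal{T}_1$ gives the desired $\widetilde{h}_1 : U_1 \to M$.

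The principal subtlety lies not in the smoothing itself (which is, if anything, simpler than in the single-foliation case because of the full decoupling into three independent 1D problems) but in ensuring that the compatibility condition on the two clean covers $\big(\mathcal{U}^{\mathcal{F}_0}, \boldsymbol{\varphi}^{\mathcal{F}_0}\big)$ and $\big(\mathcal{U}^{\mathcal{G}_0}, \boldsymbol{\varphi}^{\mathcal{G}_0}\big)$ provides the correct geometry in the auxiliary bifoliated chart $\widehat{\varphi}_\mathfrak{t}$. Concretely, one must check that $\widehat{\varphi}_\mathfrak{t}\big(\overline{U}_\mathfrak{t} \cap U_0\big)$ contains the $\ell^\infty$-neighborhoods of the pair of opposite faces of $[0,1]^3$ corresponding to the endpoints of $\mathfrak{t}$, so that the relative smoothing lemma applies directly; and that overlaps $U_\mathfrak{t} \cap U_{\mathfrak{t}'}$ for edges sharing a vertex are handled consistently. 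Both of these are guaranteed by the construction of compatible covers in Lemma~\ref{lem:cleancov2}, after which the assembly is mechanical.
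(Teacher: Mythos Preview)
Your proposal is correct and follows essentially the same approach as the paper's proof, which is itself just a one-sentence sketch referring back to Lemmas~\ref{lem:skel0} and~\ref{lem:skel1} and invoking items 1 and 2 of Lemma~\ref{lem:smoothincrease2} to smooth $a_\mathfrak{t}$, $b_\mathfrak{t}$, $c_\mathfrak{t}$. The only cosmetic difference is that you phrase the smoothing of $a_\mathfrak{t}$ as a ``1-dimensional analog of Lemma~\ref{lem:smoothemb1}'' rather than as an instance of Lemma~\ref{lem:smoothincrease2}; since a 1D orientation-preserving embedding is precisely a strictly monotone function, these are the same statement after relabeling coordinates.
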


We now explain how to adapt the smoothing near the $2$-skeleton. We can apply the same strategy as in the proof of Lemma~\ref{lem:skel2} to first define new $C^1$-foliations $\widetilde{\mathcal{F}}_1$ and $\widetilde{\mathcal{G}}_1$ on $U_2$ which coincide with $\mathcal{F}_1$ and $\mathcal{G}_1$ near $h(\mathcal{T}_1)$, and with $C^0$-close tangent plane fields. We then smooth $h$ on $U_2$ so that it matches some smoothing $\widetilde{h}_1$ provided by Lemma~\ref{lem:biskel1} near the $1$-skeleton, and such that this smoothing $\widetilde{h}_2$ sends $\mathcal{F}_0$ to $\widetilde{\mathcal{F}}_1$ and $\mathcal{G}_0$ to $\widetilde{\mathcal{G}}_1$.

More precisely, we fix some $\epsilon > 0$, and some auxiliary $0 < \epsilon_1 \ll \epsilon$ together with a smoothing $\widetilde{h}_1$ of $h$ on $U_1$ provided by Lemma~\ref{lem:biskel1}. We consider a $2$-simplex $\mathfrak{t} \in \mathcal{T}_2$, and we write
$$\widetilde{h}_{\mathfrak{t},1} \coloneqq \psi_\mathfrak{t} \circ \widetilde{h}_1 \circ \widehat{\varphi}^{-1}_\mathfrak{t},$$
which is defined on $\widehat{N}_{\mathfrak{t},1} = \widehat{\varphi}_\mathfrak{t}\left(\widehat{U}_\mathfrak{t} \cap U_1 \right)$ and is of the form
$$\widetilde{h}_{\mathfrak{t},1}(x,y,z) = \big(\widetilde{a}_{\mathfrak{t},1}(x,y,z), \widetilde{b}_{\mathfrak{t},1}(y), \widetilde{c}_{\mathfrak{t},1}(z) \big),$$
where $\partial_x \widetilde{a}_{\mathfrak{t},1} > 0$, $\partial_y \widetilde{b}_{\mathfrak{t},1} > 0$, and $\partial_z \widetilde{c}_{\mathfrak{t},1} > 0$.

Then, we apply the proof of Lemma~\ref{lem:skel2} (and possibly shrink $\epsilon_1$) to obtain $C^1$-foliations $\widetilde{\mathcal{F}}_\mathfrak{t}$ and $\widetilde{\mathcal{G}}_\mathfrak{t}$ on a neighborhood of the closure of $W_\mathfrak{t} \coloneqq \psi_\mathfrak{t} \circ h(U_\mathfrak{t})$ in $\R^3$. Writing $H = \mathrm{span}\{ \partial_x, \partial_y\}$ and $K = \mathrm{span}\{ \partial_x, \partial_z\}$, these foliations are constructed so that
$$d_{C^0}\big( T\widetilde{\mathcal{F}}_\mathfrak{t}, H \big) < \epsilon, \qquad d_{C^0}\big( T\widetilde{\mathcal{G}}_\mathfrak{t}, K \big) < \epsilon,$$
and $T\widetilde{\mathcal{F}}_\mathfrak{t} = H$ and $T\widetilde{\mathcal{G}}_\mathfrak{t}= K$ near the boundary of $W_\mathfrak{t}$.

Moreover, they can be described as families of graphs of maps
$$(x, y) \mapsto F_\mathfrak{t}(x,y,z), \qquad (x, z) \mapsto G_\mathfrak{t}(x,y,z)$$
with $\partial_z F_\mathfrak{t} > 0$ and $\partial_y G_\mathfrak{t} > 0$, and such that 
$$(F_\mathfrak{t}(x,y,z), G_\mathfrak{t}(x,y,z)) = \big(\widetilde{c}_{\mathfrak{t},1}(z) , \widetilde{b}_{\mathfrak{t},1}(y) \big)$$
near the boundary of $W_\mathfrak{t}$. Then, there exists a $C^1$ embedding $\Phi_\mathfrak{t} : W_\mathfrak{t} \hookrightarrow \R^3$ defining coordinates adapted to $(\widetilde{\mathcal{F}}_\mathfrak{t}, \widetilde{\mathcal{G}}_\mathfrak{t})$ and in which the maps $F_\mathfrak{t}$ and $G_\mathfrak{t}$ simply become $z$ and $y$, respectively. Therefore, 
$$\Phi_\mathfrak{t} \circ \widetilde{h}_{\mathfrak{t},1}(x,y,z) = \big(\overline{a}_{\mathfrak{t},1}(x,y,z), y, z \big)$$
on $\widehat{N}_{\mathfrak{t},1}$, where $\partial_x \overline{a}_{\mathfrak{t},1} > 0$ and $\overline{a}_{\mathfrak{t},1}$ is $C^0$-close to the first coordinate of $\Phi_\mathfrak{t} \circ h_\mathfrak{t}$. We can then use the third item of Lemma~\ref{lem:smoothincrease2} to extend $\overline{a}_{\mathfrak{t},1}$ to a $C^1$ map $\overline{a}_{\mathfrak{t}} : \widehat{\varphi}_\mathfrak{t}(U_\mathfrak{t}) \rightarrow \R$ satisfying $\partial_x \overline{a}_{\mathfrak{t}} > 0$, and which is $C^0$-close to the first coordinate of $\Phi_\mathfrak{t} \circ h_\mathfrak{t}$. We then define
$$\widetilde{h}_\mathfrak{t}(x,y,z) = \Phi^{-1}_\mathfrak{t}\big( \overline{a}_{\mathfrak{t}}(x,y,z), y, z \big)$$
which is the desired extension of $\widetilde{h}_{\mathfrak{t},1}$ over $\widehat{\varphi}_\mathfrak{t}(U_\mathfrak{t})$. Combining these maps together for $\mathfrak{t} \in \mathcal{T}_2$, we obtain:

\begin{lem} \label{lem:biskel2}
For every $\epsilon_2 > 0$, there exists a $C^1$ embedding $\widetilde{h}_2 : U_2 \rightarrow M$ satisfying
\begin{align*}
    d_{C^0}\big(h_{\vert U_2}, \widetilde{h}_2\big) < \epsilon_2,
\end{align*}
and $(\widetilde{h}_2)_* \mathcal{F}_0 \eqqcolon \widetilde{\mathcal{F}}_1$ and $(\widetilde{h}_2)_* \mathcal{G}_0 \eqqcolon \widetilde{\mathcal{G}}_1$ satisfy
\begin{align*}
    d_{C^0}\big({T \mathcal{F}_1}, {T\widetilde{\mathcal{F}}_1} \big) < \epsilon_2, \qquad d_{C^0}\big({T \mathcal{G}_1}, {T\widetilde{\mathcal{G}}_1} \big) < \epsilon_2, \qquad \widetilde{\mathcal{F}}_1 = \mathcal{F}_1 \textrm{ and } \widetilde{\mathcal{G}}_1 = \mathcal{G}_1 \textrm{ near } h(\mathcal{T}_1).
\end{align*}
\end{lem}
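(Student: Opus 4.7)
I would adapt the graphical interpolation strategy of Lemma~\ref{lem:skel2} and run it simultaneously in the two transverse directions of the bifoliation. Fix $\epsilon_2 > 0$ much smaller than the widths $w^{\mathcal{F}_0}_\mathfrak{t}, w^{\mathcal{G}_0}_\mathfrak{t}$, pick an auxiliary $\epsilon_1 \ll \epsilon_2$, and invoke Lemma~\ref{lem:biskel1} to obtain a smoothing $\widetilde{h}_1$ on $U_1$ which identifies $\mathcal{F}_0$ with $\mathcal{F}_1$ and $\mathcal{G}_0$ with $\mathcal{G}_1$ exactly. On each $2$-simplex $\mathfrak{t} \in \mathcal{T}_2$, I work in bifoliated adapted coordinates provided by $\widehat{\varphi}_\mathfrak{t}$ and $\psi_\mathfrak{t}$, in which $h_\mathfrak{t}$ takes the product form $(a_\mathfrak{t}(x,y,z), b_\mathfrak{t}(y), c_\mathfrak{t}(z))$ and $\widetilde{h}_1$ keeps this product form near the $1$-skeleton, but with four restrictions $\widetilde{b}^j_{\mathfrak{t},1}, \widetilde{c}^i_{\mathfrak{t},1}$ (one per boundary $1$-simplex) whose pairwise $C^0$-differences are $O(\epsilon_1)$.

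Using a smooth cutoff in a variable transverse to the $\mathcal{F}_1$-leaves, I interpolate between the $z$-component versions to build a $C^1$ foliation $\widetilde{\mathcal{F}}_\mathfrak{t}$, and using an independent cutoff in a variable transverse to the $\mathcal{G}_1$-leaves, I interpolate between the $y$-component versions to build $\widetilde{\mathcal{G}}_\mathfrak{t}$. The cutoff derivatives are fixed by the clean covers while the interpolated differences are $O(\epsilon_1)$, so the resulting plane fields satisfy $d_{C^0}(T\widetilde{\mathcal{F}}_\mathfrak{t}, T\mathcal{F}_1), d_{C^0}(T\widetilde{\mathcal{G}}_\mathfrak{t}, T\mathcal{G}_1) \lesssim \epsilon_1$; transversality of the new pair follows from its $C^0$-proximity to the transverse pair $(\mathcal{F}_1, \mathcal{G}_1)$. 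Straightening the modified bifoliation by a $C^1$ diffeomorphism $\Phi_\mathfrak{t}$, the map $\Phi_\mathfrak{t} \circ \widetilde{h}_{\mathfrak{t},1}$ reduces on the collar around the $1$-skeleton to $(\overline{a}_{\mathfrak{t},1}(x,y,z), y, z)$ with $\partial_x \overline{a}_{\mathfrak{t},1} > 0$, and I extend $\overline{a}_{\mathfrak{t},1}$ to a $C^1$ function $\overline{a}_\mathfrak{t}$ on the whole cell via the third item of Lemma~\ref{lem:smoothincrease2}, keeping monotonicity and $C^0$-closeness to the first component of $\Phi_\mathfrak{t} \circ h_\mathfrak{t}$. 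Setting $\widetilde{h}_\mathfrak{t} \coloneqq \Phi_\mathfrak{t}^{-1}(\overline{a}_\mathfrak{t}, y, z)$ and patching over $\mathcal{T}_2$ using the compatibility of the two clean covers yields the global $\widetilde{h}_2$, with $d_{C^0}(h, \widetilde{h}_2)$ controlled by a modulus-of-continuity estimate in $\epsilon_1$ as in the single-foliation case.

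The principal obstacle is building the modified target bifoliation $(\widetilde{\mathcal{F}}_\mathfrak{t}, \widetilde{\mathcal{G}}_\mathfrak{t})$ so that \emph{both} tangent plane fields remain close to the originals: a careless choice of interpolating variables would transfer the error of one interpolation into the tangent field of the other foliation, or destroy transversality. The compatibility of the two clean covers, set up precisely so that the cell decompositions adapted to $\mathcal{F}_0$ and $\mathcal{G}_0$ align across the $1$-skeleton, is what allows the two graphical interpolations to be performed with decoupled cutoff data. Once the target bifoliation is in place, the residual extension is needed only in the common leaf direction $x$ of $T\mathcal{F}_1 \cap T\mathcal{G}_1$, where the relative monotone-extension statement of Lemma~\ref{lem:smoothincrease2} applies verbatim.
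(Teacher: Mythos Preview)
Your proposal is correct and follows essentially the same route as the paper: build the modified target bifoliation $(\widetilde{\mathcal{F}}_\mathfrak{t}, \widetilde{\mathcal{G}}_\mathfrak{t})$ by running the graphical interpolation of Lemma~\ref{lem:skel2} independently in the $z$- and $y$-directions, straighten it by a $C^1$ diffeomorphism $\Phi_\mathfrak{t}$ so that $\Phi_\mathfrak{t} \circ \widetilde{h}_{\mathfrak{t},1}$ becomes $(\overline{a}_{\mathfrak{t},1}, y, z)$, and then extend the remaining monotone $x$-component via the third item of Lemma~\ref{lem:smoothincrease2}. The paper carries this out with the same notation and the same appeal to Lemma~\ref{lem:smoothincrease2}, and likewise leaves the auxiliary straightening implicit in the phrase ``apply the proof of Lemma~\ref{lem:skel2}''.
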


To finish the proof of Theorem~\ref{thmintrobeta:bifolapprox}, we then extend such a smoothing $\widetilde{h}_2$ over the $3$-cells by proceeding as in the foliated case. We first extend $\widetilde{\mathcal{F}}_1$ and $\widetilde{\mathcal{G}}_1$ on the $3$-cells so that their plane fields remain $C^0$-close to those of $\mathcal{F}_1$ and $\mathcal{G}_1$, respectively, and we extend $\widetilde{h}_2$ so that it sends $\mathcal{F}_0$ (resp.~$\mathcal{G}_0$) to $\widetilde{\mathcal{F}}_1$ (resp.~ $\widetilde{\mathcal{G}}_1$) while remaining sufficiently $C^0$-close to $h$. The extension in the direction of $\mathcal{F}_0 \cap \mathcal{G}_0$ relies on item 4 of Lemma~\ref{lem:smoothincrease2}. At this point, the details should be clear and are left to the reader. This concludes the proof of Theorem~\ref{thmintrobeta:bifolapprox}. \qed

        \subsection{Stronger versions}

Our methods can be generalized to prove stronger and more precise versions of the previous results. We collect them in this section and leave the proofs to the interested reader. We will not need these versions for our main applications, but they might be of independent interest.

\begin{thm}[Foliated smoothing, strong version] \label{thm:strongapprox}
    Let $\mathcal{F}_0$ and $\mathcal{F}_1$ be two orientable $C^1$-foliations on $M$, and $h : M \rightarrow M$ be a homeomorphism sending the leaves of $\mathcal{F}_0$ to leaves of $\mathcal{F}_1$. Then, there exists a topological isotopy $(h_t)_{0\leq t\leq 1}$ such that
    \begin{enumerate}
        \item $h_0=h$,
        \item The map 
        $$\begin{aligned}
        (0,1] \times M &\longrightarrow M \\
        (t,p) &\longmapsto h_t(p)
        \end{aligned}$$
        is smooth,
        \item The map 
        $$t \in [0,1] \longmapsto 
                                    \begin{cases}
                                    (h_t)_* (T \mathcal{F}_0) & \text{if } t>0,\\
                                    T \mathcal{F}_1 & \text{if } t=0,
                                    \end{cases}$$
    is continuous.
    \end{enumerate}
\end{thm}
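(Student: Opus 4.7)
The plan is to bootstrap Theorem~\ref{thmintrobeta:approx} into a smooth one-parameter family via a dyadic construction, then reparametrize in $t$.

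First I would apply Theorem~\ref{thmintrobeta:approx} with $\epsilon = 2^{-n}$ for each $n \geq 0$ to extract a sequence of smooth diffeomorphisms $\widetilde{h}_n$ satisfying $d_{C^0}(h, \widetilde{h}_n) < 2^{-n}$ and $d_{C^0}\bigl(T\mathcal{F}_1, (\widetilde{h}_n)_*(T\mathcal{F}_0)\bigr) < 2^{-n}$, all topologically isotopic to $h$ through $2^{-n}$-close homeomorphisms. This sequence is the backbone of the construction.

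Next I would connect $\widetilde{h}_n$ to $\widetilde{h}_{n+1}$ by a smooth isotopy $(k_{n,s})_{s\in[0,1]}$ with $k_{n,0} = \widetilde{h}_n$ and $k_{n,1} = \widetilde{h}_{n+1}$, arranged so that throughout the isotopy $d_{C^0}(k_{n,s}, h) \lesssim 2^{-n}$ and $d_{C^0}\bigl(T\mathcal{F}_1, (k_{n,s})_*(T\mathcal{F}_0)\bigr) \lesssim 2^{-n}$. This is the content of the local uniqueness remark following Theorem~\ref{thmintrobeta:approx}: setting $\phi_n \coloneqq \widetilde{h}_{n+1} \circ \widetilde{h}_n^{-1}$, one obtains a smooth foliated diffeomorphism that is $C^0$-close to the identity and conjugates two smooth foliations already $O(2^{-n})$-close to $\mathcal{F}_1$, so rerunning the clean-cover smoothing scheme of \Cref{sec:smooth} on $\phi_n$, but now in a parametric setting with both endpoints smooth, yields a smooth path of smoothings with tangent distortion of the same order.

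Then I would pick a decreasing sequence $t_n \downarrow 0$ (say $t_n = 1/n$), set $h_{t_n} \coloneqq \widetilde{h}_n$ and $h_0 \coloneqq h$, and on each sub-interval $[t_{n+1}, t_n]$ define $h_t \coloneqq k_{n, \sigma_n(t)}$, where $\sigma_n : [t_{n+1}, t_n] \to [0,1]$ is a smooth strictly monotone reparametrization all of whose derivatives vanish at the endpoints. This makes $t \mapsto h_t$ smooth on $(0,1]$ (the pieces glue to infinite order at each node $t_n$), and continuity at $t=0$, both in the $C^0$ topology on diffeomorphisms and at the level of the pushed-forward tangent plane fields, follows at once from the uniform $\lesssim 2^{-n}$ bounds established in the previous two steps.

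The hard part will be the second step: it requires a genuinely \emph{parametric} relative version of Theorem~\ref{thmintrobeta:approx} with smooth boundary data. Concretely, one must revisit the inductive scheme of \Cref{sec:smooth} and verify that all the interpolations used there (of the graph functions $\mathsf{V}_{\mathfrak{t},\bullet}$ and of the embeddings $\mathsf{u}_{\mathfrak{t},\bullet}$) can be carried out smoothly in an auxiliary parameter without amplifying the plane-field error beyond a uniform multiple of the input error. Since the cutoff functions $\tau_\mathfrak{t}$ and the clean cover itself are parameter-independent, and the various estimates in \Cref{sec:smooth} are linear in the input $C^0$-errors, this amounts to a bookkeeping exercise; nevertheless it is the place where the full ``independence of choices'' flexibility of the original construction is genuinely used.
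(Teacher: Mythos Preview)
The paper does not actually prove this theorem; it is stated in Section~1.5 with the proofs explicitly ``left to the interested reader'', the only hint being that ``Our methods can be generalized to prove stronger and more precise versions of the former results.'' So there is no detailed argument in the paper to compare against.

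Your dyadic-gluing strategy is sound and consistent with what the paper suggests. The crux, as you correctly identify, is the connecting step: producing a smooth isotopy between two smoothings $\widetilde{h}_n$ and $\widetilde{h}_{n+1}$ with plane-field distortion of order $2^{-n}$ throughout. This is exactly the ``local uniqueness'' remark the paper makes immediately after Theorem~\ref{thmintrobeta:approx}, also without proof. Your description of how to obtain it (rerun the clean-cover scheme parametrically on $\phi_n = \widetilde{h}_{n+1}\circ\widetilde{h}_n^{-1}$, which now has \emph{smooth} endpoint data) is the right idea, and the observation that all error bounds in \Cref{sec:smooth} are linear in the input $C^0$-errors with constants depending only on the fixed cover is exactly what makes it work. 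One trivial slip: you write $t_n = 1/n$ with $n \geq 0$, making $t_0$ undefined; presumably you mean $t_n = 1/(n+1)$ or start at $n=1$.

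An alternative, arguably closer to the spirit of the paper's hint, would be to make the entire construction of \Cref{sec:smooth} parametric from the start: fix the clean cover once, and arrange that all the auxiliary choices (the smoothings $\widetilde{u}_\mathfrak{t}$, $\widetilde{v}_\mathfrak{t}$, the graph interpolants $\mathsf{V}_\mathfrak{t}$, etc.) depend smoothly on the target accuracy $\epsilon \in (0,1]$, so that the output $\widetilde{h}_\epsilon$ is smooth in $\epsilon$ and converges to $h$ with plane fields converging to $T\mathcal{F}_1$ as $\epsilon \to 0$. This avoids the discrete gluing but requires the same parametric control you flag as the hard part, so the two routes are essentially equivalent in difficulty.
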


This implies that a topological conjugation between orientable $C^1$-foliations can be decomposed into a \emph{smooth} conjugation followed by a homotopy through $C^1$-foliations, for the topology induced by the $C^0$ topology on plane fields. 

\begin{thm}[Bifoliated smoothing, strong version] \label{thm:strongbifolapprox}
    Let $(\mathcal{F}_0,\mathcal{G}_0)$ and $(\mathcal{F}_1,\mathcal{G}_1)$ be orientable $C^1$-bifoliations on $M$, and $h : M \rightarrow M$ be a homeomorphism sending the leaves of $\mathcal{F}_0$ (resp.~$\mathcal{G}_0$) to the leaves of $\mathcal{F}_1$ (resp.~$\mathcal{G}_1$). Then, there exists a topological isotopy $(h_t)_{0\leq t\leq 1}$ such that
    \begin{enumerate}
        \item $h_0=h$,
        \item The map 
        $$\begin{aligned}
        (0,1] \times M &\longrightarrow M \\
        (t,p) &\longmapsto h_t(p)
        \end{aligned}$$
        is smooth,
        \item For $i \in \{0,1\}$, the maps
        \begin{align*}
        t \in [0,1] \longmapsto 
                                    \begin{cases}
                                    (h_t)_* \big(T \mathcal{F}_0\big) & \text{if } t>0,\\
                                    T \mathcal{F}_1 & \text{if } t=0,
                                    \end{cases} & &
        t \in [0,1] \longmapsto 
                                    \begin{cases}
                                    (h_t)_* \big(T \mathcal{G}_0\big) & \text{if } t>0,\\
                                    T \mathcal{G}_1 & \text{if } t=0,
                                    \end{cases}
        \end{align*}
    are continuous.
    \end{enumerate}
\end{thm}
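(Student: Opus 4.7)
The plan is to realize the isotopy as a single smooth parametric family obtained by re-running the proof of Theorem~\ref{thmintrobeta:bifolapprox} with all internal scale parameters varying smoothly in $t$, rather than constructing a sequence of discrete approximations and gluing them. That proof produces its smooth bifoliated smoothing from: a fixed compatible pair of clean covers modeled on a fixed triangulation $\mathcal{T}$, fixed cutoff functions $\tau_\mathfrak{t}$ depending only on the clean-cover geometry, a hierarchy of scale parameters $0 < \epsilon_2 \ll \epsilon_1 \ll \overline{\epsilon} \ll \epsilon$, and convolutions with mollifiers of those widths. Every ingredient except the four scale parameters can be fixed once and for all, independently of $t$.

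First I fix the compatible clean covers, triangulation, charts $\widehat{\varphi}_\mathfrak{t}, \psi_\mathfrak{t}$, and cutoffs $\tau_\mathfrak{t}$ used in the proof. I then choose smooth, strictly positive, strictly decreasing functions $\epsilon, \overline{\epsilon}, \epsilon_1, \epsilon_2 : (0,1] \to (0,\infty)$, all tending to $0$ as $t \to 0^+$, satisfying for every $t \in (0,1]$ the hierarchy required by the proof. With these scales in hand I re-run the entire skeletal induction of Theorem~\ref{thmintrobeta:bifolapprox} (near the $0$-skeleton, then Lemmas~\ref{lem:biskel1} and~\ref{lem:biskel2}, then the $3$-cells), at each occurrence of a ``smoothing'' step replacing the smoothed function by the convolution $f * \rho_{\epsilon_i(t)}$ of the original with a mollifier of smoothly varying width. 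The graphical interpolation formulas, coordinate straightenings $\Phi_\mathfrak{t}$, compositions of the type $\overline{h}_\mathfrak{t}^{-1} \circ \widetilde{h}_{\mathfrak{t},i}$, and the cutoffs are smooth functions of their smooth-in-$t$ inputs, so the resulting family $t \mapsto h_t$ is smooth on $(0,1]$ in every $C^k$-topology on diffeomorphisms, with each $h_t$ a $C^\infty$ diffeomorphism (after a final parametric smoothing at a scale much smaller than $\epsilon_2(t)$, which is a $C^1$-small perturbation and so preserves the diffeomorphism property).

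The quantitative estimates in the proof of Theorem~\ref{thmintrobeta:bifolapprox} apply at every $t$ with constants depending only on the fixed covers and on $h$, giving
\[
d_{C^0}(h_t, h) \lesssim \epsilon(t), \quad d_{C^0}\bigl(T\mathcal{F}_1,\, (h_t)_* T\mathcal{F}_0\bigr) \lesssim \epsilon(t), \quad d_{C^0}\bigl(T\mathcal{G}_1,\, (h_t)_* T\mathcal{G}_0\bigr) \lesssim \epsilon(t).
\]
Conclusion (2) is immediate from the parametric smoothness established in Step 2; extending by $h_0 \coloneqq h$ produces the topological isotopy $(h_t)_{t\in[0,1]}$ with $h_t \to h$ in $C^0$ as $t\to 0$, giving conclusion (1); and the plane-field estimates above yield conclusion (3). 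I expect the main obstacle to be the parametric smoothness of the graphical extension over the $2$-skeleton and the $3$-cells, where one inverts the smooth strictly monotone function $\overline{v}_\mathfrak{t}$ and the family of $C^1$ embeddings $\mathsf{u}_\mathfrak{t}(\,\cdot\,,\,\cdot\,, z)$. These inverses depend smoothly on $t$ by the parametric inverse function theorem, since the relevant derivatives stay uniformly bounded away from $0$ on any compact subinterval of $(0,1]$; provided the hierarchy $\epsilon_2(t) \ll \epsilon_1(t) \ll \overline{\epsilon}(t) \ll \epsilon(t)$ is respected for every $t$, no inequality invoked in the proof breaks down and the strong bifoliated version follows.
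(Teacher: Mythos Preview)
The paper does not prove this theorem: it is stated in the ``Stronger versions'' subsection with the proof explicitly left to the reader as a generalization of the methods for Theorem~\ref{thmintrobeta:bifolapprox}. Your parametric approach---fixing once and for all the compatible clean covers, triangulation, adapted charts, and cutoffs, and letting only the hierarchy of scale parameters vary smoothly in $t$---is exactly the natural way to carry out that generalization, and the outline is correct.

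One point deserves more care than you give it. You describe every smoothing step as ``convolution $f * \rho_{\epsilon_i(t)}$'', but the \emph{relative} smoothing lemmas (items 2--4 of Lemma~\ref{lem:smoothincrease2}, invoked in Lemmas~\ref{lem:biskel1} and~\ref{lem:biskel2}) are not pure convolutions: they must match prescribed $C^1$ boundary data $\widetilde{v}_\partial$ while keeping the derivative positive and the $C^0$-distance to $v$ controlled. The paper's own proofs of these lemmas use a two-step scheme (produce an interior smoothing, arrange a one-sided inequality against the boundary data on the transition strips, then interpolate via a monotone cutoff), and it is this scheme---not convolution alone---that must be made to depend smoothly on $t$. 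This is genuinely possible, since all the ingredients (the interior convolution, an affine shift enforcing the one-sided inequality, the fixed cutoff) are explicit and depend smoothly on the boundary data and the scales; but you should say so rather than subsume it under ``convolution''. You do mention the interpolation formulas and cutoffs separately, so this is a matter of phrasing rather than a real gap.

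The remaining points you flag (parametric inverse function theorem for $\overline{h}_\mathfrak{t}^{-1}$ and $\Phi_\mathfrak{t}^{-1}$ on compact subintervals of $(0,1]$; the final $C^1$-to-$C^\infty$ smoothing done parametrically) are handled correctly.
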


    \section{Uniqueness of contact approximations}

In this section, we refine the main result of Vogel~\cite{V16} and prove Theorem~\ref{thmintrobeta:uniq} from the Introduction. The main task is to maintain transversal control on the plane fields (foliations, confoliations, contact structures) involved in the proof. We also impose minimal regularity assumptions on the foliations under consideration ($C^1$ instead of $C^2$) in order to apply the results to weak foliations of Anosov flows. Along the way, we also provide some more details and fill in some important steps in Vogel's proof for the sake of completeness.

        \subsection{Hypertaut foliations} \label{sec:hypertaut}

Before we describe the class of foliations we are interested in, let us recall some basic definitions: for further background, see Candel--Conlon~\cite{CC00}. Given a foliation, a set is {\bf saturated} if it is a union of leaves. A {\bf minimal set} is a \emph{nonempty, closed} subset that is saturated by leaves and is minimal with respect to inclusion. Such subsets always exist by Zorn's Lemma. A minimal set, on a closed foliated manifold, is {\bf exceptional} if it is neither the whole manifold nor a compact leaf.

A key property of exceptional minimal sets, for $C^2$-foliations, is that they have linear holonomy, meaning that there is a(n embedded) closed leafwise curve $\gamma$, so that the (germinal) map on a transversal given by pushing along leaves has nontrivial derivative (different from $\pm1$) on its first return along $\gamma$. This is the content of Sacksteder's Theorem~\cite{S65}. An argument of Ghys (see~\cite{ET}) improves this to show that minimal foliations of class at least $C^2$ and with holonomy also have linear holonomy. 

It turns out that curves with \emph{attracting} (not necessarily linear) holonomy will be sufficient for our purpose. We will call such a curve an \textbf{attracting curve}. We will say that a foliation $\mathcal{F}$ has \textbf{enough (attracting) holonomy} if every minimal set admits an attracting curve. This notion is closely related to that of hypertautness:

\begin{prop} \label{prop:hypertaut}
    Let $\mathcal{F}$ be a $C^1$-foliation on $M$. Then the following are equivalent:
    \begin{enumerate}
        \item $\mathcal{F}$ is hypertaut,
        \item $\mathcal{F}$ has no nontrivial holonomy invariant transverse measures,
        \item $\mathcal{F}$ has no closed leaves and has enough holonomy.
    \end{enumerate}
\end{prop}

\begin{proof}
    The equivalence between items 1 and 2 is due to Sullivan~\cite{S76}, see also~\cite{CC00}. 
    
    The third item implies the second. Indeed,
    any holonomy invariant transverse measure must vanish near any attracting curve: it must be supported on the leaf containing the attracting curve because of the attracting condition, but this leaf must spiral onto itself since it is noncompact and dense in its minimal set. Moreover, every point can be connected to an arbitrarily small neighborhood of such a curve along a path tangent to the foliation, so the transverse measure must vanish everywhere. 

    We now consider the most difficult implication $(2) \Longrightarrow (3)$. First of all, note that $\mathcal{F}$ cannot have any closed leaves either by item (1), or since a closed leaf would determine an invariant transverse measure (of Dirac type). In particular, the foliation is necessarily taut. Then, we apply the results of~\cite{DKN07} to the holonomy pseudogroup of $\mathcal{F}$ (see~\cite{CC00} for the necessary definitions and background). If $\mathcal{F}$ is minimal, then~\cite[Théorème E]{DKN07} implies that there is an attracting curve (a \emph{``feuille ressort hyperbolique''}) contained in some leaf. Indeed,~\cite[Proposition 4.1]{DKN07} implies that the absence of such an attracting curve would yield the existence of a holonomy invariant probability measure for the foliation. 
    
    If $\mathcal{F}$ is not minimal, then we consider a minimal set $\Lambda$, which is necessarily an exceptional minimal set, and again apply~\cite[Proposition 4.1]{DKN07} to the holonomy pseudogroup along $\Lambda$ to deduce the existence of a curve with attracting holonomy \emph{on at least one side} (a \emph{``feuille ressort''}) in $\Lambda$. Otherwise, there would again be a holonomy invariant transverse probability measure supported on $\Lambda$. Finally, since the foliation is of class $C^1$, we can apply \cite[Section 4.1]{DKN07} to deduce the existence of a curve in $\Lambda$ with attracting holonomy \emph{on both sides}, which concludes the proof.
\end{proof}

        \subsection{Neighborhoods of attracting curves}

Before stating and proving the version of Vogel's uniqueness result that we need, we introduce some more definitions. We will consider special neighborhoods of attracting curves and construct contact approximations which have a specific form in those neighborhoods. This step is implicit and much easier in Vogel's original proof, and we need to make it more precise for the proof in the $C^1$ case. In the rest of this section, we will write $I \coloneqq [-1,1]$.

\medskip

We now describe a normal form of $\mathcal{F}$ near an attracting curve, which may not have linear holonomy; it is certainly well-known to the experts, but we were not able to find a precise proof in the literature.

\begin{lem} \label{lem:normalattracting}
    Let $\mathcal{F}$ be a cooriented $C^1$-foliation with a transverse smooth $1$-dimensional foliation 
    $\mathcal{I}$, and let $\gamma$ be a curve with attracting holonomy. Then there exist a closed neighborhood $N$ of $\gamma$ and $C^1$ coordinates $N \cong S^1_\theta \times I_{y,z}^2$ so that $\gamma \cong S^1 \times \{(0,0)\}$, $\mathcal{I}$ is tangent to $\partial_z$, and $T \mathcal{F}$ is defined by a $1$-form $\alpha$ of the form
    $$\alpha = dz - u(\theta,z) d\theta,$$
    where $u : N \rightarrow \R$ is $C^1$ and satisfies 
    \begin{itemize}
        \item $u(\theta, 0) = 0$,
        \item For $z > 0$, $u(\theta,z) < 0$,
        \item For $z < 0$, $u(\theta,z) > 0$.
    \end{itemize}
\end{lem}

\begin{proof}
    After choosing a transversal to $\mathcal{F}$ intersecting $\gamma$ and which is tangent to $\mathcal{I}$, we may represent the holonomy around $\gamma$ by a $C^1$ map $h : I \rightarrow I$ which satisfies $h' > 0$, $h(0)=0$, $0 < h(z) < z$ for $z > 0$, and $z < h(z) < 0$ for $z < 0$.

    Let $\tau : [0,1] \rightarrow [0,1]$ be a smooth map satisfying $\tau' \geq 0$, $\tau \equiv 0$ on $[0, 0.1]$, $\tau \equiv 1$ on $[0.9,1]$, and $\tau'> 0$ on $(0.1, 0.9)$. We define a (partial) foliation on $[0,1] \times I$ by considering the graphs of the maps
    $$f_t(z) \coloneqq \tau(t) h(z) + (1-\tau(t)) z$$
    for $t \in [0,1]$ and $z \in I$. Note that $(t,z) \mapsto f_t(z)$ is $C^1$ and satisfies $\partial_z f_t > 0$ and $\partial_t f_t(z) = \tau'(t) (h(z) - z)$, hence it defines a $C^1$-foliation tangent to a $C^1$ vector field $V$ of the form $V = \partial_t + v(t,z) \partial_z$ where
    \begin{itemize}
        \item $v(t,0) = 0$ for $t\in [0,1]$,
        \item $v(t,z) = 0$ for $t \in [0,0.1] \cup [0.9,1]$ and $z \in I$,
        \item For $t \in (0.1, 0.9)$, if $z> 0$ then $v(t,z) < 0$, and if $z < 0$, then $v(t,z) > 0$.
    \end{itemize}
    After shrinking the transversal, we may assume that $V$ is defined on the whole of $[0,1] \times I$ and induces a vector field $\widehat{V}$ on $S^1 \times I = [0,1]_{\slash 0 \sim 1} \times I$. Notice that its flow induces the same (germinal) holonomy as $\mathcal{F}$ around $\gamma$.
    
    At this point, $\widehat{V}$ is not quite of the desired form since it is tangent to $\partial_t$ on a strip. However, we may perform a suitable change of coordinates to achieve the desired form. For that, it suffices to find a smooth foliation by circles so that $S^1 \times \{0\} \subset S^1 \times I$ is a leaf, and all the other leaves are transverse to $\widehat{V}$, positively on $\{z > 0\}$ and negatively on $\{z < 0\}$. We may consider a smooth vector field $V_0 = \partial_t + v_0(t,z) \partial_z$ on $[0.2, 0.8] \times I$ where $v_0$ satisfies 
    \begin{itemize}
        \item $v_0(t,0) = 0$ for $t \in [0.2, 0.8]$,
        \item $v_0(t,z)= 0$ for $t \in [0.2,0.3] \cup [0.7,0.8]$ and $z \in I$,
        \item For all $t \in (0.3,0.7)$, if $z > 0$ then $v(t,z) < v_0(t,z) < 0$ and if $z < 0$, $0 < v_0(t,z) < v(t,z)$.\footnote{Here, we may use the following well-known lemma: if $f : [0,1] \rightarrow [0,1]$ is a continuous function satisfying $f(0)=0$ and $f(t) > 0$ for $t \in (0,1]$, then there exists a smooth function $g : [0,1] \rightarrow [0,1]$ such that $g(0)=0$, $0 < g(t) < f(t)$ for $t \in (0,1]$, and all the derivatives of $g$ at $0$ vanish.}
    \end{itemize}
    Then $V_0$ is positively transverse to $V$ for $z > 0$ and negatively transverse to $V$ for $z < 0$, and its flow from $t = 0.2$ to $t = 0.8$ induces a map $h_0 : I \rightarrow I$ satisfying $h_0(0) = 0$, $0 <h_0(z) < z$ for $z > 0$, and $z < h_0(z) < 0$ for $z < 0$. Then, the vector field $V_0$ can be extended to a smooth vector field $\widehat{V}_0$ on $S^1 \times I$ which is tangent to $S^1 \times \{0\}$, transverse to $\widehat{V}$ everywhere with the same signs as before, and whose flow lines are all closed: it suffices to realize the map $h^{-1}_0$ by the flow of a ``repelling'' vector field on the small arc of $S^1$ between $0.8$ and $0.2$. Then, the flow of $\widehat{V}_0$ induces a smooth foliation by circles, which are transverse to $\partial_z$, and after a smooth graphical change of coordinates making its leaves tangent to $\partial_t$, the vector field induced by $\widehat{V}$ becomes tangent to a foliation of the desired form. Note that this change of coordinates preserves the vertical direction spanned by $\partial_z$.

    The product of the latter foliation with the $y$-direction induces the same holonomy as $\mathcal{F}$ along $\gamma$, so there exist $C^1$ coordinates in which $\mathcal{F}$ is defined by a $1$-form $\alpha$ as in the statement. Moreover, since these foliations are transverse to $\partial_z$ and to $\mathcal{I}$, respectively, we may further arrange that this change of coordinates sends $\langle \partial_z \rangle$ to $T\mathcal{I}$.
\end{proof}

\begin{rem} \label{rem:smoothcoord}
    Note that even though $T \mathcal{F}$ is not assumed to be $C^1$, the induced foliation in the $C^1$ coordinates provided by the lemma has a $C^1$ tangent plane field. Moreover, we may assume that these coordinates are smooth, at the expense of an arbitrarily small $C^1$ isotopy of $\mathcal{F}$ transverse to $\mathcal{I}$ near $N$. We may further arrange that the map $u$ is \textbf{smooth} away from an arbitrarily small neighborhood of $S^1 \times \{0\} \subset S^1 \times I$, at the expense of another arbitrarily small $C^1$ isotopy, using the methods from~\cite[Section 3]{B16}.
\end{rem}

Let $U$ be a neighborhood of an attracting curve $\gamma$ for $\mathcal{F}$. A contact structure $\xi$ approximating $\mathcal{F}$ and transverse to a smooth $1$-dimensional foliation $\mathcal{I}$ is $\mathcal{I}$-\textbf{standard} on $U$ if there exists a larger neighborhood $U \subset \widehat{U} \cong S^1 \times I \times I$ with coordinates $(\theta, y, z)$ in which $\partial_z$ is positively tangent to $\mathcal{I}$, and $\xi = \ker \alpha_0$ for a $1$-form $\alpha_0$ of the form
$$\alpha_0 = dz - u(\theta, y, z) d\theta,$$
where the function $u : \widehat{U} \rightarrow \R$ satisfies 
\begin{itemize}
    \item $\partial_y u > 0$ (this is the contact condition),
    \item $\mp u(\theta, y, \pm1) > 0$ (this guarantees that $\xi$ is transverse to the top/bottom faces of $\widehat{U}$),
    \item Near $y=1$, $u$ is of the form
    $$u(\theta, y,z) = u_1(z+ 1-y),$$ where $u_1 : \R \rightarrow \R$ is a smooth function satisfying $u_1(0)=0$ and $u'_1 < 0$.
\end{itemize}

This last condition might seem somewhat obscure at the moment but will be important in the proof of Proposition~\ref{prop:contractible}. In particular, it ensures that the characteristic foliation of $\xi$ along the face $y=1$ has a single closed orbit which has attracting linear holonomy. 

We call $U$ an \textbf{$\mathcal{I}$-standard neighborhood} of $\gamma$. Finally, we say that a finite collection of attracting curves $\{\gamma_1, \dots, \gamma_k\}$ for $\mathcal{F}$ is \textbf{full} if every minimal set of $\mathcal{F}$ contains a $\gamma_i$, $1 \leq i \leq k$.

The following proposition provides convenient `basepoints' for our version of Vogel's theorem. In the case where all the minimal sets have \emph{linear} holonomy and $T\mathcal{F}$ is $C^1$, for example when the foliation is Anosov, the result would essentially follow from~\cite{ET}. However, the lack of linear holonomy and regularity of $T\mathcal{F}$ causes some complications and we resort to the tools in~\cite{B16} for the more general case.

\begin{prop} \label{prop:stdnbdgamma}
    Let $\mathcal{F}$ be a hypertaut $C^1$-foliation on $M$ and $\mathcal{I}$ be a smooth $1$-dimensional foliation transverse to $\mathcal{F}$. For every full collection of attracting curves $\{\gamma_1, \dots, \gamma_k\}$, there exist neighborhoods $U_i$ of $\gamma_i$, $1 \leq i \leq k$, such that $\mathcal{F}$ is $C^0$-approximated by positive contact structures which are $\mathcal{I}$-standard on each $U_i$, $1 \leq i \leq k$, and the corresponding neighborhoods $\widehat{U}_i$ are pairwise disjoint.
\end{prop}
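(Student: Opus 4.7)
The plan is to (1) construct near each Sacksteder curve $\gamma_i$ a $C^1$-coordinate chart simultaneously adapted to $\mathcal{F}$ and $\mathcal{I}$; (2) write down an explicit local contact form of the prescribed shape $dz - u\,d\theta$ in these coordinates; and (3) assemble these local pieces with the Eliashberg--Thurston perturbation into a global approximating contact structure.

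For Step 1, near each $\gamma_i$ I would choose $C^1$-coordinates $(\theta, y, z)$ on a tubular neighborhood $N_i \cong S^1 \times I \times I$ such that $\partial_z$ is positively tangent to $\mathcal{I}$, $\gamma_i = \{y=1,\, z=0\}$, and the leaf $L_i$ of $\mathcal{F}$ containing $\gamma_i$ is locally $\{z=0\}$. Concretely, $\theta$ parameterizes $\gamma_i$; $y$ parameterizes a smooth family of closed loops in $L_i$ all freely homotopic to $\gamma_i$, with $y = 1$ at $\gamma_i$ itself; and $z$ is the $\mathcal{I}$-flow parameter from $L_i$. Because these parallel loops all have the same linear transverse holonomy $z \mapsto \lambda z$ furnished by the Sacksteder hypothesis, a careful choice of $y$-parameterization makes $\partial_y$ tangent to $\mathcal{F}$ everywhere in $N_i$, so that $T\mathcal{F}$ becomes the kernel of a $1$-form of the form
$$\alpha_\mathcal{F} = dz - a(\theta, y, z)\,d\theta,$$
with $a(\theta, y, z) = c_i z + r(\theta, y, z)$, where $c_i < 0$ after possibly reversing the orientation of $\gamma_i$ and the remainder $r$ can be made arbitrarily $C^0$-small by shrinking $N_i$ (via a $C^1$ linearization of the holonomy along $\gamma_i$).

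For Step 2, on each $N_i$ I would set
$$\alpha_i \coloneqq dz - \bigl(c_i z + \epsilon\,\phi(y)\bigr)\,d\theta$$
for a fixed smooth $\phi : I \to \R$ with $\phi(1) = 0$ and $\phi' > 0$, and a small parameter $\epsilon > 0$. A direct computation gives $\alpha_i \wedge d\alpha_i = \epsilon\,\phi'(y)\,d\theta \wedge dy \wedge dz > 0$, and $u_i = c_i z + \epsilon\,\phi(y)$ satisfies $\partial_y u_i > 0$, $u_i(\theta, 1, 0) = 0$, and $\partial_z u_i(\theta, 1, z) = c_i < 0$, so $\xi_i \coloneqq \ker \alpha_i$ is a positive contact structure that is $\mathcal{I}$-standard on any $U_i \Subset N_i$ satisfying the required boundary transversality. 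Since $\|\alpha_i - \alpha_\mathcal{F}\|_{C^0} = O(\epsilon) + \|r\|_{C^0}$, shrinking both $\epsilon$ and $N_i$ makes $\xi_i$ arbitrarily $C^0$-close to $T\mathcal{F}$. For Step 3, I would take the $N_i$ pairwise disjoint and invoke the $C^0$-Eliashberg--Thurston perturbation of~\cite{B16, KR17}, choosing the auxiliary global $1$-form $\beta$ with $d\beta > 0$ on the leaves of $\mathcal{F}$ and with $\beta$ equal to $\phi(y)\,d\theta$ inside each $N_i$; the existence of such a $\beta$ is precisely what the admissibility of $\mathcal{F}$ together with the fullness of $\{\gamma_1, \dots, \gamma_k\}$ guarantee via the Sullivan--Plante-type propagation argument at the heart of the original Eliashberg--Thurston construction. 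The form $\alpha_\mathcal{F} + \epsilon\,\beta$ is then globally contact for small $\epsilon$, $C^0$-converges to $T\mathcal{F}$ as $\epsilon \to 0$, and agrees with $\alpha_i$ on each $U_i$.

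The main technical obstacle is Step 1: Sacksteder's theorem only gives linear holonomy on a one-dimensional transversal to $\gamma_i$, and since $\mathcal{F}$ is only $C^1$ one cannot invoke Sternberg linearization to put $\mathcal{F}$ into an exactly linearized form on all of $N_i$. Consequently, one must settle for coordinates in which $\alpha_\mathcal{F}$ is $C^0$-close (rather than equal) to $dz - c_i z\,d\theta$, relying on Hartman--Grobman type control together with the freedom to shrink $N_i$. The delicate point is arranging the $y$-coordinate so that $\partial_y$ is genuinely tangent to $\mathcal{F}$ up to $C^0$-small error, which requires parameterizing the transverse arc in $L_i$ by loops whose holonomies align compatibly with the chosen linearization along $\gamma_i$. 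Once this coordinate setup is in place, the contact computation of Step 2 and the global assembly of Step 3 are routine.
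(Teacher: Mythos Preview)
Your overall strategy matches the paper's, but there are two genuine gaps.

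First, the linearization in Steps~1--2 is both unnecessary and creates an inconsistency with Step~3. You replace the actual foliation coefficient $a(\theta,z)$ by its linearization $c_i z$ when defining $\alpha_i$, but in Step~3 you form the global perturbation $\alpha_{\mathcal F} + \epsilon\beta$ using the true $\alpha_{\mathcal F}$; these two forms do not agree on $N_i$, so the local $\mathcal I$-standard model is not actually realized by your global contact structure. The paper avoids linearizing altogether: it uses the linear holonomy only to find $C^1$ coordinates in which the foliation coefficient $v$ (your $-a$) is itself $C^1$ with $\partial_z v > C > 0$, perturbs by $\epsilon\,h(y^2+z^2)\,dy$, and then the shear $z' = z + \epsilon y$ puts the result into the form $dz' - u\,d\theta$ with $u(\theta,y,z') = -v(\theta, z'-\epsilon y)$, from which all the $\mathcal I$-standard conditions follow directly without any approximation.

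Second, your Step~3 overclaims. A global $\beta$ with $d\beta_{\vert T\mathcal F} > 0$ is precisely hypertautness, which is \emph{not} implied by admissibility; and your local choice $\beta = \phi(y)\,d\theta$ with $\phi' > 0$ throughout $I$ cannot be cut off to vanish near $\partial N_i$ without destroying the contact condition, so the transition to a global form is unclear. The paper's perturbation $\epsilon\,h(y^2+z^2)\,dy$ is compactly supported by construction, yielding a \emph{confoliation} that equals $T\mathcal F$ outside the $N_i$ and is contact only near each $\gamma_i$; the propagation machinery of~\cite{B16} is then invoked relative to this already-contact region. (A further point you omit: the adapted coordinates are only $C^1$, so the plane field you write down is only $C^1$ on $M$; the paper fixes this by smoothing the coordinate chart while keeping $\partial_z$ tangent to $\mathcal I$.)
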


A similar result holds for negative contact structures approximating $\mathcal{F}$. Notice that the neighborhoods $U_i$ are \emph{fixed} and independent of the contact approximations, and that we do not require the attracting curves to have \emph{linear} holonomy. We begin with a technical lemma that will allow us to produce the desired characteristic foliation for the contact approximations along the face $\{y=1\}$; this is achieved via a careful `squashing' procedure.

\begin{lem}[Squashing] \label{lem:betterfence}
    Let $\mathcal{G}$ be a foliation on $A = S^1_\theta \times I_z$ defined by a $1$-form $\alpha$ as in Lemma~\ref{lem:normalattracting}. 
    For every $\epsilon > 0$, there exists a smooth foliation $\widetilde{\mathcal{G}}$ on $A$ such that the following hold:
    \begin{itemize}
        \item $T \widetilde{\mathcal{G}}$ is $\epsilon$-close to $T \mathcal{G}$ in the $C^0$ topology,
        \item $T \widetilde{\mathcal{G}}$ is negatively transverse to $T \mathcal{G}$,
        \item Writing $A' = S^1 \times [-1/2, 1/2] \subset A$, there exists an orientation-preserving diffeomorphism $\varphi : A' \rightarrow A'$ such that $\mathcal{H} \coloneqq \varphi_*\big(\widetilde{\mathcal{G}}\big)$ is defined by a $1$-form $\beta$ of the form
        $$ \beta = dz-v(z) d\theta,$$
        where $v : [-1/2, 1/2] \rightarrow \R$ is a smooth function satisfying  $v(0) = 0$ and $v' < 0$. In particular, $\mathcal{H}$ has linear holonomy along $S^1 \times \{0\}$. Moreover, $\varphi$ is graphical and preserves the line field spanned by $\partial_z$.
    \end{itemize}
\end{lem}

\begin{proof}
    We use the notation of Lemma~\ref{lem:normalattracting} for $\mathcal{G}$. Let $\epsilon > 0$. We consider $\delta > 0$ small enough such that $\vert u \vert < \epsilon$ on $A_{4\delta} = S^1 \times [-4\delta, 4\delta]$, and $\delta < \epsilon$. Moreover, we write $\mu \coloneqq \inf_{S^1 \times [-4\delta, -\delta]} u > 0$.
    
    The construction of $\widetilde{\mathcal{G}}$ proceeds in three steps.

    \begin{itemize}[leftmargin=*]
    \item \textit{Step 1: on $S^1 \times \big([-1, -4\delta] \cup [0,1]\big)$.} Notice that $\mathcal{G}$ is a product on $S^1 \times [-1, -4\delta]$ in suitable coordinates, so we can easily construct a smoothing $\widetilde{\mathcal{G}}$ which is $\epsilon$-close in the $C^1$ topology, and negatively transverse to it. On $S^1 \times [0,1]$, we may first define $\widetilde{\mathcal{G}}$ near $S^1 \times \{0\}$ and extend it on the rest of $S^1 \times [0,1]$ on which $\mathcal{G}$ is also a product.
    
    \item \textit{Step 2: extension to $S^1 \times \big([-1, -3\delta] \cup [-\delta,1]\big)$.} The foliation $\widetilde{\mathcal{G}}$ induces a smooth map from $S^1 \times \{1/2\}$ to $S^1 \times \{0\}$ obtained by following the leaves from the first circle to the second. Lifting to the universal covers gives rise to a smooth map $\tau : \mathbb{R} \rightarrow \mathbb{R}$ which satisfies $\tau(t+1) = \tau(t) + 1$ for all $t \in \mathbb{R}$, and which has a positive translation number. For any sufficiently large $T \gg 1$, we can find another such map $\widehat{\tau} : \mathbb{R} \rightarrow \mathbb{R}$ which has translation number at least $T$, and so that the composition $\widehat{\tau} \circ \tau$ is a genuine translation with translation number $T' > T$. We may then extend $\widetilde{\mathcal{G}}$ to $S^1 \times [-\delta, 1]$ so that it is negatively transverse to $\mathcal{G}$ (note that $T\mathcal{G}$ has nonnegative slope on $S^1 \times [-\delta, 0]$), its leaves are graphs of decreasing functions, and its induced map from $S^1 \times \{1/2\}$ to $S^1 \times \{-\delta\}$ lifts to $\widehat{\tau} \circ \tau$. We may also arrange that $\widetilde{\mathcal{G}}$ is $\theta$-invariant near $S^1 \times \{-\delta\}$. By construction, there is a graphical diffeomorphism $\varphi : S^1 \times [-\delta, 1/2] \rightarrow S^1 \times [-\delta, 1/2]$ such that $\varphi_* \big(\widetilde{\mathcal{G}}\big)$ is $\theta$-invariant, and is defined by a $1$-form of the form $\beta = dz - v(z) d\theta$, where $v< 0$. Moreover, we may easily arrange that $\varphi$ is supported away from $S^1 \times \{-\delta\}$.

    We may perform a similar construction to extend $\widetilde{\mathcal{G}}$ on $S^1 \times [-1, -3 \delta]$, with the additional constraint that $T\widetilde{\mathcal{G}}$ has a positive slope less than $\mu$ on $S^1 \times [-4\delta, -3\delta]$ to ensure transversality to $T \mathcal{G}$. We similarly obtain a graphical diffeomorphism $S^1 \times [-1, -3\delta] \rightarrow S^1 \times [-1, -3\delta]$ making $\widetilde{\mathcal{G}}$ $\theta$-invariant, and which is supported away from $S^1 \times \{-3\delta\}$.

    This step can easily be performed so that $T\widetilde{\mathcal{G}}$ is $\epsilon$-close to $T \mathcal{G}$, thanks to our choice of $\delta$. It suffices to choose the parameter $T$ large enough, in order to control the slope of $\widetilde{\mathcal{G}}$ during the extension. 
    
    \item \textit{Step 3: extension to $A$.} Near $S^1 \times \{-2\delta\}$, we define $\widetilde{\mathcal{G}}$ through the $1$-form
    $$dz + c (z+2\delta) d\theta$$
    for a very small constant $c > 0$, so that it has attracting linear holonomy along $S^1 \times \{-2\delta\}$. We may then connect it to the rest of $\widetilde{\mathcal{G}}$ by a $\theta$-invariant foliation transverse to $T \mathcal{G}$. Note that in the last extension region, $T \mathcal{G}$ has a positive slope and is at distance at least $\mu$ from $\langle \partial_\theta\rangle$. The resulting foliation can easily be made $\epsilon$-close to $T\mathcal{G}$, by making the extension sufficiently flat.
    \end{itemize}
By construction, the diffeomorphisms from Step 2 can be combined into a diffeomorphism $\varphi : S^1 \times [-1/2, 1/2] \rightarrow S^1 \times [-1/2, 1/2]$ supported away from $S^1 \times [-3\delta, -\delta]$, so that $\mathcal{H} \coloneqq \varphi_* \big(\widetilde{\mathcal{G}}\big)$ is $\theta$-invariant, and is defined by a $1$-form $\beta$ of the form $\beta = dz - v(z) d\theta$, where $v$ satisfies:
        \begin{itemize}
            \item $v(-2\delta)  =0$, $v(z) < 0$ for $z > -2\delta$ and $v(z) > 0$ for $z< -2\delta$, 
            \item $v'<0$ near $z=-2\delta$.
        \end{itemize}
    Essentially, $\mathcal{H}$ consists of a family of $\theta$-translates of a decreasing function above $z=-2\delta$, $\theta$-translates of an increasing function below $z=-2\delta$, together with the closed curve $S^1 \times \{-2\delta\}$. Moreover, these functions are \emph{strictly convex} near $z = -2 \delta$. It is then easy to modify $\varphi$ by a graphical diffeomorphism so that these functions become convex everywhere. This ensures that the resulting map $v$ satisfies $v' < 0$. We may also arrange that $v$ vanishes along $\{z=0\}$ in these new coordinates.
\end{proof}

The previous lemma allows us to find a foliation on $S_\theta^1 \times I_z$ which behaves like the restriction of an $\mathcal{I}$-standard contact structure along the face $\{y = 1\}$ of $N$. We can therefore `insert it' on a parallel slice close to that one and replace $\mathcal{F}$ with a confoliation in between those two slices by twisting along $\partial_y$. We also need to modify $\mathcal{F}$ along a slice closer to the face $\{y=-1\}$ in order to obtain a confoliation on the whole of $N$; this modification can be performed by a suitable isotopy of $\mathcal{F}$ making it negatively transverse to itself, at least away from $\gamma$. The final step is to fill the neighborhood of $\gamma$ with an appropriate contact structure. We now make this strategy more precise.

\begin{proof}[Proof of Proposition~\ref{prop:stdnbdgamma}]
We proceed in two steps. First, we approximate $\mathcal{F}$ near the full collection of attracting curves to obtain a $C^1$-confoliation there. Then, we explain how to ``propagate the contactness'' everywhere. Note that even though $\mathcal{F}$ is $C^1$, the plane field $T \mathcal{F}$ is only $C^0$ yet \emph{uniquely} integrable, and we may further assume that it is $C^1$ near these attracting curves (after an arbitrarily small $C^1$ isotopy of $\mathcal{F}$) by Lemma~\ref{lem:normalattracting} and Remark~\ref{rem:smoothcoord}. 

\underline{Let us fix $\epsilon > 0$.}
\begin{itemize}[leftmargin=*]
    \item \textit{Step 1: creating contactness near attracting curves.} For simplicity, we consider a single attracting curve $\gamma$ with a neighborhood $N \cong S_\theta^1 \times I_{y,z}^2 = A_{\theta,z} \times I_y$ as in Lemma~\ref{lem:normalattracting}. We choose $\delta > 0$ small enough so that $\vert u \vert < \epsilon$ on $A_\delta = S^1 \times [-\delta, \delta]$. We may shrink $\delta$ further so that $\delta < \epsilon$. We then modify $\mathcal{F}$ so that $u$ is \emph{smooth} on a neighborhood of $S^1 \times \big(I \setminus (-\delta, \delta) \big)$, at the expense of a $C^1$-small isotopy which is much smaller than $\epsilon$. This can be achieved while keeping $\mathcal{F}$ tangent to $\partial_y$.
    
    We denote by $\mathcal{G}$ the foliation on $A$ induced by $\mathcal{F}$. Since $\mathcal{G}$ is smooth and trivial on $S^1 \times [\delta, 1]$ and $S^1 \times [-1, -\delta]$, it is easy to construct a smooth isotopy $(\phi_t)_{0 \leq t \leq 1}$ of $A$ with the following properties:
    \begin{itemize}
        \item[(a)] $\phi_t$ is supported on $S^1 \times \big([-0.9, -\delta/2] \cup [\delta/2, 0.9]\big)$,
        \item[(b)] $(\phi_t)_t$ is $\epsilon$-small in the $C^1$ topology,
        \item[(c)] $T \mathcal{G}_t$ is pointwise equal to or negatively transverse to $T \mathcal{G}$ on $S^1 \times \big(I \setminus (-\delta, \delta)\big)$, and is transverse to it on $S^1 \times \big( [-0.8, -\delta]\cup [\delta, 0.8]\big)$.
    \end{itemize}
    We now fix $\mu > 0$ such that $T \mathcal{G}_1$ is at distance at least $\mu$ from $T\mathcal{G}$ on $S^1 \times \big( [-0.8, -\delta]\cup [\delta, 0.8]\big)$, and we consider a smooth foliation $\widetilde{\mathcal{G}}$ as in Lemma~\ref{lem:betterfence}, so that $T\widetilde{\mathcal{G}}$ is at distance less than $\mu$ from $T \mathcal{G}$. We then modify $\widetilde{\mathcal{G}}$ on $S^1 \times \big([-1, -0.8] \cup [0.8, 1]\big)$ so that the resulting foliation $\widehat{\mathcal{G}}$ coincides with $\mathcal{G}$ on $S^1 \times \big([-1, -0.9] \cup [0.9, 1]\big)$, and $T\widehat{\mathcal{G}}$ lies between $T \mathcal{G}_1$ and $T\mathcal{G}$. Note that if $\varphi$ denotes the diffeomorphism from the third bullet point of Lemma~\ref{lem:betterfence}, then $\mathcal{H} \coloneqq \varphi_*(\widehat{\mathcal{G}})$ is of the desired form on the subannulus $A' = S^1 \times [-1/2, 1/2] \subset A$. 

    We now construct a $C^1$-confoliation $\xi$ on $N$ as follows:
    \begin{itemize}
        \item On $A \times [-1, -1/2]$, $\xi$ is obtained as the trace of the isotopy $\mathcal{G}_t$, with $t$ appropriately rescaled and cut off near $y = -1$ and $y=-1/2$. The resulting plane field is at distance at most $C \epsilon$ from $T \mathcal{F}$, for some universal constant $C$. Moreover, $\xi$ is tangent to $\partial_y$ near $y=-1/2$ and its restriction to $A \times \{-1/2\}$ coincides with $\mathcal{G}_1$.
        
        \item Near $A \times \{1/2\}$, $\xi$ is tangent to $\partial_y$ and its restriction to $A \times\{1/2\}$ coincides with $\widehat{\mathcal{G}}$. Moreover, we may arrange that after applying $\varphi \times \mathrm{id}$ near $A' \times \{1/2\}$, the resulting confoliation is defined by a $1$-form of the form $dz-u_1(z+1/2-y)d\theta$ where $u_1$ satisfies $u_1(0)=0$ and $u_1' <0$. In particular, $\xi$ is contact in a neighborhood of $A' \times \{1/2\}$. Furthermore, we can arrange that for a very small $\sigma > 0$, the restriction of $\xi$ to $A \times \{1/2+\sigma\}$ is pointwise negatively transverse to or equal to $T \mathcal{G}$, and its restriction to $S^1 \times \big(I \setminus (-\delta, \delta) \big)\times \{1/2 - \sigma\}$ is pointwise positively transverse to or equal to $T\mathcal{G}_1$.

        \item We can now extend $\xi$ to $A \times [1/2, 1]$ by an appropriate twisting along $\partial_y$, so that $\xi$ is tangent to $\mathcal{F}$ near $\partial A \times [1/2, 1]$. We can also extend it by twisting on $S^1 \times \big([-1, -\delta] \cup [\delta, 1] \big) \times [-1/2, 1/2]$. We shall also modify $\xi$ near $S^1 \times \{-1/2\} \times [-\delta, \delta]$ to add some twisting along $\partial_y$. The resulting $C^1$ plane field is then contact and tangent to $\partial_y$ near $S^1 \times [-1/2, 1/2]^2 \setminus \left(S^1 \times (-1/2, 1/2)_y \times (-\delta, \delta)_z \right)$. 

        \item Finally, we extend $\xi$ to $N$ by filling $S^1 \times [-1/2, 1/2]_y \times [-\delta, \delta]_z$ using~\cite[Lemma 5.6]{B16}. The resulting confoliation $\xi$ is at a distance at most $C \epsilon$ from $T \mathcal{F}$, for some universal constant $C$ independent of $\epsilon$. Note that this extension is not necessarily tangent to $\partial_y$.
    \end{itemize}

    This construction is summarized in Figure~\ref{fig:confoliation}.
    After some very small $C^1$ perturbation, we can arrange that $\xi$ is smooth on a neighborhood $V$ of $S^1 \times [-1/2, 1/2]^2$. Moreover, there exists a smooth vector field $Y$ on $V$ spanning the intersection of $\xi$ with $\mathrm{span}\{ \partial_y, \partial_z\}$ and which is $C\epsilon$-close to $\partial_y$. For $\epsilon > 0$ small enough, we can use its flow to define new coordinates $(y', z')$ near $S^1 \times [-1/2, 1/2]^2$ such that $\partial_{z'}=\partial_z$ and $\partial_{y'}$ is tangent to $\xi$, and after applying the diffeomorphism $\varphi \times \mathrm{id}$ of $A_{\theta, z} \times [-1/2, 1/2]_y$, we obtain a contact structure on $S^1 \times [-1/2, 1/2]^2$ of the desired form.

    \begin{figure}[h]
        \centering
        \includestandalone{tikz/square}
        \caption[Short caption]{The construction of $\xi$. In blue: $\xi$ is tangent to (a small $C^1$-isotopy of) $\mathcal{F}$. In green: adjusted trace of the isotopy $\mathcal{G}_t$. In orange and red: insertion of $\widehat{\mathcal{G}}$ with twisting along $\partial_y$. The part in red is made ``standard'' after a change of coordinates induced by $\varphi$. In yellow: $\xi$ is tangent to $\partial_y$ and twists along $\partial_y$ to interpolate between the different regions. In purple: extension obtained by applying~\cite[Lemma 5.6]{B16}. It is not necessarily tangent to $\partial_y$.}
        \label{fig:confoliation}
    \end{figure}
    
    \item \textit{Step 2: propagating the contactness.} By the previous step, we obtain a plane field $\xi$ on $M$ with the following properties:
    \begin{itemize}
        \item $\xi$ is $C^0$, and it is $C^1$ in neighborhoods of the attracting curves $\gamma_i$, $1 \leq i \leq k$,
        \item $\xi$ is a $C^1$-confoliation in those neighborhoods, and contact and ``standard'' in smaller neighborhoods of those curves (these neighborhoods have a fixed size),
        \item There is a $C^1$-foliation $\widetilde{\mathcal{F}}$, obtained from $\mathcal{F}$ by a $C^1$-small isotopy, such that $\xi$ is tangent to $\widetilde{\mathcal{F}}$ at the points where it is not contact (in particular at the points where it is not $C^1$).
    \end{itemize}
    Therefore, $\xi$ is a \emph{$C^0$-confoliation} in the sense of~\cite[Definition 2.17]{B16}. It is moreover \textit{transitive}: every point in $M$ can be joined to the contact region by a curve tangent to $\xi$. We may then use the strategy from~\cite[Section 8]{B16} to approximate $\xi$ by a contact structure, relative to a neighborhood of the attracting curves where $\xi$ is of the desired form. This strategy simplifies in our setting, since $\xi$ is already contact near the attracting curves, and it is moreover \emph{uniquely} integrable in (the interior of) its foliated region. Therefore, it is much easier to control how the holonomy is modified under perturbation. \qedhere
\end{itemize}
\end{proof}

        \subsection{Uniqueness with transversal constraint}

Recall that $\mathcal{F}$ is a cooriented hypertaut $C^1$-foliation. We fix a smooth $1$-dimensional foliation $\mathcal{I}$ positively transverse to $\mathcal{F}$. The space of continuous plane fields transverse to $\mathcal{I}$ is denoted by $\mathcal{P}_\mathcal{I}$ and is endowed with a metric induced by the choice of an auxiliary Riemannian metric on $M$. The main result of this section is

\begin{thm}[Theorem~\ref{thmintrobeta:uniq}] \label{thm:uniq}
    There exists a $C^0$-neighborhood $\mathcal{V} \subset \mathcal{P}_\mathcal{I}$ of $T\mathcal{F}$ such that any two positive (resp.~negative) contact structures in $\mathcal{V}$ are contact homotopic \underline{within $\mathcal{P}_\mathcal{I}$}.
\end{thm}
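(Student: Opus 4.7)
The overall plan is to refine Vogel's original argument while carrying a transversal control on every intermediate plane field. I would first use Proposition~\ref{prop:stdnbdgamma} to fix, once and for all, a full collection of Sacksteder curves $\gamma_1, \dots, \gamma_k$ together with $\mathcal{I}$-standard neighborhoods $U_i$, and to select the linear-deformation basepoints $\xi_\pm$ so that they are $\mathcal{I}$-standard on $U = \bigcup_i U_i$. Two linear-deformation basepoints of the same sign are connected by straight-line paths of $1$-forms which remain positive (resp.~negative) on $T\mathcal{F}$, and whose kernels remain transverse to $\mathcal{I}$ for small enough deformation parameter; this connects any two reasonable basepoints inside $\mathcal{P}_\mathcal{I}$. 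It therefore suffices to exhibit, for every $\xi \in \mathcal{V}$, a contact homotopy in $\mathcal{P}_\mathcal{I}$ from $\xi$ to the chosen $\xi_+$ (respectively $\xi_-$).

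The construction of the homotopy proceeds in two stages. In the local stage, when $\xi$ is sufficiently $C^0$-close to $T\mathcal{F}$, one puts $\xi$ in $\mathcal{I}$-standard form on each $U_i$ by a deformation whose $C^0$-size is comparable to $d_{C^0}(\xi, T\mathcal{F})$. Indeed, in the coordinates provided by Proposition~\ref{prop:stdnbdgamma}, $\xi$ is graphical over $\mathcal{I}$ and can be written as the kernel of a form $dz - u_\xi(\theta,y,z)\,d\theta$ with $u_\xi$ close to the standard $u$; linear interpolation between $u_\xi$ and $u$ preserves $\partial_y(\cdot)>0$ for small initial distance, and automatically yields plane fields transverse to $\partial_z$. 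After a relative cutoff, we may then assume that $\xi$ coincides with $\xi_+$ on a (slightly smaller) neighborhood of $U$.

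In the global stage, one applies Vogel's machinery on the complement of $U$, with $\xi_+$ playing the role of Vogel's ``reference'' contact structure near each minimal set. Vogel's proof proceeds by a stratified analysis of the foliation, using that outside a neighborhood of the minimal sets the dynamics is simple, and interpolating between contact structures via a sequence of localized modifications (homotopies within thickened neighborhoods of branched surfaces, convex-surface-theoretic cuts, and Gray-type stability arguments). The key structural observation for our refinement is that each of these modifications is either a small perturbation whose $C^0$-norm is controlled by the present distance of the plane field to $T\mathcal{F}$, or it is supported in a region in which all involved plane fields are, throughout the construction, quantitatively close to a plane field appearing in a linear deformation of $\mathcal{F}$, hence remain transverse to $\mathcal{I}$.

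The principal obstacle is precisely this quantitative transversal control across Vogel's multi-step argument. Some of Vogel's steps invoke Gray stability, whose resulting diffeomorphisms could, if applied naively, tilt the contact plane field substantially away from $T\mathcal{F}$. My intended workaround is twofold: (i) replace these diffeomorphisms by the contact vector fields that generate them, and apply them only for a short time determined by the $C^0$-size of $\mathcal{V}$; (ii) carry out a compactness argument over the complement of the nice neighborhoods, observing that in a $C^0$-neighborhood $\mathcal{W}$ of $T\mathcal{F}$ the set $\mathcal{W} \cap \mathcal{P}_\mathcal{I}$ is open, and then choosing $\mathcal{V}$ so small that the total $C^0$-displacement in Vogel's proof never escapes $\mathcal{W} \cap \mathcal{P}_\mathcal{I}$. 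The negative case is identical, with reversed signs. The technical bookkeeping required to implement (i) and (ii) uniformly across Vogel's stages is what we expect to be the hard part.
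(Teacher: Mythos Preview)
Your proposal contains two genuine gaps that make the argument fail as written.

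\textbf{The local stage is incorrect.} You claim that near a Sacksteder curve, an arbitrary contact structure $\xi$ close to $T\mathcal{F}$ and transverse to $\mathcal{I}$ can be written as $\ker\big(dz - u_\xi(\theta,y,z)\,d\theta\big)$. This is false: a general plane field transverse to $\partial_z$ has the form $\ker\big(dz - a\,d\theta - b\,dy\big)$, and the special form you assume forces $\partial_y$ to be Legendrian, which is not automatic. Linear interpolation between general contact forms of the correct shape does not preserve the contact condition either. The paper addresses exactly this step in Section~\ref{sec:uniqannulus}: proving that two contact structures on the thickened annulus which are transverse to $\partial_z$ and agree near the boundary are homotopic \emph{through such structures} requires a nontrivial ``pulling-down'' argument (Lemma~\ref{lem:pulldown}) comparing parallel transports, culminating in Proposition~\ref{prop:contractible}. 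This is one of the places where the paper has to supply substantial new work beyond Vogel.

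\textbf{The global stage cannot be controlled by $C^0$-smallness.} Your workaround~(ii) proposes to choose $\mathcal{V}$ so small that the total $C^0$-displacement in Vogel's proof stays inside a neighborhood of $T\mathcal{F}$ contained in $\mathcal{P}_\mathcal{I}$. But the ribbon-twisting step is precisely the obstruction: one flows along vector fields tangent to the contact structure for an \emph{arbitrarily large} time $T$ in order to make the boundary holonomy on polyhedra negative (see Section~\ref{sec:ribbontwist} and Lemma~\ref{lem:ribbonconv}). The resulting contact structures are \emph{not} $C^0$-close to $T\mathcal{F}$; this is exactly the phenomenon illustrated in Figure~\ref{fig:Vogelnhbd}. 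Similarly, workaround~(i) cannot help: applying the flow for a short time does not achieve the required holonomy correction. The paper's strategy is entirely different: rather than controlling the $C^0$-distance to $T\mathcal{F}$, it verifies transversality to $\mathcal{I}$ \emph{directly} at each step. For the ribbon twist this is immediate since the flow is generated by vector fields tangent to the plane field; for filling the interiors of polyhedra one needs the new Proposition~\ref{prop:contractcyl} (contractibility of contact structures on a cylinder transverse to a fixed vector field), and near the Sacksteder curves one needs Proposition~\ref{prop:contractible} as above. These are the missing ingredients you would need to supply.
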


We will call such a neighborhood $\mathcal{V}$ a \textbf{Vogel neighborhood of $\mathcal{F}$ adapted to $\mathcal{I}$}. The proof we give follows~\cite{V16} closely with some additional (crucial!) details. Keeping contact structures transverse to $\mathcal{I}$ requires a bit more care at various steps of the proof. For the reader's convenience, we outline the proof of the theorem, following the steps of~\cite{V16}, and we explain how to fill some gaps and fix some inaccuracies. We will assume that the reader is already familiar with Vogel's article. Additional details are covered in the next sections.

\begin{rem}
    We do not claim a version of Theorem~\ref{thm:uniq} with more parameters. Indeed, we will make some generic modifications to an approximating contact structure in order to ensure that certain annuli are (essentially) convex, and it is not clear how to achieve this for a \emph{family} of contact structures. However, it seems plausible that the strategy can be adapted to that case at the expense of further technicalities. 
\end{rem}

\paragraph{Adapted polyhedral decompositions.} We quickly review~\cite[Definition 4.12]{V16}.
We consider a triangulation on $M$ which is in general position, after an appropriate jiggling, with respect to a plane field $\xi$, which will for the most part be given by the tangent distribution $T\mathcal{F}$ or a perturbation thereof. Following Colin~\cite{C99}, one can then alter this triangulation by modifying the tetrahedra near supporting vertices, i.e., at vertices where the tangent plane to the foliation meets the tetrahedron at a single point. This way, one obtains a polyhedral decomposition so that each vertex is supporting for at most one polyhedron. We can further assume that exactly {\bf three} edges of the polyhedron meet at a supporting vertex.

Let $P$ be such a polyhedron, and denote by $\partial P(\xi)$ the induced characteristic foliation. Our assumptions will always be such that this foliation is piecewise $C^1$, which is a slight generalization of Vogel's set-up, where all foliations are $C^2$. By general position, this characteristic foliation on $\partial P$ has a global transversal $\gamma(P) \subset P^{(1)}$, which can be assumed to lie in the $1$-skeleton.

We further assume that the resulting decomposition is in general position with the given transverse foliation $\mathcal{I}$ and is, in addition, \emph{graphical}. This means that the projections of the smooth segments on the boundary intersect transversely, and the projection of each (piecewise $C^1$-smooth) first return curve of the characteristic foliation meets itself (transversely) at at most one point. We refer to~\cite{V16} for more details.

We shall call such polyhedral decompositions {\bf adapted} to $\xi$ and $\mathcal{I}$. 

\paragraph{Polyhedral decomposition and neighborhoods of attracting curves.}

We choose a full collection of attracting curves $\{ \gamma_1, \dots, \gamma_k\}$ and $\mathcal{I}$-standard neighborhoods $\gamma_i \subset U_i$, $1 \leq i \leq k$, as in Proposition~\ref{prop:stdnbdgamma}, and we consider a polyhedral decomposition $\mathbb{P}$ adapted to $\mathcal{F}$ and $\mathcal{I}$ as in~\cite[Section 4A2]{V16}. We further choose product neighborhoods $N_i \subset \widehat{N}_i \subset U_i$ as in~\cite[Section 5B]{V16} whose boundaries are in general position with the polyhedral decomposition. We can choose $\mathbb{P}$ fine enough so that there is a layer of polyhedra in $\widehat{N}_i$ separating $\partial \widehat{N}_i$ from $N_i$. We then write
$$N \coloneqq \bigcup_{1 \leq i \leq k} N_i, \qquad \widehat{N} \coloneqq \bigcup_{1 \leq i \leq k} \widehat{N}_i, \qquad U \coloneqq \bigcup_{1 \leq i \leq k} U_i.$$

\paragraph{Ribbons.} 

Let $\xi$ be a coorientable plane field transverse to $\mathcal{I}$. We consider a $C^1$ embedding of a closed strip $R=[0,1]_y\times [-\delta,\delta]_z$, $\delta > 0$, so that the intervals $\{y_0\}\times [-\delta,\delta]$ are tangent to $\mathcal{I}$, hence transverse to $\xi$, and such that the intervals $[0,1]\times \{z_0\}$ are tangent to $\xi$. By slight abuse of terminology, the intervals $[0,1]\times \{z_0\}$ are referred to as \emph{Legendrians}. Such subsets are called $\mathcal{I}$-adapted {\bf ribbons}, or simply ribbons. Ribbons are easily constructed in our setting and the resulting characteristic foliation induced by $\xi$ is nonsingular and transverse to the $z$-intervals. This line field is $C^0$ and uniquely integrable, as it integrates to a $1$-dimensional foliation of class $C^1$. By the unique integrability, one can control changes to the characteristic foliation under $C^0$-perturbations, whereas for general $C^0$-plane fields this is not the case.

We shall consider ribbons that start at faces of $\mathbb{P}$, in the sense that the interval $\{0\} \times [-\delta,\delta] \subseteq \mathbb{P}^{(2)}$ is contained in a unique face of the decomposition and intersects the $1$-skeleton only in at most one supporting vertex. We further require that the ribbons enter $N$ through its top or bottom faces in the product coordinates near the attracting curves. This is possible in our setting since the minimal sets are not closed leaves, so the leaf through a chosen attracting curve $\gamma_i$ also intersects the top or bottom of its corresponding product neighborhood $N_i$. 

\begin{figure}[h]
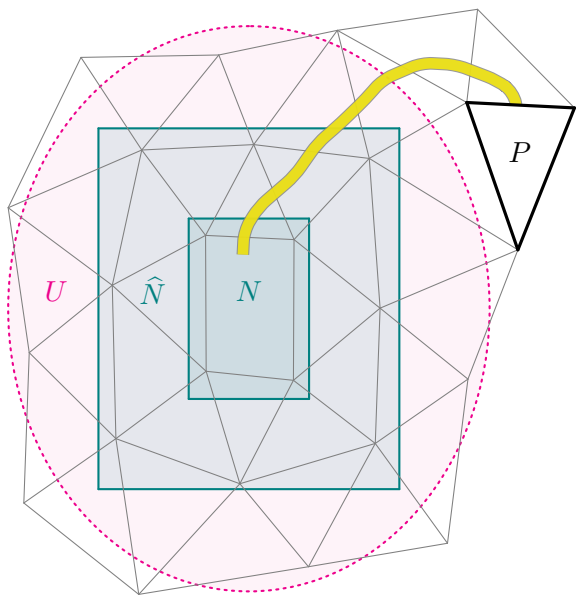

    \centering
    \includestandalone{tikz/vogel}
    \caption{Blueprint of the setup of Vogel's proof.}
    \label{fig:vogel}
\end{figure}

\paragraph{Full collections of ribbons.}

We now review~\cite[Definition 5.4]{V16}. Given a plane field in general position with an adapted polyhedral decomposition, a collection of ribbons is {\bf full} if every ribbon starts in a face that is disjoint from $N$, ends in $N$, and remains disjoint from $N$ otherwise (it is not allowed to re-enter $N$). Moreover, we assume that every leaf of the characteristic foliation of $P \subset M \setminus \text{int}(N)$, including near supporting vertices, intersects the interior of the starting interval of a ribbon. We also assume that every supporting vertex is contained in some ribbon.

There is an additional subtlety in Vogel's proof concerning the choice of ribbons: because a ribbon starting at a given polyhedron crosses other polyhedra, one has to consider \emph{induced ribbons} as in~\cite[Section 4B]{V16}. We refer the reader to that section for additional details.

\paragraph{Vogel neighborhood.}

Now, given a fixed collection of data (neighborhoods of attracting curves, polyhedral decomposition, ribbons), Vogel~\cite[Section 5C]{V16} describes a neighborhood of $\xi=T\mathcal{F}$ for a $C^2$-foliation with holonomy and without closed leaves. The definition of this neighborhood readily extends to the case of hypertaut foliations of class $C^1$, since by unique integrability, small $C^0$ changes in the plane field induce small changes of holonomy. We denote this neighborhood by $\mathcal{V} = \mathcal{V}_\mathcal{I}$.

\bigskip

We now consider an arbitrary contact structure $\xi \in \mathcal{V}$, as well as a contact structure $\widetilde{\xi} \in \mathcal{V}$ which is $\mathcal{I}$-standard in each of the $U_i$'s as in Proposition~\ref{prop:stdnbdgamma}. We think of the latter as a basepoint in $\mathcal{V}$.

\paragraph{Homotopy near supporting vertices.}

One first deforms the contact structure $\xi$ near the supporting vertices by considering small disks transverse to $\xi$ and tangent to $\mathcal{I}$, deforming the resulting characteristic foliations of $\xi$ into those of $\widetilde{\xi}$, and finally applying Gray stability exactly as in Vogel~\cite{V16}. We then get a $1$-parameter family $\xi^0_t$ defined near the supporting vertices from $\xi^0_0 = \xi$ to $\xi^0_1 = \widetilde{\xi}$. This family is $C^0$-close to $T\mathcal{F}$ (see~\cite[Section 4C2]{V16}).

\paragraph{Extending the homotopy near the $2$-skeleton.}

We first consider an extension $\eta_t$ of $\xi^0_t$, $0 \leq t \leq 1$, where $\eta_t$ is a smooth family of plane fields in $\mathcal{V}$ such that
\begin{itemize}
    \item $\eta_0 = \xi$, 
    \item On all the polyhedra intersecting the complement of $\widehat{N}$, $\eta_1 = \widetilde{\xi}$,
    \item On all the polyhedra intersecting $N$, $\eta_t =  \xi$ for all $t \in [0,1]$.
\end{itemize}
Recall that there is a layer of polyhedra between $N$ and $\widehat{N}$.

\paragraph{Correcting holonomy with semi-infinite ribbons.}

We then modify the holonomy of $\eta_t$ by attaching semi-infinite ribbons to all the polyhedra disjoint from $N$. We can first arrange that the ribbons land in convex annuli, by modifying $\xi$ generically. There is a path of vector fields $X_t$, $t \in [0,1]$, supported on the union of the semi-infinite ribbons and tangent to $\eta_t$, such that flowing along $X_t$ for a long time $T > 0$ (independent of $t$) makes the holonomy of $\eta_t$ on the polyhedra that are disjoint from $N$ \emph{negative}---the polyhedra intersecting $N$ are also modified since ribbons might cross them, but the plane fields there are all \emph{contact}.

We write $\widehat{\eta}_t \coloneqq (\phi_{X_t}^T)^*\eta_t$ and note that for each $s \in \R$, the distribution $(\phi_{X_t}^s)^*\eta_t$ is transverse to $\mathcal{I}$. In particular, $\widehat{\eta}_0$ and $\xi$ are homotopic through contact structures transverse to $\mathcal{I}$. We may further arrange that $\widehat{\eta}_t$ is contact near the $2$-skeleton; see~\cite[Remark 4.15]{V16}. Note that for all $t \in [0,1]$, $\widehat{\eta}_t$ is contact on the polyhedra intersecting $N$.

The details of how the twisting along ribbons occurs are presented in Section~\ref{sec:ribbontwist} below.

\paragraph{Extending the homotopy to the interiors of polyhedra.}

We modify $\widehat{\eta}_t$ in the interior of the polyhedra disjoint from $N$ to obtain a family of contact structures $\widehat{\xi}_t$, $0 \leq t \leq 1$, such that 
\begin{itemize}
    \item For $t \in \{0, 1\}$, $\widehat{\xi}_t = \widehat{\eta}_t$,
    \item For all $t \in [0,1]$, $\widehat{\xi}_t$ is transverse to $\mathcal{I}$.
\end{itemize}

\noindent At this point, we need to fill in some missing details in Vogel's proof. It is crucial that $\widehat{\eta}_t$ is \emph{graphical} on each polyhedron, which is ensured by the choice of Vogel neighborhood $\mathcal{V}$.

The details of that step are presented in Section~\ref{sec:fillpoly} below.

\paragraph{Correcting the homotopy near attracting curves.}

By definition, $\widehat{\xi}_1$ is homotopic through contact structures transverse to $\mathcal{I}$ to a contact structure $\xi_1 = \eta_1$ which coincides with $\widetilde{\xi}$ on the polyhedra intersecting the complement of $\widehat{N}$ (by inverting the `ribbon flow', we can assume that $T$ is large enough to ensure that the region where $\widetilde{\xi}$ and $\eta_1$ coincide contains the aforementioned polyhedra). We modify $\xi_1$ on the standard neighborhoods of attracting curves relative to their boundaries, while keeping it transverse to $\mathcal{I}$. This part is also skipped over in Vogel's proof (as it is not necessary there) and we provide the key steps in Section~\ref{sec:uniqannulus} below.

\bigskip 

After all these steps, the proof of Theorem~\ref{thm:uniq} is complete. \qed

        \subsection{Correcting holonomy with semi-infinite ribbons} \label{sec:ribbontwist}

In this section, we explain how to use ribbons ending near attracting curves to modify the holonomy along faces of the polyhedral decomposition. This is essentially a local computation. We remark that the corresponding computation in Vogel's paper~\cite{V16}, namely equation (3-4), is not correct. We explain how to modify it.

Let $A = S^1 \times I$. For $t \in I$, we set $A_t \coloneqq A \times \{t\} \subset A \times I$. We consider a contact structure $\xi$ on $A \times I$ such that the characteristic foliation of $\xi$ on $A_t$, $t \in I$, is nonsingular, transverse to the boundary and outward pointing, and its closed characteristics are nondegenerate.

Notice that the condition on the nondegeneracy of the closed characteristics is not made explicit in Vogel's paper but will be crucial for the computations below.

We further consider a nowhere vanishing vector field $X$ on $A \times I$ which is tangent to $A_t$ for every $t \in I$, and which generates the characteristic foliation $A_t(\xi)$ with its \emph{opposite} orientation. For $s \geq 0$, we set
    $$\xi_s \coloneqq (\phi_X^s)^* \xi.$$

\begin{lem} \label{lem:ribbonconv}
    Under the above assumptions, there exists a neighborhood $U$ of $\partial A \times I$ such that $\xi_s$ converges uniformly to $H = \ker dt$ on $U$ in the $C^0$ topology as $s \rightarrow +\infty$.
\end{lem}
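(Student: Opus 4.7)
The plan is to analyze the flow $\phi_X^s$ slicewise and track the pullback of a defining $1$-form for $\xi$. First I would exploit that $X$ is tangent to every slice $A_t$, so that $\phi_X^s$ preserves the foliation $\{A_t\}_{t\in I}$ and the dynamics inside each slice can be studied separately. In each $A_t$ the characteristic foliation is generated by $-X$, is nonsingular, and is transverse to $\partial A$ pointing outward; by the Poincar\'e--Bendixson theorem on the compact annulus, every forward $X$-orbit starting sufficiently close to $\partial A$ must accumulate on a closed orbit of $X$ in the interior of $A_t$. The nondegeneracy assumption, combined with the implicit function theorem applied to Poincar\'e return maps (which depend $C^1$ on $t$), then produces a $C^1$-family $\{\gamma_t\}_{t\in I}$ of attracting closed orbits and a neighborhood $U$ of $\partial A \times I$ such that for every $p = (q,t) \in U$ the orbit $\phi_X^s(p)$ converges to $\gamma_t$ exponentially fast, with attracting rate bounded uniformly in $t$.

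Next I would introduce $t$-parametric Floquet-type coordinates $(\theta, y, t)$ on a neighborhood of $\bigcup_t (\gamma_t \times \{t\})$ in which $\gamma_t = \{y=0\}$ and $X = \partial_\theta + \lambda(t)\,y\,\partial_y$ modulo $O(y^2)$, where the Floquet exponent $\lambda(t) < 0$ depends $C^1$ on $t$. To principal order the flow is then $\phi_X^s(\theta,y,t) = (\theta + s,\, e^{\lambda(t) s} y,\, t)$. Writing $\xi = \ker \alpha$ with $\alpha = a\, d\theta + b\, dy + c\, dt$, the condition $\alpha(X) = 0$ forces $a = -\lambda(t)\, y\, b + O(y^2)$, so $a$ vanishes along $\gamma_t$, and a direct pullback computation gives
\begin{equation*}
(\phi_X^s)^* \alpha = \big(a \circ \phi_X^s\big)\, d\theta + \big(b \circ \phi_X^s\big)\, e^{\lambda(t) s}\, dy + \Big(c\circ \phi_X^s + s\,\lambda'(t)\, y\, e^{\lambda(t) s}\, b\circ\phi_X^s\Big)\, dt,
\end{equation*}
plus remainder terms of the same type coming from the $t$-derivatives of the Floquet coordinates. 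The coefficients of $d\theta$ and $dy$ are of order $e^{\lambda(t) s}$ and hence tend to zero uniformly for $p \in U$, while the coefficient of $dt$ stays of order $1$ once one uses the contact condition $\alpha \wedge d\alpha \neq 0$ to rule out degeneracies of $c$ along $\gamma_t$. Dividing by the $dt$-coefficient, I would conclude that $\xi_s = \ker\big((\phi_X^s)^* \alpha\big)$ converges in $C^0$ uniformly to $\ker dt = H$ on $U$.

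The main obstacle will be the parametric dependence in $t$: in the Floquet coordinates, the derivatives $\partial_t \phi_X^s$ typically grow linearly in $s$ rather than decaying, and similarly for the $t$-derivatives of the linearizing change of coordinates. The delicate step is to show that this polynomial growth in the $t$-direction is dominated by the exponential contraction $e^{\lambda(t) s}$ in the $y$-direction, uniformly in $p \in U$. This should follow from the compactness of the parameter interval $I$, the uniform attracting rate $\lambda(t) \le \lambda_0 < 0$ produced by uniform nondegeneracy, and the $C^1$-dependence of the Floquet data on $t$.
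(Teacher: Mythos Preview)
Your plan is coherent and genuinely different from the paper's approach, but there is a real gap at the step you treat most casually. You assert that in your Floquet coordinates the $dt$-coefficient $c\circ\phi_X^s$ ``stays of order $1$'' because ``the contact condition $\alpha\wedge d\alpha\neq 0$ rules out degeneracies of $c$ along $\gamma_t$''. This is not correct as stated. At a point of $\gamma_t$ one has $\alpha = b\,dy + c\,dt$, and the contact condition there reduces to $b\,(\partial_\theta c - h c)\neq 0$ with $h=\lambda+\partial_\theta\log|b|$; this is perfectly compatible with $c=0$ (it then only forces $\partial_\theta c\neq 0$). If $c$ had zeros on $\gamma_t$, then as $s\to\infty$ the point $\phi_X^s(p)$ winds around $\gamma_t$ and $c\circ\phi_X^s(p)$ would pass through zero infinitely often; at those times all three coefficients in your pullback are $O(e^{\lambda s})$ and nothing follows about $\xi_s$. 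The claim $c|_{\gamma_t}\neq 0$ is in fact \emph{true}, but it needs a global argument on the circle: writing $f:=\partial_\theta c - hc$, the contact condition with the given orientations gives $f>0$, so at every zero of the periodic function $c$ one would have $c'=f>0$; a periodic function cannot have all its zeros simple and increasing, hence it has none. This is the missing step. By contrast, the issue you flag as the ``main obstacle'' (polynomial growth of $\partial_t\phi_X^s$) is harmless, exactly for the reason you give.

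The paper's proof bypasses this entirely. It computes $\mathcal{L}_X\alpha = h\alpha + f\,dt$ globally (no Floquet coordinates), integrates to $(\phi_X^s)^*\alpha = a_s\big(\alpha + B_s\,dt\big)$ with $B_s=\int_0^s a_\sigma^{-1}f_\sigma\,d\sigma$, and uses the nondegeneracy of the closed orbits only via \emph{convexity} of the slices near $\partial A$: one may then choose $\alpha$ with $d\alpha|_{A_t}>0$ there, which forces $h<0$, hence $a_\sigma^{-1}\ge 1$ and $B_s\ge(\min f)\,s\to+\infty$. This yields $\xi_s\to H$ without ever asking whether $\alpha(\partial_t)$ vanishes on the limit cycles, and gives uniformity for free via monotonicity of the angle between $\xi_s$ and $H$.
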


\begin{proof}
Let $\alpha$ be a contact form for $\xi$ which can be written as
$$\alpha \coloneqq \lambda_t + u_t dt,$$
where $u_t : A \rightarrow \R$ and $\lambda_t$, $t \in I$, are families of functions and $1$-forms on $A$.

The contact condition is equivalent to
$$\omega_t \coloneqq u_t d\lambda_t + \lambda_t \wedge du_t - \lambda_t \wedge \partial_t\lambda_t > 0.$$

We define $h, f : A \times I \rightarrow \R$ by 
\begin{align*}
    \iota_X d\lambda_t &\coloneqq h \lambda_t, \\
    \iota_X \omega_t &\coloneqq - f \lambda_t,
\end{align*}
so that $f > 0$. By definition, we have
$$u_t h - X\cdot u_t + \partial_t \lambda_t(X) = -f.$$
We compute:
\begin{align*}
    \mathcal{L}_X \alpha = \iota_X d\alpha &= \iota_X \left(du_t \wedge dt + d\lambda_t - \partial_t \lambda_t \wedge dt \right) \\
    &= (X \cdot u_t) dt + h\lambda_t - \partial_t\lambda_t(X) dt\\
    &= h(\lambda_t + u_t dt) + fdt\\
    &= h \alpha + fdt.
\end{align*}
Let $a_s, b_s : A \times I \rightarrow \R$ be functions such that 
$$(\phi_X^s)^* \alpha = a_s \alpha + b_s dt,$$
with $a_0 = 1$ and $b_0 = 0$. Writing $h_s \coloneqq (\phi_X^s)^*h$ and $f_s \coloneqq (\phi_X^s)^*f$, we compute:
\begin{align*}
    \frac{\partial}{\partial_s} (\phi_X^s)^*\alpha = (\phi_X^s)^* \mathcal{L}_X \alpha &= (\phi_X^s)^*(h \alpha+ fdt)\\
    &=h_s (a_s \alpha + b_s dt) + (\phi_X^s)^*(fdt) \\
    &= h_s a_s \alpha + (h_s b_s + f_s)dt,
\end{align*}
hence
\begin{align*}
    \partial_s a_s &= h_s a_s,\\
    \partial_s b_s &= h_s b_s + f_s.
\end{align*}
Solving these ODEs, we then get
$$a_s = \exp\left(\int_0^s h_\sigma \, d\sigma\right)$$
and 
\begin{align*}
    b_s &= \exp\left(\int_0^s h_\sigma \, d\sigma\right) \int_0^s\exp\left(- \int_0^\sigma h_v \, dv\right)f_\sigma \, d\sigma \\
    &= a_s \int_0^s a^{-1}_\sigma f_\sigma \, d\sigma.
\end{align*}
In conclusion:
$$\alpha_s \coloneqq (\phi_X^s)^* \alpha = a_s \left\{ \lambda_t + \left(u_t + \int_0^s a^{-1}_\sigma f_\sigma \, d\sigma\right) dt\right\}.$$

We are now left to show that 
\begin{align} \label{eq:lim}
    \lim_{s \rightarrow + \infty} \int_0^s a^{-1}_\sigma f_\sigma \, d\sigma = +\infty
\end{align}
on some neighborhood of $\partial A \times I$, so that the contact structure $\xi_s = \ker \alpha_s$ converges to the horizontal plane field $H$ in the $C^0$ topology there. Since the angle between $\xi_s$ and $H$ is decreasing as $s$ increases, by the contact condition, the convergence will be uniform (possibly on a slightly smaller open set).

On each $A_t$, $t \in I$, each flow line of $X$ intersecting $\partial A_t$ converges in positive time to a closed orbit $\gamma^\pm_t \subset A_t$, by Poincar\'{e}--Bendixson. Moreover, these orbits are nondegenerate by assumption. We denote by $U \subset A \times I$ the union of all the flow lines of $X$ intersecting $\partial A \times I$, so that setting $U_t \coloneqq U \cap A_t$, $\partial U_t = \partial A_t \cup \{\gamma^\pm_t\}$. It might be the case that $\gamma^-_t = \gamma^+_t$, if the characteristic foliation of $\xi$ on $A_t$ has a single closed orbit. See Figure~\ref{fig:charfol}.

\begin{figure}[H]
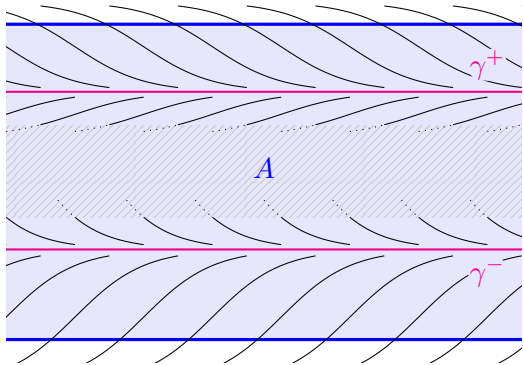

    \centering
    \includestandalone{tikz/annulus}
    \caption{The characteristic foliation near the boundary of the annulus $A$.}
    \label{fig:charfol}
\end{figure}

Note that there is a slightly larger open set $\overline{U} \subset V$ such that $A_t \cap V$ is convex for $\xi$. Then, we can further assume that the contact form $\alpha$ satisfies $d\alpha_{\vert U_t} > 0$, i.e., $d\lambda_t > 0$ on $U_t$. This implies that $h < 0$ on $U$, so that 
$$\int_0^s a^{-1}_\sigma f_\sigma \, d\sigma = \int_0^s\exp\left(- \int_0^\sigma h_v \, dv\right)f_\sigma \, d\sigma \geq \int_0^s f_\sigma \, d\sigma \geq (\min f) s$$
on $U$.
Since $f$ is bounded from below by some positive constant, the limit~\eqref{eq:lim} holds on $U$.
\end{proof}

\begin{rem}
    The attentive reader will have noticed that we only need the nonstrict inequality $h \leq 0$ on $U$. We do not know how to ensure this condition without some nondegeneracy condition on the $\gamma^\pm_t$'s, which essentially  
    amounts to a convexity condition.
\end{rem}

    \subsection{Filling polyhedra} \label{sec:fillpoly}

We next add in some missing details to complete the proof of~\cite[Lemma 4.14]{V16}. Vogel proves the existence part of the lemma and claims that the uniqueness should be clear. However, it seemed to us that this part is nontrivial and requires an explicit proof. We need to be particularly careful to ensure that the contact structures under consideration remain transverse to the fixed $1$-dimensional foliation $\mathcal{I}$.

Recall that $\mathcal{I}$ denotes a smooth $1$-dimensional foliation transverse to $\mathcal{F}$. In this section, we consider a fixed polyhedron $P$ in $M$, which is adapted to $\mathcal{I}$ and $\mathcal{F}$. We consider $\xi^t_\circ$, $t \in [0,1]$, a path of germs of contact structures near $\partial P$ and we assume that for every $t \in [0,1]$, $P$ is adapted to $(\xi^t_\circ, \mathcal{I})$. 

\begin{prop} \label{prop:extfol}
 Let $\xi^0$ and $\xi^1$ be contact structures on $P$ transverse to $\mathcal{I}$ which coincide with $\xi^0_\circ$ and $\xi^1_\circ$ near $\partial P$, respectively. Then there exists a path of contact structures $\xi^t$, $t \in [0,1]$, on $P$ from $\xi^0$ to $\xi^1$ such that for every $t \in [0,1]$, $\xi^t$ is transverse to $\mathcal{I}$ and coincides with $\xi^t_\circ$ near $\partial P$.
\end{prop}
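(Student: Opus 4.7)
The plan is to reduce the problem to a graphical one using coordinates adapted to $\mathcal{I}$, and then to construct the path by connecting both endpoints to a common reference extension.

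First, since $\mathcal{I}$ is a smooth $1$-foliation in general position with $\partial P$, I would choose smooth coordinates $(x,y,z)$ on a neighborhood $\widehat P\supset P$ in which $\mathcal{I}=\langle\partial_z\rangle$. In these coordinates every plane field transverse to $\mathcal{I}$ takes the form $\ker(dz-\alpha)$ for a unique $1$-form $\alpha=a\,dx+b\,dy$, and the positive contact condition becomes the open pointwise inequality
\[
F(\alpha)\ :=\ \partial_x b-\partial_y a+a\,\partial_z b-b\,\partial_z a>0.
\]
Writing $\xi^i=\ker(dz-\alpha^i)$ and $\xi^t_\circ=\ker(dz-\alpha^t_\circ)$ near $\partial P$, the task becomes to produce a family $\alpha^t$ on $P$ with $\alpha^t=\alpha^t_\circ$ near $\partial P$, matching $\alpha^i$ at $t=i$, and with $F(\alpha^t)>0$ throughout $[0,1]\times P$.

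Rather than interpolate $\xi^0$ directly to $\xi^1$, I would first construct a ``reference'' family $\eta^t$: an $\mathcal{I}$-transverse contact structure on $P$ extending the germ $\xi^t_\circ$, built by pushing the boundary germ inward along an $\mathcal{I}$-preserving transverse vector field and completing the interior by an explicit construction. This parallels the existence step in Vogel~\cite{V16} and uses the adapted structure of $P$ in an essential way---in particular, the global transversal $\gamma(P)\subset P^{(1)}$ for $\partial P(\xi^t_\circ)$ and the at most one supporting vertex with exactly three incident edges ensure that such a reference extension exists. I would then show that $\xi^i$ can be connected to $\eta^i$ by a path of $\mathcal{I}$-transverse contact structures fixing the boundary germ, for each $i\in\{0,1\}$, and concatenate these two paths (reversing the first) to obtain the required family. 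Building the paths $\xi^i\leadsto \eta^i$ rests on the fact that both structures are contact on the ball $P$ and agree near $\partial P$; a Gray/Moser stability argument provides a contact isotopy rel boundary, which then needs to be corrected (by a generic perturbation supported away from $\partial P$) to remain $\mathcal{I}$-transverse, using that $\mathcal{I}$-transversality is an open condition and that $\mathcal{I}$-transverse plane fields form an affine---hence contractible---subspace of all plane fields.

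The main obstacle is maintaining $\mathcal{I}$-transversality along the path: since $F(\alpha)$ is quadratic in $\alpha$, along a straight-line interpolation in the graphical coordinates the cross term $a\,\partial_z b-b\,\partial_z a$ can destroy positivity of $F$ even when the endpoints are contact. The key point is that such an obstruction cannot appear near $\partial P$ (where the path is prescribed and contact by hypothesis) nor at the endpoints $t\in\{0,1\}$ (where the plane fields are contact on all of $P$). The proof therefore hinges on controlling this quadratic term in the interior, which I expect requires a careful choice of the reference $\eta^t$ so that the two branches $\xi^i\leadsto\eta^i$ avoid the obstruction locus, together with a short induction on the combinatorial pieces of $P$ (faces, edges, and the supporting vertex). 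This is where the bulk of the technical work lies, and where the proof most closely mirrors---and sharpens with the extra transversality constraint---Vogel's interior extension argument.
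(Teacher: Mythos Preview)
Your overall architecture---build a reference path $\eta^t$ and connect each endpoint $\xi^i$ to $\eta^i$---matches the paper's strategy. The genuine gap is in how you connect $\xi^i$ to $\eta^i$ while preserving $\mathcal{I}$-transversality. A Gray/Moser isotopy gives a path of contact structures rel boundary, but the intermediate structures have no reason to remain transverse to $\mathcal{I}$; the isotopy vector field is built from Reeb-type data unrelated to $\mathcal{I}$. Your suggestion to repair this by ``generic perturbation'' does not work: $\mathcal{I}$-transversality is open, but along a path the failure locus can be large in the parameter, and there is no genericity argument that removes it. The observation that $\mathcal{I}$-transverse \emph{plane fields} form a contractible (affine) space is irrelevant here, since you must remain inside the subspace of \emph{contact} structures, where your own computation shows the constraint $F(\alpha)>0$ is not convex.

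The paper handles this step by an independent contractibility result (Proposition~\ref{prop:contractcyl}): on a solid cylinder $\mathcal{C}$ with a radial model $\xi_\bullet$ near $\partial\mathcal{C}$, the space of contact structures coinciding with $\xi_\bullet$ near the boundary and transverse to a fixed vector field $Z$ is contractible. The proof is not perturbative; it uses that any such $\xi$ is automatically \emph{tight} (extend to $\mathbb{R}^3$, straighten $Z$ to $\partial_z$, and apply~\cite[Proposition~3.5.6]{ET}), so the horizontal disks $D_z$ are convex with only positive elliptic singularities. This lets one deform to a contact form with $d\widetilde\alpha|_{D_z}>0$ and then interpolate. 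This is the key lemma your proposal is missing. A secondary point: the paper only obtains the reference family on short time intervals (Proposition~\ref{prop:extfol2}), because the supporting disk foliation near the supporting vertices depends on $\xi^t_\circ(x^\pm_P)$; it then uses the cylinder contractibility again to patch the local families at the overlap times and to match $\xi^0,\xi^1$ at the endpoints. Your plan to construct $\eta^t$ globally in one step would itself require such an argument.
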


        \subsubsection{Supporting foliations by disks}

The first ingredient in the proof of Proposition~\ref{prop:extfol} is the existence of contact structures on $P$ transverse to $\mathcal{I}$. In this section, we review the construction of Vogel and provide some more details.

Let $\xi_\circ$ be a germ of contact structure near $\partial P$ such that $P$ is adapted to $\xi_\circ$ and $\mathcal{I}$. We denote by $x^\pm_P$ the supporting vertices of $P$ and we assume that the characteristic foliation of $\xi_\circ$ on $\partial P$ spirals from $x^+_P$ to $x^-_P$.

We say that a (smooth) foliation by disks $\mathcal{D}$ on a neighborhood of $P$ \textbf{supports} $(\xi_\circ, \mathcal{I})$ if the following hold:
\begin{itemize}
    \item $\mathcal{I}$ is transverse to $\mathcal{D}$.
    \item $\mathcal{D}$ is transverse to $\partial P \setminus \{x^\pm_P\}$, and $\mathcal{D} \cap \partial P$ is transverse to the characteristic foliation of $\xi_\circ$.
    \item $\mathcal{D}$ is tangent to $\xi_\circ$ at $x^\pm_P$, and the characteristic foliation of $\xi_\circ$ on the leaves of $\mathcal{D}$ near $x^\pm_P$ has only one singularity, which is nondegenerate and elliptic. The singularities on the leaves just below $x^+_P$ and just above $x^-_P$ are contained in $\mathrm{int}(P)$.
\end{itemize}

Note that if $\mathcal{D}$ supports $(\xi_\circ, \mathcal{I})$ and if $\xi'_\circ$ is sufficiently $C^2$-close to $\xi_\circ$, then $\mathcal{D}$ supports $(\xi'_\circ, \mathcal{I})$ as well.

\begin{lem} \label{lem:existdisk}
    There exists a foliation $\mathcal{D}$ supporting $(\xi_\circ, \mathcal{I})$.
\end{lem}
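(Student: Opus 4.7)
The plan is to construct $\mathcal{D}$ as the level-set foliation of a smooth submersion $t : U \to \R$ on a neighborhood $U$ of $P$, with $dt$ nowhere vanishing. The four conditions in the definition of a supporting foliation translate to: (i) $dt(V) > 0$ for any positively oriented $V$ tangent to $\mathcal{I}$; (ii) $\ker dt(x^\pm_P) = \xi_\circ(x^\pm_P)$; (iii) the level sets of $t|_{\partial P}$ (which are smooth curves away from $\{x^\pm_P\}$) are transverse to the characteristic foliation $\partial P(\xi_\circ)$; (iv) the characteristic foliation on each leaf $\{t=c\}$ close to $x^\pm_P$ has a unique nondegenerate elliptic singularity lying in $\mathrm{int}(P)$.

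First I would build the germ of $t$ near each supporting vertex. Near $x^+_P$, straighten the smooth foliation $\mathcal{I}$ to $\partial_z$ via a local diffeomorphism, and then perform an affine change of coordinates preserving $\partial_z$ to normalize $\xi_\circ(x^+_P) = \ker dz$; in these coordinates $\xi_\circ = \ker(dz - u\,dx - v\,dy)$ with $u(0) = v(0) = 0$, and the contact condition reads $(u_y - v_x)(0) > 0$. Setting
\[
t(x,y,z) \coloneqq z - \epsilon\, Q(x,y)
\]
for small $\epsilon > 0$ and a positive-definite quadratic form $Q$ to be chosen, one has $\partial_z t = 1$ and $dt(0) = dz$, so (i) and (ii) hold at $x^+_P$. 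A direct Jacobian computation on the central paraboloid $\{t = 0\}$ shows that the characteristic line field has a unique singularity at the origin whose linearization has trace $(v_x - u_y)(0) < 0$ and determinant $\det J(u,v)(0) + O(\epsilon) + 4\epsilon^2 \det Q$; for an appropriate choice of $Q$ and sufficiently small $\epsilon$, this determinant is strictly positive, giving a nondegenerate elliptic (stable) singularity. Nearby leaves inherit this property by continuity, and the singular locus traces out a smooth curve approximately tangent to $\mathcal{I}$, which enters $\mathrm{int}(P)$ on the correct side of $x^+_P$ by the adaptedness of the polyhedral decomposition to $\mathcal{I}$. An analogous construction at $x^-_P$, with signs reversed, produces a local minimum-type model.

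The global step is to glue these local germs into a single submersion $t$ on all of $P$ satisfying (i) and (iii). The characteristic foliation on $\partial P \setminus \{x^\pm_P\}$ is a piecewise $C^1$ Morse-Smale flow from source $x^+_P$ to sink $x^-_P$ admitting the global transversal $\gamma(P) \subset P^{(1)}$, and therefore carries a global smooth Lyapunov function $\tau : \partial P \to \R$ strictly decreasing along the flow, which we may arrange to match the boundary traces of the local models from the previous step; this secures (iii). To propagate $\tau$ into the interior while enforcing (i), use the local product structure provided by $\mathcal{I}$ away from $\{x^\pm_P\}$: parametrize $P$ by pushing $\partial P$ along $\mathcal{I}$-orbits, and define $t$ as a strictly $\mathcal{I}$-monotone interpolation of $\tau$, blending with the paraboloid germs near $x^\pm_P$ via a partition of unity. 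The main obstacle is simultaneously enforcing (i) and (iii) together with the prescribed local models at $x^\pm_P$; this is made possible by the positive transversality of $\mathcal{I}$ with $\xi_\circ$, which ensures compatibility between the sign of $dt(\mathcal{I})$ dictated by the paraboloid models and the direction of the Lyapunov function $\tau$ on $\partial P$, combined with the adaptedness of the polyhedral decomposition with respect to both $\mathcal{I}$ and $\xi_\circ$.
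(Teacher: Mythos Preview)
Your overall strategy---realizing $\mathcal{D}$ as level sets of a submersion $t$ with $dt(\mathcal{I})>0$, prescribing local paraboloid models at the supporting vertices, and gluing via a Lyapunov function on $\partial P$---is reasonable and close in spirit to the paper's argument. However, there is a minor error in the local step and a genuine gap in the global one.

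For the local model, your claim that the determinant $\det J(u,v)(0)+O(\epsilon)+4\epsilon^2\det Q$ is positive ``for an appropriate choice of $Q$ and sufficiently small $\epsilon$'' fails when $\det J(u,v)(0)<0$: as $\epsilon\to 0$ the expression tends to $\det J(u,v)(0)$, regardless of $Q$. The contact condition only fixes the trace $u_y-v_x>0$, not the sign of $\det J(u,v)$, so after straightening $\mathcal{I}$ the flat disks $\{z=c\}$ may well have a hyperbolic singularity. You can rescue the argument by taking $\epsilon Q$ large enough (so that the $\epsilon^2$ term dominates), but then the paraboloid is steep and the interface with the global construction needs more care. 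The paper avoids this altogether by working in a Darboux chart for $\xi_\circ$ near $x^\pm_P$ (where $\mathcal{I}$ is only close to a linear direction, not exactly $\partial_z$): there the horizontal disks automatically have a radial characteristic foliation with a nondegenerate elliptic singularity, and one interpolates by realizing disks as graphs.

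The more serious issue is the global step. Your proposal to ``parametrize $P$ by pushing $\partial P$ along $\mathcal{I}$-orbits, and define $t$ as a strictly $\mathcal{I}$-monotone interpolation of $\tau$'' is not a well-defined procedure: every $\mathcal{I}$-arc through $\mathrm{int}(P)$ meets $\partial P$ in two points $p_-,p_+$, and a monotone interpolation requires $\tau(p_-)<\tau(p_+)$, which is in no way guaranteed for an arbitrary Lyapunov function $\tau$ of $\partial P(\xi_\circ)$. Equivalently, one needs the level circles $\{\tau=c\}\subset\partial P$ to be $\mathcal{I}$-graphical so that they bound $\mathcal{I}$-transverse disks, and this is exactly where the \emph{graphical} condition in the definition of an adapted polyhedral decomposition (projections of characteristic return curves along $\mathcal{I}$ have at most one self-intersection) must be used concretely, not merely invoked. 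The paper does not redo this step: it cites Vogel's Lemma~4.14 for the construction of the supporting disk foliation away from neighborhoods of $x^\pm_P$, and only supplies the missing details near the supporting vertices. If you want a self-contained argument, you will need to explain precisely how the graphical condition lets you choose $\tau$ (or deform it) so that its level curves are fillable by $\mathcal{I}$-transverse disks.
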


\begin{proof}
    The proof is essentially the same as that of~\cite[Lemma 4.14]{V16}, but we provide more details near the supporting vertices. 

    In~\cite[Lemma 4.14]{V16}, Vogel constructs a foliation by disks on $P$ away from an arbitrarily small neighborhood of the supporting vertices which supports $(\xi_\circ, \mathcal{I})$. This crucially uses that $P$ is adapted to $\xi_\circ$ and $\mathcal{I}$. To extend this foliation over neighborhoods of the supporting vertices, we pick coordinates near $x^\pm_P$ in which $\xi_\circ$ becomes the standard contact structure $\ker \big( r^2 d\theta + dz\big)$, the polyhedron is very close to the tip of a tetrahedron whose boundary is transverse to $\partial_z$, and $\mathcal{I}$ is very close to a linear foliation (which is not necessarily tangent to $\partial_z$) transverse to the horizontal plane. In this local model, it is then easy to interpolate between horizontal disks and the disks constructed by Vogel: one can first complete the collection of circles on $\partial P$ transverse to $\xi_\circ$ so that they become horizontal as they approach $x^\pm_P$, and then fill them with disks transverse to $\mathcal{I}$. By realizing these disks as suitable graphs of functions matching the boundary circles, we can further arrange that the disks become horizontal when approaching the supporting vertices.
\end{proof}

Now, let $\xi^t_\circ$, $t \in [0,1]$, be a family of germs of contact structures near $\partial P$ such that for every $t \in [0,1]$, $P$ is adapted to $(\xi^t_\circ, \mathcal{I})$. We prove a ``short-term existence'' result for appropriate fillings of these boundary contact structures:

\begin{prop} \label{prop:extfol2}
    There exist $\epsilon > 0$ and a family of contact structures $\xi_\bullet^t$, $t \in [0,\epsilon)$, on a neighborhood of $P$ such that for every $t \in [0,\epsilon)$, $\xi_\bullet^t$ is transverse to $\mathcal{I}$ and coincides with $\xi^t_\circ$ near $\partial P$.
\end{prop}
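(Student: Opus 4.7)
The plan is a direct openness argument, combining Lemma~\ref{lem:existdisk} with a cutoff interpolation. First, apply Lemma~\ref{lem:existdisk} to $\xi_\circ^0$ to obtain a smooth foliation $\mathcal{D}$ by disks on a neighborhood of $P$ which supports $(\xi_\circ^0, \mathcal{I})$. Since the defining conditions for ``supporting'' (transversality of $\mathcal{D}$ to $\mathcal{I}$ and to the characteristic foliation on $\partial P$, together with the correct nondegenerate elliptic local model near $x^\pm_P$) are stable under $C^2$-small perturbations of the germ of contact structure, the same foliation $\mathcal{D}$ supports $(\xi_\circ^t, \mathcal{I})$ for every $t$ in some initial interval $[0, \epsilon_0)$. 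Applying Vogel's leafwise construction from \cite[Lemma 4.14]{V16} to $(\xi_\circ^0, \mathcal{D})$ then produces a contact structure $\xi^0$ on a neighborhood of $P$, transverse to $\mathcal{I}$ and equal to $\xi_\circ^0$ near $\partial P$. Choosing a defining contact form $\alpha^0$ for $\xi^0$ and, after possibly multiplying by a positive function, arranging that $\alpha^0$ coincides with a fixed defining form $\alpha_\circ^0$ of $\xi_\circ^0$ near $\partial P$, we obtain the ``seed'' at $t = 0$.

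The next step is to interpolate to the target family by a cutoff. Write $\xi_\circ^t = \ker \alpha_\circ^t$ for a smooth family of $1$-forms $\alpha_\circ^t$ on a collar of $\partial P$. Choose a smooth cutoff function $\chi \colon P \to [0,1]$ equal to $1$ on an inner collar of $\partial P$ and identically $0$ outside a slightly larger collar, both contained in the domain of the $\alpha_\circ^t$, and set
\[
\alpha^t \;\coloneqq\; \alpha^0 \;+\; \chi \cdot (\alpha_\circ^t - \alpha_\circ^0),
\]
where the correction is extended by zero outside $\mathrm{supp}(\chi)$. Then $\alpha^t = \alpha_\circ^t$ on the inner collar (so $\ker \alpha^t$ coincides there with $\xi_\circ^t$), $\alpha^t = \alpha^0$ away from $\partial P$, and $\alpha^t \to \alpha^0$ in $C^1$ uniformly on $P$ as $t \to 0$.

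The two conditions we need on $\xi_\bullet^t \coloneqq \ker \alpha^t$, namely $\alpha^t \wedge d\alpha^t > 0$ and transversality of $\ker \alpha^t$ to $\mathcal{I}$, are both $C^1$-open and hold with a strict positive margin at $t = 0$; by compactness of $P$ they therefore persist for all $t \in [0, \epsilon)$ for some $\epsilon \le \epsilon_0$. This gives the desired family. The main potential obstacle is ensuring that the contact condition holds uniformly across the transition region where $\chi$ has nontrivial derivative: in that region, the correction term contributes derivatives of $\chi$ paired with $\alpha_\circ^t - \alpha_\circ^0$, so a priori this could dominate. However, $d\chi$ is fixed (chosen once, independent of $t$), while $\alpha_\circ^t - \alpha_\circ^0$ is arbitrarily $C^1$-small for small $t$, so the correction to $\alpha^0 \wedge d\alpha^0$ is $O(t)$ in $C^0$ and the uniform positivity at $t=0$ is preserved. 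Crucially, Lemma~\ref{lem:existdisk} and its upgrade to a contact filling are invoked only once, at $t = 0$; the extension to nearby $t$ is then purely a perturbation, which is precisely why the statement is of short-term-existence type.
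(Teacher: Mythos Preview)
Your argument is correct, but it takes a different route from the paper. The paper runs Vogel's twisting construction \emph{parametrically}: after fixing the supporting disk foliation $\mathcal{D}$, it chooses a smooth family of core curves $\gamma_t$ from $x^-_P$ to $x^+_P$ (matching the tangency locus of $\mathcal{D}$ and $\xi^t_\circ$ near the supporting vertices), then a smooth family of leafwise vector fields $X_t$ vanishing along $\gamma_t$ with elliptic singularities and spanning the characteristic foliation of $\xi^t_\circ$ on $\mathcal{D}$ near $\partial P$, and finally builds $\xi^t_\bullet$ for each $t$ by twisting $T\mathcal{D}$ along $X_t$. Your approach instead invokes the construction only once at $t=0$ and obtains the rest of the family by a cutoff perturbation and $C^1$-openness of the contact and transversality conditions on a compact neighborhood of $P$.

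Your route is shorter and more elementary, and it makes the ``short-term existence'' nature of the statement transparent. The paper's route is more explicit and keeps the geometric picture (disks, core curve, leafwise vector field) visible for all $t$, which is in line with the later remark about $D^k$-parametric versions; that said, your openness argument also handles higher-dimensional parameter spaces with no extra work. One minor cleanup: the observation that $\mathcal{D}$ continues to support $(\xi^t_\circ,\mathcal{I})$ for $t\in[0,\epsilon_0)$ is never used in your cutoff step and can be dropped. Also, since the statement asks for contact structures on a \emph{neighborhood} of $P$, you should run the compactness/openness argument on a fixed compact neighborhood rather than on $P$ itself; this is cosmetic.
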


\begin{proof}
    We essentially follow Vogel's strategy. We first pick a foliation $\mathcal{D}$ supporting $(\xi^0_\circ, \mathcal{I})$, and we fix $\epsilon > 0$ such that $\mathcal{D}$ supports $(\xi^t_\circ, \mathcal{I})$ as well, for every $t \in [0, \epsilon)$. Then, we choose a smooth family of embedded curves $\gamma_t$, $t \in [0,\epsilon)$, such that
    \begin{itemize}
        \item $\gamma_t$ starts at $x^-_P$, ends at $x^+_P$, and is contained in $\mathrm{int}(P)$ away from its endpoints,
        \item $\gamma_t$ is transverse to $\mathcal{D}$,
        \item $\gamma_t$ coincides with the locus where $\mathcal{D}$ and $\xi^t_\circ$ are tangent near $x^\pm_P$.
    \end{itemize}
    We can then pick a smooth path of vector fields $X_t$, $t \in [0,\epsilon)$, defined in a neighborhood of $P$ such that
    \begin{itemize}
        \item $X_t$ is tangent to $\mathcal{D}$,
        \item $X_t$ vanishes exactly along $\gamma_t$, and these singularities are elliptic in each leaf of $\mathcal{D}$,
        \item $X_t$ spans the characteristic foliation of $\xi^t_\circ$ on $\mathcal{D}$ near $\partial P$.
    \end{itemize}
    We then construct $\xi^t_\bullet$ by twisting $T\mathcal{D}$ along $X_t$, so that it matches $\xi^t_\circ$ near $\partial P$. By choosing the twisting to be very small away from $\partial P$, we can guarantee that $\xi^t_\bullet$ is transverse to $\mathcal{I}$.
\end{proof}

\begin{rem}
    The same methods can be used to prove a version with more parameters, for families of germs of contact structures $\xi^t_\circ$, $t \in D^k$, indexed by a $k$-dimensional disk.
\end{rem}

        \subsubsection{Uniqueness on the cylinder}

The second ingredient in the proof of Proposition~\ref{prop:extfol} is a uniqueness result for contact structures on the cylinder with prescribed characteristic foliation on the boundary that are transverse to a given \emph{fixed} vector field.

Let $\mathcal{C} \coloneqq D^2 \times [0,1]$ be a cylinder with coordinates $(x,y,z)$. We consider polar coordinates $(r, \theta)$ on the $(x,y)$-disk and set $c \coloneqq \{r=0\}$. Let $\xi_\bullet$ be a contact structure on $\mathcal{C}$ defined by a $1$-form $\alpha_\bullet$ of the form
\begin{align} \label{eq:radial}
    \alpha_\bullet \coloneqq dz + f r^2 d\theta,
\end{align}
where $f: \mathcal{C} \rightarrow \R_{>0}$ is a positive function such that $\partial_r (fr^2) > 0$ away from $c$. We further consider a smooth vector field $Z$ on $\mathcal{C}$ which is positively transverse to the horizontal disks $D_z \coloneqq D^2 \times\{z\}$, $z \in [0,1]$, and we assume that $Z$ is positively transverse to $\xi_\bullet$.

Notice that the characteristic foliation of $\xi_\bullet$ on the vertical boundary $\partial_v \mathcal{C} = \bigcup_{0 \leq z \leq 1} \mathcal{C}_z$ is nonsingular and every leaf spirals from the top circle $\mathcal{C}_1$ to the bottom circle $\mathcal{C}_0$, where $\mathcal{C}_z \coloneqq \partial D_z$, $z \in [0,1]$. Moreover, the characteristic foliation on each horizontal disk $D_z$ is radial and has a standard elliptic singularity at $0$.

Let $\Xi_Z(\mathcal{C}, \xi_\bullet)$ denote the space of positive contact structures $\xi$ on $\mathcal{C}$ satisfying
\begin{itemize}
    \item $\xi$ is positively transverse to $Z$,
    \item $\xi$ coincides with $\xi_\bullet$ near $\partial \mathcal{C}$.
\end{itemize}

\begin{prop} \label{prop:contractcyl}
    $\Xi_Z(\mathcal{C}, \xi_\bullet)$ is contractible.
\end{prop}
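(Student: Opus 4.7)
The strategy is to identify $\Xi_Z(\mathcal{C},\xi_\bullet)$ with an open subset of a contractible affine space of $1$-forms and then construct an explicit continuous deformation retraction onto the basepoint. Every $\xi \in \Xi_Z(\mathcal{C},\xi_\bullet)$ admits a unique representing $1$-form $\alpha_\xi$ with $\alpha_\xi(Z) \equiv 1$, thanks to the positive transversality $\xi \pitchfork Z$ and the given coorientation. Setting $\alpha_\bullet^Z := \alpha_\bullet/\alpha_\bullet(Z)$, we have $\alpha_\xi = \alpha_\bullet^Z$ near $\partial\mathcal{C}$, so the assignment $\xi \mapsto \eta_\xi := \alpha_\xi - \alpha_\bullet^Z$ injects $\Xi_Z(\mathcal{C},\xi_\bullet)$ as an open subset $\mathcal{U}$ of the contractible affine space
\[
\mathcal{A}_Z := \{\eta \in \Omega^1(\mathcal{C}) : \iota_Z \eta = 0 \text{ and } \eta \equiv 0 \text{ near } \partial\mathcal{C}\},
\]
where $\mathcal{U}$ is cut out by the (quadratic) contact inequality $(\alpha_\bullet^Z + \eta) \wedge d(\alpha_\bullet^Z + \eta) > 0$.

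To build a continuous contraction $H : [0,1] \times \mathcal{U} \to \mathcal{U}$ with $H(0,\eta) = \eta$ and $H(1,\eta) = 0$, observe that the naive straight-line homotopy $s \mapsto (1-s)\eta$ stays in $\mathcal{A}_Z$ but can exit $\mathcal{U}$, because the cross-term $(1-s)(\alpha_\bullet^Z \wedge d\eta + \eta \wedge d\alpha_\bullet^Z)$ may overpower the positive contributions $\alpha_\bullet^Z \wedge d\alpha_\bullet^Z$ and $(1-s)^2\,\eta \wedge d\eta$ at intermediate values. The remedy is a two-stage scheme: first concentrate $\eta$ into a small Darboux ball $B$ about an interior point of the central axis $c = \{r=0\}$. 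Concretely, I would fix a canonical, $\eta$-independent, compactly supported isotopy $\phi_t$ of $\mathcal{C}$, equal to the identity near $\partial\mathcal{C}$, preserving the foliation tangent to $Z$ (so that $\phi_t^*\eta \in \mathcal{A}_Z$ automatically) and shrinking the complement of a boundary collar into $B$. Using the hypothesis $\partial_r(fr^2)>0$, which provides a definite lower bound for $\alpha_\bullet^Z \wedge d\alpha_\bullet^Z$, one checks that $\alpha_\bullet^Z + \phi_t^*\eta$ remains contact throughout the isotopy. Once $\phi_1^*\eta$ is supported in $B$ where $\xi_\bullet$ is Darboux, the rescaled path $s \mapsto (1-s)\phi_1^*\eta$ for $s \in [0,1]$ stays contact by $C^1$-openness of the contact condition and terminates at $0$.

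The main obstacle is the concentration step: the contact inequality must survive the isotopy $\phi_t$ for every $\eta \in \mathcal{U}$, and uniformly enough to yield continuity of $H$ in $\eta$. The canonical nature of $\phi_t$ (independent of $\eta$) reduces this verification to a fixed computation involving only $\alpha_\bullet^Z$ and the geometry of $Z$, where the radial hypothesis $\partial_r(fr^2)>0$ ensures the needed positivity margin. Continuity of the entire scheme in $\eta$ then yields a genuine contraction rather than mere path-connectedness; applied fiberwise to a compact family $\{\eta_k\}_{k\in K}\subset\mathcal{U}$, the same construction produces a parametric version, killing all higher homotopy groups of $\mathcal{U}$ and establishing contractibility.
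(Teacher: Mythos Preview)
Your identification of $\Xi_Z(\mathcal{C},\xi_\bullet)$ with an open subset $\mathcal{U}\subset\mathcal{A}_Z$ is correct, but the proposed contraction has a genuine gap in the concentration step, and the same defect recurs in the second stage.

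You claim that $\alpha_\bullet^Z + \phi_t^*\eta$ remains contact and that this ``reduces to a fixed computation involving only $\alpha_\bullet^Z$ and the geometry of $Z$''. It does not: the contact inequality reads
\[
\alpha_\bullet^Z \wedge d\alpha_\bullet^Z \;+\; \bigl(\alpha_\bullet^Z \wedge d(\phi_t^*\eta) + \phi_t^*\eta \wedge d\alpha_\bullet^Z\bigr) \;+\; \phi_t^*\eta \wedge d(\phi_t^*\eta) \;>\; 0,
\]
and the last two groups depend on $\eta$ with no a priori bound across $\mathcal{U}$. The hypothesis $\partial_r(fr^2)>0$ only bounds the first (fixed) term from below; it cannot absorb contributions from $\eta$, which may be arbitrarily large in $C^1$. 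Worse, an isotopy compressing a region into a small ball \emph{amplifies} first derivatives under pullback, so $\phi_t^*\eta$ typically has larger $C^1$-norm than $\eta$ as $t\to 1$, making the situation harder rather than easier. The same objection applies verbatim to the second stage: even if $\phi_1^*\eta$ were supported in a Darboux ball, the path $s\mapsto(1-s)\phi_1^*\eta$ is precisely the naive straight-line homotopy you already rejected, now restricted to a ball, and can exit $\mathcal{U}$ for the same quadratic reason. ``$C^1$-openness'' is irrelevant here since nothing forces $\phi_1^*\eta$ to be $C^1$-small.

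The paper's argument proceeds very differently and uses genuine contact topology. It first shows that every $\xi\in\Xi_Z(\mathcal{C},\xi_\bullet)$ is \emph{tight} (by extending it to $\R^3$ transverse to a nonvanishing vector field transverse to horizontal planes, and invoking~\cite[Proposition~3.5.6]{ET}). Transversality to $Z$ forces all singularities of the characteristic foliation on each horizontal disk $D_z$ to be positive, and tightness then rules out closed leaves, so every $D_z$ is convex. This yields a contact form $\widetilde\alpha = dz + \widetilde\lambda_z$ with $d\widetilde\lambda_z>0$, after which one may add a large multiple of $dz$ and interpolate linearly in the $\widetilde\lambda_z$-part without leaving the contact region. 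The convexity step is exactly what makes the final linear interpolation succeed; this is the ingredient your scheme lacks.
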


\begin{proof}
    We will show that every $\xi \in \Xi_Z(\mathcal{C}, \xi_\bullet)$ is homotopic within $\Xi_Z(\mathcal{C}, \xi_\bullet)$ to $\xi_\bullet$. The strategy will readily extend to families of contact structures parametrized by compact spaces.
    
    Let $\alpha$ be a contact form for $\xi$ of the form
    $$\alpha = u dz + \lambda_z,$$
    where $(\lambda_z)_{0 \leq z \leq 1}$ is a family of $1$-forms on $D^2$, and $u : \mathcal{C} \rightarrow \R$ is a function which equals $1$ near $\partial \mathcal{C}$. Be aware that $u$ might not be positive a priori, since we only assume that $\xi$ is transverse to $Z$ but not to $\partial_z$. Nonetheless, $\xi$ is \emph{tight}: we can extend it to a contact structure $\widehat{\xi}$ on $\R^3$ which is standard at infinity and is transverse to a vector field $\widehat{Z}$ extending $Z$ which is transverse to the horizontal planes $\R^2 \times \{z\}$ and coincides with $\partial_z$ near infinity. After a compactly supported isotopy, we can arrange that $\widehat{Z} = \partial_z$ everywhere. Then~\cite[Proposition 3.5.6]{ET} guarantees that $\widehat{\xi}$ is tight, and so is $\xi$.

    We will construct the desired homotopy in two steps.
    \begin{itemize}[leftmargin=*]
    \item \textit{Step 1: $\xi$ is homotopic within $\Xi_Z(\mathcal{C}, \xi_\bullet)$ to a contact structure $\widetilde{\xi}$ which is transverse to $\partial_z$ and which admits a contact form $\widetilde{\alpha}$ satisfying $d\widetilde{\alpha}_{\vert D_z} > 0$ for all $z \in [0,1]$.}
    
    After a small $C^\infty$ perturbation of $\xi$ away from $\partial \mathcal{C}$, we can assume that for every $z$, the characteristic foliation of $\xi$ on $D_z$ has isolated singularities which are nodes, saddles, or saddle-nodes (embryonic). Since $\xi$ is transverse to $Z$, all these singularities are positive, and since $\xi$ is tight, $D_z(\xi)$ has no closed leaf (this would be a Legendrian curve bounding an embedded disk with vanishing Thurston--Bennequin number); similarly, it has no cycles of saddle/embryonic singularities. Therefore, all those disks are convex; in particular the characteristic foliation is outward transverse to the boundary and there exists a family of \emph{positive} functions $v_z : D^2 \rightarrow \R_{>0}$ satisfying 
    \begin{align*}
        d\left(\frac{1}{v_z} \lambda_z\right)= \frac{1}{v_z^2} \left(\lambda_z \wedge dv_z + v_z d\lambda_z \right) > 0.
    \end{align*}
    We can further arrange that $v_z = 1$ for $z$ near $\partial [0,1]$.
    For $k > 0$, we consider the $1$-form 
    $$\widetilde{\alpha} \coloneqq k v_z dz + \lambda_z.$$
    Then $\widetilde{\alpha}(\partial_z) > 0$, and for $k$ large enough, $\widetilde{\alpha}$ is a positive contact form. However, $\ker \widetilde\alpha$ does not coincide with $\xi_\bullet$ near $\partial \mathcal{C}$. This can be fixed by modifying $v_z$. Near $\partial \mathcal{C}$, $\widetilde{\alpha} = k v_z dz + f r^2 d\theta$ and the contact condition reduces to 
    \begin{align} \label{eq:contactcond}
    \partial_r \ln v_z < \partial_r \ln (fr^2).  
    \end{align}
    We may assume that $k v_z > 1$. Recall that for $z$ near $\partial [0,1]$, $v_z = 1 > 1/k$ so we can replace it by a function $v_z = \varphi(z)$ which is monotonically increasing (resp.~decreasing) from $1/k$ to $1$ (resp.~$1$ to $1/k$) for $z$ near $0$ (resp.~near $1$), and~\eqref{eq:contactcond} is still satisfied. Moreover, we can arrange that the resulting contact structure is still transverse to $Z$, by making this modification sufficiently close to $\partial_v \mathcal{C}$. We then modify $v_z$ near $\partial D^2$ such that~\eqref{eq:contactcond} is still satisfied, and $v_z$ radially decreases to the constant $1/k$ near $\partial \mathcal{C}$. Once again, we can ensure that the resulting contact structure stays transverse to $Z$.

    One easily checks that for all $t \in [0,1]$, $(1-t) \alpha + t \widetilde{\alpha}$ is a contact form which defines a contact structure in $\Xi_Z(\mathcal{C}, \xi_\bullet)$. This procedure does not change the characteristic foliation on $D_z$ but deforms $\xi$ into a contact structure transverse to $\partial_z$ and which admits a contact form $\widetilde{\alpha}$ satisfying $d\widetilde{\alpha}_{\vert D_z} > 0$. 
    
    \item \textit{Step 2: $\widetilde{\xi}$ is homotopic within $\Xi_Z(\mathcal{C}, \xi_\bullet)$ to $\xi_\bullet$.} After rescaling $\widetilde{\alpha}$ by a positive function, we may assume that it is of the form $\widetilde{\alpha} = dz + \widetilde{\lambda}_z$, where $d\widetilde{\lambda}_z > 0$ and $\widetilde{\lambda}_z = g d\theta$ near $\partial \mathcal{C}$. Then for every $k' \geq 1$, $k' dz + \widetilde{\alpha}$ is a contact form which is positive on $Z$, and similarly for $k' dz+ g d\theta$. For $k'$ large enough, the same holds for $k' dz + (1-t) \widetilde{\lambda}_z + t f r^2 d\theta$, for all $t \in [0,1]$. We readily obtain a path of contact forms positive on $Z$ from $\widetilde{\alpha}$ to $\alpha_\bullet$. However, these forms do not quite coincide with $\alpha_\bullet$ near $\partial \mathcal{C}$, but they can easily be modified as in Step 1 to yield a path of contact structures from $\widetilde{\xi}$ to $\xi_\bullet$ within $\Xi_Z(\mathcal{C}, \xi_\bullet)$. \qedhere
    \end{itemize}
\end{proof}

        \subsubsection{Proof of Proposition~\ref{prop:extfol}}

By compactness and using Proposition~\ref{prop:extfol2}, there exist $N \geq 0$ and intervals $I_k = [a_k, a_{k+1}]$, $0 \leq k \leq N$, such that $a_0 = 0$, $a_{N+1} = 1$, and $a_k < a_{k+1}$, as well as paths of contact structures $\xi^t_{\bullet, k}$, $t \in I_k$, such that $\xi^t_{\bullet, k}$ is transverse to $\mathcal{I}$ and coincides with $\xi^t_\circ$ near $\partial P$. We can then modify these paths so that they agree at their consecutive endpoints, and also agree with $\xi^0$ and $\xi^1$ at $t=0$ and $t=1$, using Proposition~\ref{prop:contractcyl}. 

We treat the case $t=0$, the other cases being similar. We can find an embedded cylinder $\mathcal{C} \subset \mathrm{int}(P)$, obtained by rounding the corners of $P$ and shrinking it slightly, such that $\xi^0_\bullet$ and $\xi^0_{\bullet, 0}$ both coincide with $\xi^0_\circ$ near $\partial \mathcal{C}$, and such that $\xi^0_\circ$ is of the form~\eqref{eq:radial} near $\partial \mathcal{C}$ in suitable coordinates on $\mathcal{C}$. Then, Proposition~\ref{prop:contractcyl} provides a path $\widetilde{\xi}^t_{\bullet, 0}$, $t \in [0,1]$, of contact structures transverse to $\mathcal{I}$ from $\xi^0_\bullet$ to $\xi^0_{\bullet, 0}$ and which coincides with $\xi^0_\circ$ near $\partial P$. We can then concatenate this path with $\xi^t_{\bullet, 0}$ (and potentially perform some necessary yet irrelevant smoothing to make this path smooth). The picky reader will notice that the boundary condition is constant for the first half of the latter path, but it is easy to modify it by some small isotopy near $\partial P$ and reparametrize the time variable accordingly. Details are left to the reader.\qed

        \subsection{Horizontal contact structures on the thickened annulus} \label{sec:uniqannulus}

Recall that $I = [-1, 1]$. Let $N = S^1 \times I \times I$ be a thickened annulus with coordinates $(\theta, y,z)$. We consider a contact structure $\xi_0$ on $N$ defined by the contact form 
$$\alpha_0 = dz - u(\theta, y, z) d\theta$$
such that the following hold: 
\begin{itemize}
    \item $\partial_y u > 0$, which corresponds to the contact condition,
    \item Near $y=1$, $u$ is of the form
    \begin{align} \label{eq:vforynear1}
        u(\theta, y,z) = u(\theta, 1, z+1-y) = u_1(z+ 1-y),
    \end{align}
    where $u_1 : \R \rightarrow \R$ is a smooth function satisfying $u_1(0)=0$ and $u'_1 < 0$.
\end{itemize}
Notice that $\partial_z$ is positively transverse to $\xi_0$, $\partial_y$ is a Legendrian vector field, and by the second condition the characteristic foliation on the annulus $A_1 = S^1 \times \{1\} \times I$ has exactly one nondegenerate periodic orbit (along $z = 0$) and is inward pointing along the boundary. 

Let $\Xi_h(N,\xi_0)$ denote the space of contact structures $\xi$ on $N$ satisfying the following:
\begin{itemize}
    \item $\xi$ is transverse to $\partial_z$,
    \item $\xi$ coincides with $\xi_0$ near $\partial N$.
\end{itemize}
The subscript $h$ stands for `horizontal'. The following proposition might be well-known to experts but we were not able to find a complete proof in the literature.

\begin{prop} \label{prop:contractible}
    $\Xi_h(N,\xi_0)$ is contractible.
\end{prop}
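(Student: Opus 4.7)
The plan is to follow Proposition~\ref{prop:contractcyl} almost verbatim, with horizontal disks replaced by horizontal annuli $A_z = S^1 \times I \times \{z\}$. Every $\xi \in \Xi_h(N,\xi_0)$ admits a unique contact form $\alpha$ normalized by $\alpha(\partial_z) = 1$, hence of the form $\alpha = dz + \lambda_z$ where $\lambda_z = a(\theta,y,z)\, d\theta + b(\theta,y,z)\, dy$ is a $z$-family of $1$-forms on the annulus $S^1 \times I$, with $\lambda_z = -u(\cdot,\cdot,z)\, d\theta$ near $\partial N$. A direct computation shows that the contact condition reduces to the annular inequality $d\lambda_z - \lambda_z \wedge \dot\lambda_z > 0$ on each $A_z$ (with orientation $d\theta\wedge dy$), and the subspace $\{b \equiv 0\}$ is cut out by the convex condition $\partial_y a < 0$.

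The bulk of the proof consists of two stages, analogous to Steps~1 and~2 of Proposition~\ref{prop:contractcyl}. In the first stage, after a $C^\infty$-small perturbation of $\xi$ supported away from $\partial N$, I expect each $A_z$ to have a Morse-Smale characteristic foliation, whose singularities are automatically positive by transversality with $\partial_z$; I would then invoke Giroux's flexibility to produce a smooth $z$-family of positive functions $v_z$, equal to $1$ near $\partial A_z$, with $d\bigl((1/v_z)\lambda_z\bigr) > 0$, and argue exactly as in Proposition~\ref{prop:contractcyl} that $\widetilde\alpha_k \coloneqq k v_z\, dz + \lambda_z$ is a positive contact form transverse to $\partial_z$ for $k \gg 1$ large, connected to $\alpha$ by a linear path in $\Xi_h(N,\xi_0)$ (after adjusting $v_z$ near the lateral boundary so that $\widetilde\alpha_k = \alpha_0$ there, using the inequality $\partial_r \ln v_z < \partial_r \ln u$ analogous to \eqref{eq:contactcond}). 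After rescaling, the endpoint $\widetilde\xi$ is defined by a form $dz + \widetilde\lambda_z$ with $d\widetilde\lambda_z > 0$ on each slice. In the second stage, since both $\widetilde\lambda_z$ and $-u\, d\theta$ have positive annular differential, the family $k'\, dz + (1-t)\widetilde\lambda_z - t u\, d\theta$ is contact for $k'$ sufficiently large (the ``bad'' term $\lambda\wedge\dot\lambda$ being dominated by $k'\, d\lambda$), and a final modification near $\partial N$ yields the desired path to $\alpha_0$. The same argument runs in families parametrized by a compact space, giving vanishing of all homotopy groups and hence contractibility.

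The main obstacle is the tightness-type input required in the first stage: unlike horizontal disks, annuli admit essential closed characteristic leaves that are not obstructed by tightness alone, and the Giroux normalization $d\bigl((1/v_z)\lambda_z\bigr) > 0$ fails globally if such leaves are present (they produce a nontrivial dividing set with a negative region). I expect this to be resolved using the rigidity imposed on $\{y = 1\}$: the hypothesis $\partial_z u(\theta,1,z) < 0$ with $u(\theta,1,0) = 0$ forces the characteristic foliation there to spiral into a unique nondegenerate closed orbit, and this structure, together with the invariance \eqref{eq:vforynear1} of $u$ under the shear $(y,z) \mapsto (y+s, z-s)$ near $y=1$, both forbids essential closed leaves in neighboring annuli and allows $(N,\xi)$ to be embedded in a tight model (e.g., by doubling across $\{y=1\}$ via the linear holonomy), establishing tightness of $\xi$ and completing the convexity argument.
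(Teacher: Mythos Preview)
Your approach is genuinely different from the paper's, and the obstacle you flag in the last paragraph is fatal as stated rather than a technicality. The paper does \emph{not} slice by the horizontal annuli $A_z = S^1 \times I \times \{z\}$; it slices by the disks $D_\theta = \{\theta\} \times I \times I$. Since $\partial_z \in TD_\theta$ is transverse to $\xi$, the characteristic line field $\xi \cap TD_\theta$ is nonsingular and transverse to the $z$-direction, so it defines a parallel-transport diffeomorphism $\varphi_\theta : I_z \to I_z$ from $\{y=-1\}$ to $\{y=1\}$. Two lemmas then do all the work: Lemma~\ref{lem:samePT} shows that two structures in $\Xi_h(N,\xi_0)$ with the same $(\varphi_\theta)_\theta$ are homotopic (after straightening, their defining forms are both of the shape $dz - v\,d\theta$ with $\partial_y v>0$, a convex condition); and Lemma~\ref{lem:pulldown} (a ``pull-down'' adapted from~\cite[\S2.5]{ET}) shows that, using precisely the attracting closed orbit on the boundary annulus $\{y=1\}$, one can post-compose any $(\varphi_\theta)_\theta$ by a fixed $f$ via a homotopy in $\Xi_h$ supported in $\{1-\delta\le y\le 1\}$. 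Applying the pull-down to $\psi_\theta=\varphi_\theta^{-1}$ matches the parallel transports of $\xi_1$ and $\xi_0$.

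Your Step~1 requires, for each $z$, a positive $v_z$ equal to $1$ near $\partial A_z$ with $d(\lambda_z/v_z)>0$ on \emph{all} of $A_z$, i.e.\ one-sided convexity of every horizontal annulus. You are right that all singularities of $A_z(\xi)$ are positive (at a point with $\lambda_z=0$ one has $\alpha\wedge d\alpha = dz\wedge d_A\lambda_z$, forcing $d_A\lambda_z>0$), but for an annulus this does not exclude an essential closed characteristic leaf: the putative negative region would be a sub-annulus with $\chi=0$, carrying no singularities at all, which is perfectly consistent. Tightness only rules out null-homotopic closed leaves. The structure you invoke on $\{y=1\}$ is a statement about the $(\theta,z)$-annulus $S^1\times\{1\}\times I$ and constrains the characteristic foliation \emph{there}; it says nothing direct about the characteristic foliation on the $(\theta,y)$-annuli $A_z$ in the interior of $N$, where $\xi$ is arbitrary subject only to the contact condition and $\alpha(\partial_z)>0$. (Locally one can write down $\alpha = dz + a\,d\theta + b\,dy$ with $a(\theta,0,z_0)=0$, $b\neq 0$ and the contact inequality satisfied, producing a closed leaf $\{y=0\}$ on $A_{z_0}$.) A secondary issue: the boundary adjustment analogous to~\eqref{eq:contactcond} becomes the condition $\partial_y(u/v_z)>0$ near $y=1$, and since $u(\theta,1,z)$ changes sign at $z=0$ (indeed the boundary circle $S^1\times\{1\}\times\{0\}$ is Legendrian), the monotone transition of $v_z$ back to its boundary value cannot be done uniformly in $z$ by a single inequality of the form you wrote. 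In short, the hypothesis on $u$ at $y=1$ is tailored to the $D_\theta$-slicing and the pull-down trick, not to a convexity argument on the $A_z$.
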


Let $\xi \in \Xi_h(N,\xi_0)$. Then $\xi$ induces a family of diffeomorphisms $\varphi_\theta : I \rightarrow I$, $\theta \in S^1$, which coincide with the identity near $\partial I$, by considering the parallel transport on $D_\theta = \{\theta\} \times I \times I$ along $\xi \cap D_\theta$. More precisely, for every $\theta \in S^1$, we consider the vector field $X_\theta$ on $D_\theta$ tangent to $\xi \cap D_\theta$ and of the form $X_\theta = \partial_y + g(y,z) \partial_z$, and we define $\varphi_\theta$ as the time-$2$ map of the flow of $X_\theta$.

\begin{lem} \label{lem:samePT}
    If $\xi, \xi' \in \Xi_h(N,\xi_0)$ induce the same parallel transport $(\varphi_\theta)_{\theta\in S^1}$, then $\xi$ and $\xi'$ are homotopic within $\Xi_h(N,\xi_0)$.
\end{lem}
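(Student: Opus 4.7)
The plan is to prove the lemma in two stages: first reduce to the case where the characteristic foliations of $\xi$ and $\xi'$ on each disk $D_\theta := \{\theta\} \times I \times I$ coincide, and then use an affine interpolation of normalized contact forms.

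\emph{Stage 1 (equalizing characteristic foliations).} Since both $\xi$ and $\xi'$ are transverse to $\partial_z$, each $D_\theta$ is foliated by integral curves of $\xi \cap TD_\theta$ (resp.\ $\xi' \cap TD_\theta$) which are graphs over $I_y$. For each $\theta \in S^1$ and $\eta \in I$, let $\Phi_{\theta, \eta} : I_z \to I_z$ denote the parallel transport along $\xi \cap D_\theta$ from $\{y=-1\}$ to $\{y=\eta\}$, and define $\Phi'_{\theta, \eta}$ analogously. By hypothesis $\Phi_{\theta, 1} = \varphi_\theta = \Phi'_{\theta, 1}$, and the boundary conditions force these maps to agree near $\partial I_z$. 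Setting $h_\theta(y, z) := \Phi'_{\theta, y} \circ \Phi_{\theta, y}^{-1}(z)$ and $\Psi(\theta, y, z) := (\theta, y, h_\theta(y, z))$ defines a fiber-preserving diffeomorphism of $N$ equal to the identity on $\{y = \pm 1\}$ and near $\partial N$, and sending the characteristic foliation of $\xi$ on each $D_\theta$ to that of $\xi'$.

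For each fixed $\theta$, the path $y \mapsto h_\theta(y, \cdot)$ is a loop in $\mathrm{Diff}_0(I, \mathrm{rel}\, \partial)$ based at the identity. Since $\mathrm{Diff}_0(I, \mathrm{rel}\, \partial)$ is contractible, so is its loop space, and by parametric contractibility I will obtain a smooth family $h_{t, \theta}(y, z)$ with $h_{0, \theta} \equiv \mathrm{id}$, $h_{1, \theta} = h_\theta$, $h_{t, \theta}(\pm 1, \cdot) = \mathrm{id}$, and $h_{t, \theta}$ equal to the identity near $\partial D_\theta$. Then $\Psi_t(\theta, y, z) := (\theta, y, h_{t, \theta}(y, z))$ is a fibered isotopy from $\mathrm{id}$ to $\Psi$; since each $\Psi_t$ sends $\partial_z$ to a positive multiple of itself, $(\Psi_t)_* \xi$ remains transverse to $\partial_z$ and standard near $\partial N$, defining a path in $\Xi_h(N, \xi_0)$ from $\xi$ to a contact structure $\xi''$ whose characteristic foliations agree with those of $\xi'$ on every $D_\theta$.

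\emph{Stage 2 (affine interpolation).} Replacing $\xi$ by $\xi''$, I may assume the characteristic foliations of $\xi$ and $\xi'$ coincide on every $D_\theta$. Normalizing contact forms so the coefficient of $dz$ equals $1$, I write $\alpha = dz + A\, d\theta + B\, dy$ and $\alpha' = dz + A'\, d\theta + B\, dy$, with the same $B$ since $B$ is determined by the common characteristic foliation. A direct computation yields
\[
\alpha \wedge d\alpha = \bigl(\partial_\theta B - \partial_y A + B\, \partial_z A - A\, \partial_z B\bigr)\, d\theta \wedge dy \wedge dz,
\]
and the crucial observation is that this expression is \emph{affine} in $A$ (and its derivatives) when $B$ is held fixed. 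Consequently $\alpha_t := dz + ((1-t)A + tA')\, d\theta + B\, dy$ satisfies $\alpha_t \wedge d\alpha_t = (1-t)(\alpha \wedge d\alpha) + t(\alpha' \wedge d\alpha') > 0$ and thus defines a contact form for every $t \in [0,1]$; each $\alpha_t$ agrees with $\alpha_0$ near $\partial N$ because both $A$ and $A'$ do, giving the required path in $\Xi_h(N, \xi_0)$.

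The main obstacle is Stage 1: the parametric contraction of the loops $y \mapsto h_\theta(y, \cdot)$ in $\mathrm{Diff}_0(I, \mathrm{rel}\, \partial)$ must be carried out smoothly in $\theta \in S^1$ while preserving the identity on $\{y = \pm 1\}$ and near $\partial D_\theta$, and one must verify that the resulting fibered isotopy keeps $(\Psi_t)_* \xi$ horizontal and standard at the boundary. Stage 2 is then essentially formal, thanks to the affine dependence of the contact condition on the $d\theta$-coefficient.
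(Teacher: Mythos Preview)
Your proof is correct and follows essentially the same two-step strategy as the paper: first arrange that the characteristic foliations on the disks $D_\theta$ agree, then interpolate linearly using the convexity/affinity of the contact condition in the remaining coefficient. The paper carries out Stage~1 slightly differently---it first changes coordinates so that $\partial_y$ is Legendrian for $\xi$, and then invokes an isotopy making $\partial_y$ Legendrian for $\xi'$ (which in the new coordinates has trivial parallel transport)---but this is just your Stage~1 phrased relative to a normalized target; your version is in fact more explicit about why the needed isotopy exists. Note also that your worry about the parametric contraction in Stage~1 dissolves: the straight-line homotopy $h_{t,\theta}(y,z)=(1-t)z+t\,h_\theta(y,z)$ already does the job, since convex combinations of increasing maps of $I$ are increasing and all the boundary conditions are preserved automatically.
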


\begin{proof}
    We first apply a change of coordinates of the form $(\theta, y, z) \mapsto (\theta, y, f(\theta, y,z))$ such that $\partial_y$ becomes Legendrian for $\xi$. Note that the direction of $\partial_z$ remains unchanged. In these new coordinates, $\xi'$ induces a trivial parallel transport map. Therefore, there exists an isotopy $(\phi_t)_{0 \leq t \leq 1}$ of $N$ relative to $\partial N$ and transverse to $\partial_z$ such that $\partial_y$ is Legendrian for $\xi'_1 \coloneqq (\phi_1)_*\xi'$. Note that $\xi$ and $\xi_1'$ admit contact forms of the form
    \begin{align*}
        dz - v(\theta, y, z) d\theta
    \end{align*}
    where $v$ is determined by $\xi_0$ near $\partial N$, and the contact condition is simply $\partial_y v > 0$. Since this condition is convex in $v$, one easily constructs a homotopy between $\xi$ and $\xi_1'$ transverse to $\partial_z$ and supported away from $\partial N$.
\end{proof}

The key technical lemma to prove Proposition~\ref{prop:contractible} is an adaptation of the ``pulling-down the window'' argument from~\cite[Section 2.5]{ET}. This will allow us to modify the parallel transport of any contact structure in $\Xi_h(N,\xi_0)$ in a suitable way.

\begin{lem}[Pulling-down] \label{lem:pulldown}
    Let $\psi_\theta : I \rightarrow I$, $\theta \in S^1$, be a family of diffeomorphisms of $I$ coinciding with the identity near $\partial I$. There exists a diffeomorphism $f : I \rightarrow I$ coinciding with the identity near $\partial I$ such that the following holds. For every $\delta >0$ small enough, there exist $\xi, \xi' \in \Xi_h(N,\xi_0)$ such that
    \begin{enumerate}
        \item $\xi$ and $\xi'$ coincide with $\xi_0$ on $S^1 \times [-1, 1-\delta] \times I$ and are homotopic to $\xi_0$ via a homotopy in $\Xi_h(N,\xi_0)$ supported in $S^1 \times [1-\delta, 1] \times I$,
        \item The parallel transport induced by $\xi$ is $(f \circ \psi_\theta)_\theta$,
        \item The parallel transport induced by $\xi'$ is $f$.
    \end{enumerate}
\end{lem}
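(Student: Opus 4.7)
The plan is to construct $\xi$ and $\xi'$ by perturbing $\alpha_0$ in the slab $S^1 \times [1-\delta, 1] \times I$ through a term $-g(\theta, y, z)\,dy$. Using~\eqref{eq:vforynear1}, write $\alpha_0 = dz - \widehat{u}(\theta, w)\,d\theta$ with $w = z + 1 - y$, $\widehat{u}(\theta, 0) = 0$, and $\partial_w \widehat{u}(\theta, 0) < 0$. The perturbed form $\alpha = \alpha_0 - g\,dy$ is automatically transverse to $\partial_z$ since $\alpha(\partial_z) = 1$, and its characteristic foliation on each disk $D_\theta$ is tangent to $\partial_y + g\,\partial_z$. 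Hence the parallel transport equals the time-$1$ map of the ODE $\dot z = g(\theta, y, z)$ across $y \in [1-\delta, 1]$. A direct calculation shows that the contact condition is
\[
\partial_y u_0 \;-\; \partial_\theta g \;+\; g\,\partial_z u_0 \;-\; u_0\,\partial_z g \;>\; 0,
\]
which is affine linear in $g$.

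The key insight is a rescaling $y' = (y - 1 + \delta)/\delta \in [0,1]$: writing $g = \tfrac{1}{\delta}\widetilde{g}(\theta, y', z)$, the parallel transport becomes the time-$1$ map of $\dot z = \widetilde{g}(\theta, y', z)$ on $[0,1]$, \emph{independent of $\delta$}. Multiplying the contact condition by $\delta$ yields $-\delta\,\partial_w \widehat{u} - \partial_\theta \widetilde{g} + \widetilde{g}\,\partial_w \widehat{u} - \widehat{u}\,\partial_z \widetilde{g} > 0$, which converges as $\delta \to 0$ to the $\delta$-free inequality $\widetilde{g}\,\partial_w \widehat{u} - \partial_\theta \widetilde{g} - \widehat{u}\,\partial_z \widetilde{g} > 0$. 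To construct $\xi'$, I choose a $\theta$-independent $\widetilde{g}'(y', z)$ supported in $[0,1] \times J$ for a small interval $J \Subset (-1,1)$ containing $0$, with $\widetilde{g}' < 0$ on its support; then the dominant term $\widetilde{g}'\,\partial_w \widehat{u}$ is positive (since $\partial_w \widehat{u} < 0$ near $w = 0$) and dominates $-\widehat{u}\,\partial_z \widetilde{g}'$ by a suitable choice of bump profile (e.g.\ $\widetilde{g}'(y',z) = -\chi(y')\phi(z)\widehat{u}(\theta_0, z)$ for a reference angle $\theta_0$, after first averaging $\widehat{u}$ in $\theta$ by a prior homotopy within $\Xi_h(N,\xi_0)$). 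The resulting time-$1$ flow defines the diffeomorphism $f : I \to I$, equal to the identity near $\partial I$ and depending only on $\xi_0$ and the fixed choice of $\widetilde{g}'$. For $\xi$, split $[1-\delta, 1]$ into two half-slabs and apply $\widetilde{g}'$ in the upper one (realizing $f$) and a $\theta$-dependent profile $\widetilde{g}_\psi(\theta, y', z)$ in the lower one (realizing $\psi_\theta$, constructed from the infinitesimal generator of a smooth isotopy from $\mathrm{id}$ to $\psi_\theta$); the composite parallel transport is $f \circ \psi_\theta$. The contact inequality for $\widetilde{g}_\psi$ acquires the $-\partial_\theta \widetilde{g}_\psi$ term, but this $O(1)$ quantity is dominated by $\widetilde{g}_\psi\,\partial_w \widehat{u}$ provided the sign of $\widetilde{g}_\psi$ matches that of $\partial_w \widehat{u}$ on the support, which can be arranged by pre-composing $\psi_\theta$ with an auxiliary fixed isotopy (absorbed into $f$) and concentrating the support near $z = 0$.

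The homotopies from $\xi_0$ to $\xi$ and $\xi'$ in $\Xi_h(N, \xi_0)$ are provided by the linear interpolation $\alpha_t = \alpha_0 - t g\,dy$ for $t \in [0,1]$: since the contact expression is affine linear in $g$, the inequality for $\alpha_t$ equals $(1-t)\partial_y u_0 + t\,[\partial_y u_0 + (\text{rest})]$, a positive convex combination of the conditions at $t = 0$ (which reduces to $\partial_y u_0 > 0$) and at $t = 1$ (the full verified condition). Transversality to $\partial_z$ is automatic throughout. The main obstacle is verifying the rescaled $\delta$-free inequality for the $\theta$-dependent profile $\widetilde{g}_\psi$; the crux is that rescaling \emph{separates} the $\delta$-dependence from the contact condition entirely, reducing the problem to a fixed PDE inequality on $[0,1]_{y'} \times I_z \times S^1_\theta$ whose solvability relies on the attracting sign $\partial_w \widehat{u} < 0$ at $\gamma$ and on directing the required displacement into that attracting direction via composition with $f$.
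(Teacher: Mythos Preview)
Your framework (perturb $\alpha_0$ by $-g\,dy$, rescale in $y$, and verify the limiting inequality $\widetilde g\,\partial_w\widehat u - \partial_\theta\widetilde g - \widehat u\,\partial_z\widetilde g \ge 0$) is a genuinely different route from the paper's, and the rescaling and the affine-in-$g$ homotopy argument are both correct. The paper instead builds $\xi$ as a \emph{pushforward} $\Phi_*\xi_0$ on $S^1\times[-1,1-\delta/2]\times I$, where $\Phi(\theta,y,z)=(\theta,y,(1-\kappa(y))z+\kappa(y)f_\theta(z))$, and then fills the remaining slab by twisting only in the $d\theta$ direction; the key computation is a pointwise \emph{slope inequality} on the annulus $A_{1-\delta/2}$, and the specific $f$ is chosen with a small parameter $\epsilon$ (slope $\epsilon$ on $\mathrm{supp}\,\psi_\theta$) precisely so that all $\psi_\theta$-dependent terms pick up a factor $\epsilon$ and are absorbed.

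The gap in your argument is exactly at the point where the paper introduces this small parameter. You attempt to realize $\psi_\theta$ by itself in one half-slab via a generator $\widetilde g_\psi$, and claim that $\widetilde g_\psi\,\partial_w\widehat u$ dominates $-\partial_\theta\widetilde g_\psi$; but all three terms of the bracket are $O(1)$ with no small parameter available, so there is no domination. Concretely, whenever $\psi_{\theta_0}$ has an interior fixed point $z_0$ that moves with $\theta$ (which is generic), any generator of an isotopy $\mathrm{id}\rightsquigarrow\psi_\theta$ vanishes at $(\theta_0,z_0)$ while its $\theta$-derivative does not, so the bracket is $-\partial_\theta\widetilde g_\psi - \widehat u\,\partial_z\widetilde g_\psi$ there, with no sign control. ``Concentrating near $z=0$'' cannot help since the $z$-support of $\widetilde g_\psi$ is forced by $\mathrm{supp}\,\psi_\theta$; and ``averaging $\widehat u$ in $\theta$ by a prior homotopy within $\Xi_h(N,\xi_0)$'' is impossible, since every element of $\Xi_h(N,\xi_0)$ agrees with $\xi_0$ near $y=1$ and hence cannot alter $\widehat u$. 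Note also that keeping the $d\theta$-coefficient equal to $u_0$ forces your $\xi$ to have the \emph{same} characteristic foliation as $\xi_0$ on every annulus $A_{y_0}$, a rigidity the paper's pushforward construction deliberately avoids. The fix is not to realize $\psi_\theta$ alone but to realize $f\circ\psi_\theta$ in one stroke with $f$ depending on $\psi_\theta$ and carrying the needed small parameter---which is exactly what the paper does.
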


\begin{proof}
    Let $\sigma > 0$ be such that $\bigcup_{\theta \in S^1} \mathrm{supp}(\psi_\theta) \subset [-1+\sigma, 1-\sigma]$, and $\sigma \leq 0.1$. For $\epsilon > 0$ small enough, we consider a diffeomorphism $f = f_\epsilon : I \rightarrow I$ satisfying
    \begin{itemize}
        \item $f(z) = z$ for $z$ close to $\partial I$,
        \item $\forall z \in I$, $f(z) \leq z$,
        \item $\forall z \in [-1, -1+\sigma]$, $f'(z) \leq 1$,
        \item $\forall z \in [-1+\sigma, 1-\sigma]$, $f(z) = \epsilon (z+2) -1$,
        \item If $f(z) \geq 0$ then $f'(z) \geq 1$.
    \end{itemize}
    See Figure~\ref{fig:function}. We emphasize that this ``pull-down profile'' is \emph{different} from the one in~\cite[Proposition 2.5.1]{ET} as it has a much more sizeable effect. As a result, more care is needed in the specific choice of pull-down profile to ensure that any sufficiently ``negative'' parallel transport can be matched, and that this pull-down can be undone by a contact isotopy.
    
\begin{figure}[ht]
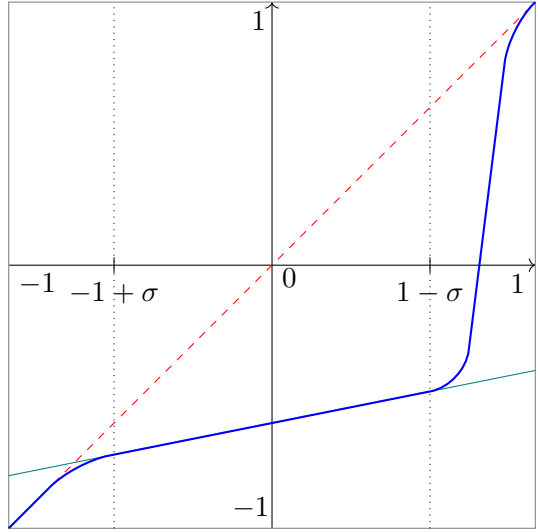

    \centering
    \includestandalone{tikz/function}
    \caption{The function $f$ in blue. The dashed red line is the identity, and the teal line has slope $\epsilon$.}
    \label{fig:function}
\end{figure}
    
    We write $f_\theta \coloneqq f \circ \psi_\theta$. Let $\delta > 0$ (independent of $\epsilon$) be small enough so that~\eqref{eq:vforynear1} holds for $y \in [1-\delta, 1]$, and $w(z) \coloneqq u(\theta, 1, z + \delta/2)=u_1(z+\delta/2)$ is positive for $z \in [-1, -1/2]$ and negative for $z \in [1/2, 1]$. Note that $w$ is decreasing. We choose a bump function $\kappa : I \rightarrow [0,1]$ such that $\kappa$ is supported on $[1-\delta, 1]$ and $\kappa \equiv 1$ near $1-\delta/2$. Let $\Phi : N \rightarrow N$ be the diffeomorphism defined by 
    $$\Phi(\theta, y, z) = \left(\theta, y, \big(1-\kappa(y)\big)z + \kappa(y) f_\theta(z)\right).$$
    The slope of the restriction of $\Phi_* \xi_0$ to $A_{1-\delta/2} = S^1 \times \{1-\delta/2\} \times I$ at the point $(\theta, f_\theta(z))$ is given by
    $$s(\theta,z) = \partial_\theta f_\theta(z) + f'_\theta(z) u_1(z+\delta/2).$$
    We want to choose $\epsilon$ such that
    \begin{align} \label{ineq:slope}
    s(\theta, z) < u_1 \circ f_\theta(z),
    \end{align}
    where the right-hand side is the slope of $\xi_0$ at the corresponding point along the face $y=1$. We consider several cases.
    \begin{itemize}
        \item $z \in [-1, -1+\sigma]$. Then $f_\theta(z) = f(z)$ and $f'(z) \leq 1$ so 
        $$s(\theta,z) = f'(z) u_1(z+\delta/2) < u_1(z) \leq u_1\big(f(z)\big)$$
        since $u_1(z+\delta/2) \geq 0$ and $u_1$ is decreasing.
        \item $z \in [-1+\sigma, 1-\sigma]$. Then $f(z) = \epsilon (z+2) -1$ and
        \begin{align*}
            s(\theta, z) &= \epsilon \big( \partial_\theta \psi_\theta(z) + \psi'_\theta(z) u_1(z+\delta/2) \big), \\
            u_1 \circ f_\theta(z) &= u_1\big(\epsilon (\psi_\theta(z)+2) - 1\big),
        \end{align*}
        and~\eqref{ineq:slope} is satisfied for $\epsilon > 0$ small enough, since $u_1(-1) > 0$.
        \item $z \in [1-\sigma, 1]$. Then $f_\theta(z) = f(z)$ and $u_1(z+\delta/2) < 0$. If $u_1\big(f(z)\big) > 0$, then inequality~\eqref{ineq:slope} is automatically satisfied since the left-hand side is negative and the right-hand side is positive. Otherwise, $u_1\big(f(z)\big) \leq 0$ so $f(z) \geq 0$
        and $f'(z) \geq 1$ by assumption, hence inequality~\eqref{ineq:slope} is satisfied again.
    \end{itemize}

We now define $\xi$ as follows:
\begin{itemize}
    \item On $S^1 \times [-1, 1-\delta/2] \times I$, $\xi \coloneqq \Phi_* \xi_0$,
    \item On $S^1 \times [1-\delta/2, 1] \times I$, we choose $\xi$ of the form 
    $$\xi \coloneqq \ker \big( dz - v(\theta, y,z) d\theta \big),$$
    where $v : S^1 \times [1-\delta/2, 1] \times I \rightarrow \R$ is a smooth function so that $\xi$ coincides with $\Phi_* \xi_0$ near $y = 1- \delta/2$, $v$ coincides with $u$ near $y=1$, and $\partial_yv(\theta, y,z) > 0$.
    The existence of this function is guaranteed by the inequality~\eqref{ineq:slope}, and the last condition ensures that $\xi$ is contact.
\end{itemize}
By definition, $\xi \in \Xi_h(N,\xi_0)$ and its induced parallel transport is exactly $(f_\theta)_\theta$. To construct $\xi'$, we apply the same procedure using $f$ instead of $(f_\theta)_\theta$.

We now argue that $\xi$ and $\xi'$ are homotopic to $\xi_0$ as in the first item of the lemma. For that, we observe that for $\epsilon > 0$ small enough, the construction of $\xi$ can be upgraded to a $1$-parameter family of contact structures $(\xi^t)_{0 \leq t \leq 1}$, $\xi^t \in \Xi_h(N,\xi_0)$, by considering an isotopy $\Phi_t$ of the form
$$\Phi_t(\theta, y, z) = \big(\theta, y, (1-\kappa(y))z + \kappa(y) f\circ \psi^t_\theta(z)\big),$$
where $\psi^t_\theta(z) = (1-t) z + t \psi_\theta(z)$. We define $\xi^t$ as $\Phi^t_*\xi_0$ on $S^1 \times [-1, 1-\delta/2] \times I$ and we complete it by twisting along the $y$-direction as before, so that $\xi^0 = \xi'$ and $\xi^1 = \xi$.

We finally apply the same procedure to a family of increasing diffeomorphisms $f_t : I \rightarrow I$, $t \in [0,1]$, from $f$ to $\mathrm{id}$ and satisfying
\begin{itemize}
    \item $f_t = \mathrm{id}$ near $\partial I$,
    \item $f_t(z) \leq z$,
    \item If $z \leq 0$ then $f_t'(z) \leq 1$,
    \item If $f_t(z) \geq 0$ then $f_t'(z) \geq 1$,
\end{itemize}
to get a homotopy of contact structures in $\Xi_h(N,\xi_0)$ between $\xi'$ and $\xi_0$ supported in $S^1 \times [1-\delta, 1] \times I$. The construction of such a family $(f_t)$ is not hard and is left to the reader. The key observation is that the slope inequality needed to perform the previous construction is 
$$\forall z \in I, \qquad f_t'(z) \, u_1(z + \delta/2) < u_1 \circ f_t(z),$$
which is satisfied for our choice of $f_t$'s.
\end{proof}

\begin{proof}[Proof of Proposition~\ref{prop:contractible}]
    We will prove that $\Xi_h(N,\xi_0)$ is path-connected, which is enough for the purpose of this paper. The proof can easily be upgraded to show that $\Xi_h(N,\xi_0)$ is weakly contractible, which implies contractibility by Whitehead's theorem.

    Let $\xi_1 \in \Xi_h(N,\xi_0)$ and let $(\varphi_\theta)_\theta$ denote its induced parallel transport. Let $\delta > 0$ be small enough so that $\xi_0$ and $\xi_1$ coincide on a $\delta$-neighborhood of $\partial N$. Applying Lemma~\ref{lem:pulldown} to the family $\psi_\theta \coloneqq \varphi^{-1}_\theta$, $\theta \in S^1$, yields a diffeomorphism $f : I \rightarrow I$ and a homotopy in $\Xi_h(N,\xi_0)$ between $\xi_1$ and a contact structure $\xi_1'$ inducing the parallel transport $f \circ \psi_\theta \circ \varphi_\theta = f$. We also obtain a homotopy in $\Xi_h(N,\xi_0)$ between $\xi_0$ and a contact structure $\xi'_0$ inducing the parallel transport $f$. By Lemma~\ref{lem:samePT}, $\xi'_0$ and $\xi'_1$ are homotopic in $\Xi_h(N,\xi_0)$, and so are $\xi_0$ and $\xi_1$.
\end{proof}

    \section{Deformation of weak symplectic fillings}\label{sec:Def_weak}

In this section, $V$ denotes a compact oriented $4$-manifold with (possibly disconnected) boundary $\partial V = M$. The results we prove are more general than necessary for our applications (in particular Proposition~\ref{propintro:deffill}) but may be of independent interest. We will essentially adapt and streamline ideas from~\cite{E91} and~\cite{E04}, which are probably well-known to the experts.

We believe that this strategy extends to higher dimensions, by considering exact weak symplectic fillings in the sense of~\cite[Definition 4]{MNW13}.

        \subsection{Pre-Liouville structures}

Let $\mathfrak{L}(V)$ denote the space of Liouville structures on $V$, i.e., the space of (smooth) $1$-forms $\lambda \in \Omega^1(V)$ satisfying
\begin{itemize}
    \item $d\lambda$ is a symplectic form on $V$,
    \item $\lambda_{\vert \partial V}$ is a contact form on $\partial V$.
\end{itemize}
Let 
$\mathscr{L}(V) \coloneqq \pi_0 \big(\mathfrak{L}(V)\big)$
denote the set of homotopy classes of Liouville structures on $V$.
We also define a set of (homotopy classes of) \textbf{pre-Liouville structures} on $V$:

\begin{defn}
    A \textbf{pre-Liouville structure} on $V$ is a pair $(\lambda, \xi)$, where $\lambda \in \Omega^1(V)$ and $\xi$ is a (cooriented) contact structure on $\partial V$, such that $d \lambda$ is symplectic and dominates $\xi$ along $\partial V$: $d\lambda_{\vert \xi}$ is a nondegenerate $2$-form on $\xi$ realizing the same orientation as $\xi$.
\end{defn}

In other words, a pre-Liouville structure is the data of a contact structure $\xi$ on $\partial V$ together with (the primitive of) an exact weak symplectic filling of $\xi$. Pre-Liouville structures are at least $C^1$-regular and form an open set for the $C^1$ topology. They can easily be smoothed, in a homotopically unique way, so we will not specify the precise regularity we are considering.

We denote by $\mathfrak{pL}(V)$ the space of pre-Liouville structures on $V$. There is a natural continuous `forgetful map'
$$
\begin{array}{rrcl}
    \mathfrak{p} : & \mathfrak{L}(V) & \longrightarrow & \mathfrak{pL}(V) \\
                & \lambda               & \longmapsto & \big(\lambda, \ker \lambda_{\vert \partial V}\big).
\end{array}
$$
We also define
$$\wp\mathscr{L}(V) \coloneqq \pi_0\big( \mathfrak{pL}(V) \big)$$
as the set of homotopy classes of pre-Liouville structures on $V$. The map $\mathfrak{p}$ above naturally induces a map
$$\wp : \mathscr{L}(V) \longrightarrow \wp\mathscr{L}(V).$$

The main result of this section is

\begin{thm} \label{thm:preliouv}
    The map $\wp$ is bijective.
\end{thm}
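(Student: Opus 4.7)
The strategy is to show that $\mathfrak{p} : \mathfrak{L}(V) \to \mathfrak{pL}(V)$ is a weak homotopy equivalence (as mentioned following Proposition~\ref{propintro:deffill}); in particular it induces a bijection on $\pi_0$. Both the surjectivity and the injectivity of $\wp$ are instances of a single parametric ``push-up'' construction, modeled on Eliashberg's classical argument deforming an exact weak filling to a strong filling. The idea is that, given any pre-Liouville structure $(\lambda, \xi)$, adding a sufficiently large multiple of a model Liouville form $e^r \alpha$ (for $\alpha$ defining $\xi$, $r$ a collar coordinate) upgrades $\lambda$ to a genuine Liouville form without changing the homotopy class of the underlying pre-Liouville structure.

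For the push-up itself, fix $(\lambda, \xi) \in \mathfrak{pL}(V)$, a contact form $\alpha$ for $\xi$, a collar $[-1, 0]_r \times \partial V \hookrightarrow V$, and a cutoff $\chi : V \to [0,1]$ supported in the collar with $\chi \equiv 1$ near $\partial V$. For $C > 0$ large enough, the 1-form
$$\lambda_C \coloneqq \lambda + C \chi e^r \alpha$$
is Liouville: in the collar, $(d\lambda_C)^2$ is dominated by the positive term $2 C^2 e^{2r}\, dr\wedge\alpha\wedge d\alpha$ (using the contact condition $\alpha \wedge d\alpha > 0$), and on $\partial V$, $\lambda_C|_{\partial V} = \lambda|_{\partial V} + C\alpha$ is a contact form whose kernel is $C^0$-close to $\xi$ and can be corrected to $\xi$ itself by a small Gray-stability homotopy. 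The straight-line path $s \in [0,1] \mapsto (\lambda + sC\chi e^r \alpha, \xi)$ stays in $\mathfrak{pL}(V)$ by the same positivity analysis applied with $sC$ in place of $C$, establishing $\wp[\lambda_C] = [(\lambda, \xi)]$ and hence surjectivity.

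For injectivity, given a path $(\lambda_t, \xi_t)_{t \in [0,1]}$ in $\mathfrak{pL}(V)$ with $\lambda_0, \lambda_1 \in \mathfrak{L}(V)$, choose a continuous family of contact forms $\alpha_t$ defining $\xi_t$ with $\alpha_i = \lambda_i|_{\partial V}$ for $i = 0, 1$. By compactness one can pick a single $C$ large enough to run the push-up simultaneously for all $t \in [0,1]$, producing a continuous path $(\lambda_{t, C})_{t \in [0,1]}$ in $\mathfrak{L}(V)$. At the endpoints $i = 0, 1$, the affine path $s \mapsto \lambda_i + sC\chi e^r \alpha_i$ stays in $\mathfrak{L}(V)$ because its boundary 1-form is the positive multiple $(1 + sC)\lambda_i|_{\partial V}$ of the original contact form, defining the fixed contact structure $\xi_i$ throughout. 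Concatenation yields the desired Liouville homotopy $\lambda_0 \leadsto \lambda_{0, C} \leadsto \lambda_{1,C} \leadsto \lambda_1$.

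The main technical obstacle is the pointwise verification that $d(\lambda + u\, d(\chi e^r \alpha))$ remains nondegenerate as $u$ ranges over the entire interval $[0, C]$---not merely at the endpoints, where positivity is manifest. Viewing $(d\lambda + u\, d(\chi e^r \alpha))^2$ as a quadratic function of $u$ at each point with positive leading coefficient, one must control the cross term $d\lambda \wedge d(\chi e^r \alpha)$, whose sign is not determined by the pre-Liouville conditions alone. This is the heart of Eliashberg's argument: by combining $d\lambda|_\xi > 0$ with the contact condition---and, if necessary, by rescaling $\alpha$ by a sufficiently large positive function on $\partial V$ before attaching the exponential collar---one ensures that the discriminant of the quadratic stays negative pointwise, so positivity propagates throughout $[0, C]$. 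The parametric and relative enhancement introduces no additional difficulty beyond a uniform choice of the rescaling and of $C$ over the compact parameter $[0, 1]$.
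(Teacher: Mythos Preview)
There is a genuine gap in your push-up argument: the linear path $s \mapsto d\lambda + sC\,d(\chi e^r\alpha)$ need not remain symplectic for all $s\in[0,C]$, and the proposed remedy of rescaling $\alpha$ does not close the gap. In a collar, write $d\lambda = dr\wedge\mu + \nu$ with $\mu\wedge\nu>0$ (symplectic) and $\alpha\wedge\nu>0$ on $\partial V$ (domination). Where $\chi\equiv 1$, the quantity $(d\lambda + s\,d(e^r\alpha))^2/(2e^{?}\,dr)$ is the quadratic $a+bs+cs^2$ with $a=\mu\wedge\nu$, $c=\alpha\wedge d\alpha$, and cross term $b=\mu\wedge d\alpha - \alpha\wedge\nu$. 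Nothing in the pre-Liouville hypotheses controls the sign of $\mu\wedge d\alpha$; a local model with (in a coframe $\alpha,\beta_1,\beta_2$, $d\alpha=\beta_1\wedge\beta_2$) $\mu=-10\alpha+\beta_2$, $\nu=\beta_1\wedge\beta_2+20\,\alpha\wedge\beta_1$ gives $a=10$, $b=-11$, $c=1$, so the quadratic is negative on $(1,10)$ and the interpolation fails. Rescaling $\alpha\mapsto K\alpha$ with $K$ constant sends $(a,b,c)\mapsto(a,Kb,K^2c)$, leaving both the sign of $b$ and the sign of $b^2-4ac$ unchanged. Taking $K$ nonconstant adds a term $\mu\wedge dK\wedge\alpha$ to $b$, but $dK$ must vanish at extrema of $K$ on the closed manifold $\partial V$, so at those points one is back to the constant-rescaling situation; there is no evident way to force the discriminant negative globally.

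This is exactly why the paper (following Eliashberg) does \emph{not} use a naive push-up. Lemma~\ref{lem:filling} first invokes the coisotropic neighborhood theorem to put $\lambda$ into the normal form $t\alpha+\gamma+df$ near $\partial V$ with the crucial relation $\alpha\wedge d\gamma=0$ (equivalently, $d\beta|_\xi=d\alpha|_\xi$ for $\beta=\lambda|_{\partial V}$). This relation is precisely what kills the bad cross term: the subsequent deformations---removing $df$, inflating $t\alpha$ to $Ct\alpha$, then cutting off $\gamma$---each stay symplectic by a direct computation using $\alpha\wedge d\gamma=0$. The parametric/relative version (Proposition~\ref{prop:deffill}) then runs the same normal-form construction in families and observes that when $\gamma=0$ (i.e., $\lambda|_{\partial V}$ already contact), the deformation is trivially through Liouville forms. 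Your outline would become correct if you replaced the blind collar $e^r\alpha$ by this coisotropic collar; as written, the ``heart of Eliashberg's argument'' you invoke is the normal form, not a discriminant estimate.
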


Lemma~\ref{lem:filling} below will show that $\wp$ is surjective while Proposition~\ref{prop:deffill} will imply that $\wp$ is injective. Our techniques can easily be extended to show:

\begin{thm}
    The map $\mathfrak{p}$ is a (weak) homotopy equivalence.
\end{thm}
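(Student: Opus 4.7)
To promote Theorem~\ref{thm:preliouv} to a weak homotopy equivalence, I would establish the following parametric relative statement, which is the natural generalization of Lemma~\ref{lem:filling} (existence) and Proposition~\ref{prop:deffill} (uniqueness): for every compact CW pair $(K, K_0)$ and every continuous family $\{(\lambda_s, \xi_s)\}_{s \in K}$ of pre-Liouville structures on $V$ such that $(\lambda_s, \xi_s) \in \mathfrak{p}(\mathfrak{L}(V))$ for $s \in K_0$, there is a homotopy of such families, fixed on $K_0$, ending with a family in $\mathfrak{p}(\mathfrak{L}(V))$. Specializing to $(K, K_0) = (D^k, \partial D^k)$ yields the vanishing of the relative homotopy groups $\pi_k(\mathfrak{pL}(V), \mathfrak{L}(V))$ for $k \geq 1$; combined with Theorem~\ref{thm:preliouv} this will give the weak equivalence.

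The proof would implement the strategy of Proposition~\ref{prop:deffill} in families. I would first choose a smoothly $s$-varying contact form $\alpha_s$ for $\xi_s$ and a smoothly $s$-varying collar $\iota_s : [0, \delta] \times \partial V \hookrightarrow V$ of $\partial V$, each agreeing on $K_0$ with the data canonically associated to the Liouville structure there, namely $\lambda_s|_{\partial V}$ and the flow of the Liouville vector field $Z_{\lambda_s}$. Such parametric choices exist because the spaces of contact forms for a fixed contact structure (a principal $C^\infty(\partial V, \R_{>0})$-bundle) and of collars for a fixed boundary are contractible, and the constructions can be globalized over $K$ by parametric partitions of unity. I would then run the Eliashberg deformation pointwise: modify $\lambda_s$ inside the collar $\iota_s$ via a straight-line interpolation toward the symplectization primitive $e^t \alpha_s$, governed by a cutoff function depending smoothly on $s$ and degenerating to the identity for $s \in K_0$. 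This brings $\ker \lambda_s|_{\partial V}$ into alignment with $\xi_s$ while preserving $d\lambda_s$ as a symplectic form.

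The main obstacle is uniform control: the deformation must remain in $\mathfrak{pL}(V)$ throughout and be simultaneously Liouville at the end for every $s \in K$, while being constant on $K_0$. Compactness of $K$ yields uniform lower bounds on the positivity quantities $d\lambda_s|_{\xi_s} > 0$ and on the transversality of the Liouville vector field to $\partial V$ near $K_0$, so the amplitude of the collar modification can be chosen uniformly small enough to preserve domination; the relative condition is enforced by multiplying this amplitude by a smooth function on $K$ vanishing to sufficient order along $K_0$, which is compatible with the estimates by openness of $\mathfrak{pL}(V)$ in the $C^1$-topology. The essential geometric input is no more than what already underlies Lemma~\ref{lem:filling} and Proposition~\ref{prop:deffill} (and was developed in~\cite{E91, E04}); the substance of the statement is the careful parametric bookkeeping that lets one thread the deformation smoothly over $K$ relative to $K_0$.
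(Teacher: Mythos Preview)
Your strategy---vanishing of the relative homotopy groups via a parametric version of the Eliashberg deformation---is correct and is precisely what the paper intends (it leaves the details to the reader, pointing to the techniques of Lemma~\ref{lem:filling} and Proposition~\ref{prop:deffill}). There is, however, one point where your implementation goes astray.

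You aim for a homotopy that is \emph{fixed} on $K_0$ and propose to enforce this by ``multiplying this amplitude by a smooth function on $K$ vanishing to sufficient order along $K_0$.'' That damping does not work: for $s$ close to but not in $K_0$, the original $\lambda_s$ is not Liouville, and a damped (hence small) modification cannot force $\ker \lambda_s|_{\partial V}$ to coincide with $\xi_s$. The end of your homotopy would then fail to land in $\mathfrak{L}(V)$ near $K_0$.

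The cleaner route, already present in the proof of Proposition~\ref{prop:deffill}, is to drop the ``fixed'' requirement altogether: to kill $\pi_k\big(\mathfrak{pL}(V), \mathfrak{L}(V)\big)$ one only needs a homotopy of maps of \emph{pairs}, i.e., one that keeps $K_0$ inside $\mathfrak{L}(V)$ throughout. The key observation (made explicit in the paper's proof of Proposition~\ref{prop:deffill}) is that when $\lambda_s$ is already Liouville one has $\gamma_s = \beta_s - \alpha_s = 0$, and every step of the deformation of Lemma~\ref{lem:filling} then keeps ${\lambda_s}_{\vert \partial V}$ a contact form for $\xi_s$. So run the parametric deformation \emph{undamped}: the whole family ends in $\mathfrak{L}(V)$, and over $K_0$ it never left. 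Your canonical choices of $\alpha_s$ and of the collar on $K_0$ are useful for setting this up smoothly, but their role is to make $\gamma_s$ and the exact term $f_s$ vanish on $K_0$, which is what keeps the path Liouville there---not to make the deformation itself vanish. (If you really want a homotopy that is literally constant on $K_0$: after these canonical choices only the scaling step of Lemma~\ref{lem:filling} remains nontrivial on $K_0$, and one can take the constant $C_s = 1$ there and let it grow continuously off $K_0$, since the lower bound on $C$ required in Lemma~\ref{lem:filling} depends only on the size of $\gamma_s$ and tends to $1$ as $\gamma_s \to 0$.)
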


We will not need this stronger result and leave details of the proof to the interested reader.

        \subsection{Straightening near the boundary}

The next lemma is a well-known result due to Eliashberg: a weak symplectic filling of a contact $3$-manifold, which is in addition exact, can be deformed near the boundary into a Liouville filling. It serves as a motivation and warm-up for Proposition~\ref{prop:deffill} below.

\begin{lem} \label{lem:filling}
Let $(\lambda, \xi)$ be a pre-Liouville structure on $V$. Then, there exists a $1$-parameter family $(\lambda_t)_{0\leq t \leq 1}$ of $1$-forms on $V$ such that
    \begin{itemize}
    \item[(a)] $\lambda_0 = \lambda$ and ${\lambda_1}_{\vert \partial V}$ is a contact form for $\xi$,
    \item[(b)] For every $0 \leq t \leq 1$, $(\lambda_t, \xi)$ is a pre-Liouville structure.
    \end{itemize}
Furthermore, we can assume that $(\lambda_t)_t$ is constant away from an arbitrarily small neighborhood of $\partial V$.
\end{lem}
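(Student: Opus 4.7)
The plan is to apply Eliashberg's classical ``weak-to-strong" trick while carefully keeping the contact structure $\xi$ fixed throughout the deformation. The key point is that we are not free to modify $\xi$: we must arrange $\lambda_1|_{\partial V}$ to have kernel \emph{exactly} $\xi$, which forces us to cancel out, not just dominate, the horizontal part of $\lambda|_{\partial V}$.

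Concretely, I would pick a contact form $\alpha$ for $\xi$ on $\partial V$ and a collar neighborhood $U \cong (-\epsilon, 0]_s \times \partial V$ with $\{0\} \times \partial V = \partial V$. Denoting by $\tilde\alpha$ and $\tilde\mu$ the pullbacks to $U$ of $\alpha$ and of $\mu := \lambda|_{\partial V}$ respectively (so both are $s$-independent), I set $\beta := K\tilde\alpha - \tilde\mu$ for a large constant $K > 0$ to be fixed later. I then choose a smooth cutoff $\phi : (-\epsilon, 0] \to [0,1]$ with $\phi \equiv 0$ near $s = -\epsilon$, $\phi(0) = 1$, and $\phi' > 0$ on a nontrivial interval reaching $s=0$, and define
$$\lambda_t := \lambda + t\, \phi(s)\, \beta$$
on $U$, extended by $\lambda$ outside. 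Immediately, $\lambda_0 = \lambda$, and at $s=0$ we get $\lambda_1|_{\partial V} = \mu + (K\alpha - \mu) = K\alpha$, a contact form for $\xi$; the deformation is supported in $U$, which can be taken arbitrarily small.

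The dominance along $\partial V$ is cheap: pulling back to $T\partial V$ kills the $ds\wedge\beta$ term and gives $d\lambda_t|_{T\partial V} = (1-t) d\mu + tK\, d\alpha$, whose restriction to $\xi$ is a positive combination of $d\lambda|_\xi > 0$ (pre-Liouville hypothesis) and $d\alpha|_\xi > 0$ (contact condition), hence positive for all $t$ and $K$. The real work is checking that $d\lambda_t$ is symplectic on all of $V$. On $U$,
$$d\lambda_t = d\lambda + t\phi'(s)\, ds \wedge \beta + t\phi(s)\, d\beta,$$
and since $d\beta$ is pulled back from the $3$-manifold $\partial V$ one has $(d\beta)^2 = 0$ and $(ds \wedge \beta)^2 = 0$, so
$$(d\lambda_t)^2 = (d\lambda)^2 + 2t\, d\lambda \wedge d(\phi\beta) + 2t^2\, \phi\phi'\, ds \wedge \beta \wedge d\beta.$$
The last term expands to $2t^2\phi\phi' K^2\, ds \wedge \tilde\alpha \wedge d\tilde\alpha + O(K)$, a \emph{positive} $4$-form wherever $\phi\phi' > 0$; this is the crucial point. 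The cross-term $2t\, d\lambda \wedge d(\phi\beta)$ is only $O(tK)$, and where $\phi \equiv 0$ the perturbation is identically zero so $(d\lambda_t)^2 = (d\lambda)^2 > 0$ automatically. Choosing $K$ large enough (after fixing $\phi$ and $\epsilon$) therefore guarantees $(d\lambda_t)^2 > 0$ everywhere on $V$ and for all $t \in [0,1]$, completing the proof.

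The main obstacle is this symplectic estimate, and the linchpin is the cancellation $(d\beta)^2 = 0$ coming from the fact that $\beta$ is a pullback from a $3$-manifold; without it, the $K^2$-term in the expansion of $(d\lambda_t)^2$ would be indefinite, and a large $K$ would not help. A secondary subtlety is arranging $\phi$ so that $\phi\phi' > 0$ on a large enough subset of the collar: smoothing the extension by $0$ past $s = -\epsilon$ forces $\phi, \phi'$ to vanish there, but in that region no estimate is needed since $\lambda_t = \lambda$.
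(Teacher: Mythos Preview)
Your symplectic estimate has a gap: the claim that the $t^2K^2$ term beats the $O(tK)$ cross term for large $K$ is not uniform in $t\in[0,1]$. When $t$ is of order $1/K$ all three contributions are $O(1)$ and you must check positivity directly. Write $d\lambda = ds\wedge\eta + \omega$ in the collar. Expanding your formula, the coefficient of $tK$ is $A_1 = \phi'\,(\tilde\alpha\wedge\omega) + \phi\,(\eta\wedge d\tilde\alpha)$ and the coefficient of $t^2K^2$ is $B_2 = \phi\phi'\,ds\wedge\tilde\alpha\wedge d\tilde\alpha$. Setting $\tau=tK$ and sending $K\to\infty$, $(d\lambda_t)^2$ converges to the quadratic $a + 2\tau A_1 + 2\tau^2 B_2$ in $\tau$, which is positive for all $\tau\ge 0$ only when $A_1\ge 0$ or $A_1^2 < 2aB_2$. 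Neither follows from your hypotheses: the term $\eta\wedge d\tilde\alpha$ has no a priori sign (it depends on the arbitrary collar through $\eta=\iota_{\partial_s}d\lambda$), and one can write down local pre-Liouville data in which it is negative enough to force $A_1^2 > 2aB_2$. For such data $(d\lambda_t)^2<0$ at some $t\sim 1/K$ \emph{for every large $K$}, so simply enlarging $K$ cannot rescue the argument.

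The paper avoids this by first invoking Gotay's coisotropic neighborhood theorem to put the collar in the model form $\big((1-\epsilon,1]\times M,\; d(t\alpha+\gamma)\big)$ with $\alpha\wedge d\gamma=0$; in these coordinates $\eta=\alpha$, so the offending term becomes $\alpha\wedge d\alpha>0$ and the difficulty evaporates. The deformation is then carried out in three explicit stages (strip an exact piece $df$, inflate the $t\alpha$ factor, cut off $\gamma$), each of which is a one-line check. Your one-shot interpolation can probably be salvaged---for instance by choosing the collar along this same symplectically distinguished direction, or by taking $\phi$ with $\phi'/\phi$ uniformly large and $\epsilon$ small so that the $\phi'\,(\tilde\alpha\wedge\omega)>0$ part of $A_1$ dominates---but as written the ``large $K$'' step does not close.
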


\begin{proof}
We essentially follow the proofs of~\cite[Proposition 3.1]{E91} and~\cite[Proposition 4.1]{E04} with some additional details. We write $\beta \coloneqq \lambda_{\vert \partial V}$. By assumption, $d\beta_{\vert \xi} > 0$ and there exists a (unique) contact form $\alpha$ for $\xi$ such that $d\beta_{\vert \xi} = d\alpha_{\vert \xi}$.\footnote{Here, we crucially use that $M$ is $3$-dimensional. In higher dimensions, one should use an appropriate notion of weak filling (see~\cite[Definition 4]{MNW13}).} We define $\gamma \coloneqq \beta - \alpha$, so that $\alpha \wedge d\gamma = 0$. On $(0,1] \times M$, we consider the Liouville form $$\widetilde{\lambda} \coloneqq t\alpha + \gamma$$ and we write $\widetilde{\omega} \coloneqq d\widetilde{\lambda}.$ We obtain two coisotropic embeddings of the presymplectic manifold $(M, d\beta)$: one as the boundary of $\big(V, \omega = d\lambda \big)$ and one as the boundary $\{1\} \times M$ of $\big( (0,1] \times M, \widetilde{\omega})$. By the local uniqueness of coisotropic embeddings~\cite{G82}, there exist
    \begin{itemize}
    \item A neighborhood $\mathcal{U}$ of $\{1\} \times M$ in $(0,1] \times M$,
    \item A neighborhood $\mathcal{V}$ of $\partial V$ in $V$,
    \item A symplectomorphism $\psi : \big(\mathcal{U}, \widetilde{\omega} \big) \rightarrow \big( \mathcal{V}, \omega\big)$ such that $\psi_{\vert \{1\} \times M}$ coincides with the identification $M \cong \partial V$.
    \end{itemize}
Let us briefly sketch the proof in our context. First, we fix coordinates $V \cong (0, 1]_t \times M$ near $\partial V$ extending the identification $\partial V \cong M$, and we define a vector field $X$ near $\partial V$ by $\omega(X, \cdot \ ) = \alpha$. Since $\omega_{\vert \partial V} = d\beta$ and $d\beta_{\vert \xi} = d\alpha_{\vert \xi}$, it is easy to see that $X$ is transverse to $\partial V$ and is outward pointing. We can use the flow of $X$ near $\partial V$ to define a diffeomorphism $\psi_0 : \mathcal{U}_0 \rightarrow \mathcal{V}_0$ from a neighborhood of $\{1\} \times M \subset (0,1] \times M$ to a neighborhood of $\partial V \subset V$ such that ${\psi_0}_{\vert \partial V}$ coincides with $\partial V \cong M$, and $\omega_0 \coloneqq \psi_0^*\omega$ satisfies $(\iota_{\partial_t} \omega_0)_{\vert \{1\} \times M} = (\iota_{\partial_t}\widetilde{\omega})_{\vert \{1\} \times M}$. Then, $\omega_0$ and $\widetilde{\omega}$ agree on $\{1\} \times M$ and we can apply the Darboux--Moser--Weinstein theorem to obtain the desired symplectomorphism $\psi$. Note that this strategy adapts in a straightforward way to a parametric setting; this will be useful in the proof of Proposition~\ref{prop:deffill} below.

By shrinking $\mathcal{U}$ and $\mathcal{V}$, we can further assume that $\mathcal{U}$ is of the form $(1-\epsilon, 1] \times M$ for a sufficiently small $\epsilon > 0$. We obtain coordinates on a tubular neighborhood of $\partial V$ in $V$ in which $\lambda$ becomes $$t\alpha + \gamma + \theta$$ for some closed $1$-form $\theta$ satisfying $\theta_{\vert \partial V} = 0$. This readily implies that $\theta$ is exact, and we write $\theta = df$ for some function $f : (1-\epsilon, 1] \times M \rightarrow \R$. Let $\varphi_0 : (1-\epsilon , 1] \rightarrow [0,1]$ be a smooth nonincreasing cutoff function such that $\varphi_0 \equiv 1$ near $1-\epsilon$ and $\varphi_0 \equiv 0$ on $(1 - \epsilon/2, 1]$. We then define a $1$-form $\lambda_{1/2}$ on $V$ such that $\lambda_{1/2} = \lambda$ outside of $\mathcal{V}$, and 
$$\lambda_{1/2} \coloneqq t\alpha + \gamma + d(\varphi_0 f)$$
in $\mathcal{V}$. Note that $d\lambda_{1/2} = d\lambda$, $\lambda_{1/2} = t\alpha + \gamma$ near $\partial V$, and there is an obvious homotopy between $\lambda$ and $\lambda_{1/2}$ satisfying condition (b). Let $C > 1$ and consider a smooth nondecreasing function $\varphi_1 : (1-\epsilon/2, 1] \rightarrow [1, C]$ such that $\varphi_1 \equiv 1$ near $1-\epsilon/2$ and $\varphi_1 \equiv C$ on $(1-\epsilon/4, 1]$. We then define
$$\lambda_{3/4} \coloneqq t \varphi_1(t) \alpha + \gamma$$
on $(1-\epsilon/2, 1] \times M$ and extend it to the rest of $V$ by $\lambda_{1/2}$. Since $\alpha \wedge d\gamma = 0$, one easily checks that $d \lambda_{3/4}$ is symplectic and dominates $\xi$ along $\partial V$. Once again, there is an obvious homotopy from $\lambda_{1/2}$ to $\lambda_{3/4}$ satisfying condition (b). Finally, let $\varphi_2 : (1-\epsilon/4 , 1] \rightarrow [0,1]$ be a smooth nonincreasing cutoff function such that $\varphi_2 \equiv 1$ near $1-\epsilon/4$ and $\varphi_2 \equiv 0$ near $1$. We define
$$\lambda_1 \coloneqq C t \alpha + \varphi_2(t) \gamma$$
and extend it to the rest of $V$ by $\lambda_{3/4}$. For $C$ large enough, $d\lambda_1$ is symplectic and $\lambda_1$ is homotopic to $\lambda_{3/4}$ through $1$-forms satisfying condition $(b)$. On $\partial V$, ${\lambda_1}_{\vert \partial V} = C \alpha$ is a contact form for $\xi$.
\end{proof}

\begin{prop} \label{prop:deffill}
Let $(\lambda_t)_{t \in [0,1]}$ be a path of $1$-forms on $V$, and $(\xi_t)_{t \in [0,1]}$ be a path of contact structures on $\partial V$. Assume the following:
\begin{itemize}
    \item[(a)] For $i \in \{0,1\}$, $\lambda_i$ is a Liouville form and ${\lambda_i}_{\vert \partial V}$ is a contact form for $\xi_i$.
    \item[(b)] For every $t \in [0,1]$, $(\lambda_t, \xi_t)$ is a pre-Liouville structure.
\end{itemize}
Then $\lambda_0$ and $\lambda_1$ are Liouville homotopic, hence their completions are exact symplectomorphic.
\end{prop}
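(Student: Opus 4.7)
The plan is to run the construction of Lemma~\ref{lem:filling} parametrically in $t \in [0,1]$, taking care that at the endpoints $t \in \{0,1\}$, where $\lambda_t$ is already Liouville, the ``straightening'' is essentially trivial, so that one obtains a genuine Liouville homotopy from $\lambda_0$ to $\lambda_1$ (up to concatenation with canonical Liouville homotopies at the endpoints).

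First, for each $t \in [0,1]$, write $\beta_t \coloneqq {\lambda_t}_{\vert \partial V}$ and let $\alpha_t$ be the unique contact form for $\xi_t$ with $d\alpha_t{}_{\vert \xi_t} = d\beta_t{}_{\vert \xi_t}$; set $\gamma_t \coloneqq \beta_t - \alpha_t$, which vanishes at $t=0,1$ by assumption~(a). The map $t \mapsto \alpha_t$ is smooth, since $\alpha_t$ is characterized by a pointwise linear equation along $\xi_t$. Next, I would run the coisotropic neighborhood theorem with parameters to produce a smooth family of open embeddings $\psi_t : (1-\epsilon, 1]_s \times M \hookrightarrow V$ with $\psi_t|_{\{1\} \times M} = \mathrm{id}_{\partial V}$ and $\psi_t^*(d\lambda_t) = d(s\alpha_t + \gamma_t)$. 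Here the key observation is that for $t \in \{0,1\}$ one can take $\psi_t$ to be the Liouville flow of $\lambda_t$ itself, because then $\gamma_t = 0$ and the target model $s\alpha_t$ agrees with the germ of $\lambda_t$ near $\partial V$; a relative parametric Moser argument (with fixed values at $t = 0, 1$) then produces the whole family~$\psi_t$.

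Now in these coordinates $\psi_t^* \lambda_t = s\alpha_t + \gamma_t + df_t$ for a smooth family $f_t$ of functions vanishing on $\{1\} \times M$, with $f_0 \equiv 0$ and $f_1 \equiv 0$. I would then apply the three-step cutoff construction from the proof of Lemma~\ref{lem:filling} parametrically: pick cutoff profiles $\varphi_0, \varphi_1, \varphi_2$ and a constant $C > 1$ that are \emph{uniform in $t$}, which is possible by compactness of $[0,1]$. This yields a smooth family $\widetilde{\lambda}_t$ of $1$-forms on $V$ such that $\widetilde{\lambda}_t$ is a Liouville form, $\widetilde{\lambda}_t{}_{\vert \partial V} = C\alpha_t$ is a contact form for $\xi_t$, and moreover for each $t$ the $1$-parameter family interpolating from $\lambda_t$ to $\widetilde{\lambda}_t$ through $\lambda_{t, 1/2}, \lambda_{t, 3/4}$ stays within pre-Liouville structures (for the contact structure $\xi_t$). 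Crucially, for $t \in \{0,1\}$, since $\gamma_t$ and $f_t$ vanish identically, the intermediate forms $\lambda_{t, 1/2}, \lambda_{t, 3/4}$ are already Liouville forms, so the homotopy from $\lambda_t$ to $\widetilde{\lambda}_t$ is by Liouville forms.

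To conclude, I concatenate three Liouville homotopies: the Liouville homotopy from $\lambda_0$ to $\widetilde{\lambda}_0$ (stage three of the parametric straightening at $t = 0$), the Liouville homotopy $t \mapsto \widetilde{\lambda}_t$ from $\widetilde{\lambda}_0$ to $\widetilde{\lambda}_1$, and the reverse Liouville homotopy from $\widetilde{\lambda}_1$ to $\lambda_1$. The exact symplectomorphism statement then follows from the standard Moser argument: a smooth path of Liouville forms on a compact manifold with boundary, with contact boundary conditions, integrates to an exact symplectomorphism of $(V, d\lambda_0) \to (V, d\lambda_1)$. The main obstacle is the parametric coisotropic neighborhood theorem \emph{relative} to $t \in \{0,1\}$: one has to prescribe $\psi_0, \psi_1$ as the Liouville flows and extend smoothly in between while preserving the symplectic model. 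This is essentially a relative Moser argument in a neighborhood of $\{1\} \times M$, and the only delicate point is the smoothness of $\alpha_t$ and $\gamma_t$ in $t$, which follows from the smooth dependence of the solution of $d\alpha_t{}_{\vert \xi_t} = d\beta_t{}_{\vert \xi_t}$ on the data.
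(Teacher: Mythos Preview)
Your approach is essentially the same as the paper's: run Lemma~\ref{lem:filling} parametrically and observe that at the endpoints $t\in\{0,1\}$ the straightening stays within Liouville forms, then concatenate the three sides of the resulting square. Two simplifications in the paper are worth noting. First, the paper begins by applying Gray stability to an isotopy supported near $\partial V$ so as to make $\xi_t$ \emph{constant}; this avoids tracking the smooth dependence of $\alpha_t$ on $t$. Second, and more importantly, the paper only uses $\gamma_t=0$ at $t\in\{0,1\}$ and does \emph{not} require $f_t\equiv 0$: in the first stage of Lemma~\ref{lem:filling} the homotopy from $\lambda$ to $\lambda_{1/2}$ modifies $\lambda$ by an exact form vanishing on $\partial V$, so the boundary restriction stays equal to $\beta=\alpha$ and remains contact; the later stages involve only $\alpha$ and $\gamma$. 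Hence there is no need to prescribe $\psi_0,\psi_1$ as the Liouville flows, and what you flag as ``the main obstacle'' (the relative parametric coisotropic neighborhood theorem) is in fact unnecessary. With that observation your argument goes through without the relative constraint.
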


\begin{proof}
First of all, we can assume that the path $(\xi_t)_t$ is constant and equal to a fixed contact structure $\xi$ after pulling back $(\lambda_t)_t$ by an isotopy of $V$ supported near $\partial V$. 

The deformation of $\lambda$ near $\partial V$ in the proof of Lemma~\ref{lem:filling} can be performed in a parametric way, by first using a parametric version of the local uniqueness of coisotropic embeddings. Moreover, if $\lambda$ already restricts to a contact form for $\xi$ on $\partial V$, then so do the forms $\lambda_t$, $0 \leq t \leq 1$, since in that case $\gamma = 0$. Applying this to the path $(\lambda_t)_t$, we obtain a family of $1$-forms $(\lambda_{s,t})_{0 \leq s,t \leq 1}$ such that 
\begin{itemize}
    \item For every $t$, $\lambda_{0,t} = \lambda_t$,
    \item For every $t$, $(\lambda_{s,t})_s$ satisfies the conditions of Lemma~\ref{lem:filling},
    \item For every $s$, ${\lambda_{s,0}}_{\vert \partial V}$ and ${\lambda_{s,1}}_{\vert \partial V}$ are contact forms for $\xi$.
\end{itemize}
We obtain a Liouville homotopy between $\lambda_0$ and $\lambda_1$ by concatenating the Liouville homotopies $(\lambda_{s,0})_s$, $(\lambda_{1,t})_t$ and $(\lambda_{1-s,1})_s$.
\end{proof}

    \section{Liouville structures from foliations}

In this section, $\mathcal{F}$ denotes a hypertaut $C^1$-foliation on $M$. By Construction~\ref{construction}, we can associate to it a Liouville structure on $[-1,1] \times M$, after making a number of choices. We will now show that the resulting Liouville structure is unique up to deformation. We shall call this a(n infinitesimal) \emph{Liouville thickening} of $\mathcal{F}$. We then consider the special case when $\mathcal{F}$ is $C^2$ and compare it with a construction of Jonathan Zung~\cite{Z21}.

        \subsection{Liouville thickenings and proof of Theorem~\ref{thmintro:liouv}}

Recall that a $C^1$-foliation $\mathcal{F}$ is hypertaut if there exists an exact $2$-form $\omega = d\beta$ satisfying $\omega_{\vert T \mathcal{F}} > 0$ (see Definition~\ref{def:hypertaut}). Foliations without closed leaves are taut, and many of them are automatically hypertaut:

\begin{lem} \label{lem:hypertaut}
    Let $\mathcal{F}$ be a taut $C^1$-foliation on $M$.
    \begin{itemize}
        \item If $M$ is a rational homology sphere, then $\mathcal{F}$ is hypertaut.
        \item If $\mathcal{F}$ is $C^2$, then it is hypertaut if and only if it has no closed leaves and it is not $C^0$-approximated by fibrations.
    \end{itemize}
\end{lem}

\begin{proof}
    The first item immediately follows from the fact that a taut $C^1$-foliation admits a \emph{closed} dominating $2$-form, which is automatically exact on a rational homology sphere.

    For the second item, Stokes' Theorem implies that hypertaut foliations have no closed leaves and cannot be approximated by fibrations. For the converse direction, $\mathcal{F}$ must have holonomy since a coorientable $C^2$-foliation without holonomy on a closed $3$-manifold is approximated by fibrations by~\cite[Corollary 1.2.3]{ET}. Therefore, every minimal set has a curve with attracting (linear) holonomy by Sacksteder's Theorem~\cite{S65} and an argument of Ghys in the minimal case; see~\cite[Theorem 1.2.7]{ET}. It follows that $\mathcal{F}$ has enough holonomy, and Proposition~\ref{prop:hypertaut} implies that $\mathcal{F}$ is hypertaut.
\end{proof}

Construction~\ref{construction} from the Introduction involves a certain number of choices that we recall here:
\begin{itemize}
    \item A $1$-form $\beta$ such that $d\beta_{\vert T\mathcal{F}} > 0$,
    \item A (continuous) $1$-form $\alpha$ such that $\ker \alpha = T \mathcal{F}$, and a smoothing $\widetilde{\alpha}$ thereof satisfying $\widetilde{\alpha} \wedge d\beta > 0$,
    \item Contact approximations $\xi_\pm$ of $\mathcal{F}$ such that $d\beta_{\vert \xi_\pm} > 0$,
    \item An $\epsilon > 0$ small enough.
\end{itemize}
In this way, we obtain a pre-Liouville structure $(\lambda_\mathrm{pre}, \xi_- \sqcup \xi_+)$ defined by 
\begin{align} \label{eq:preliouv}
    \lambda_\mathrm{pre} \coloneqq \beta + \epsilon \tau \widetilde{\alpha},
\end{align}
and we can apply Lemma~\ref{lem:filling} to obtain the desired Liouville structure on $[-1,1]_\tau \times M$. In particular, we have:

\begin{prop} \label{prop:welldef}
    The previous procedure yields a Liouville structure which is well-defined up to Liouville homotopy.
\end{prop}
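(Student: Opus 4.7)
Proof proposal. To show that any two outputs of Construction~\ref{construction} yield Liouville homotopic thickenings, the plan is to exhibit a path of pre-Liouville structures interpolating them, and then invoke Proposition~\ref{prop:deffill} after straightening at the endpoints via Lemma~\ref{lem:filling}. The argument splits into two conceptually distinct moves: (a) varying the choices $(\beta, \widetilde{\alpha}, \epsilon)$ with the contact approximations $\xi_\pm$ fixed, handled by elementary convex interpolation, and (b) varying $\xi_\pm$ with everything else fixed, which invokes Theorem~\ref{thmintrobeta:uniq}.

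The key preparatory step is to distinguish a primitive adapted to the transverse-line-field constraint in Vogel's uniqueness statement. I would pick a smooth $1$-form $\beta_\star$ with $d\beta_\star\vert_{T\mathcal{F}} > 0$ and set $\mathcal{I}_\star \coloneqq \ker d\beta_\star$. Because $d\beta_\star$ is a nowhere-zero $2$-form on a $3$-manifold, it has constant rank $2$, so $\mathcal{I}_\star$ is a smooth $1$-dimensional foliation; and it is automatically transverse to $\mathcal{F}$ because $d\beta_\star\vert_{T\mathcal{F}}$ is nondegenerate. The crucial feature is that $d\beta_\star$ is then automatically positive on \emph{every} plane field $\xi \in \mathcal{P}_{\mathcal{I}_\star}$ cooriented consistently with $\mathcal{F}$, simply because such a $\xi$ avoids $\ker d\beta_\star$ and carries the correct orientation.

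Given two sets of choices $(\beta_i, \widetilde{\alpha}_i, \xi^i_\pm, \epsilon_i)$, $i \in \{0,1\}$, I would first bring each into the configuration $(\beta_\star, \widetilde{\alpha}_\star, \xi^i_\pm, \epsilon_\star)$ while holding the contact approximations fixed. The conditions $d\beta\vert_{T\mathcal{F}} > 0$, $d\beta\vert_{\xi^i_\pm} > 0$, and $\widetilde{\alpha}_i \wedge d\beta > 0$ are open and convex in $\beta$, and hold at both $\beta_i$ and $\beta_\star$ (the middle one at $\beta_\star$ because $\xi^i_\pm$ can be assumed $C^0$-close enough to $T\mathcal{F}$ to lie in $\mathcal{P}_{\mathcal{I}_\star}$), so the linear interpolation of $\beta$ works. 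Analogous convex moves interpolate $\widetilde{\alpha}$ (once $\beta_\star$ is in place) and then $\epsilon$. For sufficiently small $\epsilon$ the bulk symplectic condition on $V$ reduces to $\widetilde{\alpha} \wedge d\beta > 0$, as one sees by expanding
\[
(d\lambda)^2 = 2\epsilon\, d\tau \wedge \widetilde{\alpha} \wedge d\beta + O(\epsilon^2),
\]
so each linear homotopy genuinely produces a path of pre-Liouville structures, and Proposition~\ref{prop:deffill} yields the corresponding Liouville homotopies.

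It remains to compare $(\beta_\star, \widetilde{\alpha}_\star, \xi^0_\pm, \epsilon_\star)$ with $(\beta_\star, \widetilde{\alpha}_\star, \xi^1_\pm, \epsilon_\star)$, differing only in the contact pair. Shrinking both pairs if necessary so that they lie in the Vogel neighborhood $\mathcal{V}_{\mathcal{I}_\star}$ of Theorem~\ref{thmintrobeta:uniq}, that theorem supplies contact homotopies $\xi^t_\pm$, $t \in [0,1]$, remaining inside $\mathcal{P}_{\mathcal{I}_\star}$. By the defining property of $\beta_\star$ we have $d\beta_\star\vert_{\xi^t_\pm} > 0$ uniformly in $t$ (by compactness of $[0,1]$), so for $\epsilon_\star$ small enough the \emph{fixed} $1$-form $\lambda_\star \coloneqq \beta_\star + \epsilon_\star \tau \widetilde{\alpha}_\star$ continues to satisfy the pre-Liouville boundary condition against the varying pair $(\xi^t_-, \xi^t_+)$. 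Applying Lemma~\ref{lem:filling} at the two endpoints to convert $\lambda_\star$ into genuine Liouville primitives with boundaries $\xi^0_\pm$ and $\xi^1_\pm$ respectively, and invoking Proposition~\ref{prop:deffill}, delivers the desired Liouville homotopy. The main difficulty this circumvents is that a Vogel homotopy need \emph{not} remain $C^0$-close to $T\mathcal{F}$, which would otherwise destroy $d\beta\vert_{\xi^t_\pm} > 0$ somewhere along the interpolation; engineering $\mathcal{I}_\star = \ker d\beta_\star$ is precisely what preserves fillability throughout.
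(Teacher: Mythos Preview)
Your proof is correct and follows essentially the same approach as the paper's. Both identify the transverse line field for Theorem~\ref{thmintrobeta:uniq} as $\ker d\beta$ (the paper writes this dually via $\iota_Z\,\mathrm{dvol} = d\beta$), so that remaining in $\mathcal{P}_{\mathcal{I}}$ along the Vogel homotopy is precisely the weak-filling condition $d\beta\vert_{\xi^t_\pm} > 0$; the only difference is organizational---the paper first fixes $\beta$ and handles $(\widetilde{\alpha},\xi_\pm,\epsilon)$ via a rescaling $\beta \mapsto \kappa\beta$ (equivalent to your shrinking of $\epsilon_\star$) before interpolating $\beta$, whereas you reduce to a distinguished $\beta_\star$ first.
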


\begin{proof}
    Let $\mathrm{dvol}$ be an arbitrary volume form on $M$. We proceed in two steps.
    
    \begin{itemize}[leftmargin=*]
    \item \textit{Step 1.} We first consider a smooth $1$-form $\beta$ satisfying $d\beta_{\vert T\mathcal{F}} >0$, and show that the Liouville structures obtained from Construction~\ref{construction} are all Liouville homotopic, for this specific choice of $\beta$. Let $Z = Z_\beta$ be the vector field defined by $\iota_{Z} \mathrm{dvol} = d\beta$. By assumption, $Z$ is positively transverse to $\mathcal{F}$ and induces a smooth foliation $\mathcal{I} = \mathcal{I}_\beta$.
    
    Let $\mathcal{V} = \mathcal{V}_\beta$ be a neighborhood of $T\mathcal{F}$ as in Theorem~\ref{thmintrobeta:uniq} for the line field $\mathcal{I}$. We then choose 
    \begin{itemize}
        \item A smooth $1$-form $\widetilde{\alpha}$ satisfying $\widetilde{\alpha} \wedge d\beta > 0$,
        \item Positive and negative contact approximations $\xi_\pm$ of $T \mathcal{F}$ in $\mathcal{V}$, both transverse to $\mathcal{I}$. 
    \end{itemize}
    Then there exists $\overline{\epsilon} = \overline{\epsilon}_{\beta, \widetilde{\alpha}, \xi_\pm}$ such that for every $0 < \epsilon < \overline{\epsilon}$, the $1$-form defined by~\eqref{eq:preliouv} induces a pre-Liouville structure $(\lambda_\mathrm{pre}, \xi_- \sqcup \xi_+)$. In particular, the latter does not depend on $\epsilon$ up to pre-Liouville homotopy.

    Let us now consider another smooth $1$-form satisfying $\widetilde{\alpha}'\wedge d\beta > 0$, and different contact approximations $\xi'_\pm \in \mathcal{V}$. We set
    $$\lambda'_\mathrm{pre} \coloneqq \beta + \epsilon \tau \widetilde{\alpha}',$$
    for $\epsilon > 0$ small enough, so that $(\lambda'_\mathrm{pre}, \xi'_- \sqcup \xi'_+)$ is also a pre-Liouville structure.

    By Theorem~\ref{thmintrobeta:uniq}, there exist paths of contact structures $\big(\xi^t_\pm\big)_{t \in [0,1]}$ such that $\xi^0_\pm = \xi_\pm$, $\xi^1_\pm = \xi'_\pm$, and every $\xi^t_\pm$ is transverse to $\mathcal{I}$ for $t \in [0,1]$. This means that $d\beta_{\xi^t_\pm} > 0$.

    We construct a path of pre-Liouville structures from $(\lambda_\mathrm{pre}, \xi_- \sqcup \xi_+)$ to $(\lambda'_\mathrm{pre}, \xi'_- \sqcup \xi'_+)$ as follows. First, note that for every $\kappa \geq 1$, both 
    \begin{align*}
        \lambda_{\mathrm{pre},\kappa} &\coloneqq \kappa \beta + \epsilon \tau \widetilde{\alpha}, \\
        \lambda'_{\mathrm{pre},\kappa} &\coloneqq \kappa \beta + \epsilon \tau \widetilde{\alpha}',
    \end{align*}
    induce pre-Liouville structures $\big(\lambda_{\mathrm{pre},\kappa}, \xi_- \sqcup \xi_+\big)$ and $\big(\lambda'_{\mathrm{pre},\kappa}, \xi'_- \sqcup \xi'_+\big)$, for all $\epsilon$ sufficiently small. Moreover, a simple computation shows that for $\kappa$ large enough, the $1$-forms
    $$\lambda^t_{\mathrm{pre}, \kappa} \coloneqq \kappa \beta + \tau \epsilon \big((1-t) \widetilde{\alpha} + t \widetilde{\alpha}' \big)$$
    determine a pre-Liouville structure $\big(\lambda^t_{\mathrm{pre}, \kappa}, \xi^t_- \sqcup \xi^t_+\big)$ for every $t \in [0,1]$. Therefore, we obtain the desired path of pre-Liouville structures by choosing $K \gg 1$ large enough and concatenating
    \begin{itemize}
        \item $\big(\lambda_{\mathrm{pre}, \kappa}, \xi_- \sqcup \xi_+ \big)$ for $1 \leq \kappa \leq K$,
        \item $\big(\lambda^t_{\mathrm{pre}, K}, \xi^t_- \sqcup \xi^t_+ \big)$ for $0 \leq t \leq 1$,
        \item $\big(\lambda'_{\mathrm{pre}, \kappa}, \xi'_- \sqcup \xi'_+ \big)$ for $1 \leq \kappa \leq K$.
    \end{itemize}
    All the paths of pre-Liouville structures can now be deformed to paths of Liouville structures by Theorem~\ref{thm:preliouv}. This shows that the Liouville thickening depends only on $\beta$.

    \item \textit{Step 2.} We now consider another $1$-form $\beta'$ satisfying $d\beta'_{\vert T \mathcal{F}} > 0$. Let $\widetilde{\alpha}$ be a $1$-form such that $\widetilde{\alpha} \wedge d\beta > 0$ and $\widetilde{\alpha} \wedge d\beta' > 0$, and let $\xi_\pm$ be contact approximations to $\mathcal{F}$ such that $d\beta_{\vert \xi_\pm} > 0$ and $d\beta'_{\vert \xi_\pm} > 0$. We then define
    \begin{align*}
        \lambda'_\mathrm{pre} &\coloneqq \beta' + \epsilon \tau \widetilde{\alpha},
    \end{align*}
    which induces a pre-Liouville structure for $\xi_- \sqcup \xi_+$ for $\epsilon > 0$ small enough. It is then easy to check that 
    $$\lambda^t_\mathrm{pre} \coloneqq \big((1-t) \beta + t \beta'\big) + \epsilon \tau \widetilde{\alpha}$$
    also induces a pre-Liouville structure for $\xi_- \sqcup \xi_+$, for every $t \in [0,1]$. Therefore, the Liouville thickening of $\mathcal{F}$ does not depend on the choice of $\beta$ up to Liouville homotopy. \qedhere
    \end{itemize}
\end{proof}

We call such a Liouville structure a/the \textbf{Liouville thickening} of the foliation $\mathcal{F}$.

\begin{rem}
    More generally, if $\mathcal{F}$ is a hypertaut $C^0$-foliation (in the sense of~\cite{B16, KR17}), one can still use Construction~\ref{construction} to associate to it a collection of homotopy classes of Liouville structures $\mathscr{L}_\mathcal{F}$ that \emph{only} depends on the choice of contact approximations. However, if $\mathcal{F}$ is not $C^1$, it can admit several nonisotopic contact approximations, so $\mathscr{L}_\mathcal{F}$ might not be reduced to a point. On the other hand, all known nonequivalent contact approximations of foliations are distinguished by \emph{Giroux torsion}, and Liouville fillable contact structures are known to have vanishing Giroux torsion. Thus, it could still be possible that this construction yields a well-defined Liouville structure for hypertaut foliations of class $C^0$, although we refrain from positing this as a conjecture.
\end{rem}

We now turn to the main result of this section. We will consider several natural equivalence relations for hypertaut foliations, and describe their effect on Liouville thickenings:

\begin{defn}
Let $\mathcal{F}_0$ and $\mathcal{F}_1$ be two hypertaut $C^1$-foliations on $M$.
\begin{itemize}
    \item $\mathcal{F}_0$ and $\mathcal{F}_1$ are \textbf{$C^0$-homotopic} if there exists a $1$-parameter family $(\mathcal{F}_t)_{t \in [0,1]}$ of hypertaut $C^1$-foliations such that the map $t \in [0,1] \mapsto T \mathcal{F}_t$ is continuous.
    \item $\mathcal{F}_0$ and $\mathcal{F}_1$ are \textbf{$C^0$-conjugate} if there exists a foliated homeomorphism $h : (M, \mathcal{F}_0) \rightarrow (M, \mathcal{F}_1)$ sending the coorientation of $\mathcal{F}_0$ to that of $\mathcal{F}_1$.
    \item $\mathcal{F}_0$ and $\mathcal{F}_1$ are \textbf{$C^0$-deformation equivalent} if they are related by a sequence of $C^0$-homotopies and $C^0$-conjugations.
\end{itemize}
\end{defn}

We now prove a slightly more general version of Theorem~\ref{thmintro:liouv} from the Introduction:

\begin{thm} \label{thm:uniqhyp}
If $\mathcal{F}_0$ and $\mathcal{F}_1$ are $C^0$-deformation equivalent hypertaut $C^1$-foliations, then their Liouville thickenings are exact symplectomorphic after completion.
\end{thm}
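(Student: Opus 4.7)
The plan is to reduce to handling the two generating equivalence relations---$C^0$-homotopies and $C^0$-conjugations---separately, since exact symplectomorphism of Liouville structures is preserved under concatenation.

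For the homotopy case, let $(\mathcal{F}_t)_{t \in [0,1]}$ be a continuous path of hypertaut admissible foliations. I would argue by a compactness argument on $[0,1]$. At each $t_0$, choose data $(\beta_{t_0}, \widetilde{\alpha}_{t_0}, \xi^{t_0}_\pm, \epsilon_{t_0})$ witnessing hypertautness of $\mathcal{F}_{t_0}$ and producing a pre-Liouville form $\lambda_{\mathrm{pre}}^{t_0}$ as in~\eqref{eq:preliouv}. Since the positivity conditions $d\beta_{\vert T\mathcal{F}} > 0$, $\widetilde{\alpha} \wedge d\beta > 0$, and $d\beta_{\vert \xi_\pm} > 0$ are all open in the $C^0$ topology on $T\mathcal{F}$, the same data produces a pre-Liouville structure $(\lambda_{\mathrm{pre}}^{t_0}, \xi^{t_0}_- \sqcup \xi^{t_0}_+)$ for all $t$ in a neighborhood of $t_0$. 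Covering $[0,1]$ by finitely many such neighborhoods and interpolating linearly between consecutive choices of $(\beta, \widetilde{\alpha}, \xi_\pm)$ (as in Step 1 of the proof of Proposition~\ref{prop:welldef}) yields a continuous family of pre-Liouville structures, which Proposition~\ref{prop:deffill} upgrades to a Liouville homotopy.

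For the conjugation case, let $h : (M, \mathcal{F}_0) \rightarrow (M, \mathcal{F}_1)$ be a foliated homeomorphism preserving coorientations. Fix a smooth $1$-dimensional foliation $\mathcal{I}_1$ positively transverse to $\mathcal{F}_1$, and let $\mathcal{V} = \mathcal{V}_{\mathcal{I}_1}$ be the Vogel neighborhood of $T\mathcal{F}_1$ provided by Theorem~\ref{thmintrobeta:uniq}. Apply Theorem~\ref{thmintrobeta:approx} with sufficiently small $\epsilon$ to obtain a smooth diffeomorphism $\widetilde{h}$ isotopic to $h$ with $T\widetilde{\mathcal{F}}_1 \coloneqq \widetilde{h}_*(T\mathcal{F}_0)$ so $C^0$-close to $T\mathcal{F}_1$ that $T\widetilde{\mathcal{F}}_1 \in \mathcal{V}$ and is positively transverse to $\mathcal{I}_1$. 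Since hypertautness and admissibility are invariant under smooth diffeomorphisms, $\widetilde{\mathcal{F}}_1$ inherits both properties from $\mathcal{F}_0$, and by $C^0$-closeness one can choose a common $1$-form $\beta$ with $d\beta$ positive on both $T\mathcal{F}_1$ and $T\widetilde{\mathcal{F}}_1$. Pick contact approximations $\xi^1_\pm$ of $\mathcal{F}_1$ and $\widetilde{\xi}^1_\pm$ of $\widetilde{\mathcal{F}}_1$, both lying in $\mathcal{V}$ and transverse to $\mathcal{I}_1$, and both compatible with $\beta$.

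The hard part is then bridging the gap between $\widetilde{\mathcal{F}}_1$ and $\mathcal{F}_1$: they are $C^0$-close but \emph{not} equal, so one cannot appeal to Proposition~\ref{prop:welldef} directly. This is exactly where Theorem~\ref{thmintrobeta:uniq} does the work: it provides contact homotopies between $\xi^1_\pm$ and $\widetilde{\xi}^1_\pm$ \emph{within} $\mathcal{P}_{\mathcal{I}_1}$, and the transversality to $\mathcal{I}_1$ ensures that $d\beta$ remains positive along the entire path (up to rescaling $\beta$ by a large constant, as in the parametric construction in Step 1 of the proof of Proposition~\ref{prop:welldef}). Combining this contact homotopy with a linear interpolation of the smoothings $\widetilde{\alpha}$ and an appropriate choice of $\epsilon$ yields a continuous path of pre-Liouville structures from the Liouville thickening of $\widetilde{\mathcal{F}}_1$ to that of $\mathcal{F}_1$, which Proposition~\ref{prop:deffill} converts to a Liouville homotopy. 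Finally, $\mathrm{id} \times \widetilde{h}$ is a tautological exact symplectomorphism sending a Liouville thickening of $\mathcal{F}_0$ to one of $\widetilde{\mathcal{F}}_1 = \widetilde{h}_*\mathcal{F}_0$, and composing this with the Liouville homotopy above produces the desired exact symplectomorphism between the Liouville thickenings of $\mathcal{F}_0$ and $\mathcal{F}_1$. The main obstacle throughout is maintaining \emph{transverse control} to a fixed line field so that the $1$-form $\beta$ witnessing hypertautness remains positive along the entire deformation---it is precisely to handle this obstacle that the refinement Theorem~\ref{thmintrobeta:uniq} of Vogel's result is required rather than the original statement.
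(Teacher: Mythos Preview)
Your overall strategy matches the paper's: handle the two generating relations separately, use compactness for homotopies, combine Theorem~\ref{thmintrobeta:approx} with Theorem~\ref{thmintrobeta:uniq} for conjugations, and invoke Proposition~\ref{prop:deffill} to pass from pre-Liouville to Liouville. There is, however, one genuine slip in the conjugation case that makes a key step fail as written.

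You fix an \emph{arbitrary} line field $\mathcal{I}_1$ transverse to $\mathcal{F}_1$, then later choose $\beta$, and then assert that ``transversality to $\mathcal{I}_1$ ensures that $d\beta$ remains positive along the entire path (up to rescaling $\beta$ by a large constant)''. This is false: positive transversality of a plane field $\xi$ to an arbitrary line field $\mathcal{I}_1$ says nothing about the sign of $d\beta_{\vert\xi}$, and rescaling $\beta$ by a constant does not change that sign. (The large constant $\kappa$ in Step~1 of Proposition~\ref{prop:welldef} is there to dominate the $\epsilon\tau\,d\widetilde\alpha$ contribution to the symplectic condition, not to repair the sign of $d\beta_{\vert\xi}$.) The statement becomes true only if $\mathcal{I}_1$ is the specific line field $\mathcal{I}_\beta$ spanned by the vector field $Z$ with $\iota_Z\,\mathrm{dvol}=d\beta$, since then $d\beta_{\vert\xi}=\mathrm{dvol}(Z,\cdot,\cdot)_{\vert\xi}$ and positivity of $d\beta_{\vert\xi}$ is \emph{equivalent} to positive transversality of $\xi$ to $\mathcal{I}_\beta$. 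The fix is simply to reverse the order: choose $\beta$ with $d\beta_{\vert T\mathcal{F}_1}>0$ first, set $\mathcal{I}_1\coloneqq\mathcal{I}_\beta$, and only then take the Vogel neighborhood $\mathcal{V}=\mathcal{V}_{\mathcal{I}_\beta}$. This is exactly how the paper organizes both the proof of Proposition~\ref{prop:welldef} and the conjugation case here, and with this correction your argument goes through.
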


\begin{proof}
We first assume that $\mathcal{F}_0$ and $\mathcal{F}_1$ are $C^0$-homotopic. By compactness, it suffices to show that if $\mathcal{F}_1$ is sufficiently $C^0$-close to $\mathcal{F}_0$ (in the sense of plane fields), then their Liouville thickenings are homotopic. This will essentially follow from the strategy of the proof of Proposition~\ref{prop:welldef}.

Let $\beta$ be a smooth $1$-form such that $d\beta_{\vert T\mathcal{F}_i} >0$ for $i \in \{0,1\}$. Furthermore, let $\mathcal{I} = \mathcal{I}_\beta$ be a line field as in the proof of Proposition~\ref{prop:welldef} and let $\widetilde{\alpha}$ be a $1$-form satisfying $\widetilde{\alpha} \wedge d\beta > 0$. Denote by $\mathcal{V}_i$ a neighborhood of $T\mathcal{F}_i$ as in Theorem~\ref{thmintrobeta:uniq} for the line field $\mathcal{I}$, for $i \in \{0,1\}$, and set $\mathcal{V} \coloneqq \mathcal{V}_0 \cap \mathcal{V}_1$. We can further assume that $T \mathcal{F}_1$ lies in $\mathcal{V}_0$, so that $\mathcal{V} \neq \varnothing$. We then consider contact structures $\xi_\pm \in \mathcal{V}$ approximating $\mathcal{F}_1$. Applying Construction~\ref{construction} to $\mathcal{F}_1$ for $\beta$, $\widetilde{\alpha}$, and $\xi_\pm$ yields a Liouville thickening which is also homotopic to a Liouville thickening of $\mathcal{F}_0$ by (the proof of) Proposition~\ref{prop:welldef}, as desired.

We now assume that $\mathcal{F}_0$ and $\mathcal{F}_1$ are $C^0$-conjugate, via a homeomorphism $h : M \rightarrow M$. Let $\beta_1$ be a $1$-form satisfying $d{\beta_1}_{\vert T \mathcal{F}_1} > 0$ and choose a Vogel neighborhood $\mathcal{V}_1$ for $\mathcal{F}_1$ as in Proposition~\ref{prop:welldef}. Using Theorem~\ref{thmintrobeta:approx}, we can find a smoothing $\widetilde{h}$ of $h$ and a smooth $1$-form $\widetilde{\alpha}_0$ approximating a $1$-form $\alpha_0$ with $\ker \alpha_0 = T \mathcal{F}_0$ such that the following conditions are satisfied:
\begin{align*}
    \widetilde{h}_* \alpha_0 \wedge d\beta_1 >0, \qquad
    \widetilde{h}_* \widetilde{\alpha}_0 \wedge d\beta_1 >0, \qquad
    \widetilde{h}_* \big(T\mathcal{F}_0\big) \in \mathcal{V}_1.
\end{align*}
We can now run Construction~\ref{construction} for $\mathcal{F}_1$ using $\beta_1$, $\widetilde{\alpha}_1 \coloneqq \widetilde{h}_* \widetilde{\alpha}_0$ and contact structures in $\mathcal{V}_1$ obtained by pushing forward contact structures $\xi^0_\pm$ approximating $T \mathcal{F}_0$ along $\widetilde{h}$. Moreover, the $1$-form $\beta_0 \coloneqq \widetilde{h}^*\beta_1$ satisfies $d{\beta_0}_{\vert T\mathcal{F}_0} >0$, and we can run Construction~\ref{construction} for $\mathcal{F}_0$ using $\beta_0$, $\widetilde{\alpha}_0$ and $\xi^0_\pm$. Therefore, we obtain (pre-)Liouville thickenings $\lambda_0$ and $\lambda_1$ of $\mathcal{F}_0$ and $\mathcal{F}_1$, respectively, which satisfy $\lambda_0 = \big(\mathrm{id} \times\widetilde{h}\big)^*\lambda_1$. Finally, Proposition~\ref{prop:welldef} implies that the Liouville thickenings of $\mathcal{F}_0$ and $\mathcal{F}_1$ are exact symplectomorphic after completion.
\end{proof}

\begin{rem}
    The proof shows that Liouville thickenings of $C^0$-homotopic hypertaut foliations are homotopic, and Liouville thickenings of $C^0$-conjugate hypertaut foliations are deformation equivalent via an equivalence (topologically) isotopic to $\mathrm{id} \times h$, where $h$ is the conjugation.
\end{rem}

Therefore, every $C^0$-deformation equivalence class of hypertaut foliations on $M$ has an associated $A_\infty$-category, well-defined up to quasi-isomorphism, obtained as the wrapped Fukaya category of the Liouville thickening $\lambda_\mathcal{F}$. The special case of Anosov foliations was studied in~\cite{CLMM}.

\medskip

The proof of Theorem~\ref{thmintro:poscont} follows from the same arguments \emph{mutatis mutandis} and is left to the reader.

        \subsection{Liouville pairs}

We say that a pair of contact forms $(\alpha_-, \alpha_+)$ on $M$ is a (linear) \textbf{Liouville pair} if the $1$-form
$$\lambda \coloneqq (1-\tau) \alpha_- + (\tau+1) \alpha_+$$
defines a Liouville form on $[-1,1]_\tau \times M$, i.e., if $d\lambda$ is symplectic. These structures already appear in~\cite{Mit95} and~\cite{MNW13} and were extensively studied in~\cite{Mas24} (with a slightly different definition).

Jonathan Zung implicitly showed in~\cite{Z21} that every hypertaut $C^2$-foliation on a closed $3$-manifold induces such a Liouville pair. More precisely, he proved:

\begin{prop}[\cite{Z21}] \label{prop:Zung}
If $\mathcal{F}$ is a hypertaut $C^2$-foliation, then there exist $1$-forms $\alpha$ and $\beta$ of class $C^1$ such that 
\begin{align}
\ker \alpha = T \mathcal{F}, \qquad \alpha \wedge d\beta > 0, \qquad \beta \wedge d\alpha \geq 0. \label{eq:Zung}
\end{align}
\end{prop}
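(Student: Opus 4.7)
The plan is to start with an arbitrary $C^1$ defining $1$-form $\alpha_0$ for $\mathcal{F}$ and a $C^1$ primitive $\beta$ of a dominating exact $2$-form (both provided by hypertautness), and then to conformally rescale $\alpha \coloneqq g\alpha_0$ by a positive function $g$ so as to additionally achieve $\beta \wedge d\alpha \geq 0$. Since $g\alpha_0 \wedge d\beta = g(\alpha_0 \wedge d\beta) > 0$, the first condition $\alpha \wedge d\beta > 0$ is automatically preserved, so only the second one needs to be addressed. The equation for $g$ reduces to a first-order cohomological inequality along the leaves of $\mathcal{F}$, whose sole obstruction will be identified with a nontrivial positive foliation cycle, which cannot exist by hypertautness (via Sullivan~\cite{S76}).

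\emph{Setup.} Since $\mathcal{F}$ is $C^2$, Frobenius provides a $C^1$ $1$-form $\eta$ with $d\alpha_0 = \alpha_0 \wedge \eta$. Fix a positive volume form $\Omega$ on $M$ and define the vector field $Z$ by $\iota_Z \Omega = \beta \wedge \alpha_0$. On a $3$-manifold $\alpha_0 \wedge \Omega$ vanishes, so contracting with $Z$ gives $\alpha_0(Z)\,\Omega = \alpha_0 \wedge \iota_Z \Omega = \alpha_0 \wedge \beta \wedge \alpha_0 = 0$, and hence $Z$ is tangent to $\mathcal{F}$.

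\emph{Reduction to a leafwise inequality.} Using the pointwise identity $df \wedge \iota_Y \Omega = (Y\cdot f)\,\Omega$, valid on any $3$-manifold (since $df \wedge \Omega = 0$), a direct computation yields
\[
\beta \wedge d(g\alpha_0)
= \beta \wedge dg \wedge \alpha_0 + g\,\beta \wedge \alpha_0 \wedge \eta
= \bigl(g\,\eta(Z) - Z\cdot g\bigr)\,\Omega.
\]
Setting $u \coloneqq \log g$, the target condition $\beta \wedge d\alpha \geq 0$ becomes the leafwise inequality $Z \cdot u \leq \eta(Z)$ on $M$.

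\emph{Solvability via hypertautness.} A standard Hahn--Banach duality argument shows that a (smooth or $C^1$) $u$ satisfying this inequality exists if and only if $\int_M \eta(Z)\,d\mu \geq 0$ for every positive Borel measure $\mu$ on $M$ with $\mathcal{L}_Z \mu = 0$. Any such $\mu$ whose support meets the open set $\{Z \neq 0\}$ would, via ergodic decomposition and the standard Sullivan--Plante correspondence between $Z$-invariant measures (with nontrivial transverse part) and positive foliation cycles of $\mathcal{F}$, yield a nontrivial holonomy-invariant transverse measure. By hypertautness, no such measure exists; consequently every $Z$-invariant positive measure is concentrated on $\{Z = 0\}$, where both sides of the inequality vanish. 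Hence the obstruction is trivial, a distributional solution exists, and a $C^1$ solution is obtained by mollification (the closed inequality $\leq$ being preserved under smoothing). The pair $(\alpha, \beta) = (e^u \alpha_0, \beta)$ then satisfies \eqref{eq:Zung}.

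The main obstacle is the identification in the last step between $Z$-invariant positive measures and positive foliation cycles of $\mathcal{F}$: the leafwise dynamics of $Z$ can be complicated, and the correspondence becomes meaningful only after factoring out the dynamics along the (noncompact) leaves, using crucially that $\mathcal{F}$ has no closed leaves under hypertautness. This is Sullivan--Plante material, but some care is required in the $C^1$ setting to ensure the integration-by-parts and the disintegration of $\mu$ along the foliation make sense.
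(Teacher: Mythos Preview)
The paper does not give its own proof of this statement; it is attributed to Zung~\cite{Z21} and cited without argument. I therefore assess your proposal on its own.

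Your reduction is clean: the computation leading to the leafwise inequality $Z\cdot u \leq \eta(Z)$ is correct, and the Hahn--Banach reformulation is natural. The real gap is the claimed ``Sullivan--Plante correspondence'' between $Z$-invariant positive measures supported on $\{Z\neq 0\}$ and holonomy-invariant transverse measures for $\mathcal{F}$. No such correspondence exists. The vector field $Z$ is tangent to the leaves; its invariant measures are invariant measures of a \emph{leafwise} flow and carry no transverse information. Concretely, take $\mathcal{F}=\mathcal{F}^{wu}$ for the geodesic flow on $T^1\Sigma$ with $\Sigma$ closed hyperbolic, $\alpha_0=\alpha_s$, and $\beta$ the canonical contact form. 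Then one computes $Z=e_u$, the strong-unstable (horocycle) direction, so $\{Z=0\}=\varnothing$ and the Liouville measure is $Z$-invariant with full support; yet $\mathcal{F}^{wu}$ is hypertaut and has no transverse invariant measure. Your premise is therefore false already in the most basic example. More structurally, Sullivan's foliation cycles are closed positive \emph{$2$-currents} built from leafwise area elements, whereas $\omega\mapsto\int\omega(Z)\,d\mu$ is a closed \emph{$1$-current}; nothing in Sullivan--Plante converts one into the other. Your final paragraph concedes this is the crux, but it is not a technicality to be filled in: fixing $\beta$ arbitrarily and rescaling only $\alpha$ appears to forfeit exactly the freedom Zung's argument exploits.

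A secondary issue: even granting the integral condition, the bipolar argument places $\eta(Z)$ only in the \emph{closure} of $\{Z\cdot u\}+C^0_{\ge 0}$, yielding $Z\cdot u_\epsilon\le\eta(Z)+\epsilon$ for each $\epsilon>0$ rather than a single $u$ with $Z\cdot u\le\eta(Z)$. Since you anticipate $\int\eta(Z)\,d\mu=0$ for measures supported on $\{Z=0\}$, there is no strict positivity available to close this gap, and mollification does not help here.
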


The first inequality simply means that $d\beta$ is a dominating $2$-form for $\mathcal{F}$.

\begin{cor} \label{cor:Zung}
If $\mathcal{F}$ is a hypertaut $C^2$ foliation, then there exists a Liouville pair $(\alpha_-, \alpha_+)$ on $M$ such that the contact structures $\xi_\pm = \ker \alpha_\pm$ are $C^0$-close to $T\mathcal{F}$.
\end{cor}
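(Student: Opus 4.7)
The plan is to build the Liouville pair directly from the forms $\alpha,\beta$ of class $C^1$ provided by Proposition~\ref{prop:Zung}. After smoothing $\alpha$ and $\beta$ in the $C^1$-topology to smooth forms $\widetilde\alpha,\widetilde\beta$ (which preserves the open condition $\widetilde\alpha\wedge d\widetilde\beta>0$), I propose the candidate
\[
\alpha_+ \coloneqq \delta\widetilde\beta + \widetilde\alpha, \qquad \alpha_- \coloneqq \delta\widetilde\beta - \widetilde\alpha,
\]
where $\delta>0$ is a small parameter. For small $\delta$, $\ker\alpha_\pm$ lies $C^0$-close to $\ker\widetilde\alpha$, hence arbitrarily close to $T\mathcal{F}$ once the smoothing is refined enough.

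The Liouville form is then $\lambda = (1-\tau)\alpha_- + (1+\tau)\alpha_+ = 2\delta\widetilde\beta + 2\tau\widetilde\alpha$, and since all pure $M$-type $4$-forms vanish on a $3$-manifold, a direct expansion yields
\[
\tfrac{1}{8}(d\lambda)^2 \;=\; \delta\, d\tau \wedge \widetilde\alpha \wedge d\widetilde\beta \;+\; \tau\, d\tau \wedge \widetilde\alpha \wedge d\widetilde\alpha.
\]
Crucially, the leading term is strictly positive by the condition $\alpha\wedge d\beta>0$ of Proposition~\ref{prop:Zung}, while the error term vanishes identically in the limit $\widetilde\alpha\to\alpha$, because $\alpha$ defines a foliation and so $\alpha\wedge d\alpha=0$. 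Hence $d\lambda$ is symplectic once the smoothing is refined.

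A parallel computation gives
\[
\alpha_\pm\wedge d\alpha_\pm \;=\; \widetilde\alpha\wedge d\widetilde\alpha \;\pm\; \delta\bigl(\widetilde\alpha\wedge d\widetilde\beta+\widetilde\beta\wedge d\widetilde\alpha\bigr)\;+\;\delta^2\,\widetilde\beta\wedge d\widetilde\beta,
\]
whose dominant term $\pm\delta\,\widetilde\alpha\wedge d\widetilde\beta$ has the required sign: $\alpha_+$ is positive contact and $\alpha_-$ negative contact, thereby producing the desired Liouville pair approximating $T\mathcal{F}$.

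The main obstacle is controlling three competing error terms against the leading one, and the interplay between the two approximation parameters $\delta$ and the smoothing quality. I would resolve this in two stages: first, fix $\delta>0$ small enough to absorb the bounded quadratic term $\delta^2\,\widetilde\beta\wedge d\widetilde\beta$ into the leading term; second, refine the smoothing so that $\widetilde\alpha\wedge d\widetilde\alpha$ is $C^0$-small relative to $\delta\,\widetilde\alpha\wedge d\widetilde\beta$ and so that $\widetilde\beta\wedge d\widetilde\alpha$ remains nearly nonnegative (approximating the nonstrict inequality $\beta\wedge d\alpha\geq 0$, which need not survive smoothing exactly). These are independent $C^0$-conditions on $d\widetilde\alpha$ and $d\widetilde\beta$ that are attainable for any prescribed tolerance by standard smoothing, and this decoupling is what makes the argument work.
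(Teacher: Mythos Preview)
Your argument is correct and follows the same construction as the paper: set $\alpha_\pm=\delta\beta\pm\alpha$ and check the contact and symplectic conditions by direct expansion, using $\alpha\wedge d\beta>0$ and $\beta\wedge d\alpha\ge 0$ from Proposition~\ref{prop:Zung}. The only difference is cosmetic: the paper works with the $C^1$ forms $\alpha,\beta$ directly---so that $\alpha\wedge d\alpha=0$ holds exactly and $(d\lambda)^2=8\delta\,d\tau\wedge\alpha\wedge d\beta$ with no error term---and smooths the resulting $C^1$ Liouville pair at the very end (the Liouville-pair condition being $C^1$-open), whereas you smooth first and must then balance the smoothing quality against $\delta$ to control $\widetilde\alpha\wedge d\widetilde\alpha$ and the near-nonnegativity of $\widetilde\beta\wedge d\widetilde\alpha$; both orderings work, but the paper's avoids the two-parameter bookkeeping.
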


\begin{proof} 
For $\delta > 0$, we define
\begin{align*} 
    \alpha_\pm &\coloneqq \delta \beta \pm \alpha, \\
    \lambda &\coloneqq (1-\tau) \alpha_- + (1+\tau) \alpha_+ \\
    &= 2\big( \delta \beta + \tau \alpha \big),
\end{align*}
where $\alpha$ and $\beta$ are as in Proposition~\ref{prop:Zung}. Following~\cite{ET}, we write $$\langle \alpha, \beta \rangle \coloneqq \alpha \wedge d \beta + \beta \wedge d \alpha.$$ One computes 
\begin{align*}
    \alpha_+ \wedge d\alpha_+ &= \delta \langle \alpha, \beta \rangle  + O(\delta^2), \\
    \alpha_- \wedge d\alpha_- &= - \delta \langle \alpha, \beta \rangle + O(\delta^2), \\
    d\lambda \wedge d\lambda &= 8\delta d\tau \wedge \alpha \wedge d\beta > 0.
\end{align*}
Hence, for $\delta$ small enough, $\alpha_\pm$ are contact forms with opposite orientations defining contact structures $C^0$-close to $T \mathcal{F}$, and $\lambda$ is a Liouville form, so $(\alpha_-, \alpha_+)$ is a Liouville pair. The $1$-forms $\alpha_\pm$ might only be $C^1$, but they can easily be smoothed to yield a smooth Liouville pair.
\end{proof}

Notice that the conditions in~\eqref{eq:Zung} are convex in both $\alpha$ and $\beta$, but might \emph{fail} to be convex in $(\alpha, \beta)$. Therefore, it is not immediately clear that two such pairs induce equivalent Liouville structures.

\begin{rem}
If the stronger condition $$\beta \wedge d\alpha > 0$$ is satisfied, then $(-\alpha_-, \alpha_+)$ is also a Liouville pair (for $\delta$ small enough). In that case, $(\alpha_-, \alpha_+)$ is an \emph{Anosov Liouville pair}; see~\cite{Hoz24, Mas25a}. This implies that the contact structures $\xi_\pm = \ker \alpha_\pm$ are transverse and their intersection is spanned by an Anosov flow. Moreover, $\mathcal{F}$ is the weak-unstable foliation of this flow. The case of Anosov flows and foliations will be studied in the next section.
\end{rem}

The proof of Corollary~\ref{cor:Zung} shows that the relevant conditions that $\alpha$ and $\beta$ have to satisfy to obtain a Liouville pair are 
\begin{align} \label{eq:condliouv}
\alpha \wedge d\beta > 0, \qquad \langle \alpha, \beta \rangle > 0.
\end{align}
We consider the space $\mathcal{Z}_\mathcal{F}$ of pairs of $1$-forms $(\alpha, \beta)$ of class $C^1$ with $\ker \alpha = T \mathcal{F}$ and satisfying~\eqref{eq:condliouv}. For every $(\alpha, \beta) \in \mathcal{Z}_\mathcal{F}$, there exists $\overline{\delta} = \overline{\delta}(\alpha, \beta) > 0$ such that for every $0 < \delta < \overline{\delta}$, $\big(\delta \beta - \alpha, \delta \beta + \alpha \big)$ is a Liouville pair. Notice that the $\delta$ factor is in front of $\beta$ whereas the $\epsilon$ factor is in front of $\alpha$ in Construction~\ref{construction}. Its associated Liouville structure does not depend on the choice of $\delta$ up to Liouville homotopy, so it defines a homotopy class of Liouville structures $[\lambda_{\alpha, \beta}] \in \mathscr{L}(V)$.

\begin{lem} \label{lem:zungpair}
For every $(\alpha, \beta) \in \mathcal{Z}_\mathcal{F}$, $\lambda_{\alpha, \beta}$ is Liouville homotopic to a Liouville thickening of $\mathcal{F}$.
\end{lem}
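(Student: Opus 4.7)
The plan is to exhibit a specific Liouville thickening that is manifestly Liouville homotopic to $\lambda_{\alpha,\beta}$, and then conclude via Proposition~\ref{prop:welldef}. The guiding observation is that Zung's Liouville form $\lambda_{\alpha,\beta} = 2(\delta\beta + \tau\alpha)$ has precisely the same shape as the pre-Liouville form $\beta + \epsilon t\widetilde{\alpha}$ underlying Construction~\ref{construction}, apart from the regularity of $\alpha$ and the rescaling $\epsilon \leftrightarrow 1/\delta$. The proof will thus proceed in two steps: first smooth $\alpha$ within the Liouville category, then recognize the resulting smoothed Zung form as itself an output of Construction~\ref{construction}.

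For the smoothing step, pick a smooth $1$-form $\widetilde{\alpha}$ that is $C^1$-close to $\alpha$, and set $\alpha_s := (1-s)\alpha + s\widetilde{\alpha}$ for $s \in [0,1]$. Since $\alpha \wedge d\alpha = 0$, the defect $\alpha_s \wedge d\alpha_s$ can be made arbitrarily $C^0$-small, uniformly in $s$, by choosing $\widetilde{\alpha}$ sufficiently $C^1$-close to $\alpha$, while the open conditions $\alpha_s \wedge d\beta > 0$ and $\langle \alpha_s, \beta\rangle > 0$ persist. Consider the family
\[
\lambda_s := 2\bigl(\delta\beta + \tau\alpha_s\bigr), \qquad s \in [0,1].
\]
The computation in the proof of Corollary~\ref{cor:Zung} then extends to show that, for $\delta > 0$ in a suitable nonempty range, $\lambda_s$ is a Liouville form throughout: both the symplecticity $d\lambda_s \wedge d\lambda_s > 0$ and the contact boundary condition at $\tau = \pm 1$ are controlled by the positive leading terms $\alpha_s \wedge d\beta$ and $\pm\delta\langle\alpha_s,\beta\rangle$, while the corrections involving $\alpha_s \wedge d\alpha_s$ and $\beta \wedge d\beta$ are dominated by them. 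This provides a Liouville homotopy from $\lambda_{\alpha,\beta}$ to the \emph{smoothed Zung form} $\widetilde{\lambda} := 2(\delta\beta + \tau\widetilde{\alpha})$.

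For the identification step, set $\epsilon := 1/\delta$ and $\xi_\pm := \ker(\delta\beta \pm \widetilde{\alpha})$. Then $\widetilde{\lambda} = 2\delta(\beta + \epsilon\tau\widetilde{\alpha})$ is a positive constant multiple of the pre-Liouville form of Construction~\ref{construction}. The hypotheses of Construction~\ref{construction} are met: $\widetilde{\alpha} \wedge d\beta > 0$ by choice of $\widetilde{\alpha}$; the $\xi_\pm$ are $C^0$-close to $T\mathcal{F}$ (since they converge to $\ker\widetilde{\alpha} \approx T\mathcal{F}$ as $\delta \to 0$), hence are valid contact approximations of $\mathcal{F}$; and $d\widetilde{\lambda}|_{\xi_\pm} > 0$ follows from $d\beta|_{T\mathcal{F}} > 0$ by continuity. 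Crucially, the Zung computation already shows $\widetilde{\lambda}|_{\partial V}$ is a contact form for $\xi_\pm$, so Lemma~\ref{lem:filling}'s boundary modification can be taken to be trivial and $\widetilde{\lambda}$ is itself a Liouville thickening of $\mathcal{F}$. Proposition~\ref{prop:welldef} then identifies it, up to Liouville homotopy, with any other Liouville thickening, as desired.

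The main subtlety to navigate will be the matching of scales: Construction~\ref{construction} is phrased for \emph{small} $\epsilon$, in which regime $\beta + \epsilon t\widetilde{\alpha}$ is only pre-Liouville and Lemma~\ref{lem:filling} is genuinely needed, whereas our identification uses $\epsilon = 1/\delta$ large, where the form is already Liouville. The resolution is that for $\widetilde{\alpha}$ sufficiently close to $\alpha$, the set of $\epsilon > 0$ for which $\beta + \epsilon t\widetilde{\alpha}$ is pre-Liouville with a fixed choice of $\xi_\pm$ is a single connected open interval containing both $\epsilon = 1/\delta$ and arbitrarily small values, and the Liouville thickenings produced for different $\epsilon$ in this interval are mutually homotopic by the argument already used in Step~1 of the proof of Proposition~\ref{prop:welldef}.
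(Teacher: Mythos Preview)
Your proof is correct, and shares with the paper's the core observation that $\lambda_{\alpha,\beta} = 2(\delta\beta + \tau\alpha)$ already has the shape of the pre-Liouville form in Construction~\ref{construction}. The route you take to exploit this is genuinely different, however.

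The paper does not smooth $\alpha$ at all: it introduces a \emph{second} parameter $t \in (0,1]$ on the boundary contact structures, setting $\xi^t_\pm \coloneqq \ker(t\delta\beta \pm \alpha)$, and observes that $(\lambda_\epsilon, \xi^t_\pm)$ is a pre-Liouville structure for \emph{all} $(\epsilon,t) \in (0,1]^2$. Letting $t \to 0$ sends $\xi^t_\pm \to T\mathcal{F}$ uniformly, independently of $\epsilon$, so one lands directly in the regime of Construction~\ref{construction} without any scale-matching; a single application of Proposition~\ref{prop:deffill} finishes. The fact that $\alpha$ is only $C^1$ is dispatched by a footnote (smooth afterwards).

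Your approach instead keeps the contact structures $\xi_\pm = \ker(\delta\beta \pm \widetilde\alpha)$ fixed and moves only $\epsilon$. This forces two extra maneuvers: the preliminary Liouville homotopy smoothing $\alpha \rightsquigarrow \widetilde\alpha$, and the interval argument reconciling the large $\epsilon = 1/\delta$ with the small $\epsilon$ of Construction~\ref{construction}. Both are fine---the relevant conditions are indeed affine in $\epsilon$, so the valid range is convex---but they add length. What the paper's decoupling of $\epsilon$ and $t$ buys is precisely the elimination of both steps: the extra parameter absorbs the closeness-to-$T\mathcal{F}$ requirement, so neither smoothing nor scale-matching is needed.
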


\begin{proof}
    Our task is to show that $\lambda_{\alpha, \beta}$ is Liouville homotopic to a Liouville structure coming from Construction~\ref{construction}. Let $t, \epsilon \in (0,1]$ and consider 
    \begin{align*}
    \lambda_\epsilon &\coloneqq 2\big( \delta \beta + \epsilon \tau \alpha \big), \\
    \xi^t_\pm &\coloneqq \ker\big( t \delta \beta \pm \alpha \big).
    \end{align*}
    Using~\eqref{eq:condliouv}, one checks that the following hold for any $\delta$ small enough (independent of $t$ and $\epsilon$), 
    \begin{itemize}
        \item For all $t, \epsilon \in (0, 1]$, $\big(\lambda_\epsilon, \xi^t_\pm\big)$ is a pre-Liouville structure on $V$,
        \item The contact structures $\xi^t_\pm$ converge uniformly in the $C^0$ sense to $T\mathcal{F}$ as $t \rightarrow 0$. 
    \end{itemize} 
Therefore, $\lambda_{\alpha, \beta}$ is (pre-)Liouville homotopic to a Liouville thickening obtained from the $1$-forms $\delta \beta$, $\alpha$ and contact approximations $\xi^t_\pm$ for $t$ small enough\footnote{Technically speaking, $\alpha$ is only $C^1$ rather than smooth, but this does not impact the argument since we can also consider (pre-)Liouville structures which are only $C^1$ and smooth them afterwards.}. 
One then concludes by applying Proposition~\ref{propintro:deffill}.
\end{proof}

As a consequence, the Liouville structures obtained in Corollary~\ref{cor:Zung} are independent of all choices up to Liouville homotopy. Therefore, any Liouville thickening of $\mathcal{F}$ is Liouville homotopic to one coming from a Liouville pair. The latter enjoy nice properties; for instance, their Liouville vector fields are easy to compute and their skeleta are codimension-$1$ submanifolds diffeomorphic to $M$; see~\cite{Mas24}.

\section{Consequences for Anosov flows}

%
        \subsection{Anosov flows}

Recall that a nonsingular flow $\Phi = (\varphi^t)_t$ generated by a smooth vector field $X$ is \textbf{Anosov} if the tangent bundle of $M$ has a continuous splitting
 $$TM =   E^{ss} \oplus \langle X \rangle \oplus E^{uu}$$
that is $\Phi$-invariant, and so that there exist a Riemannian metric and constants $C, a >0$ for which the inequalities
\begin{align*}
    \Vert d\varphi_t(v^{s})\Vert \leq C e^{-a t} \Vert v^{s} \Vert, \qquad
    \Vert d\varphi_t(v^{u})\Vert \geq C^{-1} e^{a t} \Vert v^{u}\Vert
\end{align*}
hold for all $ t \ge0$ and all $v^{u}\in E^{uu}, v^{s} \in E^{ss}$. 

The subbundles $E^{uu}, E^{ss}$ are called the \emph{strong-unstable} and \emph{strong-stable} directions of the flow, respectively. It is a classical fact due to Anosov that these distributions are uniquely integrable and integrate to foliations whose leaves consist of points that are asymptotic under the flow in forward, respectively backward, time. In the case where the manifold is $3$-dimensional and closed, these distributions are line fields.

In this case one also obtains $2$-dimensional foliations $\mathcal{F}^{ws}$ and $\mathcal{F}^{wu}$ tangent to the integrable plane fields 
$$ E^{ws}= E^{ss} \oplus \langle X \rangle, \qquad  E^{wu} = E^{uu} \oplus \langle X \rangle,$$
respectively; these are called the \emph{weak-stable} and \emph{weak-unstable} 
foliations of the flow. It is a special feature of Anosov flows in dimension $3$ that the weak-(un)stable foliations are $C^1$ by~\cite{HPS77}.

Anosov flows also have a Spectral Decomposition~\cite{Sma67}, meaning that the non-wandering set decomposes (uniquely) into finitely many transitive components. Using these structural results we have the following, which will imply that the weak-(un)stable foliations of Anosov flows are hypertaut.

\begin{lem}[Folklore] \label{lem:closedorbit}
    Let $\Phi$ be an Anosov flow on a closed $3$-manifold $M$ and let $\mathcal{F} = \mathcal{F}^{wu},$ be its weak-unstable foliation. Then there exists a finite set $\Gamma$ of closed orbits of $\Phi$ such that for every leaf $L$ of $\mathcal{F}$, the closure $\overline{L}$ contains a closed orbit of $\Phi$ in $\Gamma$.
\end{lem}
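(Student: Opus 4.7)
The plan is to invoke Smale's Spectral Decomposition Theorem~\cite{Sma67}, which is already alluded to in the paragraph preceding the lemma. Since every Anosov flow is Axiom A, the non-wandering set decomposes as a finite disjoint union
$$ \Omega(\Phi) = \Omega_1 \sqcup \cdots \sqcup \Omega_k, $$
with each $\Omega_i$ compact, $\Phi$-invariant, transitive, and with dense periodic orbits. I would simply pick one closed orbit $\gamma_i \subset \Omega_i$ for each $i$ and declare $\Gamma := \{\gamma_1, \dots, \gamma_k\}$. The content of the lemma then reduces to showing that, for every weak unstable leaf $L$, the closure $\overline{L}$ contains some entire basic set $\Omega_i$.

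The first step is to locate the right $\Omega_i$. Fix $x \in L$; since $\mathcal{F}^{wu}$ is $\Phi$-saturated the negative orbit $\{\varphi_{-t}(x)\}_{t\geq 0}$ lies in $L$, so the $\alpha$-limit set satisfies $\alpha(x) \subset \overline{L}$. A standard compactness argument shows that $\alpha(x)$ is a non-empty connected closed $\Phi$-invariant subset of $\Omega(\Phi)$, so by connectedness of $\alpha$-limit sets of flows on compact spaces, $\alpha(x) \subset \Omega_i$ for a unique $i$. Pick any $y \in \alpha(x) \subset \Omega_i \cap \overline{L}$.

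The final step is to upgrade the single point $y$ to the whole basic set $\Omega_i$. Two ingredients suffice. First, since $\mathcal{F}^{wu}$ is a continuous foliation, closures of leaf-saturated subsets are again leaf-saturated: a foliation-chart computation shows that if $y$ is a limit of points in $L$, then a local leaf-disc through $y$ is also such a limit, and connectedness of leaves extends this to $W^{wu}(y) \subset \overline{L}$. Second, the strong unstable manifold $W^{uu}(y)$ is dense in $\Omega_i$, by the classical density of strong unstable manifolds in basic sets of Axiom A flows (a consequence of the local product structure together with transitivity). Since $W^{uu}(y) \subset W^{wu}(y) \subset \overline{L}$ and $\overline{L}$ is closed, we conclude $\Omega_i \subset \overline{L}$; in particular $\gamma_i \in \overline{L}$, as required.

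The only mildly subtle points are the two ingredients used in the last paragraph, and I do not expect either to pose a real obstacle: the leaf-saturation property requires only continuity of $\mathcal{F}^{wu}$ (already automatic, with $C^1$-regularity afforded by~\cite{HPS77}), and the density of strong unstable manifolds inside basic sets is a textbook consequence of Smale's theorem. What would genuinely fail without the hyperbolic hypothesis is the finiteness of the basic-set list; here it is precisely what makes the set $\Gamma$ finite.
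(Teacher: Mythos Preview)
Your approach is sound in outline but contains one slip, and it differs substantially from the paper's argument.

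The slip: the claim that the \emph{strong} unstable manifold $W^{uu}(y)$ is dense in $\Omega_i$ is false in general. For the suspension of a hyperbolic toral automorphism one has $\Omega_i = M$, and $W^{uu}(y)$ is an irrational line in a single torus fiber; its closure is that fiber, a proper codimension-one subset. What \emph{is} true---and is the standard consequence of local product structure plus transitivity---is that $W^{wu}(y) \cap \Omega_i$ is dense in $\Omega_i$ for every $y \in \Omega_i$. Since you already established $W^{wu}(y) \subset \overline{L}$ in your leaf-saturation step, this is all you need: drop the reference to $W^{uu}$ and conclude directly from $\Omega_i \subset \overline{W^{wu}(y) \cap \Omega_i} \subset \overline{L}$. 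With this correction the argument is complete.

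The paper's proof takes a different and somewhat more elementary route. It does not invoke the spectral decomposition: instead it uses the Anosov Closing Lemma to get density of periodic orbits in $\Omega$, then finds a uniform $\epsilon > 0$ such that any weak-unstable leaf meeting the $\epsilon$-neighbourhood of a point $p$ must intersect the weak-\emph{stable} leaf through $p$. Compactness of $\Omega$ yields a finite set $\Gamma$ of periodic orbits whose $\epsilon$-neighbourhoods cover $\Omega$; any leaf $L$, being flow-saturated, enters this cover, hence meets $W^{ws}(\gamma_i)$ for some $\gamma_i \in \Gamma$, and flowing forward gives $\gamma_i \subset \overline{L}$. Your route buys the stronger conclusion $\Omega_i \subset \overline{L}$ and a canonical minimal choice of $\Gamma$ (one orbit per basic set), at the cost of citing a deeper structural fact about basic sets; the paper's route uses only the closing lemma, local transversality of the weak foliations, and a compactness argument.
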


\begin{proof}
Let $\Omega \subset M$ denote the non-wandering set of $\Phi$. By the Anosov Closing Lemma~\cite[Theorem 5.3.11]{FH19}, the union of the closed orbits of $\Phi$ is a dense subset of $\Omega$. By compactness of $M$, we can find some $\epsilon > 0$ such that for every $p \in M$, every weak-unstable leaf $L$ of $\mathcal{F}$ which intersects the $\epsilon$-neighborhood $U_\epsilon(p)$ of $p$ intersects the weak-stable leaf passing through $p$. In particular, if $\gamma$ is a periodic orbit of $\Phi$, then every weak-unstable leaf $L$ of $\mathcal{F}$ which intersects the $\epsilon$-neighborhood $U_\epsilon(\gamma)$ of $\gamma$ also intersects the weak-stable leaf passing through $\gamma$. Let $U$ denote the union of the open sets of the form $U_\epsilon(\gamma)$, for $\gamma$ a closed orbit of $\Phi$. Then $\Omega \subset U$, and by compactness of $\Omega$, there exists a finite collection of closed orbits $\Gamma$ such that $\Omega \subset \bigcup_{\gamma \in \Gamma} U_\epsilon(\gamma) \eqqcolon U'$. Since a leaf $L$ of $\mathcal{F}$ is saturated by $\Phi$, it intersects $U'$; in particular, it intersects $U_\epsilon(\gamma)$ for some $\gamma \in \Gamma$, so it intersects the weak-stable leaf of $\gamma$ and its closure $\overline{L}$ contains $\gamma$.
\end{proof}

\begin{prop}\label{prop:Anosovhypertaut}
    The weak foliations $\mathcal{F}^{ws}$ and $\mathcal{F}^{wu}$ of an Anosov flow $\Phi$ on $M$ are hypertaut.
\end{prop}

\begin{proof}
    First recall that the weak foliations are $C^1$ by~\cite{HPS77}. There are no closed leaves as the flow (uniformly) expands area, and every minimal set contains a closed orbit by the previous lemma. Those closed orbits are attracting/repelling curves, since the weak-stable (resp.~unstable) foliation has linearly expanding (resp.~contracting) holonomy along periodic orbits. Therefore, Proposition~\ref{prop:hypertaut} implies that the weak foliations are hypertaut.
\end{proof}

    \subsection{Anosov Liouville structures and proof of Theorem~\ref{thmintro:anosov}}

We recall Mitsumatsu's construction~\cite{Mit95}, later generalized and streamlined by Hozoori~\cite{Hoz24}. See also~\cite{Mas25a, Mas25b}.

For a smooth Anosov flow $\Phi$ generated by a vector field $X$ with \underline{oriented} weak bundles, there exist $C^1$ $1$-forms $\alpha_s$ and $\alpha_u$ satisfying:
\begin{align*}
    \alpha_s(X)=0, && \ker \alpha_s &= E^{wu}, & \mathcal{L}_X \alpha_s &= r_s \, \alpha_s,\\
    \alpha_u(X) = 0, && \ker \alpha_u &= E^{ws}, & \mathcal{L}_X \alpha_u &= r_u \, \alpha_u,
\end{align*}
where $r_s$ and $r_u$ are $C^1$ functions satisfying $r_s < 0 < r_u$, called the \textbf{expansion rates} of $\Phi$ in the stable and unstable directions, respectively. Such a pair $(\alpha_s, \alpha_u)$ is called a \textbf{defining pair} for $\Phi$ in~\cite{Mas25b}. Then, one considers
\begin{align*}
    \alpha_- \coloneqq \alpha_u + \alpha_s,\qquad
    \alpha_+ \coloneqq \alpha_u - \alpha_s,
\end{align*}
and checks that $(\alpha_-, \alpha_+)$ is a Liouville pair. While these forms are only of class $C^1$, they can easily be smoothed while still containing $X$ in their kernels. The resulting Liouville structure
$$\lambda \coloneqq (1-\tau) \alpha_- + (1+\tau) \alpha_+$$
on $[-1,1]_\tau \times M$ is called a (linear) \textbf{Anosov Liouville structure} supporting $\Phi$. It was shown in~\cite{Mas25a, Mas25b} that it is well-defined and its Liouville homotopy class does not depend on the auxiliary choices of defining pairs and smoothings.

We briefly explain how this fits into the framework of Construction~\ref{construction}:

\begin{lem} \label{lem:ALthick}
    If $\Phi$ is a smooth oriented Anosov flow, then any Anosov Liouville structure supporting it is a Liouville thickening of its weak-unstable foliation.
\end{lem}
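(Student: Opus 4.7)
My plan is to identify the Anosov Liouville structure $\lambda_{AL}$ as an instance (on the nose) of the Liouville pair construction $\lambda_{\alpha,\beta}$ appearing in Lemma~\ref{lem:zungpair}, applied to $\mathcal{F}^{wu}$. Once this identification is made, Lemma~\ref{lem:zungpair} immediately provides the Liouville homotopy to a Liouville thickening of $\mathcal{F}^{wu}$.

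First, I would fix a defining pair $(\alpha_s,\alpha_u)$ for $\Phi$, so that $\ker\alpha_s = E^{wu} = T\mathcal{F}^{wu}$, $\ker\alpha_u = E^{ws}$, and $\mathcal{L}_X\alpha_{s/u} = r_{s/u}\,\alpha_{s/u}$ with $r_s<0<r_u$. The key step is to verify that the pair $(\alpha,\beta) \coloneqq (-\alpha_s,\alpha_u)$ lies in $\mathcal{Z}_{\mathcal{F}^{wu}}$, after a routine smoothing preserving kernels (since the pair is a priori only $C^1$). The kernel condition is immediate. For the positivity conditions $\alpha\wedge d\beta > 0$ and $\langle\alpha,\beta\rangle > 0$, which reduce to $\alpha_s\wedge d\alpha_u < 0$ and $\alpha_u\wedge d\alpha_s < 0$ respectively, I would use two ingredients: Frobenius integrability of $E^{wu}$ and $E^{ws}$, which forces $d\alpha_s|_{E^{wu}} \equiv 0 \equiv d\alpha_u|_{E^{ws}}$ and in particular $\alpha_s\wedge d\alpha_s = \alpha_u\wedge d\alpha_u = 0$; and the Cartan identities $d\alpha_u(X,V_u) = r_u\,\alpha_u(V_u)$ and $d\alpha_s(X,V_s) = r_s\,\alpha_s(V_s)$ for local sections $V_s\in E^{ss}$, $V_u\in E^{uu}$. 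Evaluating the two three-forms on an oriented frame of $TM$ reduces them to scalar multiples of $\alpha_s(V_s)\alpha_u(V_u)$ with signs determined by $r_u$, $r_s$, and the orientation of $M$. The relevant orientation is pinned down by the requirement that $d\lambda_{AL}$ be a positive symplectic form on $V$: a direct expansion yields $d\lambda_{AL}\wedge d\lambda_{AL} = -8\,d\tau\wedge\alpha_s\wedge d\alpha_u$, which is positive precisely when $\alpha_s\wedge d\alpha_u < 0$; the analogous computation gives $\alpha_u\wedge d\alpha_s < 0$ since $r_s<0$.

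Second, I would note that the Anosov Liouville pair $(\alpha_-,\alpha_+) = (\alpha_u+\alpha_s,\,\alpha_u-\alpha_s)$ matches exactly the Liouville pair $(\delta\beta-\alpha,\,\delta\beta+\alpha)$ from the proof of Lemma~\ref{lem:zungpair}, with $\alpha = -\alpha_s$ and $\beta = \alpha_u/\delta$ for any $\delta>0$ (the factor $\delta$ cancels out of the pair). Consequently,
\[
\lambda_{AL} = (1-\tau)\alpha_- + (1+\tau)\alpha_+ = 2\alpha_u - 2\tau\alpha_s = 2\delta\beta + 2\tau\alpha = \lambda_{\alpha,\beta},
\]
and Lemma~\ref{lem:zungpair} directly concludes the proof.

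The main technical obstacle is careful tracking of sign and coorientation conventions: the identification forces $\alpha = -\alpha_s$ rather than the naive $\alpha_s$, which amounts to declaring the positive coorientation of $T\mathcal{F}^{wu}$ to be along $-V_s$. This is the coorientation implicitly used in the Anosov Liouville construction, and is the one for which Lemma~\ref{lem:zungpair} applies directly; by Proposition~\ref{prop:welldef}, the resulting Liouville thickening is well-defined up to Liouville homotopy.
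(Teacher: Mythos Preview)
Your proposal is correct and follows essentially the same approach as the paper: set $\alpha=-\alpha_s$, $\beta=\alpha_u$, verify that $(\alpha,\beta)\in\mathcal{Z}_{\mathcal{F}^{wu}}$, identify the Anosov Liouville pair with the $\delta=1$ member of the family $(\delta\beta-\alpha,\delta\beta+\alpha)$, and invoke Lemma~\ref{lem:zungpair}. Your treatment is in fact more explicit than the paper's on two points---the verification of the positivity conditions via integrability and the Cartan formula, and the consistency check $d\lambda_{AL}\wedge d\lambda_{AL}=-8\,d\tau\wedge\alpha_s\wedge d\alpha_u$ pinning down the sign convention---whereas the paper simply asserts ``one easily checks'' and handles the $C^1$ regularity by a passing remark rather than a preliminary smoothing.
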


\begin{proof}
    Let $(\alpha_s, \alpha_u)$ be a defining pair for $\Phi$. Then for every $\delta > 0$,
    \begin{align*}
        \alpha^\delta_- \coloneqq \delta \alpha_u + \alpha_s,\qquad
        \alpha^\delta_+ \coloneqq \delta \alpha_u - \alpha_s,
    \end{align*}
    also define a Liouville pair $\big(\alpha^\delta_-, \alpha^\delta_+\big)$, whose underlying contact structures converge to $E^{wu}$ (with appropriate orientation) as $\delta \rightarrow 0$. Moreover, writing
    $$\alpha \coloneqq -\alpha_s, \qquad \beta \coloneqq \alpha_u,$$
    one easily checks that these $C^1$ $1$-forms satisfy~\eqref{eq:condliouv},\footnote{In particular, $d\beta_{\vert T\mathcal{F}^{wu}} > 0$, which is an alternative proof of the fact that $\mathcal{F}^{wu}$ is hypertaut.} and the proof of Lemma~\ref{lem:zungpair} implies that the Liouville structure induced by the Liouville pair $\big(\alpha^\delta_-, \alpha^\delta_+\big)$ is Liouville homotopic to a Liouville thickening of $\mathcal{F}^{wu}$ (with appropriate orientation). The structures under consideration are not necessarily smooth but they are $C^1$, which is enough for our arguments to go through since they can be smoothed in a unique way up to homotopy.
\end{proof}

\medskip

\begin{proof}[Proof of Theorem~\ref{thmintro:anosov}] Let $\Phi_0$ and $\Phi_1$ be two smooth oriented Anosov flows which are orbit equivalent, via an orbit equivalence $h : M \rightarrow M$. Then $h$ sends the weak-stable (resp.~weak-unstable) foliation of $\Phi_0$ to that of $\Phi_1$, and we assume that it preserves their (co)orientations. Therefore, $h$ is a $C^0$-conjugacy between $\mathcal{F}^{wu}_0$ and $\mathcal{F}^{wu}_1$. These foliations are hypertaut by Proposition~\ref{prop:Anosovhypertaut}, so $h$ induces an exact symplectomorphism between the completions of their Liouville thickenings by Theorem~\ref{thmintro:liouv}. Finally, Lemma~\ref{lem:ALthick} implies that these Liouville thickenings are Liouville homotopic to the Anosov Liouville structures associated with $\Phi_0$ and $\Phi_1$, respectively.
\end{proof}

    \subsection{Bicontact structures and proof of Theorem~\ref{thmintro:anosovbicontact}}

Mitsumatsu~\cite{Mit95} and independently Eliashberg--Thurston~\cite{ET} observed that an Anosov flow gives rise to a pair of transverse contact structures $(\xi_-,\xi_+)$ that determine opposite orientations and are tangent to the flow. Such a pair is called a \textbf{bicontact structure}. These contact structures are necessarily nowhere tangent to the weak-(un)stable bundles of the flow. See Figure~\ref{fig:bicontact}. One can rephrase this condition in terms of \emph{projectively Anosov flows}, also called \emph{ conformally Anosov flows}, which generalize Anosov flows. In the dynamics literature, this condition is called a \emph{dominated splitting} and goes back to Ma\~{n}é, Liao and Pliss, although the connection to contact geometry came much later. 

These connections lead, on the one hand, to interesting ways of studying dynamics through contact geometry and, on the other hand, to investigating bicontact structures in their own right. However, our results only apply to Anosov flows since projectively Anosov flows typically do not have invariant foliations and are somewhat more flexible than their Anosov counterparts.

\setlength{\unitlength}{1mm}
\begin{figure}[H]
    \begin{center}
        \begin{picture}(60, 60)(0,0)
        \put(0,0){\includegraphics[width=60mm]{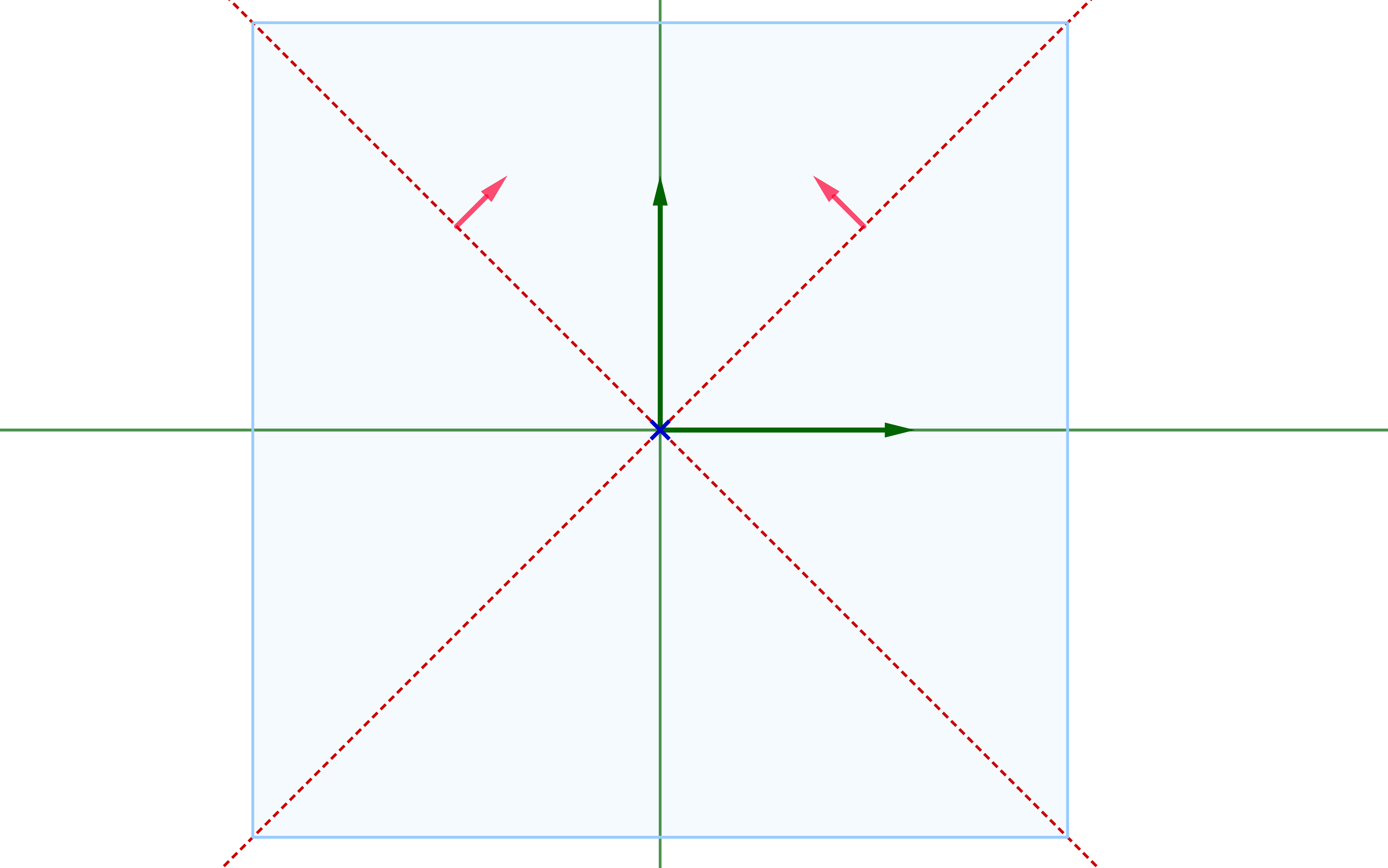}}
        \color{blue1}
        \put(26.5,24){$X$}
        \color{green1}
        \put(40,27){$e_s$}
        \put(52,26){$E^s$}
        \put(31,53){$E^u$}
        \put(31,41){$e_u$}
        \color{red1}
        \put(5,47){$\xi_-$}
        \put(52,47){$\xi_+$}
        \end{picture}
        \caption{Anosov/dominated splitting and supporting bicontact structure.}
        \label{fig:bicontact}
    \end{center}
\end{figure}

We now consider an Anosov flow $\Phi$ generated by a smooth vector field $X$ and with orientable (un)stable foliations, and we prove Theorem~\ref{thmintro:anosovbicontact} from the Introduction. We start with an elementary but somewhat technical lemma.

\begin{lem} \label{lem:flow}
    Let $\eta$ be a continuous plane field transverse to the strong-stable bundle $E^{ss}$ of $\Phi=(\varphi^t)_t$. For $t \geq 0$, we write $\eta_t \coloneqq (\varphi^t)_* \eta$. Then the following hold:
    \begin{enumerate}
        \item $\displaystyle \lim_{t \rightarrow + \infty} \eta_t = E^{wu}$ in the $C^0$ topology.
        \item There is a neighborhood $\mathcal{U}$ of $E^{ss}$ in the space of continuous line fields, and a neighborhood $\mathcal{V}$ of $E^{wu}$ in the space of continuous plane fields, such that for all $\ell \in \mathcal{U}$, $\eta \in \mathcal{V}$, and $t \geq 0$, $\ell$ is transverse to $\eta_t$.
    \end{enumerate}
\end{lem}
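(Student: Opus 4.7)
The plan is to reduce both statements to standard consequences of hyperbolicity, exploiting the continuous $\Phi$-invariant splitting $TM = E^{ss} \oplus \langle X \rangle \oplus E^{uu}$. I fix an adapted Riemannian metric and associated open cone fields
\[
C^s_p \coloneqq \{v_{ss} + v_{wu} : \|v_{wu}\| < \epsilon \|v_{ss}\|\}, \qquad
C^u_p \coloneqq \{v_{ss} + v_{wu} : \|v_{ss}\| < \epsilon \|v_{wu}\|\},
\]
where the decomposition is with respect to $E^{ss}_p \oplus E^{wu}_p$, and $\epsilon > 0$ is chosen so small that $\overline{C^s_p} \cap \overline{C^u_p} = \{0\}$ at every $p \in M$. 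By the cone-field characterization of Anosov flows (see, e.g., \cite{FH19}), the unstable cone $C^u$ is forward invariant: $d\varphi_t(C^u_p) \subset C^u_{\varphi_t(p)}$ for every $t \geq 0$.

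For part (1), at each $p \in M$ the transversality of $\eta_p$ to $E^{ss}_p$ lets me write $\eta_p$ as the graph of a unique linear map $L_p : E^{wu}_p \to E^{ss}_p$. Continuity of $\eta$ and compactness of $M$, together with the assumed transversality, yield a uniform bound $\|L_p\| \leq K$. A direct computation identifies $(\eta_t)_{\varphi_t(p)}$ with the graph of
\[
L'_{\varphi_t(p)} = d\varphi_t|_{E^{ss}_p} \circ L_p \circ \bigl(d\varphi_t|_{E^{wu}_p}\bigr)^{-1}.
\]
The Anosov estimates give $\|d\varphi_t|_{E^{ss}_p}\| \leq C e^{-at}$, while $(d\varphi_t|_{E^{wu}_p})^{-1}$ is uniformly bounded in $t \geq 0$, since it acts as (essentially) the identity on $\langle X\rangle$ and contracts $E^{uu}$ exponentially. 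Hence $\|L'_{\varphi_t(p)}\| \to 0$ uniformly in $p$, which is exactly the statement that $\eta_t \to E^{wu}$ in $C^0$.

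For part (2), I take $\mathcal{U}$ to be the set of continuous line fields whose fibers all lie in $C^s$, and $\mathcal{V}$ the set of continuous plane fields whose $2$-dimensional fibers all lie in $C^u$; both are open $C^0$-neighborhoods of $E^{ss}$ and $E^{wu}$, respectively. Forward invariance of $C^u$ gives $(\eta_t)_p = d\varphi_t\bigl(\eta_{\varphi_{-t}(p)}\bigr) \subset C^u_p$ for every $\eta \in \mathcal{V}$ and $t \geq 0$. Since $\overline{C^s_p} \cap \overline{C^u_p} = \{0\}$ by construction, any line $\ell_p$ coming from $\ell \in \mathcal{U}$ intersects the plane $(\eta_t)_p$ only at the origin, and the required transversality follows by dimension count.

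The only mild subtlety is the neutral flow direction: there is neither contraction nor expansion along $\langle X \rangle$. This is precisely why the graph representation in part (1) must be taken over the \emph{full} weak-unstable bundle $E^{wu}$ rather than merely $E^{uu}$, so that $(d\varphi_t|_{E^{wu}})^{-1}$ stays uniformly bounded; and why the unstable cone $C^u$ must be built around $E^{wu}$ rather than $E^{uu}$, so that it is genuinely forward invariant. Apart from this bookkeeping, both items are direct applications of the cone-field formulation of hyperbolicity and present no real difficulty.
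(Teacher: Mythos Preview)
Your proof is correct and complete. The approach differs from the paper's in presentation: you work on the tangent side with graph transforms and invariant cone fields around $E^{ss}$ and $E^{wu}$, while the paper works on the cotangent side, using the defining pair $(\alpha_s,\alpha_u)$ together with a $1$-form $\vartheta$ dual to the flow direction, writing $\eta = \ker(\alpha_s + f\alpha_u + g\vartheta)$ and tracking the coefficients $f,g$ under pullback by the flow. For part~(2), the paper takes $\mathcal{V}$ to be the set of plane fields with $|f|,|g|<\epsilon$ and checks directly that this box is forward-invariant, whereas you invoke forward invariance of the unstable cone. These are dual formulations of the same hyperbolic estimate; your version is the more standard dynamical-systems packaging, while the paper's fits the $1$-form framework already in use elsewhere in the article. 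Both handle the neutral flow direction in the same way, by building the relevant cone/neighborhood around the full weak-unstable bundle rather than $E^{uu}$ alone, exactly as you note.
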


\begin{proof}
Let $(\alpha_s, \alpha_u)$ be a defining pair for the flow $\Phi$, with corresponding expansion rates $r_s$ and $r_u$. We further consider a (continuous) $1$-form $\vartheta$ such that $\vartheta(X) = 1$ and $\ker \vartheta = E^{ss} \oplus E^{uu}$. Then it is easy to see that
\begin{align*}
    (\varphi^t_X)^*\alpha_s &= \exp\left(\int_0^t r_s \circ \varphi^\tau_X \, d\tau \right) \alpha_s = R_s^t \alpha_s, \\
    (\varphi^t_X)^*\alpha_u &= \exp\left(\int_0^t r_u \circ \varphi^\tau_X \, d\tau \right) \alpha_u = R_u^t \alpha_u, \\
    (\varphi^t_X)^*\vartheta &= \vartheta.
\end{align*}

    Let $\eta$ be a continuous plane field transverse to $E^{ss}$. There exist continuous functions $f, g : M \rightarrow \R$ such that $\eta$ is the kernel of the $1$-form $\alpha = \alpha_s + f \alpha_u + g \vartheta$. Note that $\eta_t = \ker \big(\varphi^{-t}_X\big)^* \alpha$, and
    \begin{align*}
    \big(\varphi^{-t}_X\big)^* \alpha &= R^{-t}_s \alpha_s  + R^{-t}_u (f\circ\varphi^{-t}_X) \alpha_u + (g\circ \varphi^{-t}_X) \vartheta \\
    &= R^{-t}_s\left(\alpha_s + \frac{R^{-t}_u}{R^{-t}_s} (f\circ\varphi^{-t}_X) \alpha_u + \frac{1}{R^{-t}_s} (g\circ \varphi^{-t}_X) \vartheta\right),
    \end{align*}
    and since $f\circ \varphi^{-t}_X$ and $g\circ \varphi^{-t}_X$ are uniformly bounded in $t$ and 
    $$\lim_{t \rightarrow + \infty} R^{-t}_u = \lim_{t \rightarrow + \infty} \frac{1}{R^{-t}_s} = 0,$$
    we obtain $\lim_{t \rightarrow +\infty}\eta_t = \ker \alpha_s = E^{wu}$. This proves the first item.

    Let $a>0$ and let $\mathcal{U}$ denote the space of line fields which stay at distance greater than $a$ from $E^{wu}$. For $a$ sufficiently small, $\mathcal{U}$ is an open neighborhood of $E^{ss}$ which only contains line fields transverse to $E^{wu}$. There exists $\epsilon > 0$ such that if $f, g : M \rightarrow \R$ are continuous functions with $\vert f \vert, \vert g \vert < \epsilon$, then the $1$-form $\alpha_s + f \alpha_u + g \vartheta$ is nowhere vanishing on each line field $\ell \in \mathcal{U}$. The kernels of all such $1$-forms define a neighborhood $\mathcal{V}$ of $E^{wu}$, and all the plane fields in $\mathcal{V}$ are transverse to all the line fields in $\mathcal{U}$. The flow of $X$ naturally acts on the space of continuous plane fields, and we claim that $\mathcal{V}$ is preserved by the flow of $X$ in positive time, which suffices to prove the second item.

    If $\eta \in \mathcal{V}$ is defined by $\alpha = \alpha_s + f\alpha_u + g \vartheta$, with $\vert f \vert, \vert g \vert < \epsilon$, then $\eta_t$ is defined by
    \begin{align*}
    \alpha_t &= \alpha_s + \frac{R^{-t}_u}{R^{-t}_s} (f\circ\varphi^{-t}_X) \alpha_u + \frac{1}{R^{-t}_s} (g\circ \varphi^{-t}_X) \vartheta \\
    &= \alpha_s + f_t \alpha_u + g_t \vartheta.
    \end{align*}
    Since $r_s < 0 < r_u$, there exists $\delta > 0$ such that $\delta < r_u$ and $\delta < -r_s$. Then, it is easy to see that
    \begin{align*}
        R^{-t}_u &\leq e^{-\delta t}, \qquad R^{-t}_s \geq e^{\delta t},
    \end{align*}
    hence for $t \geq 0$, $\vert f_t \vert \leq \vert f \circ \varphi^{-t}_X \vert < \epsilon$ and $\vert g_t \vert \leq \vert g \circ \varphi^{-t}_X\vert < \epsilon$, as desired.
\end{proof}

\medskip

\begin{proof}[Proof of Theorem~\ref{thmintro:anosovbicontact}]
    For $i \in \{0,1\}$ and $\heartsuit \in \{ss, uu, ws, wu\}$, we denote by $E^{\heartsuit}_i$ the strong-stable, strong-unstable, weak-stable, and weak-unstable bundles of $\Phi_i$, respectively.
    
    Let $\mathcal{U}_1$ be a neighborhood of $E^{ss}_1$ and let $\mathcal{V}_1$ be a neighborhood of $E_1^{wu}$ as in Lemma~\ref{lem:flow}. Similarly, let $\mathcal{U}'_1$ be a neighborhood of $E^{uu}_1$ and $\mathcal{V}'_1$ be a neighborhood of $E_1^{ws}$ such that Lemma~\ref{lem:flow} applies for $t \leq 0$ instead of $t \geq 0$.

    Let $\widetilde{h} : M \rightarrow M$ denote a smoothing of $h$, topologically isotopic to $h$, satisfying
    \begin{itemize}
        \item $\widetilde{h}_* (E_0^{wu}) \in \mathcal{V}_1$ and $\widetilde{h}_* (E_0^{ws}) \in \mathcal{V}'_1$,
        \item There exists a $C^1$ line field in $\mathcal{U}_1$ tangent to $\widetilde{h}_* (E_0^{ws})$.
    \end{itemize}
    This can be achieved by applying Theorem~\ref{thmintrobeta:bifolapprox} for a sufficiently small $\epsilon > 0$.

    Let $\big(\alpha^0_s, \alpha^0_u\big)$ be a defining pair for $\Phi_0$. We consider a bicontact structure supporting $\Phi_0$ of the form
    \begin{align*}
        \xi^0_- = \ker \left(A \alpha^0_u + \alpha^0_s \right), \qquad
        \xi^0_+ = \ker \left(\alpha^0_u - A \alpha^0_s \right),
    \end{align*}
    for a large $A > 0$, so that $\xi^0_-$ is $C^0$-close to $E_0^{ws}$, and $\xi^0_+$ is $C^0$-close to $E_0^{wu}$. For $A$ sufficiently large, we can further assume that
    \begin{itemize}
        \item $\widetilde{\xi}^1_- \coloneqq \widetilde{h}_* (\xi^0_-) \in \mathcal{V}_1$ and $\widetilde{\xi}^1_+ \coloneqq \widetilde{h}_* (\xi^0_+) \in \mathcal{V}'_1$,
        \item There exists a smooth line field $\ell_-$ in $\mathcal{U}_1$ tangent to $\widetilde{\xi}^1_-$.
    \end{itemize}
    Then by Theorem~\ref{thmintrobeta:uniq}, there exists a Vogel neighborhood $\mathcal{N}_1$ of $E_1^{wu}$ such that any two positive contact structures in $\mathcal{N}_1$ are homotopic through contact structures transverse to $\ell_-$ (and in particular transverse to $\widetilde{\xi}^1_-$). However, $\widetilde{\xi}^1_+$ might not be contained in $\mathcal{N}_1$ yet. We can remedy this by applying Lemma~\ref{lem:flow} and flowing $\widetilde{\xi}^1_+$ along $\Phi_1 = (\varphi^t_1)_t$ for a large time $T > 0$ until $\widehat{\xi}^1_+ \coloneqq (\varphi^T_1)_* [\widetilde{\xi}^1_+] \in \mathcal{N}_1$; this induces a homotopy of contact structures transverse to $\widetilde{\xi}^1_-$ from $\widetilde{\xi}^1_+$ to $\widehat{\xi}^1_+$. We then apply Theorem~\ref{thmintrobeta:uniq} to find a homotopy of contact structures transverse to $\widetilde{\xi}^1_-$ from $\widehat{\xi}^1_+$ to a positive supporting contact structure $\xi^1_+$ for $\Phi_1$. We can further arrange that $\xi^1_+$ is so close to $E_1^{wu}$ that it contains a line field $\ell_+ \in \mathcal{U}'_1$. We then apply the same procedure to $\widetilde{\xi}^1_-$ to obtain a homotopy of contact structures transverse to $\xi^1_+$ from $\widetilde{\xi}^1_-$ to a negative supporting contact structure $\xi^1_-$ for $\Phi_1$. In summary, we constructed a homotopy of \emph{bicontact structures} from $\big(\widetilde{h}_* (\xi^0_-), \widetilde{h}_* (\xi^0_+)\big)$ to a bicontact structure $\big(\xi^1_-, \xi^1_+\big)$ supporting $\Phi_1$. This finishes the proof since the space of bicontact structures supporting a given Anosov flow is path-connected (and even contractible); see~\cite{Hoz24, Mas25a}.
\end{proof}

\appendix

\newpage

    \section{Technical smoothing lemmas} \label{appendixA}

In this appendix, we collect some technical lemmas on smoothing (families of) increasing functions and topological embeddings of the $2$-disk in the plane, which are extensively used in Section~\ref{sec:smooth}. These are probably well-known to the experts and the proofs are quite standard, but we were not able to find precise statements in the literature.

        \subsection{Smoothing increasing functions}

\begin{lem} \label{lem:smoothincrease1}
Let $v : [0,1]_z \rightarrow \R$ be a continuous, strictly increasing function. We fix $\delta \in (0,1/4)$ and $\epsilon > 0$.
\begin{enumerate}
    \item There exists $\widetilde{v} \in C^1([0,1], \R)$ such that $\widetilde{v}(0) = v(0)$, $\widetilde{v}(1) = v(1)$, and 
    $$ \partial_z \widetilde{v} > 0, \qquad \big\vert \widetilde{v} - v\big\vert_{C^0} < \epsilon.$$
    \item Let $\widetilde{v}_\partial : [0, 2\delta) \cup (1-2\delta, 1] \rightarrow \R$ be a $C^1$ function such that 
    $$ \partial_z \widetilde{v}_\partial > 0, \qquad \big\vert \widetilde{v}_\partial - v\big\vert_{C^0} < \epsilon, \qquad \widetilde{v}_\partial(\delta) < \widetilde{v}_\partial(1-\delta).$$
    Then there exists $\widetilde{v} \in C^1([0,1],\R)$ satisfying
    $$\forall z \in [0, \delta) \cup (1-\delta, 1], \quad \widetilde{v}(z) = \widetilde{v}_\partial(z),$$
    and $$\partial_z \widetilde{v} > 0, \qquad \big\vert \widetilde{v} - v\big\vert_{C^0} < 2\epsilon.$$
\end{enumerate}
\end{lem}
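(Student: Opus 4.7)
The plan is to approach both parts via a piecewise-linear approximation followed by a localized mollification of the corners, a standard trick for smoothing monotone functions.

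For part (1), using uniform continuity of $v$ on $[0,1]$, I would choose a partition $0 = z_0 < z_1 < \cdots < z_N = 1$ so fine that the oscillation of $v$ on each $[z_i, z_{i+1}]$ is less than $\epsilon/3$, and define $v_1$ to be the continuous piecewise-linear function through the points $(z_i, v(z_i))$. Since $v$ is strictly increasing, so is $v_1$; it automatically matches $v$ at the endpoints, and $|v_1 - v|_{C^0} < \epsilon/2$. I would then smooth each corner of $v_1$ by convolving with a bump function supported on an interval of radius $\mu_i$ about the interior breakpoint $z_i$, with $\mu_i \ll \min(z_i - z_{i-1}, z_{i+1} - z_i)$. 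Because the two adjacent slopes of $v_1$ at each corner are positive, the mollified derivative is a positive convex combination of them, hence still positive; the $C^0$-perturbation on $[z_i - \mu_i, z_i + \mu_i]$ is bounded by (twice the local slope) $\cdot \mu_i$, so choosing $\mu_i$ sufficiently small keeps the total $C^0$-error below $\epsilon/2$, and keeps $\widetilde{v}$ equal to $v_1$ at $0$ and $1$.

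For part (2), I would apply the same philosophy relative to the prescribed boundary data. After passing to the slightly smaller domain $[0, 3\delta/2] \cup [1-3\delta/2, 1]$, I would construct a continuous, strictly increasing $v_1$ on $[0,1]$ that agrees with $\widetilde{v}_\partial$ on $[0, 3\delta/2] \cup [1-3\delta/2, 1]$ and is piecewise-linear on $[3\delta/2, 1-3\delta/2]$, with breakpoints on a fine partition of that interval chosen so the linear interpolant passes through points $(z_i, v(z_i))$ and matches $\widetilde{v}_\partial(3\delta/2)$ and $\widetilde{v}_\partial(1-3\delta/2)$ at the endpoints. The hypothesis $\widetilde{v}_\partial(\delta) < \widetilde{v}_\partial(1-\delta)$ combined with monotonicity of $\widetilde{v}_\partial$ on its domain forces $\widetilde{v}_\partial(3\delta/2) < \widetilde{v}_\partial(1-3\delta/2)$, so for a sufficiently fine partition the piecewise-linear segment is strictly increasing; and $|v_1 - v|_{C^0} < 3\epsilon/2$ follows from the triangle inequality because both $\widetilde{v}_\partial$ and the interpolant are within $\epsilon + \epsilon/2$ of $v$. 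Finally, I would round off the corners of $v_1$ by mollification, including at the two junction points $3\delta/2$ and $1-3\delta/2$ where $v_1$ is only $C^0$, choosing all mollification radii strictly smaller than $\delta/2$ so that $\widetilde{v}$ coincides with $\widetilde{v}_\partial$ on $[0,\delta] \cup [1-\delta, 1]$.

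The main technical point is a careful choice of the mollification radii to ensure simultaneously: positivity of the derivative of the smoothed function at every corner; a total $C^0$-error under the target ($\epsilon$ in part (1), $2\epsilon$ in part (2)); disjointness of the mollification supports from each other and from the prescribed-data region. Each of these is a finite collection of constraints once the partition is fixed, and they can all be satisfied by shrinking the radii. All quantitative estimates follow from uniform bounds on the slopes of $v_1$ between consecutive breakpoints, which depend only on the chosen partition.
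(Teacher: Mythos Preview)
Your approach for part~(1) is essentially the paper's own (piecewise-linear approximation, then smooth the corners), and it is correct.

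For part~(2) there is a concrete gap. You claim that the hypothesis $\widetilde{v}_\partial(\delta) < \widetilde{v}_\partial(1-\delta)$ together with monotonicity of $\widetilde{v}_\partial$ on each component forces $\widetilde{v}_\partial(3\delta/2) < \widetilde{v}_\partial(1-3\delta/2)$. This implication is false: monotonicity gives $\widetilde{v}_\partial(3\delta/2) > \widetilde{v}_\partial(\delta)$ and $\widetilde{v}_\partial(1-3\delta/2) < \widetilde{v}_\partial(1-\delta)$, which goes the wrong way. Indeed, if $v$ is nearly constant on $[\delta, 1-\delta]$ one can arrange $\widetilde{v}_\partial$ within $\epsilon$ of $v$ and satisfying the stated hypothesis, yet with $\widetilde{v}_\partial(3\delta/2) > \widetilde{v}_\partial(1-3\delta/2)$. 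Relatedly, the assertion that ``for a sufficiently fine partition the piecewise-linear segment is strictly increasing'' fails at the first junction segment: if $\widetilde{v}_\partial(3\delta/2) > v(3\delta/2)$, then refining the partition pushes $v(z_1)$ \emph{toward} $v(3\delta/2)$, not above $\widetilde{v}_\partial(3\delta/2)$.

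The fix is easy but should be stated: replace the junction points $3\delta/2$ and $1-3\delta/2$ by points $a \in (\delta, 2\delta)$ and $1-a$ chosen close enough to $\delta$ and $1-\delta$ (using continuity of $\widetilde{v}_\partial$) that $\widetilde{v}_\partial(a) < \widetilde{v}_\partial(1-a)$; then take the mollification radii smaller than $a-\delta$. Alternatively, and this is the variant the paper sketches, first build an interior smoothing $\widetilde{w}$ with $\widetilde{w} \geq \widetilde{v}_\partial$ on $[\delta, 2\delta]$ and $\widetilde{w} \leq \widetilde{v}_\partial$ on $[1-2\delta, 1-\delta]$, and blend $\widetilde{v}_\partial$ into $\widetilde{w}$ by a monotone cutoff; the sign condition makes the blended function automatically increasing. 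Either route closes the gap without changing your overall strategy.
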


\begin{proof}
    For the first item, it suffices to approximate $v$ with a piecewise linear map and then smooth it. 
    
    The second item can also be proved using the previous approach. Another method that generalizes well to parametric versions is as follows: one can first use the previous method to construct a smoothing $\widetilde{v}$ satisfying $\partial_z \widetilde{v} > 0$, and such that 
    $$
    \begin{aligned}
    \forall z \in [\delta, 2\delta] & , \quad & v_\partial &\leq \widetilde{v}, \\
    \forall z \in [1 - 2\delta, 1 - \delta] & , \quad & \widetilde{v} &\leq v_\partial.
    \end{aligned}$$
    Then one can connect $v_\partial$ and $\widetilde{v}$ on $[\delta, 2\delta]$ and $[1-2\delta, 1-\delta]$ using a monotone cutoff function. Details are left to the reader.
\end{proof}

For $1 \leq n \leq 3$ and $0 < \delta < 1/4$, we write $N^n_\delta \coloneqq [0,1]^n \setminus [\delta, 1-\delta]^n$.

\begin{lem} \label{lem:smoothincrease2}
Let $v : [0,1]^3 \rightarrow \R$ be a continuous function such that for every $(x,y) \in [0,1]^2$, $v(x, y, \, \cdot \, ) : [0,1] \rightarrow \R$ is strictly increasing. We fix $\delta \in (0,1/4)$ and $\epsilon > 0$.
\begin{enumerate}
    \item  There exists $\widetilde{v} \in C^1([0,1]^3, \R)$ such that 
    $$ \partial_z \widetilde{v} > 0, \qquad \big\vert \widetilde{v} - v\big\vert_{C^0} < \epsilon.$$
    
    \item Let $\widetilde{v}_\partial : N^1_{2\delta} \times [0,1]^2 \rightarrow \R$ be a $C^1$ function such that 
    $$ \partial_z \widetilde{v}_\partial > 0, \qquad \big\vert \widetilde{v}_\partial - v\big\vert_{C^0} < \epsilon.$$
    Then there exists $\widetilde{v} \in C^1([0,1]^3,\R)$ satisfying
    $$\forall (x,y,z) \in N^1_\delta \times [0,1]^2, \quad \widetilde{v}(x,y,z) = \widetilde{v}_\partial(x,y,z),$$
    and $$\partial_z \widetilde{v} > 0, \qquad \big\vert \widetilde{v} - v\big\vert_{C^0} < 2\epsilon.$$

    \item Let $\widetilde{v}_\partial : N^2_{2\delta} \times [0,1] \rightarrow \R$ be a $C^1$ function such that 
    $$ \partial_z \widetilde{v}_\partial > 0, \qquad \big\vert \widetilde{v}_\partial - v\big\vert_{C^0} < \epsilon.$$
    Then there exists $\widetilde{v} \in C^1([0,1]^3,\R)$ satisfying
    $$\forall (x,y,z) \in N^2_\delta \times [0,1], \quad \widetilde{v}(x,y,z) = \widetilde{v}_\partial(x,y,z),$$
    and $$\partial_z \widetilde{v} > 0, \qquad \big\vert \widetilde{v} - v\big\vert_{C^0} < 2\epsilon.$$

    \item Let $\widetilde{v}_\partial : N^3_{2\delta} \rightarrow \R$ be a $C^1$ function such that 
    $$ \partial_z \widetilde{v}_\partial > 0, \qquad \big\vert \widetilde{v}_\partial - v\big\vert_{C^0} < \epsilon, \qquad \widetilde{v}_\partial(\, \cdot\, , \, \cdot \, , \delta) < \widetilde{v}_\partial(\, \cdot\, , \, \cdot \, , 1-\delta).$$
    Then there exists $\widetilde{v} \in C^1([0,1]^3,\R)$ satisfying
    $$\forall (x,y,z) \in N^3_\delta, \quad \widetilde{v}(x,y,z) = \widetilde{v}_\partial(x,y,z),$$
    and $$\partial_z \widetilde{v} > 0, \qquad \big\vert \widetilde{v} - v\big\vert_{C^0} < 2\epsilon.$$
\end{enumerate}
\end{lem}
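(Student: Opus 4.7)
Item~(1) follows from a standard mollification: extend $v$ continuously to a slightly larger cube while preserving strict monotonicity in $z$, convolve with a nonnegative smooth mollifier in all three variables (which preserves the inequality $\partial_z \tilde v \geq 0$ since $\partial_z v \geq 0$ in the sense of distributions), and add a small linear term $\eta z$ to upgrade this to $\partial_z \tilde v > 0$. For sufficiently small mollification radius and $\eta$, the $C^0$-error is at most $\epsilon$.

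For items~(2) and~(3), the prescribed region $N^1_\delta \times [0,1]^2$ (resp.\ $N^2_\delta \times [0,1]$) is a neighborhood of the $(x,y)$-boundary, so one can employ a cutoff $\chi = \chi(x,y)$ depending only on the transverse variables: equal to $1$ on the inner copy and $0$ outside a slightly thickened outer copy. Letting $\tilde v_0$ be a global smoothing provided by item~(1) with an auxiliary error $\epsilon' < \epsilon$, we set $\tilde v \coloneqq \chi \tilde v_\partial + (1-\chi) \tilde v_0$. Since $\partial_z \chi = 0$, we get $\partial_z \tilde v = \chi \partial_z \tilde v_\partial + (1-\chi) \partial_z \tilde v_0 > 0$, and the $C^0$-closeness and boundary matching conditions are immediate.

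Item~(4) is the main difficulty because $N^3_\delta$ now includes the top and bottom faces $[0,1]^2 \times ([0,\delta] \cup [1-\delta, 1])$, so a naive cutoff $\chi(x,y,z)$ would have $\partial_z \chi \neq 0$ and introduce the troublesome term $(\partial_z \chi)(\tilde v_\partial - \tilde v_0)$ in $\partial_z \tilde v$, which cannot be controlled since $\tilde v_\partial - \tilde v_0$ is only of order $\epsilon$ in $C^0$ but gives no lower bound on the derivatives. The plan is to reduce item~(4) to the cutoff trick of item~(3) by pre-constructing an auxiliary $C^1$ function $\mu : [0,1]^3 \to \R$ with $\partial_z \mu > 0$, $|\mu - v|_{C^0} < 2\epsilon$, and $\mu \equiv \tilde v_\partial$ on $[0,1]^2 \times ([0,\delta] \cup [1-\delta, 1])$. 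Once such $\mu$ is in hand, setting $\tilde v \coloneqq \chi \mu + (1-\chi)\tilde v_\partial$ with $\chi = \chi(x,y)$ equal to $1$ on $[2\delta, 1-2\delta]^2$ and $0$ on $N^2_\delta$, one verifies that $\tilde v = \tilde v_\partial$ on all of $N^3_\delta$: on $N^2_\delta \times [0,1]$ because $\chi = 0$ there, and on $[2\delta, 1-2\delta]^2 \times ([0,\delta] \cup [1-\delta, 1])$ because $\mu = \tilde v_\partial$ there, so both summands coincide.

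The main obstacle is the construction of $\mu$, which amounts to a smoothly-parametric version of Lemma~\ref{lem:smoothincrease1}(2) with parameter $(x,y) \in [0,1]^2$. For each $(x,y)$, we set $\mu(x,y,z) \coloneqq \tilde v_\partial(x,y,z)$ on $[0,\delta] \cup [1-\delta,1]$, and on $[\delta, 1-\delta]$ we let $\mu(x,y,z) \coloneqq \tilde v_\partial(x,y,\delta) + \int_\delta^z \rho(x,y,s)\, ds$, where $\rho$ is a $C^1$ positive function matching $\partial_z \tilde v_\partial$ at $s \in \{\delta, 1-\delta\}$ (for $C^1$ gluing) and with total integral $\tilde v_\partial(x,y,1-\delta) - \tilde v_\partial(x,y,\delta) > 0$ (the positivity being exactly the standing hypothesis). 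Such a $\rho$ can be produced by starting from a linear-in-$s$ interpolation of the boundary derivatives $\partial_z \tilde v_\partial(x,y,\delta)$ and $\partial_z \tilde v_\partial(x,y,1-\delta)$ and adding a correction $g(s) c(x,y)$, where $g$ is a fixed smooth bump supported in $(\delta, 1-\delta)$ with $g(\delta) = g(1-\delta) = 0$ and $c(x,y)$ is the unique $C^1$ scalar matching the integral constraint. Strict positivity of $\rho$ is automatic when the boundary derivatives and the integral are bounded below away from zero, which can always be arranged by first absorbing any pathological behavior into $\tilde v_\partial$ via the smoothing of item~(1); the $C^0$-bound $|\mu - v|_{C^0} < 2\epsilon$ follows from the pointwise 1-variable version, and the construction depends $C^1$-smoothly on $(x,y)$ since every ingredient does.
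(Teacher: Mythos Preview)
Your arguments for items (1)--(3) are correct and in fact cleaner than the paper's: the paper uses a fine grid in $(x,y)$, applies the one-variable Lemma~\ref{lem:smoothincrease1} at each vertex, and patches by a partition of unity (using convexity of $\partial_z>0$), whereas your direct mollification-plus-$(x,y)$-cutoff achieves the same thing more transparently.

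For item (4), your overall strategy---build an auxiliary $\mu$ matching $\widetilde v_\partial$ on the top and bottom slabs, then interpolate with $\widetilde v_\partial$ via a cutoff in $(x,y)$ only---is sound and is the mirror image of the paper's reduction to item (3). The gap is in your explicit construction of $\mu$. Your density $\rho$ is built solely from the boundary data $\widetilde v_\partial(\,\cdot\,,\delta)$, $\widetilde v_\partial(\,\cdot\,,1-\delta)$ and $\partial_z\widetilde v_\partial$ at $z=\delta,1-\delta$; it never references $v$ on the interior $(\delta,1-\delta)$. Hence the claim $|\mu-v|_{C^0}<2\epsilon$ is unjustified and in general false: if, say, $v(\,\cdot\,,z)$ rises from $v(\,\cdot\,,\delta)$ to nearly $v(\,\cdot\,,1-\delta)$ already by $z=\tfrac12$ and stays almost constant thereafter, your $\mu$ (essentially linear or quadratic in $z$) will differ from $v$ by a fixed amount independent of $\epsilon$. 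Your appeal to ``the pointwise 1-variable version'' does not save this, since Lemma~\ref{lem:smoothincrease1}(2) achieves its $C^0$ bound precisely by approximating $v$ in the interior (via a piecewise-linear model), which your $\rho$ does not do. The positivity of $\rho$ can likewise fail: if the boundary derivatives $\partial_z\widetilde v_\partial(\,\cdot\,,\delta), \partial_z\widetilde v_\partial(\,\cdot\,,1-\delta)$ are large but the gap $\widetilde v_\partial(\,\cdot\,,1-\delta)-\widetilde v_\partial(\,\cdot\,,\delta)$ is small, the correction $c(x,y)$ is forced to be large and negative; you cannot ``absorb'' this into $\widetilde v_\partial$ since $\widetilde v_\partial$ is given data that must be matched exactly on $N^3_\delta$.

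The fix is to build $\mu$ out of a global smoothing $\widetilde v_0$ from item (1) (which \emph{is} $C^0$-close to $v$) and monotonically splice it to $\widetilde v_\partial$ near $z=\delta$ and $z=1-\delta$ using the hypothesis $\widetilde v_\partial(\,\cdot\,,\delta)<\widetilde v_\partial(\,\cdot\,,1-\delta)$, exactly as in the proof of Lemma~\ref{lem:smoothincrease1}(2) but now done uniformly in the $C^1$ parameter $(x,y)$.
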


\begin{proof}
For the first item, it suffices to consider a sufficiently fine grid on $[0,1]^2$, apply the first item of Lemma~\ref{lem:smoothincrease1} at each vertex of this grid, and connect those smoothings via a partition of unity supported near those vertices. Here, we are using that the condition of having a positive derivative is convex.

For the second and third items, one can combine the previous strategy with the approach outlined in the proof of the second item of Lemma~\ref{lem:smoothincrease1}.

Finally, for the fourth item, one can apply the third item to obtain a smoothing $\widetilde{v}$ coinciding with $v_\partial$ on $N^2_{\delta'} \times [0,1]$ for a slightly larger $\delta' > \delta$, and interpolate between $v_\partial$ and $\widetilde{v}$ using a cutoff function in the variables $x$ and $y$ which vanishes on $N^2_\delta$ and is equal to $1$ on $[\delta', 1-\delta']^2$.
\end{proof}

        \subsection{Smoothing embeddings of the \texorpdfstring{$2$}{2}-disk}

We now consider a $2$-dimensional version of the previous lemmas.

\begin{lem} \label{lem:smoothemb1}
    Let $u : [0,1]^3 \rightarrow \R^2$ be a continuous map such that for every $z \in [0,1]$, the map $(x,y) \mapsto u(x,y,z)$ is an embedding (i.e., a homeomorphism onto its image). We fix $\delta \in (0,1/4)$ and $\epsilon > 0$.
    \begin{enumerate}
        \item There exists a smooth map $\widetilde{u} : [0,1]^3 \rightarrow \R^2$ such that for every $z \in [0,1]$, the map $(x,y) \mapsto \widetilde{u}(x,y,z)$ is a smooth embedding, and 
        $$\vert \widetilde{u} - u\vert_{C^0} < \epsilon.$$
        
        \item Let $\widetilde{u}_\partial : [0,1]^2 \times N^1_{2\delta} \rightarrow \R^2$ be a smooth map such that for every $z \in N^1_{2\delta}$, the map $(x,y) \mapsto \widetilde{u}_\partial(x,y,z)$ is a smooth embedding, and
        $$\vert \widetilde{u}_\partial - u \vert_{C^0} < \epsilon.$$
        Then there exists a smooth map  $\widetilde{u} : [0,1]^3 \rightarrow \R^2$ such that for every $z \in [0,1]$, the map $(x,y) \mapsto \widetilde{u}(x,y,z)$ is a smooth embedding, for every $z \in N^1_\delta$, $\widetilde{u}(\, \cdot\, , z) = \widetilde{u}_\partial(\, \cdot\, , z) $, and 
        $$\vert \widetilde{u} - u\vert_{C^0} < 2\epsilon.$$ 
    \end{enumerate}
\end{lem}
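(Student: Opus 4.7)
Plan: The approach is to approximate $u$ by a piecewise-linear family on a common fine triangulation of $[0,1]^2$, then smooth. The main technical input is a uniform injectivity estimate obtained from the pointwise embedding hypothesis and compactness.

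First, by compactness of the set $\{|p-q| \geq \eta\} \subset [0,1]^2 \times [0,1]^2 \times [0,1]$ and positivity of the continuous function $(p,q,z) \mapsto |u(p,z) - u(q,z)|$ there, one obtains $c(\eta) > 0$ satisfying $|u(p,z) - u(q,z)| \geq c(\eta)$ whenever $|p-q|\geq \eta$, uniformly in $z$. Combined with uniform continuity of $u$, this yields a scale $\eta_0 = \eta_0(\epsilon) > 0$ such that $u$ varies by at most $\epsilon/8$ on sets of diameter $\eta_0$.

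For part (1), I would fix a triangulation $\mathcal{T}$ of $[0,1]^2$ of mesh $\rho < \eta_0/2$, chosen in general position so that for every $z \in [0,1]$ and every $2$-simplex $\Delta \in \mathcal{T}$, the vertex images $\{u(v,z) : v \in \Delta\}$ are non-collinear with the orientation of $\Delta$ preserved. Define the slicewise PL extension $\widetilde{u}_0(\cdot, z)$ from these vertex values; this is a PL embedding for each $z$: affine and injective on each simplex by non-collinearity, and injective globally by the uniform bound at scale $\rho$ combined with orientation-preservation across adjacent simplices. Smoothing $\widetilde{u}_0$ via bump functions near the $1$-skeleton of $\mathcal{T}$ at scale much smaller than $c(\rho)$, and mollifying the vertex value maps $z \mapsto u(v, z)$ in the $z$-direction at a similar scale, produces the desired smooth map $\widetilde{u}$, which is $C^0$-close to $u$ and slicewise an embedding.

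For part (2), use the same strategy with a smooth $z$-cutoff blending: set $\widetilde{u} = \chi\,\widetilde{u}_\partial + (1-\chi)\,\widetilde{u}_{\mathrm{int}}$, where $\chi : [0,1] \to [0,1]$ equals $1$ on $N^1_\delta$ and $0$ on $[\delta, 1-\delta]$, and $\widetilde{u}_{\mathrm{int}}$ is the construction from part (1) performed on the same triangulation $\mathcal{T}$. Both $\widetilde{u}_\partial(\cdot, z)$ and $\widetilde{u}_{\mathrm{int}}(\cdot, z)$ can be arranged to be $C^0$-close to $u(\cdot, z)$ within $\epsilon/4$, so their convex combination lies within $\epsilon/2$ of $u$ in $C^0$ and is slicewise injective at scale $\geq \eta_0$ by the uniform modulus estimate; for local injectivity across simplices one uses that both approximations are PL on $\mathcal{T}$ with matching orientation, so that the convex combination is itself PL on $\mathcal{T}$ with non-collinear vertex images.

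The main obstacle is the general position assumption on $\mathcal{T}$: ensuring non-collinearity (with correct orientation) of vertex images $u(v, z)$ for all $z$ and all $2$-simplices simultaneously. For a single simplex $\Delta$, the ``bad set'' $\{z : \text{vertex images of } \Delta \text{ are degenerate}\}$ is closed in $[0,1]$ and, for a generic perturbation of the vertices of $\mathcal{T}$, consists of isolated points, which can be removed by a local subdivision of $\mathcal{T}$ depending smoothly on $z$. All remaining technical verifications---preservation of slicewise injectivity under bump-smoothing, the $C^0$ estimates, and smoothness of the final family---follow from the uniform injectivity bound and standard cutoff techniques.
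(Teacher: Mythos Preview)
Your approach has a genuine gap at its core: the PL interpolant of a merely continuous embedding need not be an embedding, no matter how fine the triangulation. The uniform injectivity bound $c(\eta)$ you derive is correct, but it only controls \emph{large-scale} injectivity; it says nothing about the local geometry of vertex images. Concretely, take the shear $u(x,y) = (x,\, y + W(x))$ with $W$ a Weierstrass function of H\"older exponent $\alpha < 1$. For a triangle with vertices $(a,c)$, $(a+h,c)$, $(a+h/2,\,c+h)$, the signed area of the image triangle is $h^2 + h\big[W(a+h/2) - \tfrac{1}{2}W(a) - \tfrac{1}{2}W(a+h)\big]$, and the bracketed second difference can be of order $h^\alpha \gg h$ with either sign at arbitrarily small scales. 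So the determinant can vanish or become negative on simplices of any mesh. Your general-position claim---that for a generic triangulation the degeneracy locus in $z$ consists of isolated points---requires the determinant to depend at least $C^1$ on the vertex positions and on $z$, which it does not; for $C^0$ data the zero set can be any closed subset of $[0,1]$, and local subdivision does not help since it introduces new vertices with the same regularity problem.

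The second gap is in part (2): your blending $\chi\,\widetilde{u}_\partial + (1-\chi)\,\widetilde{u}_{\mathrm{int}}$ is a convex combination of two maps that are each slicewise embeddings and $C^0$-close to $u$, but a convex combination of $C^0$-close embeddings is \emph{not} in general an embedding. You justify local injectivity by asserting that both factors are PL on the same triangulation $\mathcal{T}$, but $\widetilde{u}_\partial$ is given as an arbitrary smooth family, not PL on $\mathcal{T}$, so this step simply does not apply.

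The paper's proof avoids both issues by working in the target rather than the source: for a single slice, one uses Jordan--Schoenflies to isotope the image of a fine grid to a smooth configuration (a topological, not linear-algebraic, straightening), then fills in the squares smoothly. For the family in $z$, one applies this at finitely many sample values $\sigma_k$ and interpolates between consecutive slices via a smooth isotopy produced by Smale's theorem on the contractibility of $\mathrm{Diff}(D^2,\partial)$ (this is Lemma~\ref{lem:technsmoothing} in the paper). The same isotopy mechanism handles the relative version. The essential input you are missing is this use of planar topology (Jordan--Schoenflies and Smale) in place of linear interpolation.
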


\begin{proof}
   For the first item, we first consider the case of a single topological embedding $u : [0,1]^2 \rightarrow \R^2$. It is well-known that $u$ can be approximated by smooth embeddings. The strategy goes as follows. 
   \begin{itemize}
       \item We first subdivide $[0,1]^2$ into a sufficiently fine grid.
       \item Then, we can find a small (topological) isotopy supported near $u([0,1]^2)$ which ``straightens'' the image of the grid under $u$ and makes it smooth. To that end, we first perform this isotopy near the images of the vertices, using the Jordan--Schoenflies theorem. We then isotope the edges relative to neighborhoods of the vertices by smoothing the edges as parametrized maps, and then remove potential self-intersections.
       
       We denote the resulting map by $\overline{u} : [0,1]^2 \rightarrow \R^2$, which is arbitrarily $C^0$-close to $u$ (independently of the size of the grid).
       \item We can then replace $\overline{u}$ by a smooth map near the vertices, which sends edges to edges there, extend it by a smooth map in the neighborhoods of the edges, and finally extend it over the squares. Choosing the original grid fine enough, we can ensure that the resulting smooth map is arbitrarily $C^0$-close to $u$.
   \end{itemize} 
   
   To extend this to a family of topological embeddings as in item 1, we can choose a very fine subdivision $(\sigma_0, \dots, \sigma_n)$ of $[0,1]_z$, apply the smoothing procedure to $u(\, \cdot\, , \sigma_k)$ to obtain smooth maps $\widetilde{u}_k : [0,1]^2 \rightarrow \R^2$, $0 \leq k \leq n$.\footnote{Technically speaking, we might first have to extend $u_z$ to a small neighborhood of $[0,1]^2$ before applying the smoothing, as we might have to shrink the domains later; details are left to the reader.} We now define $f_k \coloneqq \widetilde{u}^{-1}_{k+1} \circ \widetilde{u}_k :[0,1]^2 \rightarrow \R^2$, which is a smooth embedding $C^0$-close to the identity. Using Lemma~\ref{lem:technsmoothing} below, we can find a smooth isotopy $f^z_k$, $z \in [\sigma_k, \sigma_{k+1}]$, from $f_k$ to $\mathrm{id}$ which stays $C^0$-close to the identity. We then define $\widetilde{u}_z$ for $z \in [\sigma_k, \sigma_{k+1}]$ as $\widetilde{u}_z \coloneqq \widetilde{u}_{k+1} \circ f^z_k$. We might have to use suitable cutoffs to ensure that this path is smooth; details are left to the reader.

   The previous strategy immediately applies to the relative version of item 2.
\end{proof}

The key technical result used in the proof is:

\begin{lem} \label{lem:technsmoothing}
    For every $\epsilon > 0$ small enough, the following holds. If $f : [0,1]^2 \rightarrow \R^2$ is a smooth embedding such that 
    $$\vert f- \mathrm{id} \vert_{C^0} < \epsilon,$$
    then there exists an smooth isotopy $f_t : [0,1]^2 \rightarrow \R^2$, $t\in [0,1],$ such that $f_0 = \mathrm{id}$, $f_1 = f$, and for every $t \in [0,1]$, 
    $$\vert f_t - \mathrm{id}\vert_{C^0} < 2 \epsilon.$$
\end{lem}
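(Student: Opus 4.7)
My plan is to first extend $f$ to a compactly supported smooth diffeomorphism $F : \R^2 \to \R^2$ whose $C^0$-distance from the identity is only slightly larger than $\epsilon$, and then construct an explicit smooth isotopy from $\mathrm{id}$ to $F$ via conjugation by radial contractions. Restricting the isotopy to $[0,1]^2$ will then yield the required family $f_t$ with the desired $C^0$-bound.

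For the extension, fix some small $\delta \in (0,\epsilon)$. Since $f$ is a smooth embedding of the compact manifold-with-corners $[0,1]^2$, it extends to a smooth embedding $\hat f$ of a slightly larger square $Q = [-\delta,1+\delta]^2$, still satisfying $|\hat f - \mathrm{id}|_{C^0(Q)} < \epsilon + \delta$ by continuity. On the collar $Q \setminus [0,1]^2$, I would carefully interpolate between $\hat f$ and the identity using a smooth cutoff transitioning slowly along rays emanating from the center of $[0,1]^2$. The outcome is a compactly supported smooth diffeomorphism $F$ of $\R^2$, equal to $f$ on $[0,1]^2$, to $\mathrm{id}$ outside $Q$, and satisfying $|F - \mathrm{id}|_{C^0(\R^2)} < \epsilon + \delta'$ for some $\delta'$ that can be made arbitrarily small.

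For the isotopy, fix an interior point $p$ of $[0,1]^2$ and let $h_s(x) \coloneqq p + s(x-p)$ denote the scaling by factor $s \in (0,1]$ centered at $p$. Define $F_s \coloneqq h_s \circ F \circ h_s^{-1}$, a smooth diffeomorphism of $\R^2$. A direct computation with $w = h_s^{-1}(y)$ gives $F_s(y) - y = s\,(F(w) - w)$, so
\[
|F_s - \mathrm{id}|_{C^0(\R^2)} \,\leq\, s \cdot |F - \mathrm{id}|_{C^0(\R^2)} \,<\, s(\epsilon + \delta'),
\]
and $F_s \to \mathrm{id}$ uniformly as $s \to 0^+$. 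To upgrade the convergence to smoothness at $s=0$, reparametrize $s = \psi(t)$ via a smooth $\psi : [0,1] \to [0,1]$ with $\psi(1) = 1$ and $\psi$ flat of infinite order at $t = 0$. Setting $f_t \coloneqq F_{\psi(t)}\big|_{[0,1]^2}$ yields a smooth isotopy with $f_0 = \mathrm{id}$, $f_1 = f$, and $|f_t - \mathrm{id}|_{C^0} < \epsilon + \delta' < 2\epsilon$ for $\delta'$ small.

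The main obstacle is the extension step. A naive construction of the form $F(x) = x + \chi(x)(\hat f(x) - x)$ may fail to be a diffeomorphism, because the Jacobian
\[
DF \,=\, I + \chi\,(D\hat f - I) + (\hat f - \mathrm{id}) \otimes \nabla \chi
\]
can become singular when $\hat f$ has large derivatives (which is allowed by a $C^0$-hypothesis alone, as $C^0$-smallness does not imply $C^1$-smallness). Overcoming this requires a careful ``slow radial'' interpolation along a preferred transverse direction---essentially, extending $f$ by flowing along a suitable vector field in the collar---so that the cross term $(\hat f - \mathrm{id}) \otimes \nabla \chi$ remains controllable. Once the extension is in place, the conjugation-by-contractions construction above is elementary and delivers the required quantitative bound automatically.
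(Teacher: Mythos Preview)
Your Alexander-trick reduction is elegant, but the argument hinges entirely on the extension step, and that step contains the whole difficulty of the lemma. You correctly identify that the naive cutoff $F = \mathrm{id} + \chi\,(\hat f - \mathrm{id})$ can fail because $D\hat f$ is uncontrolled under a $C^0$ hypothesis. However, your proposed remedy---``slow radial interpolation'' or ``flowing along a suitable vector field in the collar''---does not resolve this: no matter how slowly the cutoff varies radially, the \emph{tangential} derivative of $F$ along $\partial[0,1]^2$ still involves $Df\cdot t$, which can point in an arbitrary direction and have arbitrary magnitude. Producing a compactly supported diffeomorphism $F$ of $\R^2$ extending $f$ with $|F-\mathrm{id}|_{C^0}$ small is a form of quantitative Schoenflies; it is true in dimension~$2$, but the standard proofs use precisely the kind of grid-localization that the paper employs for the lemma itself. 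So you have reduced the problem to one that is at least as hard.

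There is also a smoothness issue in your isotopy. With $p$ in the interior of $[0,1]^2$ one computes $D_yF_s(p)=DF(p)$ for every $s>0$, so $\lim_{t\to 0^+}D_yf_t(p)=DF(p)$ while $D_yf_0(p)=I$; hence the family is not even $C^1$ at $(0,p)$, regardless of how flat $\psi$ is at~$0$. This is easily repaired by taking the scaling center $p$ \emph{outside} $[0,1]^2$: then $h_s^{-1}(y)$ escapes the support of $F$ uniformly for $y\in[0,1]^2$ as $s\to 0$, so $f_t\equiv\mathrm{id}$ for all small $t$ and smoothness is automatic.

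The paper's proof sidesteps the extension problem altogether. It chooses a grid of mesh $\sim 2\epsilon$ on $[0,1]^2$, first isotopes $f$ (staying $2\epsilon$-close to $\mathrm{id}$) so that the image of the grid is straightened and $f$ restricts to the identity on the $1$-skeleton, and then applies Smale's theorem (contractibility of $\mathrm{Diff}(D^2,\partial)$) independently on each small square. The $C^0$ control comes for free from the grid size, since on each square both the map and the identity take values in a $2\epsilon$-box. This localization is exactly the ingredient your global extension step is missing.
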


\begin{proof}
    We use a strategy similar to that of the previous lemma. Note that we are now dealing with smooth maps.

    We first choose a sufficiently fine grid on $[0,1]^2$ (of size roughly $2\epsilon$), and use an isotopy to straighten its image under $f$. After this, we obtain an isotopy $\overline{f}_t$, $t\in [0,1]$, from $f$ to an embedding $\overline{f}$ which preserves the chosen grid. Moreover, this isotopy remains $2\epsilon$-close to $\mathrm{id}$. We can further arrange that $\overline{f}$ restricts to the identity on the grid. Then, using Smale's theorem on the contractibility of the space of diffeomorphisms of the $2$-disk that restrict to the identity along the boundary, it is easy to construct a smooth isotopy from $\overline{f}$ to $\mathrm{id}$ which stays $2\epsilon$-close to $\mathrm{id}$. Concatenating these two isotopies yields the desired isotopy between $f$ and $\mathrm{id}$.
\end{proof}

We will also need a relative version of Lemma~\ref{lem:smoothemb1}. It will be sufficient to consider the case $u = \mathrm{id}$ only. The proof follows from similar arguments and is left to the reader.

\begin{lem} \label{lem:smoothemb2}
    Let $\delta \in (0,1/4)$ and $\epsilon > 0$. We write $N^*_{r} = [0,1] \times N^2_{r}$ or $N^*_{r} = N^3_{r}$.

    Let $\widetilde{f}_\partial : N^*_{2\delta} \rightarrow \R^2$ be a smooth map such that for every $z \in [0,1]$, the map $\widetilde{f}^z_\partial: (x,y) \mapsto \widetilde{f}_\partial(x,y,z)$ is a smooth embedding (on its domain of definition), and
        $$\vert \widetilde{f}^z_\partial - \mathrm{id} \vert_{C^0} < \epsilon.$$
    Then there exists a smooth map $\widetilde{f} : [0,1]^3 \rightarrow \R^2$ such that for every $z \in [0,1]$, the map $\widetilde{f}^z:(x,y) \mapsto \widetilde{f}(x,y,z)$ is a smooth embedding, for every $(x,y,z) \in N^*_\delta$, $\widetilde{f}(x,y,z) = \widetilde{f}_\partial(x,y,z) $, and 
        $$\vert \widetilde{f}^z - \mathrm{id}\vert_{C^0} < 2\epsilon.$$ 
\end{lem}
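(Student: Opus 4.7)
The plan is to mirror the parametric smoothing strategy of Lemma~\ref{lem:smoothemb1}, performed slicewise in $z$ and relatively to the prescribed data $\widetilde{f}_\partial$. I will focus on the case $N^*_\rho = N^3_\rho$, since the case $N^*_\rho = [0,1]\times N^2_\rho$ is analogous and strictly simpler: for each $z$, the extension reduces to filling in a middle rectangular strip rather than the interior of a Jordan curve. For $z \in [0, 2\delta] \cup [1-2\delta, 1]$ the entire slice $[0,1]^2$ already lies in $N^3_{2\delta}$, so I would take $\widetilde{f}^z := \widetilde{f}^z_\partial$; the real work is for $z \in (2\delta, 1-2\delta)$, where $\widetilde{f}^z_\partial$ is only defined on the planar collar $N^2_{2\delta}$ and is merely $\epsilon$-close to the identity in $C^0$.

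On such a slice, the strategy has two stages, both performed smoothly in $z$. First, in the transition annulus $N^2_{2\delta} \setminus N^2_\delta$---where we are free to modify the map, since matching is required only on $N^*_\delta$---I would use an isotopy-extension argument to interpolate from $\widetilde{f}^z_\partial$ near $\partial[\delta, 1-\delta]^2$ to the restriction near $\partial[2\delta, 1-2\delta]^2$ of a smooth ambient diffeomorphism $\Phi_z$ of $\R^2$ that is $O(\epsilon)$-close to $\mathrm{id}$ and depends smoothly on $z$. Second, I would fill the inner square $[2\delta, 1-2\delta]^2$ by $\Phi_z$ itself; each slice is then automatically an embedding. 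To glue the two $z$-ranges smoothly across $z = 2\delta$ and $z = 1-2\delta$, I would insert a smooth $z$-cutoff $\chi$ equal to $1$ on $[0,\delta] \cup [1-\delta, 1]$ and $0$ on $[2\delta, 1-2\delta]$, and interpolate radially in the $z$-transition strip exactly as in the proof of Lemma~\ref{lem:skel2}. This preserves the slicewise embedding property since the two interpolands agree on $N^2_\delta$, and keeps the result $2\epsilon$-close to $\mathrm{id}$.

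The main technical obstacle is the first stage: the hypothesis gives only a $C^0$-bound on $\widetilde{f}^z_\partial - \mathrm{id}$ with no control on derivatives, so a naive radial or convex-combination extension need not be injective, and the image of the inner boundary of the collar could a priori be a wildly oscillating curve not star-shaped with respect to any central point. The way around this is a parametric smooth Jordan--Schoenflies argument: any smooth Jordan curve $C^0$-close to $\partial[2\delta, 1-2\delta]^2$ can be straightened to it by a diffeomorphism of $\R^2$ that remains $C^0$-close to $\mathrm{id}$, and this straightening can be performed smoothly in the $z$-parameter by exploiting the contractibility of the space of smooth embeddings of $S^1$ into $\R^2$ which are $C^0$-close to a standard embedding. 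After straightening the inner-boundary image inside the transition annulus, the extension across the inner square is just the restriction of a diffeomorphism of $\R^2$, so the embedding property in each slice comes for free, and smoothness in $z$ is inherited from the parametric straightening.
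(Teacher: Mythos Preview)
The paper does not actually prove this lemma: it simply says ``the proof follows from similar arguments [as Lemma~\ref{lem:smoothemb1}] and is left to the reader.'' Your proposal supplies exactly the kind of argument the paper has in mind, relying on the same two ingredients used there---a parametric smooth Sch\"onflies straightening and the contractibility of the relevant space of disk embeddings (Smale's theorem on $\mathrm{Diff}(D^2,\partial)$, which is what Lemma~\ref{lem:technsmoothing} packages). So the approach is correct and aligned with the paper.

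One point deserves a little more care. In the $z$-transition strips $[\delta,2\delta]$ and $[1-2\delta,1-\delta]$ you propose to ``interpolate radially'' between $\widetilde{f}^z_\partial$ (defined on the whole slice there) and your middle-$z$ extension, invoking Lemma~\ref{lem:skel2}. But that lemma interpolates \emph{graphs} of scalar functions, where convexity makes the interpolation automatic; here you are interpolating between two planar embeddings that agree only on $N^2_\delta$, and a naive convex or radial interpolation on the inner square need not stay injective. The clean fix is to parametrize the Sch\"onflies/Smale isotopy producing your inner filling by $\chi(z)$ itself, so that at $\chi=1$ the filling degenerates to $\widetilde{f}^z_\partial$ and at $\chi=0$ it is your ambient diffeomorphism $\Phi_z$; every intermediate stage is then an embedding by construction. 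This is the same manoeuvre as in the proof of Lemma~\ref{lem:technsmoothing}, and with it your argument is complete.
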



\setcounter{footnote}{0}
\renewcommand{\thefootnote}{\fnsymbol{footnote}}
\newpage

    \section{Realizing self orbit equivalences by partially hyperbolic diffeomorphisms} \label{appendixB}

\begin{center}
\large
\begin{tabular*}{0.8\textwidth}{@{\extracolsep{\fill}} c c c}
Thomas Barthelm\'{e}\footnotemark[1] & S\'{e}rgio R. Fenley\footnotemark[2] & Rafael Potrie\footnotemark[3]
\end{tabular*}
\end{center}
\footnotetext[1]{Queen's University, Kingston, Ontario, Canada. Email address: \href{mailto:thomas.barthelme@queensu.ca}{\texttt{thomas.barthelme@queensu.ca}}. Website: \url{https://sites.google.com/site/thomasbarthelme}.}
\footnotetext[2]{Florida State University, Tallahassee, FL 32306, USA. Email address: \href{mailto:sfenley@fsu.edu}{\texttt{sfenley@fsu.edu}}.}
\footnotetext[3]{Centro de Matem\'atica, Universidad de la Rep\'ublica (Uruguay) \& IRL-IFUMI CNRS (France). Email address: \href{mailto:rpotrie@cmat.edu.uy}{\texttt{rpotrie@cmat.edu.uy}}. Website: \url{https://www.cmat.edu.uy/~rpotrie/}.}

\bigskip

This appendix uses the results from the main paper to solve an important problem in the classification of partially hyperbolic diffeomorphisms. We refer the reader to~\cite{BGHP,BFP} for a presentation of this problem as well as precise definitions. After Pujals' conjecture was shown not to hold, new examples of partially hyperbolic diffeomorphisms in 3-manifolds started to appear. In particular, in~\cite{BGHP}, a general criterion for constructing examples was devised. In~\cite{BFP} we proposed a way to correct the conjecture, by considering the class of \emph{collapsed Anosov flows}. Roughly speaking, these are partially hyperbolic diffeomorphisms whose dynamics corresponds to that of a self orbit equivalence of an Anosov flow.
 Due to the previous work on other 3-manifolds, the classification problem of partially hyperbolic diffeomorphisms became to show that, in closed 3-manifolds with non virtually solvable fundamental group, every partially hyperbolic diffeomorphism is a collapsed Anosov flow. This was shown to hold, for instance, in hyperbolic 3-manifolds (\cite{BFFP1,BFFP2, FP-dt}), and was recently announced by the last two authors of this appendix to hold for transitive partially hyperbolic diffeomorphisms in \emph{any} 3-manifold~\cite{FP-phtf}.

Such classification results proved that, to partially hyperbolic diffeomorphisms, one can always associate an Anosov flow and a self orbit equivalence of it. The other direction of that correspondence remained a key open question though  (see~\cite[Question 3]{BFP}). More precisely: Can every self orbit equivalence be realized by a collapsed Anosov flow?  In this appendix we give a positive answer to this question under orientability assumptions. 

\begin{thm}\label{teo.mainappendix}
Let $\varphi_t\colon M \to M$ be an Anosov flow in an orientable 3-manifold with orientable foliations. Let $\beta_0\colon M \to M$ be a self orbit equivalence preserving orientations of the bundles. Then, there exists $f\colon M \to M$ a (strong) collapsed Anosov flow associated to $\beta_0$. 
\end{thm}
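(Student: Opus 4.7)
The plan is to apply Corollary~\ref{corintro:smoothing} to the self orbit equivalence $\beta_0$ in order to produce a smooth diffeomorphism $\widetilde{\beta}\colon M \to M$, topologically isotopic to $\beta_0$, such that for any prescribed $\epsilon > 0$ the pushforwards $\widetilde{\beta}_*(E^{ws})$ and $\widetilde{\beta}_*(E^{wu})$ are $\epsilon$-close to $E^{ws}$ and $E^{wu}$ respectively, and the flow directions $\widetilde{\beta}_*(X)$ and $X$ are $\epsilon$-close (where $X$ generates $\varphi_t$). As noted in the main text right after Corollary~\ref{corintro:smoothing}, this is exactly the data needed to ensure that $\Phi$ is $\widetilde{\beta}$-transverse to itself in the terminology of~\cite{BFP}.

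Next, I would set $f \coloneqq \widetilde{\beta} \circ \varphi_T$ for $T > 0$ sufficiently large and verify that $f$ is a partially hyperbolic diffeomorphism with one-dimensional center close to the flow direction. The verification proceeds via the standard invariant cone field criterion. Fix narrow cone fields $\mathcal{C}^{cs}$ around $E^{ws}$, $\mathcal{C}^{cu}$ around $E^{wu}$, and $\mathcal{C}^{c}$ around $\langle X \rangle$. The derivative $D\varphi_T$ sharpens $\mathcal{C}^{cu}$ towards $E^{uu}$ and (under $D\varphi_T^{-1}$) sharpens $\mathcal{C}^{cs}$ towards $E^{ss}$ with rates growing exponentially in $T$, while preserving $X$. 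The derivative $D\widetilde{\beta}$ then perturbs these cones by a bounded $\epsilon$-controlled amount, thanks to Corollary~\ref{corintro:smoothing}. For $\epsilon$ small and $T$ large the composition $Df$ therefore preserves the cone fields strictly, and the exponential contractions/expansions coming from $\varphi_T$ dominate the fixed distortion of $D\widetilde{\beta}$, yielding a dominated splitting $TM = E^s_f \oplus E^c_f \oplus E^u_f$ with $E^c_f$ close to $\langle X \rangle$.

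To conclude, I would verify that $f$ is a \emph{strong} collapsed Anosov flow associated to $\beta_0$ in the sense of~\cite{BFP}. By construction, $f$ sends every orbit $\{\varphi_t(x)\}_{t\in\R}$ of $\Phi$ to $\widetilde{\beta}(\{\varphi_t(x)\}_{t\in\R})$, which is again an orbit of $\Phi$. Taking the semiconjugacy $h = \mathrm{id}$, this exhibits $f$ as realizing $\widetilde{\beta}$ as a self orbit equivalence of $\Phi$; since $\widetilde{\beta}$ is topologically isotopic to $\beta_0$, $f$ is a strong collapsed Anosov flow associated to $\beta_0$. The orientation hypotheses on the bundles and on $\beta_0$ ensure that the coorientations on the weak foliations are respected by $\widetilde{\beta}$ (this is used to apply the bifoliated smoothing of Corollary~\ref{corintro:smoothing} without passing to a cover).

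The principal obstacle lies in the partial hyperbolicity verification. While the overall scheme is classical (Hadamard--Perron via invariant cones), one must carefully calibrate the cones using both the quantitative output of Corollary~\ref{corintro:smoothing} and the exponential rates of $\varphi_T$. The subtlety is that $D\widetilde{\beta}$ only approximately preserves the weak distributions $E^{ws}, E^{wu}$ and not the strong line fields $E^{ss}, E^{uu}$; thus the cone argument must first be run at the level of the two-dimensional weak bundles, with the one-dimensional strong splitting $E^s_f, E^u_f$ of $f$ emerging a posteriori via iteration of $Df$ on sharpened cones. A secondary, more bookkeeping issue is to match the precise definitional conventions of strong collapsed Anosov flow in~\cite{BFP}.
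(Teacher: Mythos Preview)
Your approach has a genuine gap in the final step, and it is precisely where the paper's proof does its real work.

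First, a small but fatal error: $\widetilde{\beta}$ is \emph{not} a self orbit equivalence. It is a smooth diffeomorphism $C^0$-close to $\beta_0$ whose derivative almost preserves the weak bundles, but it does not send flow orbits to flow orbits. Hence your claim that ``$f$ sends every orbit $\{\varphi_t(x)\}$ to $\widetilde{\beta}(\{\varphi_t(x)\})$, which is again an orbit of $\Phi$'' is false, and taking $h=\mathrm{id}$ does not exhibit $f$ as a collapsed Anosov flow. Indeed with $h=\mathrm{id}$ the second condition would force orbits of $\varphi_t$ to be tangent to $E^c_f$, which they are not (only close).

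More importantly, even after one invokes the machinery of~\cite{BFP} (the paper's Proposition~\ref{prop-caf}, which packages your cone argument and much more) to conclude that $f_{t,\epsilon}=\varphi_t\circ\eta_\epsilon\circ\varphi_t$ is a strong collapsed Anosov flow associated to \emph{some} self orbit equivalence $\beta'$, one only knows that $\beta'$ is \emph{homotopic} to $\beta_0$. The definition requires $\beta'$ to be \emph{equivalent} to $\beta_0$, meaning $\beta'(x)=\varphi_{\tau(x)}(\beta_0(x))$ for some continuous $\tau$. For skewed $\R$-covered Anosov flows, each homotopy class contains infinitely many inequivalent self orbit equivalences, differing by powers of the one-step-up map (this is Theorem~\ref{teoBG} from~\cite{BG}). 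Your sentence ``since $\widetilde{\beta}$ is topologically isotopic to $\beta_0$, $f$ is a strong collapsed Anosov flow associated to $\beta_0$'' conflates homotopy with equivalence and skips the entire heart of the argument. The paper closes this gap via Lemma~\ref{lem.AF}: it tracks a periodic leaf $E\in\widetilde{\mathcal{F}}^{wu}$ in the universal cover and uses the $C^0$-closeness of $\eta_\epsilon$ to $\beta_0$ together with the convergence statements in Proposition~\ref{prop-caf} to show that $\tilde\beta'(E)$ and $\tilde\beta_0(E)$ are within the slithering gap $\delta$, forcing $\beta'$ and $\beta_0$ to be equivalent.
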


This in particular completes the classification of partially hyperbolic diffeomorphisms in hyperbolic $3$-manifolds (see~\cite{BFFP1,BFFP2,FP-dt}), as well as for transitive partially hyperbolic diffeomorphisms in any 3-manifolds (by~\cite{FP-phtf}): The main case left open was whether examples of collapsed Anosov flows called \emph{double translations} existed in those manifolds. The existence of such follows from Theorem~\ref{teo.mainappendix} as realization of self orbit equivalences of the one-step up map of some $\RR$-covered Anosov flow in a hyperbolic 3-manifold.

Notice that we do not require the flow to be transitive in Theorem~\ref{teo.mainappendix}, but it is assumed to be a true (i.e., smooth) Anosov flow and not just a topological Anosov flow as was considered in~\cite{BFP}. For transitive Anosov flows, the two notions of smooth and topological coincide up to orbit equivalence, thanks to~\cite{Sh21}, but it is not yet known whether these notions also coincide for non-transitive flows.

We start by quickly recalling the definitions of the objects we are working with here, and refer to~\cite{BFP} for details and more precise statements. 

A \emph{self orbit equivalence} $\beta_0\colon M \to M$ of an Anosov flow $\{\varphi_t\}_t$ is a homeomorphism of $M$ which sends (oriented) orbits to (oriented) orbits of the flow. It can be shown that such a homeomorphism also preserves the weak-stable and weak-unstable foliations $\cF^{ws}$ and $\cF^{wu}$ which intersect in the orbits of the flow. Two self orbit equivalences $\beta_0$ and $\beta$ are said to be equivalent if there is a continuous function $\tau\colon M \to \RR$ so that $\beta(x) = \varphi_{\tau(x)} ( \beta_0(x))$. 

A \emph{collapsed Anosov flow} associated to $(\varphi_t,\beta_0)$ is a partially hyperbolic diffeomorphism $f\colon M \to M$ such that there is a continuous map $h\colon M \to M$ homotopic to the identity and a self orbit equivalence $\beta$ equivalent to $\beta_0$ such that: 

\begin{itemize}
\item $f \circ h = h \circ \beta$, 
\item the map $h$ sends orbits of the flow to curves tangent to the center direction $E^c$ of $f$. 
\end{itemize}

We say that $f$ is a \emph{strong collapsed Anosov flow} if, moreover, the map $h$ sends the leaves of the foliations $\cF^{ws}$ and $\cF^{wu}$ to immersed surfaces tangent respectively to the bundles $E^{cs}$ and $E^{cu}$ of the partially hyperbolic diffeomorphism $f$ and gives $f$-invariant \emph{branching foliations} $\cW^{cs}$ and $\cW^{cu}$.
 
In~\cite[\S 10]{BFP}, we extended the work in~\cite{BGHP} and proved the following fact.

\begin{prop}\label{prop-caf}
Let $\varphi_t\colon M \to M$ be an Anosov flow, $\eta\colon M \to M$ a diffeomorphism and, for all $t$, $f_t\colon M \to M$ defined by $f_t := \varphi_t \circ \eta \circ \varphi_t$.

If $D\eta(T\cF^{ws})$ is transverse to $T\cF^{wu}$ and $D\eta(T\cF^{wu})$ to $T\cF^{ws}$, then there is $t_0$ such that for all $t>t_0$ one has: 
 
 \begin{enumerate}[label=(\arabic*)]
 \item \label{item_strong_caf} $f_t$ is a strong collapsed Anosov flow associated to $(\varphi_t, \beta)$ where $\beta$ is a self orbit equivalence independent of $t$, 
 \item \label{item_bundle_convergence} as $t \to +\infty$ the bundles $E^s_t$, $E^{c}_t$, $E^u_t$ associated to $f_t$ converge uniformly to the bundles associated to the Anosov flow $\{\varphi_t\}_t$. 
 \item \label{item_branching_foliations} the branching foliations $\widetilde{\cW^{cs}_t}$ and $\widetilde{\cW^{cu}_t}$ in the universal cover, converge uniformly to the Anosov foliations $\widetilde{\cF^{ws}}$ and $\widetilde{\cF^{wu}}$.
 \item \label{item_center_curves} center curves converge uniformly to orbits of the Anosov flow in the universal cover. 
 \end{enumerate}
\end{prop}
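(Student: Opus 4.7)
The plan is to verify the cone field criterion for partial hyperbolicity of $f_t$ for $t$ large, and then to construct the collapsing semiconjugacy by exploiting the resulting proximity of the $f_t$-invariant splitting to the Anosov splitting of $\varphi_t$. The main ideas follow \cite{BGHP} and \cite[Section 10]{BFP}, adapted to the case where $\eta$ need not exactly preserve the weak Anosov bifoliation.

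First, I would establish partial hyperbolicity by building invariant cone fields. Starting from thin cones around $T\mathcal{F}^{wu}$ and $T\mathcal{F}^{ws}$, the strong expansion/contraction of $D\varphi_t$ on $E^{uu}/E^{ss}$ dominates the bounded distortion introduced by $D\eta$ once $t$ is large: the plane transversality hypotheses precisely ensure that the $D\eta$-image of the unstable cone still meets $T\mathcal{F}^{wu}$ transversely to $T\mathcal{F}^{ws}$, so the second application of $D\varphi_t$ pushes it back inside itself with net expansion; a symmetric argument handles the stable cone in backward time. A standard domination argument then produces the partially hyperbolic splitting $E^s_t \oplus E^c_t \oplus E^u_t$ of $f_t$, and since the cones can be chosen arbitrarily narrow at the cost of enlarging $t$, these bundles converge uniformly to the Anosov splitting of $\varphi_t$ as $t \to \infty$, yielding item~\ref{item_bundle_convergence}.

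With partial hyperbolicity in hand, the near-invariance of $T\mathcal{F}^{ws}$ and $T\mathcal{F}^{wu}$ under $f_t$ allows one to produce $f_t$-invariant branching foliations $\mathcal{W}^{cs}_t$ and $\mathcal{W}^{cu}_t$ tangent to $E^{cs}_t$ and $E^{cu}_t$ by a Burago--Ivanov-type limiting procedure applied to $f_t^{\pm n}$-iterates of the Anosov weak foliations; by construction the lifted leaves lie uniformly close in $\widetilde{M}$ to leaves of $\widetilde{\mathcal{F}}^{ws}$ and $\widetilde{\mathcal{F}}^{wu}$, which gives item~\ref{item_branching_foliations}. In $\widetilde{M}$ one then collapses each branching leaf onto the corresponding Anosov leaf via the near-identity correspondence between their leaf spaces, producing an equivariant continuous map $\widetilde{h}$ homotopic to the identity that sends orbits of $\varphi$ to center curves of $f_t$; passing to the quotient yields the collapsing map $h$ and the center-curve convergence of item~\ref{item_center_curves}.

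Finally, the self orbit equivalence $\beta$ is defined by the relation $f_t \circ h = h \circ \beta$. Since $h$ is constructed from the bundle correspondence and not from the specific value of $t$ used in the outer factors of $f_t = \varphi_t \circ \eta \circ \varphi_t$, one checks that those outer factors contribute only a reparametrization along the flow (absorbed in the equivalence relation on self orbit equivalences), so $\beta$ is independent of $t$ up to equivalence; combined with items~\ref{item_bundle_convergence}--\ref{item_center_curves} this proves~\ref{item_strong_caf}. The hardest step, in my view, is constructing the branching foliations and the collapsing map $h$ in the universal cover: the bundles of $f_t$ are only continuous and $\eta$ does not exactly preserve the bifoliation, so the limit and collapse procedures must be controlled carefully using the bifoliated leaf-space machinery of \cite{BFP}, and the plane transversality hypotheses are used throughout to prevent degeneracies.
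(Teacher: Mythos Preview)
Your sketch is essentially correct and aligns with the approach that the paper invokes: the paper does not give a self-contained proof of this proposition but assembles it from citations to \cite{BGHP} and \cite[\S 10]{BFP}, and your outline (cone-field criterion for partial hyperbolicity and bundle convergence, branching foliations via a limiting/graph-transform procedure, collapsing map in the universal cover, independence of $\beta$ from the outer $\varphi_t$ factors) accurately captures the content of those references. One small point: the independence of $\beta$ on $t$ is attributed in the paper to \cite[Theorem~C]{BFP} (a uniqueness statement for the associated self orbit equivalence), which is a cleaner way to phrase what you argue informally via the outer factors being reparametrizations along the flow.
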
 
  
Item \ref{item_strong_caf} is~\cite[Theorem A]{BFP} (the independence on $t$ follows from~\cite[Theorem C]{BFP}). Item \ref{item_bundle_convergence} follows from~\cite{BGHP} (see~\cite[Proposition 10.1]{BFP}).  Item \ref{item_branching_foliations} follows from~\cite[Proposition 10.1]{BFP}) and the proof of~\cite[Theorem A]{BFP}. Notice that in this item, uniform convergence is meant as a strong uniform convergence, i.e., given $\eps>0$ there is $t_{\eps}$ so that, for $t>t_{\eps}$, leaves of the branching foliations are uniformly $\eps$-$C^1$-close to their corresponding leaves via the map $h_t$ in the definition of strong collapsed Anosov flow which by construction is $C^0$-close to identity. 

Finally, while item \ref{item_center_curves} is not explicitly stated in~\cite[\S 10]{BFP}, it follows directly from the description of center curves in~\cite[Proposition 10.1]{BFP} as well as the uniqueness properties of the branching foliations~\cite[Proposition 10.3]{BFP} and~\cite[Proposition 10.6]{BFP}. 

Let us now restate Corollary~\ref{corintro:smoothing} of the main paper (note that it is standard that Anosov flows in dimension 3 have $C^1$ weak-stable and weak-unstable foliations; see~\cite[Corollary 1.8]{Hasselblatt}):

\begin{thm}\label{teo-mainmain}
Let  $\{\varphi_t\}$ be an Anosov flow in an orientable 3-manifold with orientable foliations, and $\beta_0 \colon M \to M$ be a self orbit equivalence preserving orientations. Then, for every $\eps>0$, there is a diffeomorphism $\eta_{\eps}\colon M \to M$ which is $\eps$-$C^0$-close to $\beta_0$ and such that $D\eta_{\eps}(T\cF^{ws})$ makes angle less than $\eps$ with $T\cF^{ws}$ and $D\eta_{\eps}(T\cF^{wu})$ makes angle less than $\eps$ with $T\cF^{wu}$. 
\end{thm}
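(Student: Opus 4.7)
The plan is to obtain Theorem~\ref{teo-mainmain} as a direct instance of Corollary~\ref{corintro:smoothing} of the main paper, which itself is a specialization of Theorem~\ref{thmintrobeta:bifolapprox} (the bifoliated smoothing theorem). Since the statement under consideration is precisely the self-equivalence case of that corollary, no new technical ingredient is required beyond verifying that the hypotheses apply.

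First I would check that the pair of weak foliations $(\mathcal{F}^{ws}, \mathcal{F}^{wu})$ constitutes an orientable $C^1$ bifoliation on $M$. The $C^1$-regularity of the weak stable and weak unstable foliations of a smooth Anosov flow on a closed $3$-manifold is classical, following from Hirsch--Pugh--Shub (as cited in the proof of Proposition~\ref{prop:Anosovadmi}); orientability is assumed in the hypothesis of Theorem~\ref{teo-mainmain}; and transversality is built into the definition of an Anosov splitting. Since $\beta_0$ is a self orbit equivalence preserving the orientations of the bundles, it sends $\mathcal{F}^{ws}$-leaves to $\mathcal{F}^{ws}$-leaves and $\mathcal{F}^{wu}$-leaves to $\mathcal{F}^{wu}$-leaves, so it is a bifoliated homeomorphism in the sense of Theorem~\ref{thmintrobeta:bifolapprox}.

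Next I would invoke Theorem~\ref{thmintrobeta:bifolapprox} with $(\mathcal{F}_0, \mathcal{G}_0) = (\mathcal{F}_1, \mathcal{G}_1) = (\mathcal{F}^{ws}, \mathcal{F}^{wu})$ and $h = \beta_0$. This directly produces, for any prescribed $\epsilon > 0$, a smooth diffeomorphism $\eta_\epsilon \colon M \to M$ satisfying $d_{C^0}(\beta_0, \eta_\epsilon) < \epsilon$ and such that the push-forward plane fields $D\eta_\epsilon(T\mathcal{F}^{ws}) = T(\eta_\epsilon)_*\mathcal{F}^{ws}$ and $D\eta_\epsilon(T\mathcal{F}^{wu}) = T(\eta_\epsilon)_*\mathcal{F}^{wu}$ are within $\epsilon$ of $T\mathcal{F}^{ws}$ and $T\mathcal{F}^{wu}$ respectively in the $C^0$-metric on plane fields. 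Since that metric (induced by any auxiliary Riemannian metric on $M$) measures pointwise angular distance, this translates immediately to the angle bound asserted in the theorem.

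The genuine technical content is therefore entirely absorbed into Section~\ref{sec:smooth} of the main text. The main obstacle there is the inductive extension of the smoothing from the $1$-skeleton to the $2$- and $3$-cells of a clean cover, where the transverse components of smoothings produced near adjacent edges need not agree and must be reconciled by a graphical interpolation whose tilt can be made arbitrarily small in the $C^0$-sense, thereby preserving closeness of the push-forward bundles; in the bifoliated setting this interpolation must be performed simultaneously in two transverse directions using compatible clean covers. Once that machinery is in place, the present theorem is immediate, so no additional obstacle arises in the appendix itself.
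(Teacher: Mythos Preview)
Your proposal is correct and matches the paper's treatment exactly: the paper introduces Theorem~\ref{teo-mainmain} explicitly as a restatement of Corollary~\ref{corintro:smoothing} (noting only that the weak foliations are $C^1$ by Hasselblatt), so there is no separate proof to compare against. Your verification that $\beta_0$ is a bifoliated homeomorphism and that the hypotheses of Theorem~\ref{thmintrobeta:bifolapprox} apply is precisely the reduction the paper has in mind.
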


Putting together Theorem \ref{teo-mainmain} and Proposition \ref{prop-caf}, we get that, for large $t$ and any fixed $\eps>0$, $f_{t,\eps} = \varphi_t \circ \eta_{\eps} \circ \varphi_t$ are strong collapsed Anosov flows associated to $\varphi_t$ and some self orbit equivalence $\beta'_{\eps}$. Our goal in order to prove Theorem \ref{teo.mainappendix} is then to show that the self orbit equivalence $\beta'_{\eps}$ associated to $f_{t,\eps}$ is equivalent to the original $\beta_0$. Note that there are cases (\cite{BG}) where there are unique (or finitely many) self orbit equivalences in a given mapping class of the manifold. In these cases, it is easy to establish the equivalence class of the self orbit equivalence as $f_{t,\eps}$ is always homotopic to $\eta_{\eps}$ which is homotopic to $\beta_0$. It is therefore the other case (which always corresponds to skewed Anosov flows) that is more challenging and requires more arguments since an homotopy class will contain infinitely many inequivalent self orbit equivalences (but again, thanks to~\cite{BG} we know exactly how they differ from each other). 

We first quote the following result from~\cite{BG} which reduces the problem to the skewed case. For skewed Anosov flows, there is a specific self orbit equivalence, called \emph{one-step up map} constructed by using the skewed structure in the universal cover (see~\cite{BG,BM24}). Note that for some flows (e.g., the geodesic flow) this one step up map can be finite order (or even the identity) but it is always homotopic to the identity and sometimes has infinite order.  

\begin{thm}\label{teoBG} Let $\varphi_t \colon M \to M$ be an Anosov flow. If $\beta_1$ and $\beta_2$ are inequivalent self orbit equivalences of $\varphi_t$ and homotopic to each other, then, $\varphi_t$ is skewed Anosov and $\beta_1 \circ \beta_2^{-1}$ is equivalent to an iterate of a one step up map. 
 \end{thm}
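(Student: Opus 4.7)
The plan is to reduce to the study of self orbit equivalences homotopic to the identity. Setting $\beta := \beta_1 \circ \beta_2^{-1}$, this is a self orbit equivalence of $\varphi_t$ homotopic to the identity, and the task becomes to show that either $\beta$ is equivalent to the identity (which would contradict the inequivalence hypothesis), or $\varphi_t$ is skewed $\R$-covered and $\beta$ is equivalent to an iterate of the one-step up map. I would therefore aim to classify, up to equivalence, all self orbit equivalences homotopic to the identity.

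First I would lift $\beta$ to a homeomorphism $\widetilde\beta \colon \widetilde M \to \widetilde M$ that is equivariant with respect to the deck group $\pi_1(M)$ (such a lift exists because $\beta$ is homotopic to the identity). Since $\beta$ preserves oriented orbits and the oriented weak foliations $\cF^{ws}, \cF^{wu}$, the map $\widetilde\beta$ descends to the orbit space $\mathcal{O} := \widetilde M / \widetilde\varphi_t$, which is homeomorphic to $\R^2$ by Fenley--Barbot, yielding a $\pi_1(M)$-equivariant homeomorphism $\overline\beta \colon \mathcal{O} \to \mathcal{O}$ preserving the two transverse singular foliations $\mathcal{O}^s, \mathcal{O}^u$ obtained by projecting $\widetilde{\cF^{ws}}$ and $\widetilde{\cF^{wu}}$.

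Next I would invoke the Barbot--Fenley structure theory and split into cases. If $\varphi_t$ is not $\R$-covered, the orbit space contains non-trivial chains of lozenges whose corners project to periodic orbits and are fixed by specific deck transformations; since $\overline\beta$ commutes with the deck action and preserves the two foliations, it must preserve the lozenge structure and fix each such corner. A standard argument then propagates this rigidity from a dense set of corners to force $\overline\beta = \mathrm{id}_{\mathcal{O}}$, from which one deduces that $\beta$ is equivalent to the identity, contradicting inequivalence. In the product $\R$-covered case (e.g., suspensions of hyperbolic toral automorphisms) the orbit space is a genuine product and the same rigidity argument, using density of periodic orbits, gives $\overline\beta = \mathrm{id}_{\mathcal{O}}$.

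This leaves the skewed $\R$-covered case. Here the leaf spaces $\Lambda^s, \Lambda^u$ of $\widetilde{\cF^{ws}}, \widetilde{\cF^{wu}}$ are each homeomorphic to $\R$, and the embedding $\mathcal{O} \hookrightarrow \Lambda^s \times \Lambda^u$ has a characteristic skewed image that admits a canonical $\pi_1(M)$-equivariant $\Z$-action generated by the one-step up map $\sigma$, which descends to a self orbit equivalence of $\varphi_t$ homotopic to the identity. Since $\overline\beta$ preserves the two foliations and is $\pi_1(M)$-equivariant, it induces order-preserving $\pi_1$-equivariant homeomorphisms of $\Lambda^s$ and $\Lambda^u$; the skewed structure forces any such pair to act by a uniform integer ``shift'' with respect to the grading provided by $\sigma$. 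The main obstacle — and the hard core of the proof — is precisely to show that this integer shift exists and is well-defined: one must prove that for a unique $k \in \Z$, the composition $\overline\beta \circ \sigma^{-k}$ fixes one (hence every) leaf of both $\mathcal{O}^s$ and $\mathcal{O}^u$, and then invoke the rigidity argument from the previous paragraph to conclude $\overline\beta \circ \sigma^{-k} = \mathrm{id}_{\mathcal{O}}$, so that $\beta$ is equivalent to $\sigma^k$. This last step requires carefully analyzing the translation number of $\overline\beta$ on the cyclically ordered set of periodic leaves and matching it to the shift introduced by $\sigma$.
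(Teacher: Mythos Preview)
The paper does not prove this statement: it is quoted from~\cite{BG} (Barthelm\'e--Gogolev) and used as a black box. So there is no in-paper proof to compare against.

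That said, your outline is broadly the strategy of~\cite{BG}: pass to $\beta=\beta_1\beta_2^{-1}$, study the induced $\pi_1$-equivariant action on the orbit space, and split according to the Barbot--Fenley trichotomy. A couple of points where your sketch is looser than what is actually needed. In the non-$\R$-covered case, the rigidity does not come from ``density of corners of lozenges'' (which need not hold); the argument in~\cite{BG} uses the non-Hausdorff points of the leaf spaces together with the structure of branching and periodic leaves to pin down $\overline\beta$ on enough leaves, and then propagates via the bifoliated structure. In the skewed case your identification of the integer shift is right in spirit, but one must also handle the ambiguity of the lift $\widetilde\beta$ (different lifts differ by deck transformations, and the one-step up map is defined only after fixing a $\pi_1$-equivariant lift), and check that equality on the orbit space really forces equivalence of the self orbit equivalences downstairs. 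None of these are fatal, but they are where the actual work in~\cite{BG} lies.
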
 

For more information about \emph{skewed Anosov flows} (sometimes called \emph{skewed} $\RR$-\emph{covered}) and general background on (topological) Anosov flows see~\cite{BM_book}. 

As a consequence we get an easy criterion to check if two homotopic self orbit equivalences are equivalent or not:  

\begin{lem}\label{lem.AF}
Assume that $\beta'$ and $\beta$ are homotopic self orbit equivalences of an Anosov flow $\varphi_t$ and let $\tilde \beta'$ and $\tilde \beta$ be lifts to $\widetilde{M}$ at bounded distance. Let $E$ be a leaf of $\widetilde{\cF}^{wu}$ which is fixed by some nontrivial element $\gamma \in \pi_1(M)$. Then, there is $\delta>0$ (depending only on $\varphi_t$) for which the following is true: if $\tilde \beta(E)$ and $\tilde \beta'(E)$ have points at distance less than $\delta$, then $\beta$ and $\beta'$ are equivalent.  
\end{lem}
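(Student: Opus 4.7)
The proof proceeds by contradiction, leveraging Theorem \ref{teoBG} to force a rigid structural situation. Suppose $\beta$ and $\beta'$ are inequivalent; since they are homotopic, Theorem \ref{teoBG} implies that $\varphi_t$ is skewed $\mathbb{R}$-covered Anosov and $\beta \circ (\beta')^{-1}$ is equivalent to $\Psi^n$ for some $n \neq 0$, where $\Psi$ denotes the one-step up map. Since $\tilde\beta$ and $\tilde\beta'$ are lifts at bounded distance, they induce the same action on $\pi_1(M)$, so both $\tilde\beta(E)$ and $\tilde\beta'(E)$ are fixed by the same element $\gamma' \in \pi_1(M) \setminus \{e\}$. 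Because $\Psi$ is homotopic to the identity, $\tilde\Psi$ commutes with deck transformations, and we may pick the lift $\tilde\Psi^n$ at bounded distance from the identity so that the equivalence lifts to $\tilde\beta \circ (\tilde\beta')^{-1}(x) = \tilde\varphi_{\tau(x)}(\tilde\Psi^n(x))$ for some continuous $\tau : \widetilde M \to \mathbb{R}$. Applying this to points of $\tilde\beta'(E)$ and using that the flow preserves weak unstable leaves, we obtain the equality $\tilde\beta(E) = \tilde\Psi^n(\tilde\beta'(E))$ as leaves of $\widetilde{\cF}^{wu}$; since $n \neq 0$ and $\tilde\Psi$ acts freely on the leaf space of $\widetilde{\cF}^{wu}$ in the skewed case, these are two \emph{distinct} periodic leaves.

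To obtain the desired $\delta$ and reach a contradiction, the plan is to establish the following quantitative statement intrinsic to the flow: there exists $\delta > 0$, depending only on $\varphi_t$, such that for every leaf $L$ of $\widetilde{\cF}^{wu}$ fixed by a nontrivial $\gamma' \in \pi_1(M)$ and every $n \neq 0$, the leaves $L$ and $\tilde\Psi^n(L)$ are at distance at least $\delta$ in $\widetilde M$. The key geometric input is that both $L$ and $\tilde\Psi(L)$ contain distinct lifts of the (unique, up to orientation) closed orbit of $\varphi_t$ corresponding to $\gamma'$, and these two leaves together with their connecting half-leaves of $\widetilde{\cF}^{ws}$ bound a \emph{lozenge} in the sense of Fenley--Barbot. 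The width of this lozenge bounds from below the distance between $L$ and $\tilde\Psi(L)$; monotonicity of $d(L, \tilde\Psi^n(L))$ in $|n|$ then follows from the nested structure of consecutive lozenges. Contradicting the hypothesis $d < \delta$ yields that $n=0$, hence $\beta$ and $\beta'$ are equivalent.

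The main obstacle is upgrading the \emph{pointwise} positive separation (automatic, since $L \neq \tilde\Psi^n(L)$) to a \emph{uniform} lower bound $\delta > 0$ valid simultaneously over every choice of periodic $E$ and $\gamma$. A naive tubular-neighborhood argument around closed orbits fails because the injectivity radius at closed orbits of $\varphi_t$ may be arbitrarily small. The uniformity is instead extracted from the cocompact action of $\pi_1(M)$ on $\widetilde M$ together with the local product structure: the width of a lozenge joining two consecutive $\gamma'$-fixed weak-unstable leaves projects to a well-defined geometric quantity on $M$, which is bounded below by compactness. This is the technical core of the argument; the rest is essentially bookkeeping about lifts and the action on $\pi_1$.
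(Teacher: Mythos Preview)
Your overall architecture matches the paper's: reduce via Theorem~\ref{teoBG} to the skewed case, identify $\tilde\beta(E)$ with $\tilde\Psi^n(\tilde\beta'(E))$, and then derive a contradiction from a uniform lower bound on the distance between a leaf and its image under a nontrivial power of the lifted one-step up map. The difference lies entirely in how that uniform $\delta$ is obtained.

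The paper dispenses with the lozenge analysis and instead invokes Thurston's \emph{slithering}: the weak-unstable foliation of a skewed $\mathbb{R}$-covered Anosov flow is the fiber foliation of a $\pi_1(M)$-equivariant submersion $s:\widetilde M\to\mathbb{R}$, with the one-step up map covering translation by~$1$. Uniform continuity of $s$ (which follows from compactness of $M$, since $s$ descends to a map $M\to S^1$) immediately gives a $\delta>0$ such that $d_{\widetilde M}(x,y)<\delta$ forces $|s(x)-s(y)|<1$; hence any leaf $L$ and $\tilde\Psi(L)$ are at distance at least $\delta$, for \emph{every} leaf, not only periodic ones. This is both shorter and stronger than what you need.

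Your route through lozenges is viable in principle, but two points deserve care. First, the assertion that $L$ and $\tilde\Psi(L)$ contain lifts of \emph{the same} closed orbit is not quite right: they contain lifts of freely homotopic closed orbits, which in general are distinct orbits of $\varphi_t$ (this is exactly what the lozenge records). Second, the passage from ``width of a lozenge'' in the orbit space to an actual lower bound on $d_{\widetilde M}(L,\tilde\Psi(L))$ is where the work lies, and your compactness sketch does not make this transfer precise. The slithering argument sidesteps this entirely, and since you already know the flow is skewed $\mathbb{R}$-covered at that point, it is the natural tool to invoke.
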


\begin{proof}
This follows from the fact that there exists $\delta>0$ (depending only on $\varphi_t$) so that the lift of the one step up map sends any leaf $E \in \widetilde{\cF^{wu}}$ to a leaf $E'$ so that the closest point of $E$ to $E'$ is larger than $\delta$. This is because the foliations are induced by a slithering $\widetilde M \to S^1$ (see~\cite{Thurston}). Thus, applying Theorem \ref{teoBG} we conclude. 
\end{proof}

Now we are ready to prove the main result of the appendix: 

\begin{proof}[Proof of Theorem \ref{teo.mainappendix}]
We consider an Anosov flow $\varphi_t\colon M \to M$ and a self orbit equivalence $\beta_0\colon M \to M$ preserving orientations.

We apply Theorem \ref{teo-mainmain} to obtain $\eta_{\eps}\colon M \to M$ a diffeomorphism $\eps$-$C^0$-close to $\beta_0$ which respects transversalities with angle $\leq \eps$.  In particular, $\eta_{\eps}$ and $\beta_0$ are homotopic, and if we fix $\tilde \beta_0$ a lift of $\beta_0$, we can consider $\tilde \eta_{\eps}$ to be the unique lift of $\eta_{\eps}$ which is $\eps$-close to $\tilde \beta_0$. We also consider $\tilde \varphi_t$ to be the (unique) flow in $\mt$ which lifts $\varphi_t$. 

Note that the diffeomorphisms: 

$$ f_{t,\eps} = \varphi_t \circ \eta_{\eps} \circ \varphi_t, $$

\noindent are all homotopic to $\beta_0$ by construction. By Proposition \ref{prop-caf}, for large enough $t$, these are partially hyperbolic and (strong) collapsed Anosov flows with respect to $\varphi_t$ and some self orbit equivalence $\beta' = \beta'_{t,\eps}$ homotopic to $\eta_{\eps}$ and therefore also homotopic to $\beta_0$. We wish to show that $\beta'$ is equivalent to $\beta_0$, at least for large $t$ and small $\eps$. By Theorem \ref{teoBG} we can assume that if $\beta'$ and $\beta_0$ are not equivalent, then $\varphi_t$ is skewed and they differ by a power of a one-step up map (but we will use this only by applying Lemma \ref{lem.AF}). 

We call $\hat f = f_{t_0,\eps}$ and $\tilde f$ the lift to $\widetilde{M}$ associated to $\tilde \beta_0$, that is, $\tilde f = \tilde \varphi_{t_0} \circ \tilde \eta_{\eps} \circ \tilde \varphi_{t_0}$. Fix a periodic orbit $o$ of $\varphi_t$ and a lift $\tilde o$ to $\widetilde{M}$. Let $c$ be the center curve in $\widetilde{M}$ associated to $\tilde o$, which in particular is $C^0$-close to $\tilde o$ everywhere. Let  $E$ the leaf of $\widetilde{\cF}^{wu}$ containing $\tilde o$. We want to show that if $\eps$ is small and $t_0$ is large, then $\tilde f(c)$ is close to $\tilde \beta_0(E)$, which then by Lemma \ref{lem.AF} implies that $\beta'$ is equivalent to $\beta_0$. In fact, we will show that if $L \in \widetilde{\cW}^{cu}$ is the leaf containing $c$ and  $E$ the leaf of $\widetilde{\cF}^{wu}$ containing $\tilde o$, then the distance of $\tilde f(L)$ (which if $\eps$ is small and $t$ is big is very close to $\tilde \beta'(E)$) and $\tilde \beta_0(E)$ is smaller than $\delta$ for some fixed $\delta$ given by Lemma \ref{lem.AF}. 

Note that if $\epsilon_1 \ll \delta$, then, thanks to item \ref{item_branching_foliations} of Proposition \ref{prop-caf}, we can choose $\eps$ and $t$ so that $E$ and $L$ are uniformly $\eps_1/10$ close and are both invariant under the same deck transformation, say $\gamma \in \pi_1(M)$. Proposition \ref{prop-caf} \ref{item_bundle_convergence} implies that for $t_0$ large we have that the bundles of $\hat f$ are very close to those of $\varphi_t$ which make good angle, and $E$ is tangent to the weak-unstable bundle of $\varphi_t$ and $L$ to the center unstable bundle of $\hat f$, therefore, we know that $E \subset \bigcup_{x \in L} \cW^{ss}_{loc}(x)$ and thus, we know that the distance between $\tilde f(E)$ and $\tilde f(L)$ is uniformly less than $\eps_1/10$. We can also assume that $\tilde f(L)$ (and therefore $\tilde f(E)$ if $\eps_1$ is sufficiently small) is contained in $\bigcup_{x \in E} \cF^{ss}_{loc}(x)$ where $\cF^{ss}_{loc}$ denotes the local strong-stable manifold for the flow $\tilde \varphi_t$. 

Note that $\tilde f(E) = \tilde \varphi_{t_0} \circ \tilde \eta_{\eps} \circ \tilde \varphi_{t_0} (E) =  \tilde \varphi_{t_0} \circ \tilde \eta_{\eps} (E)$ which is very close to $\tilde f(L)$ as was remarked before. If $t_0$ is large, since $\tilde \eta_{\eps}$ is $\eps$ close to $\tilde \beta_0$, flowing by $\tilde \varphi_{t_0}$ this gets even closer to $\tilde \beta_0(E)$. We deduce that $\tilde f(E)$, which is $\eps_1$ close to some leaf of $\widetilde{\cF^{wu}}$ invariant under $(\beta_0)_\ast \gamma$ needs to be close to $\widetilde\beta_0(E)$. But this implies that $\tilde f(L)$ is the leaf close to $\widetilde{\beta_0}(E)$ as we wanted to show. 
\end{proof}

\paragraph{Acknowledgments.}

T.B. is partially supported by the NSERC (ALLRP 598447-24 and RGPIN-2024-04412). S.F. was partially supported by National Science Foundation grant DMS-2054909. R.P. was partially supported by CSIC.

\newpage
\printbibliography[heading=bibintoc, title={References}]

\end{document}